\newtheorem{prop}{Proposition}[section]
\newtheorem{thm}[prop]{Theorem}
\newtheorem{lemm}[prop]{Lemma}
\newtheorem{coro}[prop]{Corollary}
\newtheorem{defiprop}[prop]{Proposition-Definition}
\newtheorem*{claim*}{Claim}
\theoremstyle{definition}
\newtheorem{defi}[prop]{Definition}
\newtheorem{rmk}[prop]{Remark}
\newcommand{\CC}{\mathbb{C}}
\newcommand{\NN}{\mathbb{N}}
\newcommand{\RR}{\mathbb{R}}
\newcommand{\ZZ}{\mathbb{Z}}
\newcommand{\cA}{\mathcal A}
\newcommand{\cB}{\mathcal B}
\newcommand{\cE}{\mathcal E}
\newcommand{\cK}{\mathcal K}
\newcommand{\cN}{\mathcal N}
\newcommand{\cP}{\mathcal P}
\newcommand{\cS}{\mathcal S}
\newcommand{\cT}{\mathcal T}
\newcommand{\cV}{\mathcal V}
\DeclareMathOperator{\Span}{span}
\DeclareMathOperator{\supp}{supp}
\DeclareMathOperator{\Div}{div}
\DeclareMathOperator{\loc}{loc}
\DeclareMathOperator{\re}{Re}
\DeclareMathOperator{\dist}{dist}
\DeclareMathOperator{\Ric}{Ric}
\DeclareMathOperator{\Ind}{Ind}
\DeclareMathOperator{\Ker}{Ker}
\DeclareMathOperator{\Ran}{Ran}
\newcommand{\ep}{\varepsilon}
\newcommand{\Vol}{\text{Vol}}
\newcommand{\pa}[2]{\frac{\partial #1}{\partial #2}}
\newcommand{\paop}[1]{\pa{}{#1}}
\setlist[enumerate]{leftmargin = 2em}
\numberwithin{equation}{section}
\title[Existence of CMC spheres]{Existence of constant mean curvature 2-spheres in Riemannian 3-spheres}
\author{Da Rong Cheng}
\address{Department of Pure Mathematics, University of Waterloo, Waterloo, ON, N2L 3G1 Canada}
\email{drcheng@uwaterloo.ca}
\author{Xin Zhou}
\address{Department of Mathematics, Cornell University, Ithaca, NY 14853}
\email{xinzhou@cornell.edu}
\begin{document}

\dedicatory{Dedicated to Rick Schoen on the occasion of his 70th birthday}

\begin{abstract} 
We prove the existence of branched immersed constant mean curvature 2-spheres in an arbitrary Riemannian 3-sphere for almost every prescribed mean curvature, and moreover for all prescribed mean curvatures when the 3-sphere is positively curved. To achieve this, we develop a min-max scheme for a weighted Dirichlet energy functional.  There are three main ingredients in our approach: a bi-harmonic approximation procedure to obtain compactness of the new functional, a derivative estimate of the min-max values to gain energy upper bounds for min-max sequences for almost every choice of mean curvature, and a Morse index estimate to obtain another uniform energy bound required to reach the remaining constant mean curvatures in the presence of positive curvature. 
\end{abstract}

\maketitle

%Introduction------------------------------------------------------------------------
\section{Introduction}

In this paper, we address the question of constructing surfaces in a given three manifold with prescribed constant mean curvature and controlled topology, focusing on the fundamental case when the ambient space is a Riemannian 3-sphere. Constant mean curvature (CMC) surfaces constitute an important and widely-studied topic in differential geometry, and appear as models in many disciplines, including soap bubbles, gas-liquid interface and event horizons in general relativity. The search for CMC surfaces has produced an extensive literature (see for example \cite{Heinz54, Hildebrandt70, Brezis-Coron84, Struwe85, Struwe86, Struwe88, Kapouleas90, Ye91, Duzaar-Steffen96, Mahmoudi-Mazzeo-Pacard06, Rosenberg-Smith20, Breiner-Kapouleas17}), but no general existence theory was available for closed hypersurfaces with prescribed constant mean curvature until the very recent joint work of Zhu and the second author, \cite{Zhou-Zhu19, Zhou-Zhu20}. However, the existence results in \cite{Zhou-Zhu19} left open the question of whether the CMC surfaces have controlled topology in 3-dimensional ambient spaces, particularly in Riemannian 3-spheres. In the case of homogeneous 3-spheres, the work of Meeks-Mira-P\'erez-Ros \cite{MMPR13} gives the existence and uniqueness of immersed CMC 2-spheres with any prescribed mean curvature; see also \cite{Abresch-Rosenberg05, DanielMira13, Meeks13, MMPR20}, and the references therein. By contrast, the existence of CMC $2$-spheres in arbitrary Riemannian 3-spheres is only known for $H=0$ \cite{Sacks-Uhlenbeck81, Smith82}, and for very large $H$ \cite{Ye91, Pacard-Xu09}, but remains open for other $H>0$. Our first result fills in this gap in the branched immersed case for almost every prescribed mean curvature. 

\begin{thm}\label{thm:main1}
Given a Riemannian manifold $(S^3, g)$ diffeomorphic to the standard 3-sphere, for almost every constant $H>0$, there exists a nontrivial branched immersed $2$-sphere with constant mean curvature $H$ and Morse index at most 1.
\end{thm}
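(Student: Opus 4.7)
The plan is to realize the desired CMC spheres as critical points of a weighted Dirichlet energy on maps $u : S^2 \to (S^3, g)$. Imitating the classical $H$-bubble functional $\tfrac{1}{2}\int|\nabla u|^2 + \tfrac{2H}{3}\int u\cdot (u_x \wedge u_y)$ on $\RR^3$, I would define $E_H(u) = \tfrac{1}{2}\int_{S^2} |du|^2_g \, dA + H\cdot V(u)$, where $V(u)$ is a ``signed enclosed volume'' obtained by integrating the ambient volume form $d\mathrm{vol}_g$ over any filling $F: B^3 \to S^3$ of $u$ (well-defined modulo $\mathrm{Vol}(S^3,g)$ since $\pi_2(S^3)=0$). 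Critical points of $E_H$ should then be weakly conformal branched $H$-immersions of $S^2$. Because the Dirichlet part is conformally invariant and $V$ is only weakly continuous, the Palais--Smale condition fails, so following the strategy announced in the abstract I would pass to a bi-harmonic regularization $E_{H,\epsilon}(u) = E_H(u) + \epsilon \int_{S^2} |\Delta_g u|^2\, dA$ on $W^{2,2}(S^2, S^3)$, for which PS holds.

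\textbf{Min-max and energy bounds.} Using $\pi_3(S^3) \cong \ZZ$, I would consider 1-parameter sweepouts $\Phi : [0,1] \to W^{2,2}(S^2,S^3)$ with constant endpoints whose total track is a degree-one map $S^3 \to S^3$. A standard mountain-pass argument applied to $E_{H,\epsilon}$ then yields a critical point $u_{H,\epsilon}$ at the width $W(H,\epsilon) = \inf_\Phi \sup_t E_{H,\epsilon}(\Phi(t))$ with Morse index $\le 1$. The key analytic input is an $\epsilon$-independent bound on the Dirichlet energy: since $H \mapsto W(H,\epsilon)$ is, after subtracting a linear-in-$H$ comparison term, monotone/concave, Struwe's monotonicity trick implies differentiability for a.e. $H$, with derivative controlling $|V(u_{H,\epsilon})|$; combined with $E_{H,\epsilon}(u_{H,\epsilon}) = W(H,\epsilon)$, this produces a uniform bound on $\int|du_{H,\epsilon}|^2 + \epsilon\int|\Delta_g u_{H,\epsilon}|^2$ at every such differentiability point.

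\textbf{Bubble convergence and the main obstacle.} For $H$ in the full-measure set above, I would send $\epsilon_i \to 0$ and extract a weak limit $u_H$ together with finitely many bubbles in Sacks--Uhlenbeck fashion, adapted to the functional $E_H$. An $\epsilon$-regularity statement for $E_{H,\epsilon}$, uniform as $\epsilon \to 0$, would pass smallness of local energy into strong convergence away from the concentration points; each non-trivial bubble is then a weak, hence (by elliptic regularity for the $H$-system) smooth branched $H$-immersion of $S^2$. The upper bound from the min-max level paired with the Sacks--Uhlenbeck-type lower energy gap ensures at least one bubble is non-constant, and the Morse index constraint descends through the limit by a Hessian-passage argument. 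The principal obstacle in the whole scheme is exactly this last compactness step: one must simultaneously let $\epsilon\to 0$ and allow bubbling, rule out neck energy loss that could destroy the mean-curvature condition, and certify that the surviving bubble is not a minimal sphere (which would correspond to $H=0$). This is where the bi-harmonic approximation, the a.e.\ energy bound from Struwe's trick, and a careful $\epsilon$-uniform regularity theory all have to cooperate.
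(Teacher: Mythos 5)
Your proposal follows the paper's strategy closely: biharmonic regularization, degree-one sweepout min-max, Struwe's monotonicity trick for a.e.-$H$ energy bounds, and an $\epsilon \to 0$ bubble analysis. But the key step you describe as ``standard'' hides the real work. You assert that a standard mountain-pass argument gives a min-max critical point of $E_{H,\epsilon}$ with Morse index $\le 1$. In the presence of degenerate critical points at the min-max level, the classical deformation theory gives existence but not an index bound; the textbook remedy (perturb the functional to a Morse one, as in Micallef--Moore) is unavailable here, because the set of admissible $H$ is only known to have full measure, and there is no evident way to choose a second perturbation that preserves both non-degeneracy and the uniform $D_\epsilon$-bound simultaneously. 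The paper instead proves a deformation lemma (Lemma~\ref{lemm:deformation}), inspired by the Morse-index estimates of Marques--Neves and Song in the Almgren--Pitts setting, which uses generalized Morse neighborhoods (Proposition--Definition~\ref{defiprop:Morse-neighborhood}) to push sweepouts off the portion of the min-max critical set with index $\geq 2$. Without an argument of this kind, your index bound is not justified.

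Secondarily, your worry about ``certifying that the surviving bubble is not a minimal sphere (which would correspond to $H=0$)'' is aimed at the wrong target: the CMC system is conformally invariant in two dimensions, so any rescaled bubble automatically satisfies the CMC equation with the \emph{same} $H$. The genuine danger --- which the paper resolves in Proposition~\ref{prop:prepare-blowup} via a Pohozaev-type computation following Lamm --- is that the rescaled perturbation parameter $\widetilde{\epsilon}_j = \epsilon_j / t_j$ might fail to tend to zero, in which case the bubble would be a finite-energy solution of the \emph{perturbed} biharmonic system on $\RR^2$ rather than the CMC equation. The paper shows such a solution must be harmonic (hence constant), ruling this out and guaranteeing $\widetilde{\epsilon}_j \to 0$ along a subsequence. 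Your sketch correctly flags the $\epsilon\to 0$/bubbling interaction as the principal obstacle, but it does not identify the precise mechanism that makes it work.
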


Before stating our second result, we would like to mention a conjecture made by Rosenberg-Smith \cite[page 3]{Rosenberg-Smith20} in their treatise on the degree theory of immersed, prescribed curvature hypersurfaces. The conjecture states that ``{\em for any $H\geq 0$ and any metric $g$ on $S^3$ of positive sectional curvature, there exists an embedding of $S^2$ to $S^3$ of constant mean curvature $H$}". Our second result confirms their conjecture for branched immersed CMC 2-spheres. 
\begin{thm}\label{thm:main2}
If the Riemannian 3-sphere $(S^3, g)$ has positive Ricci curvature, then for every constant $H>0$, there exists a nontrivial branched immersed $2$-sphere with constant mean curvature $H$ and with Morse index 1.
\end{thm}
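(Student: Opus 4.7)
The strategy is to deduce Theorem \ref{thm:main2} from Theorem \ref{thm:main1} via an approximation-and-compactness argument, using the positive Ricci hypothesis and the Morse index bound to supply the extra compactness lost when $H$ lies outside the full-measure set of Theorem \ref{thm:main1}. Given arbitrary $H>0$, I would pick a sequence $H_k \to H$ in the good set, obtaining from Theorem \ref{thm:main1} a sequence of nontrivial branched immersed CMC $H_k$-spheres $u_k$ of Morse index at most $1$ (with respect to the weighted Dirichlet functional of the min-max scheme). The task reduces to extracting a subsequential limit which is a branched immersed CMC $H$-sphere of Morse index exactly $1$.

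The principal obstacle is to produce a uniform upper bound on the weighted Dirichlet energy of $u_k$, independent of $k$. This is the role of the third ingredient advertised in the abstract, and it is where positive Ricci curvature enters decisively. The plan is to test the second variation of the functional at $u_k$ against a carefully chosen finite-dimensional family of variations built from ambient Killing-type or translation-type fields on $(S^3,g)$, inserted through the immersion. Under positive Ricci, the Jacobi operator acquires a strictly positive zeroth-order term of the form $\Ric(\nu,\nu) + |A|^2$, so on any codimension-one subspace on which the second variation is nonnegative (which exists because $\Ind(u_k)\le 1$), one obtains an integrated Simons-type inequality bounding a weighted $L^2$ norm of $|A|$ and hence the area of $u_k$. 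Since the branched immersions in the min-max scheme are conformal up to a controlled error from the bi-harmonic approximation, an area bound then upgrades to a bound on the Dirichlet energy.

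Once a uniform energy bound is secured, the compactness part of the min-max theory, i.e.\ the bi-harmonic approximation procedure mentioned as the first ingredient, allows one to pass to a subsequential limit $u_\infty$ which is a branched immersed CMC $H$-sphere. Nontriviality of $u_\infty$ follows from a lower energy bound: the min-max width for $H_k$ is uniformly positive in $k$ (one controls it from below by a quantitative isoperimetric-type argument already used in proving Theorem \ref{thm:main1}), and no energy can be lost because an $H_k$-bubble in the limit would have to carry a definite amount of the Dirichlet energy, which can be ruled out by the same index bound applied to the bubble tree. Finally, to upgrade $\Ind(u_\infty)$ from $\le 1$ to exactly $1$, I would rule out stability: a stable branched CMC $2$-sphere in $(S^3,g)$ with $\Ric>0$ cannot exist because testing the stability inequality with the constant function and invoking Gauss--Bonnet together with the Gauss equation forces a contradiction, exactly as in the classical Schoen--Yau-style argument. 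Combined with the upper bound inherited from the min-max construction, this pins the index at $1$ and completes the proof.
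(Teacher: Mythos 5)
Your high-level architecture matches the paper's: approximate $H$ by a sequence $H_k$ in the full-measure set of Theorem~\ref{thm:main1}, derive a uniform Dirichlet energy bound for $u_k$ from the combination of the Morse index bound and the curvature hypothesis, pass to a subsequential limit, and rule out stability to conclude $\Ind_H(u)=1$. However, the mechanism you propose for the crucial energy bound has two substantive gaps, and the specific analytic tools you invoke are not the ones that actually do the work.

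First, there is an entire step missing between ``$\Ind_{H_k}(u_k)\le 1$'' and ``there is a codimension-one subspace on which a scalar stability form is nonnegative.'' The Morse index in Theorem~\ref{thm:main1} is the index of the full Hessian $\delta^2 E_{H}$ acting on all sections of $u_k^{\ast}TS^3$, including tangential directions, whereas the scalar stability form you want to test is defined only on normal sections. The paper's Proposition~\ref{prop:index-comparison}, following Ejiri--Micallef, bridges this by solving an inhomogeneous $\bar\partial$-equation (the obstruction vanishing because $c_1(\Lambda^{1,0}T^{\ast}S^2\otimes\xi^{0,1})<0$, via Koszul--Malgrange) so that to each normal section $s$ one can attach a tangential $\sigma$ with $\delta^2 E_H(u)(s+\sigma,s+\sigma)\le B_H(u)(s,s)$. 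Without this, the index-$1$ bound on $\delta^2 E_H$ gives you nothing about the scalar operator, and the argument stalls. (The same comparison is also used at the very end to upgrade $\Ind_H(u)>0$: index of $B_H\le\Ind_H(u)$, and $f\equiv 1$ makes $B_H$ negative.)

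Second, the zeroth-order term you propose, $\Ric(\nu,\nu)+|A|^2$, is exactly what the paper deliberately avoids: $|A|^2$ blows up at branch points, and bounding $\int|A|^2$ via a Simons-type inequality is both out of reach and unnecessary. The paper instead defines $B_H$ with $\frac{H^2}{2}+\Ric_g(\bm n,\bm n)$, which is controlled everywhere and suffices under the hypothesis $\Ric_g+\frac{H^2}{2}g\ge c_0 g$. With that form in hand, the energy bound is produced not by Killing fields on $S^3$ nor a Simons inequality but by the Hersch/Li--Yau conformal balancing trick: choose a conformal $F:S^2\to S^2$ so that $\int(x^i\circ F)\varphi\,d\Vol=0$ for the lowest eigenfunction $\varphi$, test $B_H$ against $x^i\circ F$, sum over $i$, and use conformal invariance of the Dirichlet integral to get $c_0 D(u_k)\le 8\pi$. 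Also note that the $u_k$ produced by Theorem~\ref{thm:main1} already solve the \emph{unperturbed} equation~\eqref{equ:CMC equation1}, hence are weakly conformal by Proposition~\ref{prop:conformal}; there is no ``controlled error from the bi-harmonic approximation'' to reconcile area and Dirichlet energy at this stage. Finally, the stability rule-out is simpler than you suggest: plugging $f\equiv 1$ into $B_H$ and using the curvature condition already gives a strictly negative value, without appealing to Gauss--Bonnet or the Gauss equation.
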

\begin{rmk}\label{rmk:main2}
We actually prove a stronger statement than Theorem~\ref{thm:main2}, namely that the existence holds whenever $g$ and $H$ satisfy $Ric_{g}>- \frac{H^2}{2} g$.
\end{rmk}

\begin{rmk} We make two comments about our main theorems.
\vskip 1mm
\begin{enumerate}
\item[(1)] Concerning our choice of ambient space, we note that Theorem~\ref{thm:main1} and Theorem~\ref{thm:main2} are easily seen to hold with $S^3$ replaced by any spherical 3-manifold, which are of course the only closed, connected 3-manifolds admitting metrics with positive Ricci curvature. More generally, with only minor modifications to their proofs, Theorem~\ref{thm:main1} and Remark~\ref{rmk:main2} hold with $S^3$ replaced by any closed 3-manifold with non-trivial third homotopy group. Note that this includes all prime 3-manifolds which are non-aspherical. 
\vskip 1mm
\item[(2)] Our results generalize to the CMC setting in Riemannian $3$-spheres the celebrated existence theory of branched immersed minimal $2$-spheres by Sacks-Uhlenbeck \cite{Sacks-Uhlenbeck81}; see also \cite{Sacks-Uhlenbeck82, Lamm, Colding-Minicozzi08b}. Also closely related is the result of Struwe \cite{Struwe88}, who, using the heat flow of harmonic maps, constructed CMC disks with free boundary in a 3-dimensional Euclidean domain for almost every $H$ within an upper bound depending only on the radius of the domain. Thus, our results can also be viewed as generalizations of Struwe's result to Riemannian $3$-spheres. 
\end{enumerate}
\end{rmk}

\vspace{0.5em}
Below we recall the PDE of interest. Assuming that $(S^3, g)$ is isometrically embedded in $\RR^N$ for some large $N\in \NN$, the CMC surface we construct is parametrized by a smooth map $u: S^2\to (S^3, g)\subset \RR^N$ satisfying:
\begin{align}
& \Delta u - A(u)(\nabla u, \nabla u) = H *(u^* Q), \label{equ:CMC equation1}\\
& |u_x|^2 - |u_y|^2 = u_x\cdot u_y =0. \label{equ:CMC equation2}
\end{align}
Here $A$ and $Q$ are, respectively, the second fundamental form of the embedding $S^3\subset \RR^N$ and the cross product on $TS^3$ induced by the metric $g$ and the volume form $\Vol_g$. The $\ast$ following $H$ is the Hodge star operator on $S^2$, so that both sides of~\eqref{equ:CMC equation1} are functions $S^2 \to \RR^N$; (see Section \ref{S:perturbed functional} for more details). In~\eqref{equ:CMC equation2}, $(x, y)$ is a choice of isothermal coordinates on $S^2$, while the norm and inner product are of course the standard ones on $\RR^N$. Note also that since the domain is $S^2$, the weak conformality condition~\eqref{equ:CMC equation2} follows from~\eqref{equ:CMC equation1} by a classical argument using the Hopf differential. By the work of Gulliver~\cite{Gul73}, non-constant smooth solutions to~\eqref{equ:CMC equation1} and~\eqref{equ:CMC equation2} are branched immersions and parametrize surfaces with constant mean curvature $H$. Below we refer to~\eqref{equ:CMC equation1} as the CMC equation. 
 
\subsection*{Related backgrounds on CMC and minimal surfaces}
The local existence theory for CMC surfaces in $\RR^3$ with Plateau boundary conditions was initiated by Heinz \cite{Heinz54} and Hildebrandt \cite{Hildebrandt70}. The Rellich conjecture, which asserts the existence of at least two solutions to the CMC Plateau problem, was solved later by Brezis-Coron \cite{Brezis-Coron84} and Struwe \cite{Struwe85, Struwe86}. For the existence of closed CMC hypersurfaces, the boundary of isoperimetric regions are shown to be smoothly embedded CMC hypersurfaces (up to a singular set of codimension 7); see \cite{Almgren76, Morgan03}. However, this approach does not provide control on the value of the mean curvature, nor control on the topology in 3-manifolds. By perturbation arguments, one can generate foliations by closed CMC hypersurfaces from a given non-degenerate closed minimal hypersurface. Moreover, Ye \cite{Ye91}, Mahmoudi-Mazzeo-Pacard \cite{Mahmoudi-Mazzeo-Pacard06}, and others have constructed foliations by closed CMC hypersurfaces near minimal submanifolds of strictly lower dimensions; (see the survey article \cite{Pacard05}). Nevertheless, the CMC hypersurfaces constructed by these methods have either very small or very large mean curvatures. We also mention the gluing constructions by Kapouleas \cite{Kapouleas90} and Breiner-Kapouleas \cite{Breiner-Kapouleas17}, which produced many important examples of CMC surfaces in Euclidean spaces.  Using the min-max method, Zhu and the second author \cite{Zhou-Zhu19} established the full existence theory of closed CMC hypersurfaces with any given prescribed mean curvature in any closed Riemannian manifold of dimension between 3 and 7. This was later generalized to higher dimensions by Dey allowing a singular set of codimension 7 \cite{Dey19}. In addition, there are several exciting recent works on the existence of multiple CMC hypersurfaces based on the min-max theory in \cite{Zhou-Zhu19}: Dey calculated the asymptotics of the number of distinct closed CMC hypersurfaces of prescribed small mean curvatures in \cite{Dey19}, and Mazurowski constructed unstable closed CMC hypersurfaces of index 1 or 3 near a non-degereate closed minimal hypersurface \cite{Mazurowski20}, generalizing a gluing result by Pacard-Sun \cite{Pacard-Sunpre}.

By comparison, we have seen in recent years tremendous advancements in the existence theory of minimal hypersurfaces, which are CMC hypersurfaces with $H = 0$.  The Almgren-Pitts min-max theory \cite{Almgren62, Almgren65, Pitts81, Schoen-Simon81}, which provided the first general existence result for closed minimal hypersurfaces, was recently greatly improved and refined starting from the resolution of the Willmore conjecture by Marques-Neves \cite{Marques-Neves14}.  Yau's conjecture on the existence of infinitely many closed minimal surfaces was solved by Marques-Neves \cite{Marques-Neves17} assuming positive Ricci curvature, and then by Song in the general case \cite{Song18}.  For generic metrics, Irie-Marques-Neves \cite{Irie-Marques-Neves18}, Marques-Neves-Song \cite{Marques-Neves-Song19} respectively proved density and equidistribution results for closed minimal hypersurfaces, both using the Weyl Law for the area functional by Liokumovich-Marques-Neves \cite{Liokumovich-Marques-Neves18}. Using a cylindrical type Weyl Law introduced in \cite{Song18}, Song and the second author obtained scarring results for closed minimal hypsurfaces surrounding any closed stable hypersurface \cite{Song-Zhou20}. Around the same time, a Morse theory for the area functional was established: the second author \cite{Zhou19} proved the Multiplicity One Conjecture raised by Marques-Neves \cite{Marques-Neves16, Marques-Neves18}; (see also Chodosh-Mantoulidis \cite{Chodosh-Mantoulidis18}). When combined with \cite{Marques-Neves18}, this implies that, for bumpy metrics, there exists a closed minimal hypersurface of Morse index $p$ for each $p\in \NN$. Recently, the Morse inequalities for the area functional were proved for bumpy metrics by Marques-Montezuma-Neves \cite{Marques-Montezuma-Neves20}. 

Parallel to this line of works, Simon-Smith proved the existence of an embedded minimal 2-sphere in any Riemannian 3-sphere by adapting of the Almgren-Pitts theory \cite{Smith82} to sweepouts consisting of embedded spheres. Colding-De Lellis \cite{Colding-DeLellis03} generalized the Simon-Smith result to sweepouts by Heegaard splittings, and genus upper bounds were obtained by De Lellis-Pellandini \cite{DeLellis-Pellandini10} and Ketover \cite{Ketover19}. On the other hand, there are also the mapping approaches used by the second author \cite{Zhou10, Zhou17b} and Rivi\'ere \cite{Riviere17} to produce branched immersed minimal surfaces of bounded genus. Finally, we note that the CMC min-max theory by Zhou-Zhu \cite{Zhou-Zhu19, Zhou-Zhu20} follows the frameworks of Almgren-Pitts while using sweepouts by Caccioppoli sets. Unfortunately, the Simon-Smith theory cannot be adapted in the same way, as the crucial Meeks-Simon-Yau regularity of area minimizers in an isotopy class \cite{Meeks-Simon-Yau82} still remains open in the CMC context. (This has been claimed to hold true by Yau in \cite{Yau01}.) 

\subsection*{Overview of proofs}

We employ the mapping approach. Given a prescribed value $H>0$, the solutions of \eqref{equ:CMC equation1} are, at least formally, critical points of the following weighted Dirichlet energy functional (compare with~\cite[Equation (2.3)]{Struwe88})
\[
E_H(u) = \frac{1}{2} \int_{S^2} |\nabla u|^2 + H\cdot V(f_u) = : D(u) + H \cdot V(f_u),
\]
where $V(f_u)$ is the ``enclosed volume" of the map $u: S^2\to (S^3, g)$, which is well defined up to an integer multiple of the volume $\Vol(S^3, g)$; see Section \ref{SS:Definition and preliminaries} for more details.

There are two main difficulties in establishing a critical point theory for $E_H(\cdot)$. First of all, as is well-known, the Dirichlet energy is conformally invariant. Secondly, as the volume term $V(\cdot)$ can be very negative, the functional $E_H$ is not bounded from below, and nor does a bound on $E_H$ necessarily imply a bound on the Dirichlet energy.  To overcome the first difficulty, we perturb the Dirichlet energy by replacing it with 
\[
D_{\ep}(u) = \frac{1}{2}\int_{S^2}\ep^2 |\Delta u|^2 + |\nabla u|^2,
\]
as done by Lamm~\cite{Lamm} in the case $H = 0$; see also the work of Chang-Wang-Yang~\cite{CWY} and Wang \cite{Wang04MathZ, Wang04CVPDE, Wang} for fundamental results related to the regularity of the new functionals.

 The perturbation nonetheless doesn't resolve the second difficulty, and moreover introduces the task of deriving uniform estimates in order to pass to the limit as $\ep$ goes to zero. To address the remaining issues,  we use the monotonicity of the min-max values with respect to the mean curvature $H$, motivated by Struwe \cite{Struwe88}. Specifically, we show that the derivatives of the min-max values with respect to $H$ are bounded uniformly in $\ep$ for almost every $H>0$. We then establish, for each such $H$, a uniform bound on $D_{\ep}$ for min-max sequences of the perturbed functional that converge to the min-max value sufficiently fast. Combining this with two deformation arguments, a standard one using pseudo-gradient vector fields and a significantly more technical one inspired by the work of Marques-Neves~\cite{Marques-Neves16} and Song~\cite{Song19} in the Almgren-Pitts setting, we produce non-trivial critical points $u_\ep$ of the perturbed functional with $D_{\ep}(u_\ep)$ uniformly bounded from above and Morse index bounded by $1$.
 
Next, to study the limit as $\ep \to 0$, we adapt the analysis done by Lamm~\cite{Lamm} to obtain higher-order estimates on $u_\ep$ independent of $\ep$ where $D_{\ep}$ does not concentrate. An important consequence of these estimates is a positive lower bound on $D_{\ep}$, again independent of $\ep$, for non-trivial critical points. We now have a familiar dichotomy: on the one hand, if $D_{\ep}$ does not concentrate anywhere, then a subsequence of $u_\ep$ converges smoothly on $S^2$ to a solution of the CMC equation, which directly inherits the Morse index upper bound of $u_{\ep}$ and has to be non-constant thanks to the uniform lower bound. On the other hand, if $D_{\ep}$ concentrates somewhere, then a suitable rescaling of $u_{\ep}$ sub-converges smoothly to a non-constant, finite-energy solution of the CMC equation on $\RR^2$, which gives rise to a non-constant solution on $S^2$ by conformal invariance and removable singularity. Here an important point is that the parameter $\ep$ also gets rescaled. To ensure that the rescaled parameters still converge to zero, so that in the limit we get a solution to the unperturbed equation, we adapt a computation from~\cite{Lamm}, which essentially rules out non-constant solutions to the perturbed equations on $\RR^2$ with $D_{\ep}$ finite. The Morse index upper bound passes to the limit as well in this case, thanks to the logarithmic cut-off trick (see for instance Micallef-Moore~\cite{Micallef-Moore88}). To sum up, whether or not there is concentration of $D_{\ep}$, we obtain a non-constant solution to the CMC equation on $S^2$ with Morse index at most $1$, thereby finishing the proof of Theorem~\ref{thm:main1}. (Recall that weak conformality~\eqref{equ:CMC equation2} is automatic on $S^2$.)

The index bound obtained above figures prominently in the proof of Theorem~\ref{thm:main2}, which proceeds by approximation from the mean curvature values yielded by Theorem~\ref{thm:main1}, and consists mainly of two parts. First we extend a calculation of Eijiri-Micallef~\cite{EM} to transfer the Morse index bound to another bilinear form $B_H(u)$, which is a suitably modified version of the second variation of area. Secondly, the curvature assumption together with a standard conformal balancing argument (see for example the work of Li-Yau~\cite{LiYau}) applied to $B_H(u)$ yield an a priori bound on $D(u)$ for index $1$ solutions to the CMC equation. This bound and a straightforward modification of the $\ep$-regularity theorem for harmonic maps (see for instance Schoen~\cite[Theorem 2.2]{Sch}) in turn allow us finish the proof of Theorem~\ref{thm:main2} in more or less the same way as in Theorem~\ref{thm:main1}.

A couple of comments are in order. First of all, note that to get index control when producing the critical points $u_\ep$ above, we opted to bypass high-index critical points directly with a deformation procedure, rather than perturbing the functional a second time and applying Morse theory as in~\cite{Micallef-Moore88}. This is because in our case the latter approach would be complicated by the fact that, aside from its measure, we have little information about the set of mean curvatures for which there is a uniform $D_\ep$-bound on suitable min-max sequences, and it is unclear to us how to choose the second perturbation to have non-degeneracy of critical points and the uniform $D_\ep$-bound simultaneously.

Secondly, if we follow Sacks-Uhlenbeck~\cite{Sacks-Uhlenbeck81} and replace the Dirichlet energy instead by 
\[
\frac{1}{2}\int_{S^2}(1 + |\nabla u|^2)^{\alpha}, \quad \alpha>1,
\]
when perturbing $E_{H}$, then much of the analysis in~\cite{Sacks-Uhlenbeck81} would still hold. However, in the case that energy concentrates as $\alpha \to 1$, it is unclear to us whether the limit of the rescaled maps would be a solution to the CMC equation when $H \neq 0$. This boils down to comparing $\alpha - 1$ and the rate of rescaling, which seems harder than the corresponding step in the approach described above. (After the completing this work, we learned that Lamm \cite{Lamm10} has obtained a comparison between $\alpha-1$ and the rescaling radius under an entropy-type condition.)

\subsection*{Organization}

In Section~\ref{S:perturbed functional} we set up some notation before defining the enclosed volume and the perturbed functional $E_{H, \ep}$ and establishing their important properties in Sections~\ref{SS:Definition and preliminaries} and~\ref{subsec:loc-reduct}. In Section~\ref{subsec:1st-2nd variations} we derive the first and second variation formula of $E_{H, \ep}$. In Section~\ref{SS:smoothness} we show that critical points of $E_{H, \ep}$ are smooth and establish a version of the Palais-Smale condition for $E_{H, \ep}$. We return to the second variation of $E_{H, \ep}$ in Section~\ref{subsec:Morse} and show that there exist ``generalized Morse neighborhoods'' around critical points.

Section~\ref{S:non-trivial-critical-points} is devoted to finding non-constant critical points of the perturbed functionals with $D_{\ep}$ bounded independently of $\ep$, and with index at most $1$. In Section~\ref{SS:sweepouts} we introduce the class of admissible sweepouts and define the min-max values. Section~\ref{SS:struwe} establishes, for almost every $H$, a uniform bound on $D_{\ep}$ for suitable min-max sequences of $E_{H, \ep}$, as described in the overview. Then, in Section~\ref{SS:existence-non-trivial-critical}, we show how to extract non-constant critical points $u_\ep$ of $E_{H, \ep}$ out of these min-max sequences. Section~\ref{subsec:deformation} opens with the technical deformation lemma mentioned in the overview, which is then combined with results from the two previous sections to yield critical points with the desired properties.

In Section~\ref{S:convergence}, we analyze $u_{\ep}$ as $\ep \to 0$ and conclude the proof of Theorem~\ref{thm:main1}. The key estimate is Proposition~\ref{prop:eta-regularity}, which gives higher-order estimates under a smallness assumption and implies strong subsequential convergence of $u_{\ep}$ away from energy concentration points. The proof of the Theorem~\ref{thm:main1} is concluded at the end of the Section.

Section~\ref{sec:improved} is devoted to the proof of Theorem~\ref{thm:main2}. The two main ingredients, namely an index comparison result and an a priori energy bound for index $1$ critical points, are established respectively in Section~\ref{subsec:index-comparison} and Section~\ref{subsec:uniform-energy-bound}. The proof of Theorem~\ref{thm:main2} is completed at the very end.

Finally, Appendix~\ref{sec:proof of two lemmas} collects a number of standard estimates we need, along with indications of their proofs, for the reader's convenience.

\vspace{1em}
{\bf Acknowledgement}: X. Z. is partially supported by NSF grant DMS-1811293, DMS-1945178, and an Alfred P. Sloan Research Fellowship.  We would also like to thank Tobias Lamm for comments, especially for pointing out the reference \cite{Lamm10}. Thanks also go to Andre Neves for helpful conversations on constant mean curvature spheres. Finally we are grateful to the anonymous referees for very helpful comments.

%Perturbed Functional-------------------------------------------------
\section{The perturbed functional}
\label{S:perturbed functional}

Below, $S^2$ denotes the $2$-sphere with the standard round metric, and $(S^3, g)$ denotes a Riemannian manifold diffeomorphic to the standard 3-sphere. We assume that $(S^3, g)$ is isometrically embedded in $\RR^N$ for some large $N\in \NN$, and denote by $A$ the second fundamental form of this embedding. Moreover, we let $\cV$ be a tubular neighborhood of $S^3$ in $\RR^N$ and assume that the nearest-point projection, denoted $\Pi: \cV \to S^3$, has bounded derivatives of all orders on $\cV$. For brevity, we write $P$ for the differential $d\Pi: \cV \to \RR^{N \times N}$. In particular, for $y \in \cV$, $P_y$ is the orthogonal projection onto $T_{\Pi(y)}S^3$. For a map $v : S^2 \to \cV$, we write $P_{v}$ for the composition $P \circ v : S^2 \to \RR^{N \times N}$.

The metric $g$ and the volume form $\Vol_g$ of $(S^3, g)$ allow us to define a cross product $Q \in \Gamma(\Lambda^2T^{\ast}S^3\otimes TS^3)$, which is a skew-symmetric bilinear form on $TS^3$ with value in $TS^3$, and is given by the following relation:
\begin{equation}\label{eq:cross-product}
\Vol_g(X, Y, Z) = g (Q(X, Y), Z) = Q(X, Y) \cdot Z.
\end{equation}
Here ``$\cdot$'' denotes the standard inner product on $\RR^N$. The right-hand side of the CMC equation~\eqref{equ:CMC equation1} can then be understood as a map $S^2 \to \RR^N$ since the Hodge star operator on $S^2$ transforms the pullback $u^{\ast}Q \in \Gamma(\Lambda^2 T^{\ast}S^2 \otimes u^{\ast}TS^3)$ into a section $\ast(u^{\ast}Q): S^2 \to u^{\ast}TS^3$, which we identify with an $\RR^N$-valued function on $S^2$ via the embedding $S^3 \to \RR^N$.

%%%%%%%%%%%%%%%%%
\subsection{Definition and preliminaries}
\label{SS:Definition and preliminaries}
As usual we define the Sobolev space 
\[ W^{2, 2}(S^2; S^3) = \{u \in W^{2, 2}(S^2; \RR^N)\ |\ u(x) \in S^3 \text{ for all }x \in S^2\}, \]
and equip it with the subspace topology coming from the $W^{2, 2}$-norm
\[
\|u\|_{2, 2}^2 := \int_{S^2}|u|^2 + |\nabla u|^2 + |\nabla^2 u|^2 d\Vol_{S^2} \text{, for }u \in W^{2, 2}(S^2; \RR^N),
\]
where the tensor norms on the right are with respect to the standard metric on $S^2$, and $\nabla$ is the Levi-Civita connection on $S^2$, applied to $u$ componentwise. The norms $\|\cdot\|_{k, p}$ for other choices of $k, p$ are defined similarly. Thanks primarily to the Sobolev embedding $W^{2, 2}(S^2; \RR^N) \to C^{0}(S^2; \RR^N)$, the space $W^{2, 2}(S^2; S^3)$ is a smooth, closed submanifold of $W^{2, 2}(S^2; \RR^N)$. For each $u \in W^{2, 2}(S^2; S^3)$, the tangent space of $W^{2, 2}(S^2, S^3)$ at $u$ can be identified with
\[
\cT_{u} = \{\psi \in W^{2, 2}(S^2; \RR^N)\ |\ \psi(x) \in T_{u(x)}S^3 \text{ for all }x \in S^2\},
\]
which is closed in $W^{2, 2}(S^2; \RR^N)$ and hence is a Hilbert space itself. For small enough balls $\cB_{u}$ around the origin in $\cT_u$, we define the maps $\Theta_u: \cB_{u} \to W^{2, 2}(S^2; S^3)$ by
\[
\Theta_u(\psi) = \Pi(u + \psi).
\]
Then the collection $\{(\Theta_u, \cB_u)\}$ is a smooth atlas for $W^{2, 2}(S^2; S^3)$. Restricting the inner product on $W^{2, 2}(S^2; \RR^N)$ to each $\cT_u$ gives a Riemannian structure on $W^{2, 2}(S^2; S^3)$. 

For $u \in W^{2, 2}(S^2; S^3)$, we let 
\[
\cE(u) = \{f \in C^{0}([0, 1] \times S^2; S^3)\ |\ f(0, \cdot) = \text{ constant},\ f(1, \cdot) = u\},
\]
which may be thought of as the set of continuous extensions of $u$ to a map from $B^3$ to $S^3$. We now define the perturbed functionals.

\begin{defi}
For $H > 0$ and $\ep > 0$ and $u \in W^{2, 2}(S^{2}; S^{3})$, $f \in \cE(u) \cap W^{1, 3}([0, 1] \times S^2; S^3)$, we define
\begin{equation}\label{eq:main perturbed functional}
E_{H, \ep}(u, f) = D_{\ep}(u) + H\cdot V(f),
\end{equation}
where 
\begin{equation}\label{eq:D_epsilon}
D_{\ep}(u) = \frac{1}{2}\int_{S^2}\ep^2 |\Delta u|^2 + |\nabla u|^2,
\end{equation}
and 
\begin{equation}\label{eq:enclosed volume}
V(f) = \int_{[0, 1] \times S^2}f^{\ast}\Vol_g.
\end{equation}
\end{defi}
Here $D_{\ep}(u)$ is a biharmonic regularization of the Dirichlet energy $D(u)=\frac{1}{2}\int_{S^2}|\nabla u|^2$. On the other hand, the term $V(f)$ can be viewed as the signed volume enclosed by $u(S^2)$, which is well-defined since $f \in W^{1, 3}([0, 1] \times S^2; S^3)$ by assumption. We note that a similar quantity was already introduced by Struwe~\cite{Struwe88} in the free-boundary case. Our usage of the term ``extension'', however, differs slightly from his. We summarize some basic properties of the enclosed volume below. (Compare with~\cite[p.24]{Struwe88}.)

\begin{lemm}\label{lemm:volume-properties} 
Fix any $u \in W^{2, 2}(S^2; S^3)$.
\begin{enumerate}
\item[(a)] If $f_0, f_1 \in \cE(u) \cap W^{1, 3}([0, 1] \times S^2; S^3)$ are two choices of extensions, then 
\[
\frac{V(f_0) - V(f_1)}{\Vol_g(S^3)} \in \ZZ.
\]
\vskip 1mm
\item[(b)] If $F:[0, 1] \to \cE(u) \cap W^{1, 3}([0, 1] \times S^2; S^3)$ is a path which is continuous in the $C^0 \cap W^{1, 3}$-topology, then 
\[
V(F(0)) = V(F(1)).
\]
\vskip 1mm
\item[(c)] There exists a universal constant $\delta_0$ such that if $f_0, f_1 \in \cE(u) \cap W^{1, 3}([0, 1] \times S^2; S^3)$ and $\|f_0 - f_1\|_{C^{0}} < \delta_0$ then $V(f_0) = V(f_1)$.
 \end{enumerate}
\end{lemm}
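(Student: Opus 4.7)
The plan is to establish (a) first via a suspension/degree argument, and then derive (b) and (c) from it. For (a), starting from two extensions $f_0, f_1 \in \cE(u) \cap W^{1, 3}$, I note they coincide on $\{1\} \times S^2$ with $u$ and are each constant on $\{0\} \times S^2$. Concatenating $f_1$ on $[0, 1]$ with the time-reversal $(t, x) \mapsto f_0(2-t, x)$ on $[1, 2]$ defines a $W^{1, 3}$-map $F : [0, 2] \times S^2 \to S^3$ that is constant on both end slices and continuous across $\{1\} \times S^2$. Collapsing each end slice yields an $S^3$ quotient (the unreduced suspension of $S^2$), and $F$ factors through a $W^{1, 3}$-map $\hat F : S^3 \to S^3$. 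A quick orientation check using the reversal gives
\[
\int_{S^3} \hat F^{\ast} \Vol_g \;=\; \int_{[0, 2] \times S^2} F^{\ast} \Vol_g \;=\; V(f_1) - V(f_0),
\]
while the left-hand side equals $\deg(\hat F) \cdot \Vol_g(S^3) \in \Vol_g(S^3) \cdot \ZZ$, proving (a).

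For (b), I would observe that $f \mapsto V(f)$ is continuous on $W^{1, 3}([0, 1] \times S^2; S^3)$ because $f^{\ast}\Vol_g$ is cubic in $\nabla f$ and bounded in $L^1$ by $\|\nabla f\|_{L^3}^3$ via H\"older. Hence $t \mapsto V(F(t))$ is continuous, and by (a) it takes values in the discrete set $V(F(0)) + \Vol_g(S^3) \cdot \ZZ$, forcing it to be constant. For (c), I would choose $\delta_0$ smaller than the tubular-neighborhood radius of $S^3 \subset \RR^N$ and set $F_t = \Pi((1 - t) f_0 + t f_1)$ for $t \in [0, 1]$. The smallness hypothesis keeps the affine interpolation inside $\cV$, so $F_t$ is well defined, and it lies in $\cE(u)$ for each $t$: indeed $F_t(1, \cdot) = \Pi(u) = u$, and writing $f_i(0, \cdot) = p_i$ one has $F_t(0, \cdot) = \Pi((1 - t) p_0 + t p_1)$, still independent of $x$. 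The smoothness of $\Pi$ on $\cV$ makes $t \mapsto F_t$ continuous in $C^0 \cap W^{1, 3}$, so (b) gives $V(f_0) = V(F_0) = V(F_1) = V(f_1)$.

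The main technical point I expect to have to address carefully is the integrality of $\deg(\hat F)$ in part (a) when $\hat F$ is only $W^{1, 3}$ — exactly the critical Sobolev exponent for degree on a three-dimensional target. The cleanest justification is to smoothly approximate $\hat F$ in $W^{1, 3}(S^3; S^3)$, invoke classical integrality for smooth maps, and pass to the limit using the $W^{1, 3}$-continuity of the pull-back integral already used in (b); one could alternatively cite Brezis--Nirenberg's degree theory for VMO maps. Everything else is routine once the gluing, the orientation check, and the continuity of $V$ in $W^{1, 3}$ are in place.
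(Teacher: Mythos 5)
Your proof takes essentially the same route as the paper's: concatenate one extension with the reversal of the other, collapse the end slices to get a $C^0 \cap W^{1,3}$ self-map of $S^3$, identify the volume difference with the degree for part (a), deduce (b) from continuity of $V$ plus discreteness, and get (c) by projecting the affine interpolation into $\cV$ and applying (b). The one place you go beyond the paper is your explicit acknowledgment that integrality of the degree for a map that is merely $W^{1,3}$ (critical exponent) requires a small argument — smooth approximation in $W^{1,3}$ or the Brezis–Nirenberg VMO degree — which the paper leaves implicit; that is a legitimate refinement rather than a deviation.
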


\begin{proof}
For part (a), we consider the concatenation of $f_0$ and the reverse of $f_1$. That is, the map
\[
g(t, \cdot) = \left\{
\begin{array}{cc}
f_0(2t, \cdot) & \text{ for }0 \leq t \leq 1/2,\\
f_1(2 - 2t, \cdot) & \text{ for }1/2 \leq t \leq 1.
\end{array}
\right.
\]
Then $g$ again lies in $C^0 \cap W^{1, 3}([0, 1] \times S^2; S^3)$ and induces a $C^0$-map from $S^3$ to itself, whose degree can be computed by $\frac{V(f_0) - V(f_1)}{\Vol_g(S^3)}$. Hence the latter is an integer.
\vskip 1mm
For part (b), we note that by assumption and part (a), the function $t \mapsto V(F(t))$ is continuous and takes values in the discrete set $\{V(F(0)) + k\Vol_g(S^3)\ |\ k \in \ZZ\}$, and hence must be constant. 
\vskip 1mm
To prove part (c), we choose $\delta_0$ small enough so that $\{y \in \RR^N\ |\ \dist(y, S^3) \leq 2\delta_0\} \subset \cV$. Then $tf_1 + (1 - t)f_0$ maps into $\cV$ for $t \in [0, 1]$, and the result follows by applying part (b) to $F(t) = \Pi \big( tf_1 + (1 - t)f_0 \big)$, which defines a map $[0, 1] \to \cE(u) \cap W^{1, 3}([0, 1]\times S^2; S^3)$ which is continuous in the $C^0\cap W^{1, 3}$-topology. 
\end{proof}

\begin{defi}
Part (c) of the previous lemma allows us to define $V(f)$ for $f \in \cE(u)$ which are not necessarily in $W^{1, 3}([0, 1] \times S^2; S^3)$, by letting 
\[
V(f) = V(\tilde{f}),
\]
with $\tilde{f}$ a choice of extension in $\cE(u) \cap W^{1, 3}([0, 1] \times S^2; S^3)$ such that $\|f - \tilde{f}\|_{C^{0}} < \delta_0/2$, where $\delta_0$ is as in Lemma~\ref{lemm:volume-properties}(c), by which we see that $V(f)$ is well-defined. 
\end{defi}
\begin{comment}Finding such a $\tilde{f}$ is quite easy, and in fact we can produce one that lies in $\cE(u) \cap W^{2, 2}([0, 1] \times S^2; S^3)$: First we take a smooth cut-off function $\zeta: \RR \to [0, 1]$ such that $\zeta(t) = 0$ for $t \leq 0$ and $\zeta = 1$ for $t \geq 1$, and define $f_1(t, x) = (1 - \zeta(t))f(0, x) + \zeta(t) f(1, x)$. Then $f - f_1$ can be approximated in the $C^0$-norm by a sequence  $h_j \in C^{2}([0,1] \times S^2; \RR^{N})$ which vanishes near the boundary of $[0, 1] \times S^2$. Thus $h_j + f_1 \in W^{2, 2}([0, 1] \times S^2; \RR^N)$ approximates $f$ in the $C^{0}$-norm, and agrees with $f$ on the boundary of $[0, 1] \times S^2$, so eventually $\Pi(h_j + f_1)$ makes sense, and is a sequence in $\cE(u) \cap W^{2, 2}([0,1] \times S^2; S^3)$ converging to $f$ in $C^0$-norm.
\end{comment}

\begin{lemm}\label{lemm:volume-extension}
With the definition of $V$ extended as above, the conclusions of the previous lemma continue to hold with $\cE(u)$ in place of $\cE(u) \cap W^{1, 3}([0, 1] \times S^2; S^3)$, and with $C^{0}$-topology in place of $C^0 \cap W^{1, 3}$-topology in part (b).
\end{lemm}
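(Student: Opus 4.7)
My plan is to reduce each of the three statements to its $W^{1,3}$ counterpart in Lemma~\ref{lemm:volume-properties} via $C^0$-approximation. Since the extended definition $V(f) = V(\tilde{f})$ allows $\tilde{f}$ to be any $W^{1,3}$-extension within $C^0$-distance $\delta_0/2$ of $f$, we have the flexibility to choose approximations as close as needed. I would carry out the three parts in the order (a), (c), (b), since (c) is the main new step and (b) is a straightforward corollary of (c).

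For part (a), given $f_0, f_1 \in \cE(u)$, I would pick $\tilde{f}_i \in \cE(u)\cap W^{1,3}([0,1]\times S^2; S^3)$ with $\|f_i - \tilde{f}_i\|_{C^0} < \delta_0/2$ (using the density argument sketched in the suppressed commentary above the statement). By definition $V(f_i) = V(\tilde{f}_i)$, so Lemma~\ref{lemm:volume-properties}(a) applied to $\tilde{f}_0, \tilde{f}_1$ immediately yields $(V(f_0) - V(f_1))/\Vol_g(S^3) \in \ZZ$.

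For part (c), the main point is a careful triangle inequality. Suppose $\|f_0 - f_1\|_{C^0} < \delta_0$. Choose $\eta > 0$ small enough that both $\eta < \delta_0/2$ and $\|f_0 - f_1\|_{C^0} + 2\eta < \delta_0$, and then pick $W^{1,3}$ approximations $\tilde{f}_i$ with $\|f_i - \tilde{f}_i\|_{C^0} < \eta$. The bound $\eta < \delta_0/2$ guarantees $V(f_i) = V(\tilde{f}_i)$ by definition, while the triangle inequality gives $\|\tilde{f}_0 - \tilde{f}_1\|_{C^0} < \delta_0$, so Lemma~\ref{lemm:volume-properties}(c) forces $V(\tilde{f}_0) = V(\tilde{f}_1)$. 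Combining these identities yields $V(f_0) = V(f_1)$.

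For part (b), given a path $F:[0,1] \to \cE(u)$ which is continuous in the $C^0$-topology, uniform continuity on the compact interval $[0,1]$ yields a partition $0 = t_0 < t_1 < \cdots < t_n = 1$ with $\|F(t_{i-1}) - F(t_i)\|_{C^0} < \delta_0$ for each $i$. Applying the extended part (c) to each consecutive pair gives $V(F(t_{i-1})) = V(F(t_i))$, and telescoping yields $V(F(0)) = V(F(1))$. The main obstacle, such as it is, is the bookkeeping in part (c); once that is arranged, parts (a) and (b) are essentially automatic.
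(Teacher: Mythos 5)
Your proof is correct and takes essentially the same route as the paper, which disposes of (a) and (c) with a single remark ("obvious") and proves (b) by partitioning $[0,1]$ and applying (c) repeatedly. You have simply filled in the triangle-inequality bookkeeping for (c) — which, as you note, is the only step with any content — and the rest follows as you describe.
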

\begin{proof} 
That (a) and (c) continue to hold is obvious. Part (b) follows by partitioning $[0, 1]$ sufficiently finely and applying part (c) repeatedly.
\end{proof}

Below we collect some basic estimates concerning $D_{\ep}$, the $W^{2, 2}$-norm and the projection $\Pi$ that will be used later. The proofs are recorded in Appendix \ref{sec:proof of two lemmas}.

\begin{lemm}\label{lemm:D-equivalence} 
There exists a universal constant $A_0$ such that for all $\ep \in (0, 1]$ and $u \in W^{2, 2}(S^2; S^3)$ there hold
\begin{enumerate}
\item[(a)] $\big| \int_{S^2}|\nabla^2 u|^2 - |\Delta u|^2 \big| \leq A_0 \int_{S^2}|\nabla u|^2$. (See also~\cite[Equation (2.6)]{Lamm}.)
\vskip 1mm
\item[(b)] $A_{0}^{-1}D_{\ep}(u) \leq \int_{S^2}|\nabla u|^2 + \ep^2 |\nabla^2 u|^2 \leq A_0 D_{\ep}(u)$ and  $\|u\|_{2, 2}^2 \leq A_0 \big( 1 + \ep^{-2}D_{\ep}(u)  \big)$.
\end{enumerate}
Conclusion (a) in fact holds for all $u \in W^{2, 2}(S^2; \RR^N)$.
\end{lemm}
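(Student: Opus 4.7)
The whole lemma reduces to the Bochner--Weitzenb\"ock identity on $S^2$, applied componentwise to the $\mathbb{R}^N$-valued map $u$.

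For part (a), I would first establish the scalar identity. For any $f \in C^{\infty}(S^2)$, the standard Bochner formula reads
\[
\tfrac{1}{2}\Delta |\nabla f|^2 = |\nabla^2 f|^2 + \langle \nabla f, \nabla \Delta f\rangle + \mathrm{Ric}(\nabla f, \nabla f).
\]
Integrating over the closed surface $S^2$ makes the left-hand side vanish, and integration by parts converts $\int \langle \nabla f, \nabla \Delta f\rangle$ into $-\int |\Delta f|^2$, so
\[
\int_{S^2}\bigl(|\nabla^2 f|^2 - |\Delta f|^2\bigr) = -\int_{S^2}\mathrm{Ric}(\nabla f, \nabla f).
\]
Because the Ricci tensor of the round $S^2$ equals the metric itself (Gauss curvature $1$), the right-hand side is bounded in absolute value by $\int_{S^2}|\nabla f|^2$. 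Summing over the components of $u \in W^{2,2}(S^2;\mathbb{R}^N)$ and using a standard density argument to pass from $C^\infty$ to $W^{2,2}$ gives (a) with $A_0 = 1$ (and by enlarging $A_0$ at the end we may assume it is universal).

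For part (b), everything follows by elementary manipulation of (a) and the constraint $\ep \in (0,1]$. For the upper bound $\int|\nabla u|^2 + \ep^2|\nabla^2 u|^2 \le A_0 D_\ep(u)$, I would estimate
\[
\ep^2 \int_{S^2}|\nabla^2 u|^2 \;\le\; \ep^2 \int_{S^2}|\Delta u|^2 + A_0 \ep^2 \int_{S^2}|\nabla u|^2 \;\le\; (2 + 2A_0)\, D_\ep(u),
\]
using (a) in the first step and $\ep \le 1$ in the second, while $\int|\nabla u|^2 \le 2 D_\ep(u)$ is immediate. For the lower bound, the same two ingredients give $\ep^2 \int|\Delta u|^2 \le \ep^2 \int|\nabla^2 u|^2 + A_0 \ep^2\int|\nabla u|^2$, so $D_\ep(u)$ is controlled by a constant multiple of $\int|\nabla u|^2 + \ep^2|\nabla^2 u|^2$.

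Finally, for the $W^{2,2}$-estimate I would use that $u$ maps into the compact set $S^3 \subset \mathbb{R}^N$, so $\int_{S^2}|u|^2$ is bounded by a universal constant (absorbed into $A_0$). Then $\int|\nabla u|^2 \le 2 D_\ep(u) \le 2\ep^{-2} D_\ep(u)$ and $\int|\nabla^2 u|^2 \le \ep^{-2}(\ep^2 \int|\nabla^2 u|^2) \le A_0 \ep^{-2} D_\ep(u)$ by the first half of (b), giving $\|u\|_{2,2}^2 \le A_0(1 + \ep^{-2}D_\ep(u))$ after enlarging $A_0$ once more. No step here is a real obstacle; the only subtlety is remembering to exploit $\ep \le 1$ in the direction that controls $\int|\nabla u|^2$ by the larger quantity $\ep^{-2}D_\ep(u)$.
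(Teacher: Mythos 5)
Your proof is correct and follows the same route as the paper's: both rely on the Bochner identity on the round $S^2$, integrate it, integrate by parts to trade $\int\langle\nabla f,\nabla\Delta f\rangle$ for $-\int|\Delta f|^2$, and note that $\mathrm{Ric}^{S^2}$ equals the metric so the curvature term is controlled by $\int|\nabla f|^2$; part (b) is then the same elementary manipulation using $\ep\le 1$ and the compactness of $S^3$. The only cosmetic difference is that you write the scalar Bochner formula for $|\nabla f|^2$ whereas the paper tests the commutation identity $\Delta\nabla h=\nabla\Delta h+\mathrm{Ric}(\nabla h)$ against $\nabla h$, but these produce the identical integral identity.
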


\begin{lemm}
\label{lemm:projection-estimates}
Let $\cV, \Pi$ and $P$ be defined as in the beginning of the section. Then there exists some universal constant $A_1$ such that the following estimates hold for all $\ep \in (0, 1]$.
\begin{enumerate}
\item[(a)] For $\widetilde{v} \in W^{2, 2}(S^2; \cV)$, denote $v = \Pi(\widetilde{v})$. Then $v \in W^{2, 2}(S^2; S^3)$, and we have
\[
\int_{S^2} |\nabla v|^2 + \ep^2 |\nabla^2 v|^2 \leq A_1\big( 1 + \|\widetilde{v}\|_{2, 2}^2 \big) \int_{S^2}|\nabla\widetilde{v}|^2 + \ep^2 |\nabla^2\widetilde{v}|^2.
\]
Moreover, the projection $\Pi$, when viewed as a map $W^{2, 2}(S^2; \cV) \to W^{2, 2}(S^2; S^3)$, is locally Lipschitz. 
\vskip 1mm
\item[(b)] The map $u \mapsto P_u$ is locally Lipschitz from $W^{2, 2}(S^2; S^3)$ to $W^{2, 2}(S^2; \RR^{N \times N})$. For all $u \in W^{2, 2}(S^2; S^3)$ and $\widetilde{\psi} \in W^{2, 2}(S^2; \RR^N)$, write $\psi = P_u(\widetilde{\psi})$. Then $\psi \in \cT_u$, and we have
\[
\|\psi\|_{2, 2} \leq A_1\big( 1 + \|u\|_{2, 2}^2 \big)\|\widetilde{\psi}\|_{2, 2}.
\]
Moreover, for each fixed $\widetilde{\psi} \in W^{2, 2}(S^2; \RR^N)$, the map $u \mapsto P_u(\widetilde{\psi})$ from $W^{2, 2}(S^2; S^3)$ to $W^{2, 2}(S^2; \RR^N)$ is continuous.
\end{enumerate}
\end{lemm}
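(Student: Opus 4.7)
The plan is to prove both parts by a direct application of the chain rule, using that $\Pi, P$ and their derivatives are bounded on $\cV$, and then controlling the nonlinear terms that arise via two-dimensional Sobolev/Gagliardo–Nirenberg inequalities. Throughout, $C$ will denote a constant depending only on the $C^k$-norms of $\Pi$ on $\cV$ and on the geometry of $S^2$.

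For part (a), I would first differentiate $v = \Pi \circ \widetilde{v}$:
\begin{equation*}
\nabla v = P_{\widetilde{v}}\,\nabla \widetilde{v}, \qquad \nabla^2 v = (d^2\Pi)_{\widetilde{v}}(\nabla \widetilde{v},\nabla \widetilde{v}) + P_{\widetilde{v}}\,\nabla^2 \widetilde{v},
\end{equation*}
so that pointwise $|\nabla v| \leq C|\nabla \widetilde{v}|$ and $|\nabla^2 v|^2 \leq C(|\nabla \widetilde{v}|^4 + |\nabla^2 \widetilde{v}|^2)$. The only non-trivial ingredient is the quartic term. For this I would invoke the two-dimensional Gagliardo–Nirenberg inequality on $S^2$,
\begin{equation*}
\|\nabla \widetilde{v}\|_{L^4(S^2)}^2 \leq C\,\|\widetilde{v}\|_{W^{2,2}(S^2)}\,\|\nabla \widetilde{v}\|_{L^2(S^2)},
\end{equation*}
which squares to $\|\nabla \widetilde{v}\|_{L^4}^4 \leq C\|\widetilde{v}\|_{2,2}^2 \|\nabla \widetilde{v}\|_{L^2}^2$. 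Integrating the pointwise bound on $|\nabla^2 v|^2$, multiplying by $\ep^2 \leq 1$, and combining with the $L^2$-bound on $\nabla v$ yields exactly the inequality claimed, provided $A_1$ is chosen to absorb $C$. For the local Lipschitz property, given two maps $\widetilde{v}_0, \widetilde{v}_1$ in a fixed $W^{2,2}$-ball, I would write $v_s = \Pi(s\widetilde{v}_1 + (1-s)\widetilde{v}_0)$, observe that the linear segment remains in $\cV$ if the ball is small enough (here one uses $W^{2,2}\hookrightarrow C^0$), differentiate in $s$, and run the same chain-rule computation on $\partial_s v_s$ to bound $\|v_1 - v_0\|_{2,2}$ by $\|\widetilde{v}_1 - \widetilde{v}_0\|_{2,2}$ up to a constant depending on the ball.

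For part (b), the local Lipschitz statement $u \mapsto P_u$ is of exactly the same flavor as part (a) and is handled identically by chain-rule and Gagliardo–Nirenberg. For the pointwise bound on $\psi = P_u \widetilde{\psi}$, the inclusion $\psi \in \cT_u$ is immediate because $P_{u(x)}$ is by definition the orthogonal projection onto $T_{u(x)}S^3$. Differentiating gives
\begin{equation*}
\nabla^2 \psi = (\nabla^2 P_u)\widetilde{\psi} + 2(\nabla P_u)(\nabla \widetilde{\psi}) + P_u\,\nabla^2 \widetilde{\psi},
\end{equation*}
with $|\nabla P_u| \leq C|\nabla u|$ and $|\nabla^2 P_u| \leq C(|\nabla u|^2 + |\nabla^2 u|)$. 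Integrating, and controlling each mixed $L^p$-norm by the two standard 2D estimates $\|\widetilde{\psi}\|_{L^\infty} \leq C\|\widetilde{\psi}\|_{2,2}$ and $\|\nabla f\|_{L^4}^2 \leq C\|f\|_{2,2}^2$ (applied to both $u$ and $\widetilde{\psi}$), yields
\begin{equation*}
\|\psi\|_{2,2}^2 \leq C(1+\|u\|_{2,2}^4)\,\|\widetilde{\psi}\|_{2,2}^2,
\end{equation*}
which is exactly the squared form of the claim. Finally, the continuity of $u \mapsto P_u(\widetilde{\psi})$ for fixed $\widetilde{\psi}$ follows from the just-proved local Lipschitz property of $u \mapsto P_u$ together with the fact that multiplication $W^{2,2}\times W^{2,2}\to W^{2,2}$ is continuous in two dimensions (again via $W^{2,2}\hookrightarrow C^0$ and Gagliardo–Nirenberg for the mixed derivatives).

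The only genuine bookkeeping point is the systematic use of the two-dimensional interpolation $\|\nabla f\|_{L^4}^2 \leq C\|f\|_{2,2}\|\nabla f\|_{L^2}$ to absorb all quartic gradient terms into the claimed right-hand sides; there is no deeper obstacle, and the proofs are otherwise purely bookkeeping from the chain rule and the uniform boundedness of $\Pi$ and its derivatives on $\cV$.
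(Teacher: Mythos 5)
Your proof is correct and follows essentially the same route as the paper: chain rule to produce the quartic gradient term, then the two-dimensional Gagliardo--Nirenberg/Sobolev interpolation $\|\nabla f\|_{L^4}^2 \leq C\|f\|_{2,2}\|\nabla f\|_{L^2}$ (together with $\ep \leq 1$) to absorb it, with part (b) also relying on the $W^{2,2}$-algebra property in two dimensions. The paper's argument is phrased slightly differently (it states the $L^4$ Sobolev bound directly rather than as a GN interpolation, and bounds $\|P_u\|_{2,2}$ first before using the multiplication estimate), but the decomposition, the key inequalities, and the way $\|u\|_{2,2}^2$ enters the constant are the same.
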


%local reduction------------------------------------------
\subsection{Local reduction}\label{subsec:loc-reduct}
In this section we show how we may locally reduce the number of variables in $E_{H, \ep}$ so that it only depends on the map $u$. To set the stage, let $\cA$ be a simply-connected open set in $W^{2, 2}(S^2; S^3)$. Given $u_0 \in \cA$ and $f_0 \in \cE(u_0)$, for all $u \in \cA$, there is by connectedness a path $h: [0, 1] \to \cA$ with $h(0) = u_0$ and $h(1) = u$, and we denote by $f_u$ the extension in $\cE(u)$ obtained by concatenating $f_0$ with the map $(t, x) \mapsto h(t)(x)$, which is continuous by Sobolev embedding. We then define 
\[
E^{\cA}_{H, \ep}(u) = E_{H, \ep}(u, f_u).
\]

\begin{prop}\label{prop:local-reduct}
In the above notation, we have
\vskip 2mm
\begin{enumerate}
\item[(a)] $E^{\cA}_{H, \ep}(u)$ is well-defined for all $u \in \cA$. That is, the choice of the path $h$ is irrelevant.
\vskip 1mm
\item[(b)] $E^{\cA}_{H, \ep}$ is a $C^{2}$-functional on $\cA$. 
\end{enumerate}
\vskip 2mm
\end{prop}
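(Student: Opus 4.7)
The plan for part (a) is to exploit the simple-connectivity of $\cA$. Given two paths $h_0, h_1: [0,1] \to \cA$ from $u_0$ to $u$, I would invoke simple connectivity of $\cA$ in the $W^{2,2}$-topology to obtain a continuous homotopy $H: [0,1]^2 \to \cA$ with $H(s,0) = u_0$, $H(s,1) = u$, $H(0,\cdot) = h_0$, and $H(1,\cdot) = h_1$. For each $s$, concatenating $f_0$ with $(t,x) \mapsto H(s,t)(x)$ yields an extension $f^{(s)} \in \cE(u)$. The Sobolev embedding $W^{2,2}(S^2) \hookrightarrow C^0(S^2)$ promotes the continuity of $H$ to continuity of $s \mapsto f^{(s)}$ from $[0,1]$ into $\cE(u)$ in the uniform topology, so Lemma~\ref{lemm:volume-extension} yields $V(f^{(0)}) = V(f^{(1)})$, completing part (a).

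For part (b), I would fix an arbitrary $u_* \in \cA$, choose a chart $(\Theta_{u_*}, \cB_{u_*})$ with $\cB_{u_*}$ small enough that $\Theta_{u_*}(\cB_{u_*}) \subset \cA$, and establish that $F := E^{\cA}_{H,\ep} \circ \Theta_{u_*}$ is $C^2$ on $\cB_{u_*}$. Since $\Theta_{u_*}(\psi) = \Pi(u_* + \psi)$, the combination of Lemma~\ref{lemm:projection-estimates}(a) with the fact that $\Pi$ has bounded derivatives of all orders on $\cV$ shows that $\psi \mapsto \Theta_{u_*}(\psi)$ is smooth from $\cB_{u_*}$ into $W^{2,2}(S^2; \RR^N)$. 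As $D_{\ep}$ is a continuous quadratic form on this Hilbert space, the Dirichlet piece $D_{\ep} \circ \Theta_{u_*}$ is then $C^{\infty}$ in $\psi$.

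For the volume piece, part (a) allows me to choose, for each $\psi \in \cB_{u_*}$, the extension $f_{\Theta_{u_*}(\psi)}$ obtained by concatenating $f_{u_*}$ with the linear path $\sigma \mapsto \Theta_{u_*}(\sigma\psi)$. This reduces the task to showing smoothness of
\[
W(\psi) := \int_0^1 \!\! \int_{S^2} \phi_{\psi}^* \Vol_g, \qquad \phi_{\psi}(\sigma, x) := \Pi\bigl(u_*(x) + \sigma\psi(x)\bigr).
\]
In isothermal coordinates $(x, y)$ on $S^2$, the integrand takes the form $\Vol_g\bigl(\phi_{\psi};\, \partial_\sigma\phi_{\psi}, \partial_x\phi_{\psi}, \partial_y\phi_{\psi}\bigr)$, which is trilinear in the three first-derivative factors with coefficients depending smoothly and boundedly on $\phi_{\psi}$ through $\Pi$ and $\Vol_g$. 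Because $W^{2,2}(S^2) \hookrightarrow C^0 \cap W^{1,p}(S^2)$ for every finite $p$, standard Nemytskii arguments will show that $W$, and hence $F$, has continuous Fr\'echet derivatives up to order two.

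The main technical obstacle I anticipate is making rigorous the differentiability of $W$, which requires controlling products of three factors involving $\nabla\psi$ simultaneously. This relies essentially on the $L^p$-integrability of $\nabla\psi$ for all finite $p$ available only in dimension two; in higher dimensions one would not have enough integrability to differentiate the cubic integrand twice inside $W^{2,2}$. Part (a) and the Dirichlet piece of part (b) are conceptually routine once Lemmas~\ref{lemm:volume-extension} and~\ref{lemm:projection-estimates} are in place.
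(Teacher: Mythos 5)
Your proposal follows essentially the same structure as the paper's proof: for (a), simple-connectedness of $\cA$ plus the homotopy-invariance of $V$; for (b), the split into Dirichlet and volume pieces, the chart $\Theta_{u_*}$, and the linear extension $\sigma\mapsto\Pi(u_*+\sigma\psi)$. The one place where the paper does something cleaner that your sketch does not: instead of differentiating $\phi_\psi=\Pi(u_*+\sigma\psi)$ and its spatial derivatives directly (which are nonlinear in $\psi$), the paper absorbs the projection into smooth coefficient functions $\theta_{ijk}(y)=(\Vol_g)_{\Pi(y)}\bigl(P_y\paop{y^i},P_y\paop{y^j},P_y\paop{y^k}\bigr)$, so the integrand becomes $\theta_{ijk}(w)\,w^i_t w^j_{x^1}w^k_{x^2}$ with $w(t,x)=u_*(x)+t\psi(x)$ \emph{affine} in $\psi$. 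The volume piece then is a fixed $C^2$-functional on $W^{2,2}([0,1]\times S^2;\cV)$ precomposed with a smooth (affine) map of $\psi$, which dissolves the "main technical obstacle" you flag without ad-hoc Nemytskii bookkeeping. This factorization also shows that your dimension-two integrability remark is stronger than what is actually needed: the cubic integrand is handled on the three-dimensional domain $[0,1]\times S^2$ via $W^{1,2}\hookrightarrow L^6$ (so three first-derivative factors land in $L^2$), rather than via $W^{1,p}(S^2)$ for all $p<\infty$. Finally, a small justification gap: Lemma~\ref{lemm:projection-estimates}(a) gives only local Lipschitz continuity of $\Pi$ as a map on $W^{2,2}$, not $C^2$-smoothness of $\psi\mapsto\Theta_{u_*}(\psi)$; the paper invokes Palais's composition theorems for that.
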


\begin{proof}
Part (a) follows easily from the simply-connectedness of $\cA$ and Lemma~\ref{lemm:volume-properties}(b). For part (b), we fix $u \in \cA$ and take a local chart $(\Theta_u, \cB_u)$ as defined in the beginning of Section~\ref{SS:Definition and preliminaries}. For brevity, below we drop the subscript $u$ in $\Theta_u$ and $\cB_u$. 

By definition, to show that $E^{\cA}_{H, \ep}$ is $C^2$ near $u$, it suffices to show that $E^{\cA}_{H, \ep}\circ \Theta$ is $C^2$ on $\cB$. Using the path-independence established in (a), we may replace the extension $f_{\Theta(\psi)} \in \cE(\Theta(\psi))$ by the concatenation of $f_u \in \cE(u)$ with $(t, x) \mapsto \Pi(u(x) + t\psi(x))$. Denoting $\widetilde{f}(t, x) = \Pi(u(x) + t\psi(x))$, then we find that 
\begin{align}\label{eq:E-compose-chart}
E^{\cA}_{H, \ep}(\Theta(\psi)) =&\ \frac{1}{2}\int_{S^2}\ep^2 |\Delta (\Theta(\psi))|^2 + |\nabla (\Theta(\psi))|^2 \nonumber\\
&\ + H \cdot \int_{[0, 1] \times S^2} (\Vol_g)_{\widetilde{f}}\big( \widetilde{f}_t, \widetilde{f}_{x^1}, \widetilde{f}_{x^2}\big) dx^1 \wedge dx^2 dt\nonumber\\
&\  + H \cdot V(f_u).  
\end{align}
The first line on the right-hand side of~\eqref{eq:E-compose-chart} is $C^2$ on $\cB$ since it is the composition of a bounded quadratic form on $W^{2, 2}(S^2; \RR^N)$ with $\psi \mapsto \Theta(\psi)$, which is $C^2$ as a map from $\cB$ into $W^{2, 2}(S^2; \RR^N)$. (See for instance~\cite[Theorem 11.3, also Corollary 9.7, Lemma 9.9]{Pal}.) 
%Now note that $\psi \mapsto \Theta(\psi)$ is in fact $C^2$ as a map $\cB \to W^{2, 2}(S^2; \RR^N)$ (see for instance~\cite[Theorem 11.3, also Corollary 9.7, Lemma 9.9]{Pal}), while the function
%\[
%v \mapsto \int_{S^2}\ep^2 |\Delta v|^2 + |\nabla v|^2
%\]
%from $W^{2, 2}(S^2; \RR^N)$ to $\RR$, being a bounded quadratic form, is also $C^2$. Hence the first line on the right-hand side of~\eqref{eq:E-compose-chart} is $C^2$ on $\cB$. 

For the second line on the right-hand side of~\eqref{eq:E-compose-chart}, we let $\{\paop{y^i}\}_{i = 1}^N$ be the coordinate vectors on $\RR^N$ and define, for $i,j, k = 1, \cdots N$, the functions $\theta_{ijk}:\cV \to \RR$ by letting
\[
\theta_{ijk}(y) = (\Vol_g)_{\Pi(y)}\big( P_y(\paop{y^i}), P_y(\paop{y^j}), P_y(\paop{y^k}) \big).
\]
Note that each $\theta_{ijk}$ is smooth, %To see that, fix an arbitrary $y_0 \in \cV$ and choose a local orthonormal frame $\{e_{\alpha}\}_{\alpha = 1}^3$ on $S^3$ near $\Pi(y_0)$. Since $P_y = d\Pi_y$ projects onto $T_{\Pi(y)}S^3$, we may write $P_y(\paop{y^i}) = P^{\alpha}_i(y) e_{\alpha}(\Pi(y))$. Then for $y$ close to $y_0$, we have
%\[
%\theta_{ijk}(y) = (\Vol_g)_{\alpha\beta\gamma}(\Pi(y))P^{\alpha}_i(y)P^{\beta}_j(y)P^{\gamma}_k(y),
%\]
%where the components of the volume form are defined, for $p$ near $\Pi(y_0)$, by
%\[
%(\Vol_g)_{\alpha\beta\gamma}(p) =  (\Vol_g)_{p}(e_\alpha(p), e_{\beta}(p), e_{\gamma}(p)).
%\] 
%This shows that the functions $\theta_{ijk}$ are smooth.
and that for a map $w \in W^{2, 2}([0, 1] \times S^2; \cV)$, writing $dw = dw^i \paop{y^i}$, we have
\begin{align*}
(\Vol_g)_{\Pi(w)} (P_w(w_t), P_w(w_{x^1}), P_w(w_{x^2})) dx^1\wedge dx^2
= \theta_{ijk}(w)w^i_t w^j_{x^1} w^k_{x^2} dx^1\wedge dx^2.
\end{align*}
It is then not hard to see from the smoothness of $\theta_{ijk}$ that the following functional,
\begin{equation}\label{eq:volume-coordinates}
w \mapsto \int_{[0, 1]} \int_{S^2} \theta_{ijk}(w)w^i_t w^j_{x^1} w^k_{x^2} dx^1\wedge dx^2 dt,
\end{equation}
defined for $w \in W^{2, 2}([0, 1] \times S^2; \cV)$, is $C^2$. To finish, note that sending $\psi$ to the function $(t, x) \mapsto u(x) + t\psi(x)$ defines a smooth map from $\cB$ to $W^{2, 2}([0, 1] \times S^2; \cV)$.  
\end{proof}

\begin{rmk}\label{rmk:local-reduct}
We have the following remarks ready.
\begin{enumerate}
\item[(1)]
We call $E^{\cA}_{H, \ep}$ the {\em local reduction} of $E_{H, \ep}$ on $\cA$ induced by $(u_0, f_0)$. Note that we are suppressing from the notation the dependence on $(u_0, f_0)$, since the choice should always be clear from the context. 
\vskip 1mm
\item[(2)] By Lemma~\ref{lemm:volume-properties}(a) and Proposition~\ref{prop:local-reduct}(b), changing the choice of $u_0 \in \cA$ and $f_0 \in \cE(u_0)$ used to define the local reduction merely alters its value by an integer multiple of $H\cdot\Vol_g(S^3)$. More generally, on any connected subset of their common domain, two local reductions differ by a constant integer multiple of $H\cdot\Vol_g(S^3)$.  
\end{enumerate}
\end{rmk}

%first and second variation------------------------------------------
\subsection{The first and second variations of $E_{H, \ep}$}\label{subsec:1st-2nd variations}
Next we compute the first and second variations of $E_{H, \ep}$. We carry out the computation by first choosing a local reduction, but this choice turns out to be irrelevant, and the variations make sense globally.

We begin by explaining the framework for the computation.  Let $\cA \in W^{2, 2}(S^2; S^3)$ be a simply-connected open set on which a local reduction $E^{\cA}_{H, \ep}$ is defined. Since $E^{\cA}_{H, \ep}$ is a $C^{2}$-functional, at each $u \in \cA$ it has a differential, which is a bounded linear functional on $\cT_u$, and at critical points it has a well-defined Hessian, which is a bounded symmetric bilinear form on $\cT_u$. The action of the differential of $E^{\cA}_{H, \ep}$ at $u$, denoted $\delta E^{\cA}_{H, \ep}(u): \cT_u \to \RR$, may be seen by computing
\begin{equation}\label{eq:first-var-compute-scheme}
\delta E^{\cA}_{H, \ep}(u)(\psi) = \frac{d}{dt}\Big|_{t = 0}E^{\cA}_{H, \ep}(\Pi(u + t\psi)),\ \text{ for }\psi \in \cT_u.
\end{equation}
If $\delta E^{\cA}_{H, \ep}(u) = 0$, then the Hessian of $E^{\cA}_{H, \ep}$ at $u$, denoted $\delta^2 E^{\cA}_{H, \ep}: \cT_u \times \cT_u \to \RR$, can be computed by 
\begin{equation}\label{eq:second-var-compute-scheme}
\delta^2 E^{\cA}_{H, \ep}(u)(\psi, \psi) = \frac{d^2}{dt^2}\Big|_{t = 0}E^{\cA}_{H, \ep}(\Pi(u + t\psi)),\ \text{ for }\psi \in \cT_u.
\end{equation}
We are now ready to define the differential and Hessian of $E_{H, \ep}$.
\begin{defi}\label{defi:differential}
For $u \in W^{2, 2}(S^2; S^3)$, we define $\delta E_{H, \ep}(u): \cT_u \to \RR$ by letting 
\[
\delta E_{H, \ep}(u) = \delta E^{\cA}_{H, \ep}(u), 
\]
where $E^{\cA}_{H, \ep}$ is any local reduction on a simply-connected neighborhood $\cA$ that contains $u$. Note that such a neighborhood always exists since $W^{2, 2}(S^2; S^3)$ is a manifold. Moreover, $\delta E_{H, \ep}(u)$ is well-defined in view of the formula~\eqref{eq:first-var-compute-scheme} and Remark~\ref{rmk:local-reduct}(2). 
\end{defi}
\begin{defi}\label{defi:critical-point}
A map $u \in W^{2, 2}(S^2; S^3)$ is called a critical point of $E_{H, \ep}$ if $\delta E_{H, \ep}(u) = 0$. 
\end{defi}
\begin{defi}\label{defi:hessian}
Suppose $u$ is a critical point of $E_{H, \ep}$. Then by Definition~\ref{defi:critical-point}, for any local reduction $E^{\cA}_{H, \ep}$ on a neighborhood containing $u$, the Hessian $\delta^2 E^{\cA}_{H, \ep}(u)$ makes sense, and we define the Hessian of $E_{H, \ep}$ at $u$ by 
\[
\delta^2 E_{H, \ep}(u) = \delta^2 E^{\cA}_{H, \ep}(u).
\]
As with the differential, the Hessian is well-defined by~\eqref{eq:second-var-compute-scheme} and Remark~\ref{rmk:local-reduct}(2).
\end{defi}

\vspace{1mm}
Proceeding to the actual computation, we take $u \in W^{2, 2}(S^2; S^3)$, fix an extension $f \in \cE(u)$ and consider the local reduction induced by $(u, f)$ on a simply-connected neighborhood $\cA$ of $u$. For $\psi \in \cT_u$ and $t$ sufficiently small, using the extension of $\Pi(u + t\psi)$ obtained by concatenating $f$ with the map $(s, x) \mapsto \Pi(u + st\psi)$, we find that 
\begin{align}
&E^{\cA}_{H, \ep}(\Pi(u + t\psi)) - E^{\cA}_{H, \ep}(u) \label{eq:reduct-difference-quotient} \\
 =&\ D_{\ep}(\Pi(u + t\psi)) - D_{\ep}(u)\nonumber \\
&\ +H  \int_{0}^t\Big[\int_{S^2} (\Vol_{g})_{\Pi(u + s\psi)}\big( P_{u + s\psi} (\psi), (\Pi(u + s\psi))_{x^1}, (\Pi(u + s\psi))_{x^2} \big) dx^1 \wedge dx^2 \Big] ds. \nonumber
\end{align}
It is not hard to see that the $t$-derivative of the last integral at $t = 0$ is equal to
\begin{equation}\label{eq:V-variation}
H \int_{S^2} (\Vol_g)_u \big( P_u (\psi), u_{x^1}, u_{x^2}  \big) dx^1 \wedge dx^2 =H \int_{S^2} \psi \cdot \ast(u^{\ast}Q), 
\end{equation}
where the equality follows because $\psi \in \cT_u$.  
%(In terms of local coordinates $(x^1, x^2)$ on $S^2$, 
%\[
%\ast(u^{\ast}Q) = \ast \big(Q(u_{x^1}, u_{x^2})dx^1 \wedge dx^2\big) = Q(u_{x^1}, u_{x^2}) \frac{1}{\sqrt{\det(g_{ij})}},
%\]
%and hence we get~\eqref{eq:V-variation} using~\eqref{eq:cross-product}.) 
On the other hand, it is well-known that (see for instance~\cite{Wang, Lamm2})
\begin{align}\label{eq:D-variation}
\frac{d}{dt}\Big|_{t = 0}D_{\ep}(\Pi(u + t\psi)) 
&= \int_{S^2} \ep^2 \Delta u \cdot \Delta \psi + \langle \nabla u, \nabla \psi \rangle, \text{ for }\psi \in \cT_u.
\end{align}
Adding up~\eqref{eq:V-variation} and~\eqref{eq:D-variation} and recalling Definition~\ref{defi:differential}, we obtain the following first variation formula for $E_{H, \ep}$:
\begin{equation}\label{eq:first-var-tangential}
\delta E_{H, \ep}(u)(\psi) = \int_{S^2} \ep^2 \Delta u \cdot \Delta \psi + \langle \nabla u, \nabla \psi \rangle + H \psi \cdot \ast(u^{\ast}Q).
\end{equation}

To establish the regularity of critical points and derive a priori estimates in later sections, we also need a formula for $\delta E_{H, \ep}(u)(P_u(\psi))$, with $\psi$ varying in the bigger space $W^{2, 2}(S^2; \RR^N)$. To reduce notation, we write 
\[
G_{H, \ep}(u) = \delta E_{H, \ep} (u)\circ P_u,
\]
and define the norm of $G_{H, \ep}(u): W^{2, 2}(S^2; \RR^N) \to \RR$ by 
\begin{equation*}\label{eq:extension-norm}
\| G_{H, \ep}(u) \| =\sup \big\{ | G_{H, \ep}(u)(\psi)|  \ \big|\ \psi \in W^{2, 2}(S^2; \RR^N),\ \|\psi \|_{2, 2} \leq 1 \big\}.
\end{equation*}

\begin{prop}\label{prop:first-var-properties}
We have the following properties for $G_{H, \ep}$.
\begin{enumerate}
\item[(a)] (\cite[Proposition 2.2]{Wang})  
Given $u \in W^{2, 2}(S^2; S^3)$, we have for any $\psi \in W^{2, 2}(S^2; \RR^N)$ that 
\begin{align}
G_{H, \ep}(u)(\psi) =\ &\ep^2\int_{S^2} \Delta u \cdot \Delta \psi - A(u)(\nabla u, \nabla u) \cdot \Delta \psi +2 \langle \nabla (P_u)(\Delta u), \nabla \psi\rangle + \Delta (P_u)(\Delta u) \cdot \psi\nonumber\\
&+ \int_{S^2}\langle \nabla u, \nabla \psi \rangle + A(u)(\nabla u, \nabla u)\cdot \psi + H \int_{S^2} \psi \cdot \ast (u^{\ast}Q).\label{eq:first-var-weak}
\end{align}
\vskip 1mm
\item[(b)] For all $K > 0$ there exists $C_{K} > 0$ such that 
\[
\| G_{H, \ep}(u)  \| \leq C_K,
\]
whenever $\ep \leq 1$ and $u \in W^{2, 2}(S^2; S^3)$ with $H,\,  \|u\|_{2, 2} \leq K$. 
\vskip 1mm
\item[(c)] For all $K > 0$, there exists $C_{K} > 0$ such that
\[
\| G_{H, \ep}(u_1) - G_{H, \ep}(u_0)\| \leq C_{K} \|u_1 - u_0\|_{2, 2},
\]
whenever $\ep \leq 1$ and $H,\, \|u_0\|_{2, 2},\, \|u_1\|_{2, 2} \leq K$. 
\end{enumerate}
\end{prop}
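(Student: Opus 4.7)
\textbf{Proof proposal for Proposition~\ref{prop:first-var-properties}.}

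For part (a), the plan is to start from the tangential formula \eqref{eq:first-var-tangential} with test function $P_u(\psi)$ and redistribute derivatives. Since $u(x)\in S^3$, we have the identities $P_u(\nabla u)=\nabla u$ and, by differentiating $P_u u_i=u_i$, the relation $(\nabla_j P_u)(u_i)=A(u)(u_i,u_j)$, which upon tracing gives $(\nabla_i P_u)(u_i)=A(u)(\nabla u,\nabla u)$ and $P_u(\Delta u)=\Delta u-A(u)(\nabla u,\nabla u)$. Using the Leibniz rule
\[
\Delta(P_u\psi)=(\Delta P_u)\psi+2(\nabla P_u)(\nabla\psi)+P_u(\Delta\psi),
\]
together with the symmetry of $P_u$ and of each $\nabla_i P_u$, the three pieces of $\ep^2\int \Delta u\cdot\Delta(P_u\psi)$ rearrange into precisely the first line of \eqref{eq:first-var-weak}. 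A similar expansion of $\int \langle\nabla u,\nabla(P_u\psi)\rangle$ yields $\int\langle\nabla u,\nabla\psi\rangle+\int A(u)(\nabla u,\nabla u)\cdot\psi$, and the Hodge-star term is unchanged since $\ast(u^*Q)\in u^*TS^3$.

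For part (b), the plan is to bound each of the seven integrals in \eqref{eq:first-var-weak} separately, using only (i) the uniform bounds on $A$, $Q$ and all derivatives of $P$ on the tubular neighborhood $\cV$, (ii) the pointwise estimates $|\nabla P_u|\lesssim|\nabla u|$ and $|\Delta P_u|\lesssim|\nabla u|^2+|\Delta u|$, and (iii) the Sobolev embeddings $W^{2,2}(S^2)\hookrightarrow L^\infty$ and $W^{1,2}(S^2)\hookrightarrow L^p$ for all $p<\infty$. For instance, $\ep^2\int(\nabla P_u)(\Delta u)\cdot\nabla\psi$ is estimated by H\"older with exponents $(4,2,4)$ as $C\|\nabla u\|_{L^4}\|\Delta u\|_{L^2}\|\nabla\psi\|_{L^4}\lesssim \|u\|_{2,2}^2\|\psi\|_{2,2}$; and $\ep^2\int(\Delta P_u)(\Delta u)\cdot\psi$ by H\"older with exponents $(4,4,2,\infty)$ together with a similar estimate for the $|\Delta u|^2$ piece. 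The restriction $\ep\leq 1$ is used to absorb $\ep^2$ into the constants. The Hodge-star term is handled by $|\ast(u^*Q)|\lesssim|\nabla u|^2$.

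For part (c), the strategy is to write $G_{H,\ep}(u_1)(\psi)-G_{H,\ep}(u_0)(\psi)$ as a telescoping sum, one integral at a time, using the multilinear structure of \eqref{eq:first-var-weak} in the quantities that depend on $u$. The coefficient fields $A(u),P_u,\nabla P_u,\Delta P_u$ and $\ast(u^*Q)$ all have the schematic form $F(u)\cdot \mathrm{polynomial}(\nabla u,\Delta u)$ with $F$ smooth on $\cV$, so their differences split into a ``coefficient'' part controlled by $\|u_1-u_0\|_{L^\infty}\lesssim\|u_1-u_0\|_{2,2}$ via the mean value theorem, plus a ``derivative'' part that is linear in $\nabla(u_1-u_0)$ or $\Delta(u_1-u_0)$ with the remaining factors bounded as in (b). Combining with the same H\"older/Sobolev pattern used for (b) gives the claimed Lipschitz bound, with constant depending only on $K$.

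The main difficulty will be the term $\ep^2\int(\Delta P_u)(\Delta u)\cdot\psi$ in part (c), because $\Delta P_u$ itself contains $\Delta u$, making the integrand cubic in second derivatives of $u$ in a loose sense. To produce a Lipschitz bound one must split the difference into a piece proportional to $\Delta(u_1-u_0)$ (estimated by pairing the $L^2$-bound on $\Delta(u_1-u_0)$ against $L^4\times L^4$ factors coming from $\nabla u$'s and an $L^\infty$ factor coming from $\psi$) and a piece proportional to $(P'(u_1)-P'(u_0))\star\nabla u\star\nabla u\star\Delta u$, and verify that both pieces admit the bound $C_K\|u_1-u_0\|_{2,2}\|\psi\|_{2,2}$ using Lemma~\ref{lemm:projection-estimates}(b) and the Sobolev embeddings. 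Once this term is handled, all the others are strictly easier.
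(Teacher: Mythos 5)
Your proof proposal is correct, and for part (a) you take a mildly different computational route from the paper. The paper first assumes $u$ smooth, writes the strong-form expression $G_{H,\ep}(u)(\psi)=\int(\ep^2 P_u\Delta^2 u-P_u\Delta u+H\ast(u^*Q))\cdot\psi$, expands $P_u(\Delta^2 u)$ into divergence form (following Wang), integrates by parts, and then approximates to reach general $u\in W^{2,2}$. You instead start directly from the tangential first variation \eqref{eq:first-var-tangential} with test function $P_u\psi$, expand $\Delta(P_u\psi)$ and $\nabla(P_u\psi)$ via the Leibniz rule, and move the symmetric matrices $P_u$ and $\nabla_i P_u$ over to the $u$ side. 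The two derivations are dual to each other via integration by parts, but your version has the small advantage of never needing fourth derivatives of $u$, so the approximation step disappears: each term in your Leibniz expansion already lies in $L^2$ (respectively $L^1$ against an $L^\infty$ factor) when $u,\psi\in W^{2,2}(S^2)$, so the identity holds at that regularity. Parts (b) and (c) proceed by the same H\"older/Sobolev bookkeeping as the paper, and your identification of $\ep^2\int(\Delta P_u)(\Delta u)\cdot\psi$ as the delicate term in (c), together with the proposed splitting into a $\Delta(u_1-u_0)$ piece and a coefficient piece, is exactly right.

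One small technical point to patch in (c): when you invoke the mean value theorem to control a coefficient difference such as $(dP)_{u_1}-(dP)_{u_0}$ by $\|u_1-u_0\|_{L^\infty}$, the segment $t\,u_1(x)+(1-t)u_0(x)$ must remain inside $\cV$, which need not hold for arbitrary $u_0,u_1\in W^{2,2}(S^2;S^3)$. The paper preempts this by observing that, by part (b) and the Sobolev embedding $W^{2,2}\hookrightarrow C^0$, the claimed Lipschitz bound is trivial when $\|u_1-u_0\|_{C^0}$ is bounded below (simply use the triangle inequality $\|G(u_1)-G(u_0)\|\le 2C_K\lesssim\|u_1-u_0\|_{2,2}$), so one may assume $\|u_1-u_0\|_{C^0}<\delta_0$. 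You should add this one-line reduction before appealing to the mean value theorem; with it, your argument is complete.
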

\begin{proof}
For part (a), assume first that $u \in C^{\infty}(S^2; S^3)$, then we see from~\eqref{eq:first-var-tangential} that
\begin{equation}\label{eq:Euler-Lagrange-unsplit}
G_{H, \ep}(u)(\psi) = \int_{S^2}\big( \ep^2 P_u(\Delta^2 u) - P_u (\Delta u) + H \ast (u^{\ast}Q)\big) \cdot \psi.
\end{equation}
Now recall that 
\begin{equation}\label{eq:Delta-split}
P_u (\Delta u) = \Delta u - A(u)(\nabla u, \nabla u).
\end{equation}
On the other hand, as mentioned in~\cite{Wang}, the term $P_u (\Delta^2 u)$ can be computed as follows. We first note that by the above identity,
\begin{equation}\label{eq:Delta-2-split}
\Delta^2 u = \Delta \big( A(u)(\nabla u, \nabla u) + P_{u} (\Delta u) \big).
\end{equation}
For the second term on the right-hand side, we have
\begin{align*}
\Delta \big( P_u(\Delta u) \big) &= \Delta (P\circ u) (\Delta u) + 2 \nabla (P \circ u)(\nabla \Delta u) + (P \circ u)(\Delta^2 u)\\
&= 2\Div\big( \nabla(P\circ u)(\Delta u) \big) - \Delta(P\circ u)(\Delta u) + (P\circ u)(\Delta^2 u).
\end{align*}
Putting this back into~\eqref{eq:Delta-2-split} and rearranging, we see that
\[
P_u(\Delta^2 u) = \Delta^2 u - \Delta \big( A(u)(\nabla u, \nabla u) \big) -2 \Div \big( \nabla (P\circ u)(\Delta u) \big) + \Delta (P\circ u)(\Delta u).
\]
Substituting this along with~\eqref{eq:Delta-split} back into~\eqref{eq:Euler-Lagrange-unsplit}, and then integrating by parts, we get~\eqref{eq:first-var-weak} when $u$ is smooth. The general case follows by approximation. 
\vskip 1mm
For part (b), with the help of the following Sobolev inequalities applied to $u$ and $\psi$ where appropriate,
\begin{align*}
\|h\|_{\infty} &\leq C \|h\|_{2, 2},\\
 \|h\|_{1, p} &\leq C_{p}\|h\|_{2, 2} \text{ for all } p < \infty,
\end{align*}
we verify from~\eqref{eq:first-var-weak} and H\"older's inequality that there exists $C_{K} > 0$ such that
\[
|G_{H, \ep}(u)(\psi)| \leq C_{K}\|\psi\|_{2, 2},
\]
whenever $\psi \in W^{2, 2}(S^2; \RR^N)$, and $\ep, u$ are as in the statement. This proves (b).

\vskip 1mm
The proof of part (c) is also straightforward. Note that by part (b) and the Sobolev embedding of $W^{2, 2}$ into $C^0$, we need only consider the case where $\|u_1 - u_0\|_{0} < \delta_0$, where $\delta_0$ is as in Lemma~\ref{lemm:volume-properties}(c). Now we use~\eqref{eq:first-var-weak} to see that
\begin{align}
&\ G_{H, \ep}(u_1)(\psi) - G_{H, \ep}(u_0)(\psi) \nonumber\\
=&\  \int_{S^2} \ep^2\big(\Delta u_1 - \Delta u_0\big) \cdot \Delta \psi + \langle \nabla u_1 - \nabla u_0, \nabla \psi \rangle\nonumber\\
&\ +\ep^2\int_{S^2} 2\big\langle \nabla(P_{u_1})(\Delta u_1) - \nabla(P_{u_0})(\Delta u_0), \nabla \psi \big\rangle\nonumber\\
&\  +\ep^2\int_{S^2} \big(\Delta (P_{u_1}) (\Delta u_1) - \Delta (P_{u_0})(\Delta u_0)\big)\cdot \psi\nonumber\\
&\ -\ep^2 \int_{S^2} \big( A_{u_1}(\nabla u_1, \nabla u_1) - A_{u_0}(\nabla u_0, \nabla u_0) \big) \cdot \Delta \psi \nonumber\\
&\ +\int_{S^2} \big( A_{u_1}(\nabla u_1, \nabla u_1) - A_{u_0}(\nabla u_0, \nabla u_0) \big) \cdot \psi \nonumber\\
&\  + H \int_{S^2}\big( \ast(u_1^{\ast}Q) - \ast(u_0^{\ast}Q)  \big) \cdot \psi. \label{eq:first-var-difference}
\end{align}
The first integral on the right-hand side is obviously bounded by $\|u_1 - u_0\|_{2, 2} \|\psi\|_{2, 2}$ since $\ep \leq 1$ by assumption. Next we note that by our choice of $\delta_0$, it makes sense to define $u_{t} = \Pi ( tu_1 + (1 - t)u_0) \ \text{ for }t \in [0, 1]$. We can then estimate the remaining integrals, for example, using the fundamental theorem of calculus together with Sobolev inequalities.
%To estimate the remaining integrals, we note that by our choice of $\delta_0$, it makes sense to define $u_{t} = \Pi ( tu_1 + (1 - t)u_0) \ \text{ for }t \in [0, 1]$. Then, by considering
%\[
%A_{u_1}(\nabla u_1, \nabla u_1) - A_{u_0}(\nabla u_0, \nabla u_0) = \int_{0}^1 \frac{d}{dt}\big( A_{u_t}(\nabla u_t, \nabla u_t) \big) dt,
%\]
%we can show that, 
%\[
%\big| A_{u_1}(\nabla u_1, \nabla u_1) - A_{u_0}(\nabla u_0, \nabla u_0)  \big| \leq C|u_1 - u_0|(|\nabla u_1|^2 + |\nabla u_0|^2) + C|\nabla u_1 - \nabla u_0| (|\nabla u_1| + |\nabla u_0|).
%\]
%This allows us to bound the third and fourth integrals on the right-hand side of~\eqref{eq:first-var-difference}. The remaining integrals can be estimated similarly, yielding the desired estimate.
\end{proof}

\vspace{1mm}
We next turn to computing the Hessian of $\delta^2 E_{H, \ep}$, assuming that $\delta E_{H, \ep}(u) = 0$. From ~\eqref{eq:second-var-compute-scheme}, \eqref{eq:reduct-difference-quotient}, ~\eqref{eq:V-variation} and~\eqref{eq:D-variation}, we know that for $\psi \in \cT_u$, we can compute $\delta^2 E_{H, \ep}(u)(\psi, \psi)$ by evaluating the $t$-derivative of the following expression at $t = 0$:
\begin{align*}
 \int_{S^2}&\ep^2 \Delta \big(\Pi(u + t\psi)\big) \cdot \Delta\big( P_{u + t\psi}(\psi) \big) + \langle \nabla\big( \Pi(u + t\psi) \big), \nabla \big( P_{u + t\psi}(\psi) \big) \rangle  \\
 & + H\int_{S^2} P_{u + t\psi}(\psi) \cdot Q_{\Pi(u + t\psi)}\big( (\Pi(u + t\psi))_{x^1}, (\Pi(u + t\psi))_{x^2} \big) dx^1\wedge dx^2.
\end{align*}
Carrying out the differentiation for the first line gives
\begin{equation}\label{eq:second-derivative-1}
\int_{S^2} \ep^2|\Delta \psi|^2 + \ep^2\Delta u \cdot \Delta \big( (dP)_u(\psi, \psi) \big) +  |\nabla \psi|^2 + \langle \nabla u, \nabla \big( (dP)_u(\psi, \psi) \big) \rangle.
\end{equation}
For the second line, using the fact that $Q$ is parallel with respect to the Levi-Civita connection on $S^3$, we get
\begin{equation}\label{eq:second-derivative-2}
H \int_{S^2} (dP)_u(\psi, \psi) \cdot Q_u (u_{x^1}, u_{x^2}) + \psi \cdot \big(  Q(D_{x^1}\psi, u_{x^2}) + Q(u_{x^1}, D_{x^2}\psi) \big) dx^1 \wedge dx^2,
\end{equation}
where the operator $D$ is defined by $D\psi = P_u(\nabla \psi)$. We now add~\eqref{eq:second-derivative-1} and~\eqref{eq:second-derivative-2}, split each occurrence of the term $(dP)_u(\psi, \psi)$ into its tangential and normal component with respect to $TS^3$ as $(dP)_u(\psi, \psi) = P_u\big( (dP)_u(\psi, \psi) \big) + \big( (dP)_u(\psi, \psi) \big)^{\perp}$, and observe that 
\[
\big( (dP)_u(\psi, \psi) \big)^{\perp} = A_{u}(\psi, \psi).
\]
Then we obtain
\begin{align}
\delta^2 & E_{H, \ep}(u)(\psi, \psi) =\ \delta E_{H, \ep}(u)\Big(P_u\big( (dP)_u(\psi, \psi) \big)\Big)\nonumber\\
&\ + \int_{S^2} \ep^2|\Delta \psi|^2 + \ep^2 \Delta u \cdot \Delta \big( A_{u}(\psi, \psi)\big) +  |\nabla \psi|^2 + \langle \nabla u, \nabla \big( A_{u}(\psi, \psi)\big) \rangle \nonumber\\
&\ + H \int_{S^2}  \psi \cdot \big(  Q(D_{x^1}\psi, u_{x^2}) + Q(u_{x^1}, D_{x^2}\psi) \big) dx^1 \wedge dx^2. \label{eq:second-var-long-formula}
\end{align}
Recalling that $u$ is a critical point of $E_{H, \ep}$ by assumption, we see that 
\begin{equation}\label{eq:critical-condition}
\delta E_{H, \ep}(u)\Big(P_u\big( (dP)_u(\psi, \psi) \big)\Big) = 0.
\end{equation}
Also, note that 
\[
\int_{S^2} |\nabla \psi|^2 + \langle \nabla u, \nabla \big( A_{u}(\psi, \psi)\big) \rangle = \int_{S^2} |D\psi|^2 + |(\nabla \psi)^{\perp}|^2 - \langle (\Delta u)^{\perp}, A_u (\psi, \psi) \rangle.
\]
Recalling that $(\nabla \psi)^{\perp} = A_u(\nabla u, \psi)$ and $(\Delta u)^{\perp} = A_u (\nabla u, \nabla u)$, we see from Gauss' equation that in fact
\begin{equation}\label{eq:intrinsic-1}
\int_{S^2} |\nabla \psi|^2 + \langle \nabla u, \nabla \big( A_{u}(\psi, \psi)\big) \rangle = \int_{S^2} |D\psi|^2 - R^{S^3}(\psi, \nabla u, \nabla u, \psi).
\end{equation}
Putting~\eqref{eq:intrinsic-1} and~\eqref{eq:critical-condition} back into~\eqref{eq:second-var-long-formula}, and then polarizing, we obtain the second variation formula of $E_{H, \ep}$. We summarize the result below.

\begin{prop}\label{prop:second-var-formula-intrinsic}
Let $u \in W^{2, 2}(S^2; S^3)$ be a critical point of $E_{H, \ep}$ and let $\psi, \xi \in \cT_u$. Then 
\begin{align}
\delta^2 E_{H, \ep}(u)(\psi, \xi)=&\ \ep^2\int_{S^2} \Delta \psi \cdot \Delta \xi + \Delta u \cdot \Delta \big( A_{u}(\psi, \xi)\big)\nonumber\\
&\ + \int_{S^2} \langle D\psi, D\xi \rangle- R^{S^3}(\psi, \nabla u, \nabla u, \xi) \nonumber\\
&\ + H \int_{S^2}  \psi \cdot \big(  Q(D_{x^1}\xi, u_{x^2}) + Q(u_{x^1}, D_{x^2}\xi) \big) dx^1 \wedge dx^2. \label{eq:2nd-var-intrinsic}
\end{align}
\end{prop}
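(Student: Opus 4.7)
The plan is to compute the diagonal second variation $\delta^2 E_{H,\ep}(u)(\psi,\psi)$ for $\psi \in \cT_u$ via formula~\eqref{eq:second-var-compute-scheme}, and then recover the bilinear form by polarization. Since $E^{\cA}_{H,\ep}$ depends on $u$ through both the biharmonic-regularized Dirichlet energy $D_\ep$ and the enclosed volume $V$, I would carry out the computation along the path $t \mapsto \Pi(u + t\psi)$, using path-independence (Proposition~\ref{prop:local-reduct}(a)) to write the relevant extension as the concatenation of the initial $f$ with $s \mapsto \Pi(u + st\psi)$.

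First I would differentiate once to produce a $t$-parametrized version of the first variation, which by~\eqref{eq:V-variation} and~\eqref{eq:D-variation} takes the form
\[
\int_{S^2} \ep^2 \Delta(\Pi(u+t\psi)) \cdot \Delta(P_{u+t\psi}(\psi)) + \langle \nabla \Pi(u+t\psi), \nabla P_{u+t\psi}(\psi)\rangle + H\, P_{u+t\psi}(\psi) \cdot \ast((\Pi(u+t\psi))^{\ast} Q).
\]
Differentiating once more at $t=0$ via the product rule, the Dirichlet part produces $\int \ep^2 |\Delta \psi|^2 + |\nabla \psi|^2$ together with cross terms encapsulating $(dP)_u(\psi,\psi)$. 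For the volume part, the parallelness of $Q$ on $(S^3,g)$ ensures the differentiation yields the $H$-dependent expression in which $D\psi = P_u(\nabla \psi)$ appears naturally, plus a further $(dP)_u(\psi,\psi)$ contribution from differentiating the factor $P_{u+t\psi}(\psi)$.

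The key structural step is to decompose $(dP)_u(\psi,\psi) = P_u((dP)_u(\psi,\psi)) + A_u(\psi,\psi)$ into tangential and normal parts relative to $TS^3$. The tangential part, when plugged back into the sum of the above cross terms, assembles precisely into a single application of the first variation $\delta E_{H,\ep}(u)$ evaluated at $P_u((dP)_u(\psi,\psi))$, which vanishes by criticality. What remains are the $A_u(\psi,\psi)$-contributions listed in the proposition. Finally, I would combine $\int |\nabla \psi|^2 + \langle \nabla u, \nabla A_u(\psi,\psi)\rangle$ using the orthogonal decompositions $(\nabla \psi)^{\perp} = A_u(\nabla u,\psi)$ and $(\Delta u)^{\perp} = A_u(\nabla u,\nabla u)$, and invoke Gauss's equation to rewrite this intrinsically as $\int |D\psi|^2 - R^{S^3}(\psi,\nabla u,\nabla u,\psi)$.

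Polarizing the resulting quadratic form then gives the claimed bilinear formula for arbitrary $\psi,\xi \in \cT_u$. The main technical point to be careful about is the bookkeeping around $(dP)_u(\psi,\psi)$: correctly tracking that one $\psi$ arises from differentiating the base point while the other is the argument of $P$, and then verifying that all tangential contributions (from the two $\ep^2$-terms, the Dirichlet first-order term, and the $H$-term) line up exactly with $\delta E_{H,\ep}(u)$ evaluated at the single vector $P_u((dP)_u(\psi,\psi))$. Once this alignment is established, invoking criticality and Gauss's equation is routine, and polarization is purely algebraic.
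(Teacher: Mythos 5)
Your proposal follows essentially the same route as the paper's proof: compute the diagonal second variation by differentiating twice along $t\mapsto\Pi(u+t\psi)$ using path-independence, collect the $(dP)_u(\psi,\psi)$ cross terms from both the Dirichlet and volume parts, split $(dP)_u(\psi,\psi)$ into tangential and normal ($=A_u(\psi,\psi)$) components, observe that all tangential contributions reassemble into $\delta E_{H,\ep}(u)(P_u((dP)_u(\psi,\psi)))=0$ by criticality, rewrite the remaining first-order terms via $(\nabla\psi)^\perp = A_u(\nabla u,\psi)$, $(\Delta u)^\perp = A_u(\nabla u,\nabla u)$ and the Gauss equation, and polarize. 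The bookkeeping you flag as the delicate point is indeed exactly what the paper's computation carries out; no gap.
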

\begin{rmk}
An integration by parts (using the fact that $Q$ is parallel) shows that the second line is symmetric with respect to $\psi, \xi$. It's also not hard to see that indeed $\delta^2 E_{H, \ep}(u)$ is a bounded bilinear form on $\cT_u$. 
For use in Sections~\ref{S:convergence} and~\ref{sec:improved}, we denote by $\delta^2 E_H(u)$ the bilinear form obtained by setting $\ep = 0$ in~\eqref{eq:2nd-var-intrinsic}.
\end{rmk}

%Smoothness------------------------------------------------------------------------------------------------
\subsection{Regularity of critical points and the Palais-Smale condition}\label{SS:smoothness}
Two of the advantages of the perturbed functional $E_{H, \ep}$ are, first of all, that weak solutions in $W^{2, 2}(S^2; S^3)$ to $\delta E_{H, \ep}(u) = 0$ are smooth, and, secondly, that it satisfies a version of the Palais-Smale condition. These are standard facts, but the proofs are rather straightforward, so we include them below. We first address the regularity of critical points.

\begin{prop}\label{prop:weak-solution-smooth}
Let $u\in W^{2, 2}(S^2; S^3)$ be a critical point of $E_{H, \ep}$. Then $u$ is smooth. Moreover, for all $k$, the $W^{k, 2}$-norm of $u$ is bounded in terms of $k, \ep, H$ and $\|u\|_{2, 2}$. 
\end{prop}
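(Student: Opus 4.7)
The plan is to pass from the weak to the strong form of the Euler--Lagrange equation and then bootstrap regularity via elliptic theory for the fourth-order operator $\ep^2\Delta^2-\Delta+\Id$ on $S^2$. Since $u$ is a critical point, Proposition~\ref{prop:first-var-properties}(a) applied to arbitrary $\psi \in W^{2,2}(S^2;\RR^N)$ gives $G_{H,\ep}(u)(\psi)=0$, and integrating by parts in \eqref{eq:first-var-weak} yields the distributional identity
\[
\ep^2\Delta^2 u - \Delta u = \ep^2\Delta\big[A(u)(\nabla u,\nabla u)\big] + 2\ep^2\Div\big[\nabla(P_u)(\Delta u)\big] - \ep^2\Delta(P_u)(\Delta u) - A(u)(\nabla u,\nabla u) - H\ast(u^\ast Q).
\]
The starting assumption $u \in W^{2,2}(S^2;S^3)$, together with the Sobolev embeddings $W^{2,2}(S^2) \hookrightarrow C^0 \cap W^{1,p}$ for all $p<\infty$, puts $\nabla u$ in every $L^p$, so that $A(u)(\nabla u,\nabla u)$ and $\ast(u^\ast Q)$ lie in $L^p$ for every $p$, while $\nabla(P_u)(\Delta u) \in L^q$ for every $q<2$ and $\Delta(P_u)(\Delta u)\in L^1$.

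The first round of bootstrap hinges on keeping the top-order nonlinearities in their divergence form---rather than expanding them to expose contributions of order $\nabla^3 u$ matching the principal part---so that the right-hand side of the PDE above is viewed as a sum of distributions of the schematic form $\ep^2\Delta[\,\cdot\,]$ with argument in $L^p$ for all $p<\infty$, $\ep^2\Div[\,\cdot\,]$ with argument in $L^q$ for all $q<2$, and zeroth-order terms in $L^1 \cup \bigcap_p L^p$. Standard elliptic regularity for $\ep^2\Delta^2-\Delta+\Id$, which on the closed surface $S^2$ is an isomorphism between Sobolev spaces with $\ep$-dependent bounds on its inverse (and with the extra $u$ term absorbed into the right-hand side using the pointwise bound $|u| \leq 1$), then yields $u \in W^{2,p}(S^2)$ for every $p<\infty$ and $u \in W^{4,1}(S^2)$. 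The embedding chain $W^{4,1}(S^2) \hookrightarrow W^{3,2}(S^2) \hookrightarrow C^{1,\alpha}(S^2)$ thereby upgrades $\nabla u$ to $L^\infty$.

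From here the iteration proceeds in a routine fashion: with $\nabla u \in L^\infty$ and $\nabla^2 u \in L^p$ for all $p$, each term on the right-hand side now sits in stronger (negative) Sobolev spaces, and a further application of elliptic regularity gains more derivatives on $u$; iterating and, where necessary, differentiating the PDE itself produces $u \in W^{k,2}(S^2)$ for every $k$, with quantitative bounds depending on $k,\ep,H$ and $\|u\|_{2,2}$ --- the $\ep$-dependence being inherited from the degeneration of the principal operator $\ep^2\Delta^2$ as $\ep \to 0$. Smoothness of $u$ is then immediate from Sobolev embedding. The main technical obstacle is the one already met in the first round: the pointwise PDE formally contains $\nabla^3 u$ contributions inside $\Delta[A(u)(\nabla u,\nabla u)]$ and $\Div[\nabla(P_u)(\Delta u)]$ that are of the same order as $\ep^2\Delta^2 u$. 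Retaining these in divergence form and interpreting them as elements of negative Sobolev spaces to be inverted by the elliptic operator sidesteps the difficulty long enough to boost $\nabla u$ to $L^\infty$, after which the remainder of the bootstrap is standard.
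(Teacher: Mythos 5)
Your strategy is essentially the one the paper uses: pass from the weak formulation to a distributional PDE, keep the top-order nonlinearities in divergence form so that the right-hand side sits in negative Sobolev spaces, and bootstrap. The paper organizes this slightly differently by introducing $v = \Delta u$ and writing a \emph{second-order} equation $\ep^2 \Delta v = f_1 + \ep^2 f_2 + \ep^2 \Div F$ for $v$, where $f_1 \in L^2$, $F \in L^q$ for all $q < 2$, and $f_2 \in L^1$; but this is equivalent to your fourth-order formulation, and the two routes hit the same issue at the same place.

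The gap is the claim that ``standard elliptic regularity'' applied to the $L^1$ piece ($\ep^2 f_2 = -\ep^2 \Delta(P\circ u)(\Delta u) \in L^1$) gives $u \in W^{4,1}(S^2)$. Calder\'on--Zygmund theory fails at the endpoint $p=1$: a solution of $\Delta^2 w = f$ with $f \in L^1$ does not in general have $\nabla^4 w \in L^1$. What is true is the weak-type substitute: in two dimensions one gets $\nabla^4 w \in L^{1,\infty}$ and hence $\nabla^3 w \in L^{2,\infty}$, which embeds into $L^q$ for all $q<2$ but not into $L^2$. So the correct conclusion for the $L^1$ piece is $u \in W^{3,q}$ for all $q<2$, not $W^{4,1}$. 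This is exactly the point the paper flags: ``the term $f_2$ lies in $L^1$ only, but that still suffices for an $L^{2,\infty}$-estimate on $\nabla v$,'' citing H\'elein's book for the weak-type bound. Fortunately your intended conclusion survives: $W^{3,q}$ for $q$ close to $2$ still embeds into $C^{1,\alpha}$ (and into $W^{2,p}$ for every $p<\infty$), so $\nabla u \in L^{\infty}$ follows and the second, routine round of the bootstrap can proceed. But you do need to invoke the Lorentz-space estimate explicitly rather than cite standard $L^p$ elliptic regularity, which is genuinely false at $p=1$.

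One smaller remark: the induction you describe (``iterating and, where necessary, differentiating the PDE itself'') is left rather vague. The paper makes it precise by observing that if $u \in \bigcap_{q<\infty} W^{l,q}$, then $f_1, f_2, F \in \bigcap_{q<\infty} W^{l-2,q}$, hence $v = \Delta u \in \bigcap_{q<\infty} W^{l-1,q}$ and so $u \in \bigcap_{q<\infty} W^{l+1,q}$ --- a clean gain of one derivative per step, with no need to differentiate the equation. Since your write-up leaves this step implicit, it is worth spelling it out in this form to make the dependence of the $W^{k,2}$-bounds on $k$, $\ep$, $H$, $\|u\|_{2,2}$ transparent.
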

\begin{proof}
To begin, note that since $\delta E_{H, \ep}(u) = 0$, we have $\delta E_{H, \ep}(u)(P_u(\psi)) = 0$ for all $\psi \in W^{2, 2}(S^2; \RR^N)$. This implies, by Proposition~\ref{prop:first-var-properties}, that for all $\psi \in W^{2, 2}(S^2; \RR^N)$ there holds
\begin{align}
&\ep^2 \int_{S^2} \Delta u \cdot \Delta \psi - A(u)(\nabla u, \nabla u) \cdot \Delta \psi\nonumber\\
& +\ep^2\int_{S^2}2 \langle \nabla (P\circ u)(\Delta u), \nabla \psi\rangle + \Delta (P\circ u)(\Delta u) \cdot \psi\nonumber\\
&+ \int_{S^2}\langle \nabla u, \nabla \psi \rangle + A(u)(\nabla u, \nabla u)\cdot \psi 
+H \int_{S^2} \psi \cdot \ast (u^{\ast}Q)  = 0.\label{eq:Euler-Lagrange-weak}
\end{align}

To continue, we write $v = \Delta u$. The crucial step is showing that $v \in W^{1, p}$ for all $p < 2$, which would kick start a bootstrapping argument. To that end, we define
\begin{align*}
f_1 &= \Delta u - A(u)(\nabla u, \nabla u) - H \ast(u^{\ast}Q),\\ 
f_2 &=- \Delta (P \circ u) (\Delta u),\\
F &=  \nabla \Big( A(u)(\nabla u, \nabla u)\Big) +2  \nabla (P \circ u)(\Delta u) .
\end{align*}
Note that $f_1 \in L^2$, $F \in L^{p}$ for all $p < 2$, while $f_2 \in L^1$, with their respective $L^p$-norms bounded in terms of the quantities listed in the statement of the proposition.

Now, since $v \in L^2$ is a distributional solution to 
\begin{equation}\label{eq:v-PDE}
\ep^2\Delta v  = f_1 + \ep^2 f_2 + \ep^2\Div F,
\end{equation}
we have by standard elliptic theory that $v \in W^{1, p}$ for all $p < 2$. (The term $f_2$ lies in $L^1$ only, but that still suffices for an $L^{{2, \infty}}$-estimate on $\nabla v$. See for example~\cite[Theorem 3.3.6]{Hel}.) Sobolev embedding then implies that $v \in L^{q}$ for all $q < \infty$, and consequently elliptic regularity yields $u \in W^{2, q}$ for all $q < \infty$. Next, suppose by induction that $u \in \cap_{q < \infty} W^{l, q}$. Then $f_1, f_2$ and $F$ all lie in $W^{l - 2, q}$ for all $q < \infty$. Consequently $v \in W^{l-1, q}$ by equation~\eqref{eq:v-PDE}, and hence $u \in W^{l + 1, q}$ for all $q < \infty$. This proves by induction that $u$ lies in $W^{k, q}$ for all $k \geq 1$ and $q < \infty$, and hence is smooth. Moreover, since every improvement of regularity above is accompanied by estimates, we also get the second conclusion of the Proposition.
\end{proof}

%Conformal and removable singularity---------------------------------------------------------------------------------
For later use, we record two standard results on solutions to \eqref{equ:CMC equation1}. Note that these satisfy~\eqref{eq:Euler-Lagrange-weak} with $\ep = 0$.
\begin{prop}\label{prop:conformal} 
Let $u: S^2 \to S^3$ be a smooth solution to~\eqref{equ:CMC equation1}. Then $u$ is weakly conformal.
\end{prop}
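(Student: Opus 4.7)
The plan is to use the Hopf differential. In an isothermal coordinate chart $(x,y)$ on $S^2$ with $z = x+iy$, I would introduce
\[
\phi := F\,dz^2, \qquad F \;=\; |u_x|^2 - |u_y|^2 - 2i\, u_x\cdot u_y \;=\; 4\, u_z \cdot u_z,
\]
where $u_z = \tfrac{1}{2}(u_x - iu_y)$ and ``$\,\cdot\,$'' denotes the $\RR^N$-inner product, extended complex-bilinearly. The identity $F = 4\,u_z\cdot u_z$ makes it immediate that under a holomorphic change of coordinate $z \mapsto w$ one has $F\,dz^2 = F'\,dw^2$, so $\phi$ defines a global smooth section of $K_{S^2}^{\otimes 2}$. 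Furthermore, $\phi \equiv 0$ is precisely the weak conformality condition \eqref{equ:CMC equation2}.

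To prove $\phi \equiv 0$, I would first show that $\phi$ is holomorphic, and then invoke the fact that the Riemann sphere admits no nonzero holomorphic quadratic differentials. From $F = 4 u_z \cdot u_z$,
\[
\partial_{\bar z} F \;=\; 8\, u_{z\bar z}\cdot u_z \;=\; 2\,(u_{xx}+u_{yy})\cdot u_z.
\]
Writing the round metric as $\lambda^2(dx^2+dy^2)$ in these isothermal coordinates, the CMC equation \eqref{equ:CMC equation1} becomes, after cancelling the common factor $\lambda^{-2}$ from $\Delta_{S^2} u$ and from the Hodge star applied to $u^\ast Q$,
\[
u_{xx}+u_{yy} \;=\; A(u)(u_x,u_x) + A(u)(u_y,u_y) + H\,Q(u_x,u_y).
\]
Substituting into the previous display, it suffices to show the right-hand side is orthogonal to $u_z$ in $\RR^N$. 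The second-fundamental-form terms take values in $(T_{u}S^3)^{\perp}$, whereas $u_z \in T_{u}S^3 \otimes \CC$, so those contributions vanish. For the cross-product term, the defining relation \eqref{eq:cross-product} gives $Q(u_x,u_y)\cdot Z = \Vol_g(u_x,u_y,Z)$ for any $Z \in T_u S^3$, which is zero when $Z = u_x$ or $Z = u_y$ by antisymmetry; extending complex-bilinearly yields $Q(u_x,u_y)\cdot u_z = 0$. Hence $\partial_{\bar z} F = 0$.

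Finally, since $\phi$ is holomorphic on $S^2$ and $K_{S^2}^{\otimes 2}$ has negative degree (namely $-4$), its space of global holomorphic sections is zero, so $\phi \equiv 0$. Translating back, $|u_x|^2 = |u_y|^2$ and $u_x\cdot u_y = 0$ in every isothermal chart, which is \eqref{equ:CMC equation2}.

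This is essentially a classical computation and I do not anticipate any real obstacle; the only mild bookkeeping is the cancellation of $\lambda^{-2}$ between $\Delta_{S^2}$ and the Hodge star acting on $u^\ast Q$, and the verification that the new right-hand side $H\,Q(u_x,u_y)$, absent in the harmonic-map case, is not merely tangential to $S^3$ but in fact orthogonal to both $u_x$ and $u_y$ separately, via the defining property of the cross product $Q$.
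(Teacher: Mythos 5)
Your proof is correct and is precisely the argument the paper sketches: the Hopf differential $(u_z\cdot u_z)\,dz^2$ is holomorphic because the CMC equation forces $u_{xx}+u_{yy}$ to be $\RR^N$-orthogonal to $u_z$ (the second-fundamental-form terms being normal to $S^3$ and the $Q$-term being orthogonal to both $u_x$ and $u_y$), and then vanishes because $S^2$ carries no nonzero holomorphic quadratic differentials. The bookkeeping about the conformal factor cancelling between $\Delta_{S^2}$ and the Hodge star on $u^*Q$ is also right.
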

\begin{proof}
The proof uses the Hopf differential and is exactly the same as in the case $H = 0$. We need only notice that, in local coordinates, $\Delta u$ is orthogonal to both $u_x$ and $u_y$ thanks to the CMC equation. This implies that the Hopf differential $(u_{z} \cdot u_z) dz^2$ is holomorphic.
\end{proof}

\begin{prop}\label{prop:removable-singularity}
Let $u \in W^{1, 2}(S^2; S^3)$ be a weak solution to~\eqref{equ:CMC equation1} which is smooth away from finitely many points $p_1, \cdots, p_L$. Then in fact $u$ is smooth on all of $S^2$.
\end{prop}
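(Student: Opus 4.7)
The proof proceeds in two steps: first, promote $u$ to a global weak solution of~\eqref{equ:CMC equation1} on all of $S^2$ (not just away from the punctures); second, apply an $\varepsilon$-regularity result at each $p_i$.

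For the first step, I would exploit the fact that a finite collection of points in $S^2$ has zero $W^{1,2}$-capacity. Concretely, choose a family of Lipschitz cut-offs $\eta_\delta: S^2 \to [0,1]$ that vanish on a $\delta$-neighborhood of $\{p_1,\dots,p_L\}$, equal $1$ outside a $\sqrt{\delta}$-neighborhood, and satisfy $\|\nabla \eta_\delta\|_{L^2(S^2)} \to 0$ as $\delta \to 0$ (standard logarithmic cut-offs work). For any test function $\varphi \in C^\infty(S^2; \RR^N)$, the product $\eta_\delta \varphi$ is supported away from the singular set, so the weak form of~\eqref{equ:CMC equation1} tested against $\eta_\delta \varphi$ holds by smoothness of $u$ on $S^2 \setminus \{p_1,\dots,p_L\}$. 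Expanding and letting $\delta \to 0$, the problematic term
\[
\int_{S^2} \varphi \, \nabla u \cdot \nabla \eta_\delta
\]
is bounded by $\|\varphi\|_\infty \|\nabla u\|_{L^2}\|\nabla \eta_\delta\|_{L^2} \to 0$, while the remaining terms converge by dominated convergence, using $|\nabla u|^2 \in L^1$ and the pointwise bound $|{*}(u^{\ast}Q)| \leq C|\nabla u|^2$. Thus $u$ is a global $W^{1,2}$ weak solution of the CMC equation on $S^2$.

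For the second step, fix a singular point $p = p_i$ and work in an isothermal chart identifying a neighborhood of $p$ with a Euclidean disk. Since $u \in W^{1,2}(S^2;S^3)$, we may shrink the chart so that $\int_{D_r(p)} |\nabla u|^2 < \varepsilon_0$ for a threshold $\varepsilon_0 > 0$ to be specified. On this disk, $u$ weakly solves
\[
\Delta u = A(u)(\nabla u, \nabla u) + H \ast (u^\ast Q),
\]
with the first term normal to $TS^3$ and of quadratic growth, and the second term exhibiting the Jacobian-type structure $H(u_x \wedge u_y)$. One then invokes $\varepsilon$-regularity in the spirit of Schoen~\cite[Theorem 2.2]{Sch}: provided $\varepsilon_0$ is small enough, the small-energy hypothesis propagates into higher-order estimates on $D_{r/2}(p)$, and a standard bootstrap yields $u \in C^\infty(D_{r/2}(p))$. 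Applying this at each $p_i$ concludes the proof.

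\textbf{Main obstacle.} The first step is essentially a capacity argument and presents no serious difficulty. The technical heart lies in step two: ensuring that the extra term $H\ast(u^\ast Q)$, which is quadratic in $\nabla u$, is compatible with the $\varepsilon$-regularity machinery developed for harmonic maps. The cleanest way to handle it is to observe that $\ast(u^\ast Q)$ is a Jacobian-type quantity in $\nabla u$; one can either absorb it via the crude pointwise bound $|\ast(u^\ast Q)| \leq C|\nabla u|^2$ (sufficient since the scheme of Schoen's $\varepsilon$-regularity only uses quadratic-growth bounds on the right hand side, together with $C^0$-smallness of $u$ after the chart is chosen small), or, more sharply, invoke Wente's inequality or Rivière-type conservation laws to exploit the null-form structure. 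Either way, the CMC nonlinearity does not disrupt the improvement estimate, and standard bootstrapping completes the regularity upgrade.
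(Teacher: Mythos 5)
Your proposal does not follow the paper's route and has a genuine gap in Step 2. Schoen's $\varepsilon$-regularity (\cite[Theorem 2.2]{Sch}) is an \emph{a priori} estimate: it says that if $u$ is already a \emph{smooth} (or at least $W^{2,2}$) solution with small energy on a disk, then $r^2\sup_{B_{r/2}}|\nabla u|^2 \leq C\int_{B_r}|\nabla u|^2$ and higher-order estimates follow. It does \emph{not} assert that a weak $W^{1,2}$ solution with small energy on a disk is smooth. After your Step 1 you only have a global weak $W^{1,2}$ solution, and there is no regularity to run Schoen's integration-by-parts scheme at the puncture; the bootstrap never starts. The crude bound $|\Delta u| \leq C|\nabla u|^2$ only tells you $\Delta u \in L^1$, which in dimension two gives back $\nabla u \in L^{2,\infty}$ and no gain --- precisely the cycle that the moving-frame/conservation-law arguments of H\'elein, Wente, and Rivi\`ere were invented to break. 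So the ``sharper'' alternative you mention in passing (Wente or Rivi\`ere-type conservation laws exploiting the Jacobian null-form) is not an optional refinement; it is the only way your two-step plan can be made rigorous.

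The paper's proof avoids this pitfall by not passing through the global weak solution at all. Following Sacks--Uhlenbeck (\cite[Theorem 3.6]{Sacks-Uhlenbeck81}), one works on the punctured disk where $u$ is already smooth, applies the $\varepsilon$-regularity estimate legitimately on dyadic \emph{annuli} $B_{2\rho}\setminus B_{\rho/2}$ (away from the puncture) to obtain the decay $\rho|\nabla u| \to 0$, and then uses the holomorphicity of the Hopf differential $\langle u_z, u_z\rangle\,dz^2$ (Proposition~\ref{prop:conformal}) to show that the singularity of $\phi$ is removable, from which a continuity estimate across the puncture follows and the bootstrap closes. Your argument drops the Hopf differential entirely, which is the ingredient that substitutes for the missing regularity at the center of the disk. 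If you wish to keep your two-step structure, replace Step 2 with H\'elein/Rivi\`ere interior regularity for $W^{1,2}$ weak solutions of systems with antisymmetric potentials; otherwise follow the annulus-plus-Hopf-differential route as the paper does.
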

\begin{proof} 
By conformal invariance we need only consider the case where the domain is a punctured disk $B\setminus \{0\} \subset \RR^2$. Then observe that we have all the ingredients necessary for the argument of~\cite[Theorem 3.6]{Sacks-Uhlenbeck81}, which yields the desired smoothness. These ingredients are the holomorphicity of the Hopf differential, an $\ep$-regularity theorem (which can be proven along the lines of~\cite[Theorem 2.2]{Sch}), and the fact that $|\Delta u| \leq C|\nabla u|^2$.
\end{proof}

%Palais-Smale-------------------------------------------------------------------------------

Next we verify that $E_{H, \ep}$ satisfies a version of the Palais-Smale condition, subject to a bound on $D_{\ep}$. 
\begin{prop}\label{prop:PS}
Let $\{u_j\}$ be a sequence in $W^{2, 2}(S^2; S^3)$ satisfying
\begin{enumerate}
\item[(i)] $\|G_{H, \ep}(u_j)\| \to 0$ as $j\to \infty$.
\vskip 1mm
\item[(ii)] $D_{\ep}(u_j) \leq C$ for some $C>0$.
\end{enumerate}
Then, passing to a subsequence if necessary, $u_j$ converges strongly in $W^{2, 2}$ to a limit $u$ satisfying $\delta E_{H, \ep}(u) = 0$ and $D_{\ep}(u) \leq C$.
\end{prop}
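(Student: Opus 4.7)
I would structure the proof as extraction of a weak limit, upgrade to strong $W^{2,2}$ convergence, and identification of the limit as a critical point. The hypothesis $D_\ep(u_j) \le C$ together with Lemma~\ref{lemm:D-equivalence}(b) yields a uniform bound on $\|u_j\|_{2,2}$, so after passing to a subsequence $u_j \rightharpoonup u$ weakly in $W^{2,2}(S^2;\RR^N)$, and by Rellich--Kondrachov and the Sobolev embedding $W^{2,2} \hookrightarrow C^0$, strongly in $W^{1,p}$ for every $p<\infty$ and uniformly on $S^2$. Uniform convergence ensures $u(x)\in S^3$, hence $u \in W^{2,2}(S^2;S^3)$, and lower semicontinuity gives $D_\ep(u) \le C$.

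\textbf{Strong convergence via the difference formula.} Set $\psi_j := u_j - u$, which is bounded in $W^{2,2}$, tends to $0$ weakly in $W^{2,2}$, and strongly in $W^{1,p}$ and in $L^\infty$. I would apply formula~\eqref{eq:first-var-difference} of Proposition~\ref{prop:first-var-properties}(a) with $u_1=u_j$, $u_0=u$, and test function $\psi_j$. The left-hand side equals $G_{H,\ep}(u_j)(\psi_j) - G_{H,\ep}(u)(\psi_j)$: the first part tends to zero by hypothesis (i) and $W^{2,2}$-boundedness of $\psi_j$, while the second tends to zero because $G_{H,\ep}(u)$ is a fixed bounded linear functional on $W^{2,2}$ (Proposition~\ref{prop:first-var-properties}(b)) acting on a weakly null sequence. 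With this choice of test function, the first integral on the right-hand side of~\eqref{eq:first-var-difference} collapses to exactly $\ep^2\int_{S^2} |\Delta\psi_j|^2 + \int_{S^2}|\nabla\psi_j|^2$. The remaining five integrals should all be $o(1)$: the two terms involving $A_{u_j}(\nabla u_j,\nabla u_j) - A_u(\nabla u,\nabla u)$ and the one involving $\ast(u_j^\ast Q) - \ast(u^\ast Q)$ all tend to $0$ in $L^p$ for every $p<\infty$ (combining uniform convergence of $u_j$ with strong $L^p$-convergence of $\nabla u_j$), and then pair against $\Delta \psi_j$ (bounded in $L^2$) or $\psi_j$ (small in $L^\infty$). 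Combining this with $\|\psi_j\|_{L^2} \to 0$ and Lemma~\ref{lemm:D-equivalence}(b) yields $\|u_j - u\|_{2,2}\to 0$.

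\textbf{Conclusion and main obstacle.} Once strong $W^{2,2}$ convergence is established, the Lipschitz estimate of Proposition~\ref{prop:first-var-properties}(c) gives $\|G_{H,\ep}(u_j) - G_{H,\ep}(u)\| \to 0$, so by hypothesis (i), $G_{H,\ep}(u) = 0$; restricting to $\phi \in \cT_u$ (for which $P_u\phi = \phi$) yields $\delta E_{H,\ep}(u) = 0$, and the $D_\ep$-bound on $u$ is then automatic. The main obstacle is the bookkeeping for the two ``$P$-terms'' in~\eqref{eq:first-var-difference}, namely $\nabla(P_{u_j})(\Delta u_j) - \nabla(P_u)(\Delta u)$ and $\Delta(P_{u_j})(\Delta u_j) - \Delta(P_u)(\Delta u)$, since $\Delta u_j$ converges only weakly in $L^2$. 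The trick is to split each such difference into a $u_j$--part (controlled by uniform convergence of $u_j$ and boundedness of $P, dP, d^2P$, hence small in some $L^p$ and paired against the bounded $\nabla\psi_j$ or $\psi_j$) and a $\Delta u_j$--part of the form $(\nabla P_u)\cdot(\Delta u_j - \Delta u)$ or $(\Delta P_u)\cdot(\Delta u_j - \Delta u) + \cdots$; after absorbing the test factor $\nabla \psi_j$ or $\psi_j$, which tends to $0$ strongly in $L^p$ for large $p$, the companion factor is strongly null in $L^2$, so the weak--strong pairing with $\Delta u_j - \Delta u$ is $o(1)$.
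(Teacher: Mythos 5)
Your proof is correct and is essentially the paper's argument with one small, useful variation. The paper establishes strong $W^{2,2}$-convergence through a Cauchy argument: it sets $R_{jk} = G_{H,\ep}(u_j)(u_j - u_k) - G_{H,\ep}(u_k)(u_j - u_k)$, shows via hypothesis (i) and $W^{2,2}$-boundedness that $R_{jk}\to 0$, and separately that $R_{jk}$ differs from $\int_{S^2}\ep^2|\Delta(u_j - u_k)|^2 + |\nabla(u_j - u_k)|^2$ by a quantity vanishing as $j,k\to\infty$, whence the sequence is $W^{2,2}$-Cauchy by Lemma~\ref{lemm:D-equivalence}. You instead test the difference formula~\eqref{eq:first-var-difference} directly against $\psi_j = u_j - u$, which avoids the double index but requires the additional (and correct) observation that $G_{H,\ep}(u)(\psi_j)\to 0$ because $G_{H,\ep}(u)$ is a fixed bounded linear functional on $W^{2,2}(S^2;\RR^N)$ applied to a weakly null sequence. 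The treatment of the five remaining error integrals is the same in spirit as what the paper alludes to with ``weak convergence of the sequence $u_j$ and Sobolev embedding,'' and your explicit split of the $P$-terms into a part controlled by uniform plus $W^{1,p}$-strong convergence and a weak--strong pairing against $\Delta\psi_j$ is precisely what is needed. Your concluding step — strong convergence plus the Lipschitz bound Proposition~\ref{prop:first-var-properties}(c) giving $G_{H,\ep}(u)=0$, then restricting to $\cT_u$ — is slightly more economical than the paper's term-by-term passage to the limit in~\eqref{eq:first-var-weak}, though both are routine once strong convergence is in hand.
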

\begin{rmk}
By Lemma~\ref{lemm:projection-estimates}(b), the assumption (i) is equivalent to the more standard assumption where the quantities $\|G_{H, \ep}(u_j)\|$ are replaced by the norms of $\delta E_{H, \ep}(u_j)$ as elements in the dual space of $\cT_{u_j}$.
\end{rmk}
\begin{proof}
This fact is already mentioned and used in~\cite{Lamm}. We indicate the main steps of the proof for the reader's convenience. First,  Lemma~\ref{lemm:D-equivalence}(a) and assumption (ii) imply that the sequence $u_{j}$ is bounded in $W^{2, 2}$. Hence, passing to a subsequence if necessary, we may assume that $u_{j}$ converges weakly in $W^{2, 2}$ to some limit $u \in W^{2, 2}(S^2; S^3)$. %Consequently,
%\begin{align}
%\lim_{j \to \infty}\|u_j - u\|_{1, p} &= 0 \text{ for all }p < \infty, \label{eq:strong-W1p-convergence}\\
%\lim_{j \to \infty}\|u_j - u\|_{\infty} &= 0. \label{eq:strong-C0-convergence}
%\end{align}
%In particular $u \in W^{2, 2}(S^2; S^3)$. 
Next we want to upgrade the weak $W^{2, 2}$-convergence of $u_{j}$ to a strong $W^{2, 2}$-convergence. To do that we take $j, k$ large and consider
\[
R_{jk} = G_{H, \ep}(u_j)(u_j - u_k)- G_{H, \ep}(u_k)(u_j - u_k).
\]
With the help of~\eqref{eq:first-var-weak}, the weak convergence of the sequence $u_j$ and Sobolev embedding, we can show that 
\[
\Big| R_{jk} - \int_{S^2}\ep^2 |\Delta u_j - \Delta u_k|^2 + |\nabla u_j - \nabla u_k|^2 \Big| \to 0 \text{ as }j, k \to \infty.
\]
On the other hand, assumption (i) and the $W^{2, 2}$-boundedness of the sequence $u_j$ implies that $R_{jk} \to 0$ as $j, k \to \infty$, so we see that
\[
\int_{S^2}\ep^2 |\Delta u_j - \Delta u_k|^2 + |\nabla u_j - \nabla u_k|^2 \to 0 \text{ as }j, k \to \infty.
\]
Applying Lemma~\ref{lemm:D-equivalence}(a) to $u_j - u_k$, we infer from the above that $u_j$ converges strongly in $W^{2, 2}$. Therefore we may pass to limits in~\eqref{eq:first-var-weak} and use assumption (i) to get 
\[
G_{H,\ep}(u)(\psi) = \lim_{j \to \infty} G_{H, \ep}(u_j)(\psi) = 0, \text{ for all }\psi \in W^{2, 2}(S^2; \RR^{N}).
\] 
Hence $\delta E_{H, \ep}(u) = 0$ as asserted. Finally, the fact that $D_{\ep}(u)\leq C$ follows immediately from the strong $W^{2, 2}$-convergence of $u_j$ to $u$.
\end{proof}

%--------------------------------------------------------------------------------------------------------------------------------------------------
\subsection{The Morse index and generalized Morse neighborhoods}\label{subsec:Morse}

In this section we establish some functional-analytic properties of the Hessian $\delta^2 E_{H, \ep}(u)$ at a critical point. These have two consequences. First of all, the Morse index of the critical point, to be defined shortly, is always finite. Secondly, we may invoke the generalized Morse lemma (see for instance~\cite[Theorem 8.3]{MW}) to find a suitable neighborhood of the critical point on which $E_{H, \ep}$ takes on a particularly simple form after a change of coordinates. Note that we do not require the critical point to be non-degenerate.

\begin{defi}\label{defi:index-non-degen}
Let $u$ be a critical point of $E_{H, \ep}$. The Morse index of $u$ as a critical point of $E_{H, \ep}$, denoted $\Ind_{H, \ep}(u)$, is defined to be the supremum of $\dim V$ over all finite-dimensional subspaces $V$ of $\cT_u$ on which the Hessian $\delta^2 E_{H, \ep}(u)$ restricts to be negative-definite.
\end{defi}
\begin{rmk}\label{rmk:index}
Let $u$ be a smooth solution to~\eqref{equ:CMC equation1}. Then formally it is a critical point of~\eqref{eq:main perturbed functional} with $\ep = 0$, and we define its Morse index, denoted $\Ind_H(u)$, in exactly the same way as above, with $\delta^2 E_{H, \ep}(u)$ replaced by $\delta^2 E_H(u)$. 
\end{rmk}

The next lemma shows that the index as defined in Definition~\ref{defi:index-non-degen} is always finite and establishes some properties of $\delta^2 E_{H, \ep}(u)$ to be used later.
\begin{lemm}\label{lemm:finite-index} 
In the notation of Definition~\ref{defi:index-non-degen}, suppose furthermore that $A: \cT_u \to \cT_u$ is the bounded linear operator associated with the bilinear form $\delta^2 E_{H, \ep}(u)$ via the inner product on $\cT_u$. Then
\vskip 1mm
\begin{enumerate}
\item[(a)] The operator $A: \cT_u \to \cT_u$ is a self-adjoint Fredholm operator. In particular, $\cT_u$ splits orthogonally into $\Ker A \oplus \Ran A$.
\vskip 1mm
\item[(b)] There exists a sequence of real numbers $\lambda_i \to \infty$ and a basis $\{\psi_i\}$ of $\cT_u$ such that 
\[
\delta^2 E_{H, \ep}(u)(\psi_i, \cdot) = \lambda_i (\psi_i, \cdot)_{L^2},
\]
and that $(\psi_i, \psi_j)_{L^2} =\delta_{ij}$. In particular $\Ind_{H, \ep}(u)$ is finite.
\end{enumerate}
\end{lemm}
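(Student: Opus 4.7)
The scheme is to write $\delta^2 E_{H,\ep}(u)$ as a coercive principal part plus a compact perturbation on the Hilbert space $\cT_u$; part (a) then follows from Fredholm theory for self-adjoint operators, and part (b) from the spectral theorem for compact self-adjoint operators applied to an auxiliary operator.

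For the decomposition, I would single out as the principal part
\[
Q_0(\psi,\xi) := \ep^2\int_{S^2}\Delta\psi\cdot\Delta\xi + \int_{S^2}\langle D\psi, D\xi\rangle.
\]
Using Lemma~\ref{lemm:D-equivalence}(a) together with $D=P_u\circ\nabla$ (so $|D\psi|^2\geq |\nabla\psi|^2 - C(u)|\psi|^2$ since $u$ is smooth by Proposition~\ref{prop:weak-solution-smooth}), one obtains $Q_0(\psi,\psi)+C'\|\psi\|_{L^2}^2\geq c\,\|\psi\|_{W^{2,2}}^2$ for constants $c, C'$ depending on $\ep$ and $u$. The remainder $R:=\delta^2 E_{H,\ep}(u)-Q_0$ consists of the curvature term, the $H$-term (which is first order in $\xi$ with coefficients involving $\nabla u$), and the term $\ep^2\int\Delta u\cdot\Delta(A_u(\psi,\xi))$. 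The last looks problematic because a naive derivative count puts it at the same order as the principal part, but using smoothness of $u$ I would integrate by parts twice to rewrite it as $\ep^2\int\Delta^2u\cdot A_u(\psi,\xi)$, a zero-order pointwise pairing. The compact embeddings $\cT_u\hookrightarrow C^0$ and $\cT_u\hookrightarrow W^{1,p}$ (for any $p<\infty$) then imply that each term in $R$ is jointly weakly continuous on $\cT_u\times\cT_u$, i.e., $R$ is represented by a compact self-adjoint operator on $\cT_u$ via the Riesz isomorphism.

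For part (a), set $Q_1:=\delta^2 E_{H,\ep}(u)+C'(\cdot,\cdot)_{L^2}$ with $C'$ as above; then $Q_1$ is symmetric, continuous, and coercive on $\cT_u$, so Lax--Milgram produces a self-adjoint invertible operator $B:\cT_u\to\cT_u$. The $L^2$-pairing yields a compact self-adjoint operator $K$ on $\cT_u$ (by Rellich), and the operator $A$ associated to $\delta^2 E_{H,\ep}(u)$ equals $B-C'K$, hence is self-adjoint Fredholm of index zero. The orthogonal splitting $\cT_u=\Ker A\oplus\Ran A$ is then standard.

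For part (b), I would set up a generalized eigenvalue problem through an auxiliary operator. Equip $\cT_u$ with the equivalent inner product $Q_1$ and define $T:\cT_u\to\cT_u$ by $Q_1(T\psi,\xi)=(\psi,\xi)_{L^2}$. Then $T$ is bounded, self-adjoint, injective, positive, and compact (the last because $\cT_u\hookrightarrow L^2$ is compact). The spectral theorem delivers a $Q_1$-orthonormal eigenbasis $\{\phi_i\}$ with $T\phi_i=\mu_i\phi_i$, $\mu_i>0$, $\mu_i\to 0^+$. Rescaling $\psi_i:=\phi_i/\sqrt{\mu_i}$ gives an $L^2$-orthonormal family; unwinding the definitions yields $\delta^2 E_{H,\ep}(u)(\psi_i,\xi)=\lambda_i(\psi_i,\xi)_{L^2}$ with $\lambda_i=\mu_i^{-1}-C'\to+\infty$, and $\Span\{\psi_i\}$ is dense in $\cT_u$ (in the $Q_1$-norm, equivalently the $W^{2,2}$-norm) because $\Span\{\phi_i\}$ is. The Morse index bound is immediate: any subspace on which $\delta^2 E_{H,\ep}(u)$ is negative definite injects into the $L^2$-span of $\{\psi_i:\lambda_i<0\}$, which is finite-dimensional since $\lambda_i\to+\infty$. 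The only genuine subtlety I anticipate is the integration-by-parts treatment of the $\ep^2\int\Delta u\cdot\Delta(A_u(\psi,\xi))$ term in Step 1, as its compactness is hidden behind a derivative count that seems unfavorable at first glance.
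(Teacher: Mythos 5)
Your proposal is correct and follows essentially the same route as the paper: both reduce to establishing a G\aa rding-type inequality for $\delta^2 E_{H,\ep}(u)$ (i.e.\ coercivity of $\delta^2 E_{H,\ep}(u)+C'(\cdot,\cdot)_{L^2}$ in the $W^{2,2}$-norm) and then invoking the compact embedding $\cT_u\hookrightarrow L^2$ together with standard Hilbert-space spectral theory. Two small remarks: your observation that $\ep^2\int\Delta u\cdot\Delta(A_u(\psi,\xi))$ integrates by parts onto $\ep^2\int\Delta^2u\cdot A_u(\psi,\xi)$ is a clean way to see its lower-order nature, whereas the paper instead expands the derivatives and absorbs with Young's inequality; and the constant $C'$ appearing in $Q_1=\delta^2 E_{H,\ep}(u)+C'(\cdot,\cdot)_{L^2}$ must be permitted to be larger than the one obtained for $Q_0$ alone, so as to also absorb the (interpolatable, lower-order) contributions of the remainder $R$ before coercivity and hence Lax--Milgram can be applied.
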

\begin{proof}
First note that $A$ is self-adjoint because the bilinear form $\delta^2 E_{H, \ep}(u)$ is symmetric. Next we establish the following G$\mathring{a}$rding-type inequality:
\begin{equation}\label{eq:Garding}
(A\psi, \psi)_{\cT_u} = \delta^2 E_{H, \ep}(u)(\psi, \psi) \geq c_1\int_{S^2} |\nabla^2 \psi|^2 - c_2 \int_{S^2} |\psi|^2.
\end{equation}
(The dependences of $c_1, c_2$ on $\ep$ and $u$ are suppressed in the notation since the latter are fixed.) Note that since $u$ is smooth by Proposition~\ref{prop:weak-solution-smooth}, we can go to~\eqref{eq:second-var-long-formula} and carry out the differentiation in the terms $\Delta \big( A_{u}(\psi, \psi)\big)$ and $\nabla \big( A_{u}(\psi, \psi)\big)$. Recalling further that 
\[
D\psi = \nabla \psi + A_{u}(\nabla u, \psi), 
\]
and using Lemma~\ref{lemm:D-equivalence}(a) together with Young's inequality, it is not hard to see that there are constants $c_1, c_2$ independent of $\psi$ so that 
\[
\delta^2 E_{H, \ep}(u)(\psi, \psi) \geq c_1\int_{S^2} |\nabla^2 \psi|^2 - c_2 \int_{S^2} |\nabla \psi|^2 + |\psi|^2.
\]
Then, an integration by parts shows that $\int_{S^2} |\nabla \psi|^2 \leq \int_{S^2} |\Delta \psi | |\psi|$. Hence, adjusting the constants $c_1, c_2$ if necessary, another application of Young's inequality leads to~\eqref{eq:Garding}, which together with self-adjointness and the compact embedding 
\[
\cT_u \to \{\psi \in L^2(S^2; \RR^N)\ |\ \psi(x) \in T_{u(x)}S^3 \text{ for a.e. }x \in S^2\}
\]
yield all the remaining assertions of the lemma. 
%The fact that $A: \cT_u \to \cT_u$ has finite-dimensional kernel and closed range is a standard consequence of~\eqref{eq:Garding} and the compact embedding  $\cT_u \to \cH$, where $\cH$ is by definition
%\[
%\cH = \{\psi \in L^2(S^2; \RR^N)\ |\ \psi(x) \in T_{u(x)}S^3 \text{ for a.e. }x \in S^2\}.
%\]
%The closedness of $\Ran A$ and the fact that $A$ is self-adjoint then gives $\Ran A = (\Ker A)^{\perp}$. This proves (a). The sequences $\{\lambda_i\}$ and $\{\psi_i\}$ as described in part (b) can be found using standard Hilbert space methods (see for example~\cite[Section 6.2]{Ev}), again thanks to the estimate~\eqref{eq:Garding}, the symmetry of $\delta^2 E_{H, \ep}(u)$ and the compact embedding $\cT_u \to \cH$.
\end{proof}

Next we continue to assume that $u$ is a critical point of $E_{H, \ep}$. Let $\cA$ be a simply-connected neighborhood of $u$, and consider a local reduction $E^{\cA}_{H, \ep}$ induced by some extension $f \in \cE(u)$. Since $E^{\cA}_{H, \ep}$ is a $C^2$-functional and since $\delta^2 E^{\cA}_{H, \ep} = \delta^2 E_{H, \ep}$ on $\cA$, by Lemma~\ref{lemm:finite-index}(a) we may apply the generalized Morse Lemma on Hilbert-Riemannian manifolds (see for example~\cite[Theorem 8.3]{MW}) to find a homeomorphism $\Psi_1$ from a neighborhood of zero in $\cT_u$ onto a neighborhood of $u$ in $\cA$, with $\Psi_1(0) = u$, such that
\[
E^{\cA}_{H, \ep}(\Psi_1(\xi)) = e_1(\xi_0)  + \delta^2 E_{H, \ep}(u)(\xi_{\perp}, \xi_{\perp}) \text{ for }\xi \text{ close to $0$ in $\cT_u$,}
\]
where we've written $\xi = \xi_0 + \xi_{\perp}$ according to the decomposition $\cT_u = \Ker A \oplus \Ran A$, and $e_1$ is a $C^2$ function whose first and second derivatives vanish at the origin. This together with the diagonalization provided by Lemma~\ref{lemm:finite-index}(b) imply that, passing to a smaller neighborhood in $\cA$ if necessary, we obtain what we refer to as a generalized Morse neighborhood of $u$, defined below.

\begin{defiprop}[Generalized Morse neighborhood]
\label{defiprop:Morse-neighborhood}
Each critical point $u$ of $E_{H, \ep}$ possesses a {\em generalized Morse neighborhood},  which is a neighborhood $\cA \subset W^{2, 2}(S^2; S^3)$ of $u$ with the following properties:
\begin{enumerate}
\item[(i)] $\cA$ is simply-connected, so that any choice of $f \in \cE(u)$ induces a local reduction $E^{\cA}_{H, \ep}$.
\vskip 1mm
\item[(ii)] There exists a ball $\cB$ centered at zero in $\cT_{u}$, a homeomorphism $\Psi: \cB \to \cA$ with $\Psi(0) = u$, and a direct sum decomposition 
\begin{equation}\label{eq:Morse-decomp}
\cT_{u} = H_0 \oplus H_- \oplus H_+,
\end{equation}
of $\cT_u$ into closed subspaces, with $\dim H_0 = \dim \Ker A$ and $\dim H_{-} = \Ind_{H, \ep}(u)$, so that for any $f \in \cE(u)$ there holds
\begin{equation}\label{eq:generalized-Morse}
\widetilde{E}(\psi) = e(\psi_0) + \|\psi_{+}\|_{\cT_u}^2 - \|\psi_{-}\|_{\cT_u}^2 \text{ for all }\psi \in \cB,
\end{equation}
where we've written $\psi = \psi_0 + \psi_{-} + \psi_+$ with respect to the decomposition~\eqref{eq:Morse-decomp}, and $\widetilde{E} = E^{\cA}_{H, \ep} \circ \Psi$. Also, the function $e$ is again $C^2$ and its first and second derivatives at the origin vanish. (Note that the difference $\widetilde{E}(\psi) - \widetilde{E}(0)$ is independent of the choice of extension $f \in \cE(u)$, so if the above holds for one extension it holds for all.)
\end{enumerate}
\end{defiprop}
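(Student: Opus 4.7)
The plan is to construct $\Psi$ as a composition $\Psi_1 \circ \Psi_2$: first apply the generalized Morse--Palais lemma to a local reduction in order to bring $E^{\cA}_{H,\ep}$ into the form $e_1(\xi_0) + (A\xi_\perp,\xi_\perp)_{\cT_u}$, then apply a linear spectral rescaling $\Psi_2$ on $\Ran A$ that converts the quadratic form into $\pm$-squares of the $\cT_u$-norm, and finally shrink to ensure the simple connectedness in (i).

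For the first step, choose any simply-connected neighborhood $\cA_0$ of $u$ in $W^{2,2}(S^2;S^3)$ and any $f_0 \in \cE(u)$; together they induce a $C^{2}$ local reduction $E^{\cA_0}_{H,\ep}$ by Proposition~\ref{prop:local-reduct}. Its Hessian at $u$ is $\delta^2 E_{H,\ep}(u)$, whose associated operator $A$ is self-adjoint Fredholm by Lemma~\ref{lemm:finite-index}(a). Applying the generalized Morse lemma on Hilbert--Riemannian manifolds (\cite[Theorem 8.3]{MW}) yields a homeomorphism $\Psi_1$ from a neighborhood of $0 \in \cT_u$ onto a neighborhood of $u$ in $\cA_0$, with $\Psi_1(0) = u$, such that
\[
E^{\cA_0}_{H,\ep}(\Psi_1(\xi)) - E^{\cA_0}_{H,\ep}(u) = e_1(\xi_0) + (A\xi_\perp, \xi_\perp)_{\cT_u},
\]
where $\xi = \xi_0 + \xi_\perp$ splits via $\cT_u = \Ker A \oplus \Ran A$ and $e_1 \in C^{2}$ has vanishing first and second derivatives at the origin.

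For the second step, since $A|_{\Ran A}$ is a bounded self-adjoint isomorphism, $0$ lies outside its (compact) spectrum and the Borel functional calculus furnishes $\cT_u$-orthogonal spectral projections $P_\pm$ onto its positive and negative parts. Set $H_\pm := P_\pm(\cT_u)$ and $H_0 := \Ker A$; these are closed, pairwise $\cT_u$-orthogonal, and realize the splitting~\eqref{eq:Morse-decomp}. To verify $\dim H_- = \Ind_{H,\ep}(u)$, observe that the Hessian is negative-definite on $H_-$ (so $\dim H_- \leq \Ind_{H,\ep}(u)$, the latter being finite by Lemma~\ref{lemm:finite-index}(b)), while on $H_0 \oplus H_+$ it is non-negative, so $P_-$ restricts injectively to any subspace on which the Hessian is negative-definite, giving the reverse inequality. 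Now let $T := |A|^{1/2}|_{\Ran A}$, which is a bounded self-adjoint operator with bounded inverse because its spectrum lies in $[\sqrt{c}, \sqrt{\|A\|}]$ for some $c > 0$. Since $T$ commutes with $A$, a direct computation gives
\[
(A T^{-1}\eta, T^{-1}\eta)_{\cT_u} = (\sign(A)\eta, \eta)_{\cT_u} = \|P_+\eta\|^2_{\cT_u} - \|P_-\eta\|^2_{\cT_u}.
\]
Setting $\Psi_2 := \id_{H_0} \oplus T^{-1}$ (a bounded linear automorphism of $\cT_u$) and $\Psi := \Psi_1 \circ \Psi_2$, and writing $\psi = \psi_0 + \psi_- + \psi_+$, one recovers the identity~\eqref{eq:generalized-Morse} on a small ball $\cB$ centered at $0 \in \cT_u$, with $e := e_1$.

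To conclude, take $\cA := \Psi(\cB)$ for $\cB$ a convex ball; since $\Psi$ is a homeomorphism, $\cA$ is homeomorphic to $\cB$ and hence simply-connected, giving (i). Independence from the choice of $f \in \cE(u)$ in (ii) follows from Remark~\ref{rmk:local-reduct}(2): switching extensions shifts $E^{\cA}_{H,\ep}$ by a constant integer multiple of $H\cdot\Vol_g(S^3)$, which merely adds a constant to $e$ without disturbing either its second-order behavior at $0$ or the quadratic part of~\eqref{eq:generalized-Morse}. The main technical point is the boundedness of $T^{-1}$ as an operator on $\Ran A$, which hinges on $0$ being isolated in $\mathrm{spec}(A|_{\Ran A})$---a direct consequence of the Fredholm property of $A$ established in Lemma~\ref{lemm:finite-index}(a).
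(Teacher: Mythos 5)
Your proof is correct and follows essentially the same route as the paper: apply the generalized Morse--Palais lemma to a local reduction using Lemma~\ref{lemm:finite-index}(a), then straighten the remaining quadratic form $(A\xi_\perp,\xi_\perp)_{\cT_u}$ by a linear change of coordinates. The only notable difference is in how the second step is executed: the paper invokes the $L^2$-eigendecomposition from Lemma~\ref{lemm:finite-index}(b) (leaving the details implicit), whereas you use the $\cT_u$-spectral calculus directly, taking $P_\pm$ and $T=|A|^{1/2}|_{\Ran A}$. Your route is arguably the cleaner way to land exactly on the $\cT_u$-norm appearing in~\eqref{eq:generalized-Morse}, since the spectral projections $P_\pm$ are $\cT_u$-orthogonal by construction, while the eigenbasis from Lemma~\ref{lemm:finite-index}(b) is $L^2$-orthonormal and would require a small additional argument to convert to $\cT_u$-orthogonality. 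Your justification that $T^{-1}$ is bounded (via $0\notin\mathrm{spec}(A|_{\Ran A})$, forced by Fredholmness and the closed-range theorem) and your argument for $\dim H_-=\Ind_{H,\ep}(u)$ via injectivity of $P_-$ on negative subspaces are both complete and correct.
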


%------------------------------------------------------------------------------------------------
\section{Existence of non-trivial critical points of the perturbed functional}\label{S:non-trivial-critical-points}

In this section, we show how to find non-constant critical points with bounded Morse index for the perturbed functionals with $D_{\ep}$ bounded independently of $\ep$.

\subsection{Admissible sweepouts and the min-max value}\label{SS:sweepouts}
We introduce the admissible sweepouts and explain how to define the min-max value. For each continuous path $\gamma:[0, 1] \to W^{2, 2}(S^2; S^3)$ with $\gamma(0), \gamma(1)$ being constant maps, we consider the induced map $h_{\gamma}: S^{3} \to S^3$. The continuous embedding of $W^{2, 2}(S^2; S^3)$ into $C^{0}(S^2; S^3)$ guarantees that $h_\gamma$ is continuous, and hence has a degree, denoted $\deg(h_\gamma)$. The space of admissible sweepouts is then given by (compare with~\cite[p.28]{Struwe88}) 
\[
\cP = \{\gamma \in C^{0}([0, 1]; W^{2, 2}(S^2; S^3))\ |\ \gamma(0) = \text{constant}, \gamma(1) = \text{constant}, \deg(h_\gamma) = 1\}.
\]
For each $\gamma \in \cP$ and $t \in [0, 1]$, the map $f_{\gamma, t}: [0, 1] \times S^2 \to S^3$ given by
\[
f_{\gamma, t}(s, x) = \gamma(ts)(x),
\]
belongs to $\cE(\gamma(t))$, and we associate a min-max value to each $E_{H, \ep}$ by letting 
\begin{equation}\label{eq:min-max value}
\omega_{H, \ep} = \inf_{\gamma \in \cP}\sup_{t \in [0, 1]} E_{H, \ep}(\gamma(t), f_{\gamma, t}).
\end{equation}
Moreover, for later use we introduce the following collections. Given $H, \ep$, for $\alpha, C > 0$, we define 
\begin{align*}
\cP_{\alpha, C} =\ &\text{ the collection of sweepouts in }\cP \text{ satisfying }\\
&\ \text{(i) } \max\{ E_{H, \ep}(\gamma(t), f_{\gamma, t}): t \in [0, 1]\} \leq \omega_{H, \ep} + \alpha \text{, and }\\
&\ \text{(ii) } D_{\ep}(\gamma(t)) \leq C  \text{ whenever }E_{H, \ep}(\gamma(t), f_{\gamma, t}) \geq \omega_{H, \ep} - \alpha.
\end{align*}
Next, for $C > 0$ we define the subset $\cK_{C} \subset W^{2, 2}(S^2; S^3)$ by
\[
\begin{aligned}
\cK_{C} = \{ u \in W^{2, 2}(S^2; S^3)\ | & \ \delta E_{H, \ep}(u) = 0, D_{\ep}(u) \leq C \text { and }\\
&\ E_{H, \ep}(u, f) = \omega_{H, \ep} \text{ for some }f \in \cE(u)\}.
\end{aligned}
\]
By Proposition~\ref{prop:PS} and a direct computation using concatenations and Lemma~\ref{lemm:volume-properties}(a), we see that $\cK_{C}$ is a compact subset of $W^{2, 2}(S^2; S^3)$. Also, both $\cP_{\alpha, C}$ and $\cK_{C}$ of course depend on $H, \ep$, but we have suppressed that dependence in the notation as the choice should always be clear from the context.

The basic properties of the min-max values are established in the Lemma below.
\begin{lemm}\label{lemm:min-max-finite} In the above notation,
\vskip 1mm
\begin{enumerate}
\item[(a)] For any $H \in \RR$ and $\ep > 0$, and any $\gamma \in \cP$, the function $t \mapsto E_{H, \ep}(\gamma(t), f_{\gamma, t})$ is continuous.
\vskip 1mm
\item[(b)] $0 \leq \omega_{H, \ep} < \infty$ for all $H \in \RR$ and $\ep > 0$.
\vskip 1mm
\item[(c)] $\omega_{H, \ep}$ is a measurable function in $(H, \ep)$.
\end{enumerate}
\end{lemm}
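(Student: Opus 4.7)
The three parts decompose along $E_{H, \ep}(\gamma(t), f_{\gamma, t}) = D_\ep(\gamma(t)) + H \cdot V(f_{\gamma, t})$. Continuity of $t \mapsto D_\ep(\gamma(t))$ in part (a) is immediate since $D_\ep$ is a bounded quadratic form on $W^{2, 2}(S^2; \RR^N)$ (hence locally Lipschitz there) and $\gamma$ is continuous into $W^{2, 2}$. The core content of part (a) is therefore continuity of $t \mapsto V(f_{\gamma, t})$; this step is the main obstacle, because $f_{\gamma, t}(s, x) = \gamma(ts)(x)$ need not be $W^{1, 3}$ (so $V(f_{\gamma, t})$ is defined only through the $C^0$-approximation of Lemma~\ref{lemm:volume-extension}), and because $f_{\gamma, t}$ and $f_{\gamma, t_0}$ lie in the different fibers $\cE(\gamma(t))$, $\cE(\gamma(t_0))$, which blocks any direct application of Lemma~\ref{lemm:volume-extension}(b).

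To handle this, I would fix $t_0$ and restrict to $t$ with $\|\gamma(t) - \gamma(t_0)\|_{C^0} < \delta_0/4$ (valid by continuity of $\gamma$ into $W^{2, 2} \hookrightarrow C^0$, with $\delta_0$ from Lemma~\ref{lemm:volume-properties}(c)). Writing $\alpha_r = (1 - r) t_0 + r t$, I would introduce extensions of the common target $\gamma(t)$ by
\begin{equation*}
\Phi_r(s, x) = \begin{cases} \gamma\bigl(\alpha_r \cdot s/(1 - \delta)\bigr)(x), & s \in [0, 1 - \delta], \\ \Pi\bigl((1 - \sigma)\gamma(\alpha_r)(x) + \sigma\gamma(t)(x)\bigr), & s \in [1 - \delta, 1], \end{cases}
\end{equation*}
with $\sigma = (s - (1 - \delta))/\delta$ and a fixed small $\delta > 0$. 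Then $\Phi_r \in \cE(\gamma(t))$, and $r \mapsto \Phi_r$ is continuous in $C^0$, so Lemma~\ref{lemm:volume-extension}(b) yields $V(\Phi_0) = V(\Phi_1)$. Now $\Phi_1$ is $f_{\gamma, t}$ reparameterized onto $[0, 1-\delta]$ and extended by the constant $\gamma(t)$ on $[1 - \delta, 1]$, so $V(\Phi_1) = V(f_{\gamma, t})$. For $\Phi_0$, after replacing $f_{\gamma, t_0}$ on $[0, 1-\delta]$ by a $C^0$-close $W^{1, 3}$-approximation (justified by Lemma~\ref{lemm:volume-extension}(c)), the resulting map is itself $W^{1, 3}$ and a direct integration gives $V(\Phi_0) = V(f_{\gamma, t_0}) + V_{\text{tail}}$, where $V_{\text{tail}} = \int_{[0, 1]\times S^2}\Vol_g(w_\sigma, w_{x^1}, w_{x^2})\,dx^1 dx^2 d\sigma$ with $w(\sigma, x) = \Pi((1 - \sigma)\gamma(t_0)(x) + \sigma\gamma(t)(x))$. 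Since $\|w_\sigma\|_{C^0} \leq C\|\gamma(t) - \gamma(t_0)\|_{C^0}$ and the $W^{1, 2}$-norms of $\gamma(t_0), \gamma(t)$ stay locally uniformly bounded, one gets $|V_{\text{tail}}| \leq C\|\gamma(t) - \gamma(t_0)\|_{C^0} \to 0$, and thus $V(f_{\gamma, t}) \to V(f_{\gamma, t_0})$.

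For part (b), at $t = 0$ both $\gamma(0)$ and $f_{\gamma, 0}$ are constant maps, so $D_\ep(\gamma(0)) = 0 = V(f_{\gamma, 0})$ and hence $\sup_t E_{H, \ep}(\gamma(t), f_{\gamma, t}) \geq 0$ for every $\gamma \in \cP$, giving $\omega_{H, \ep} \geq 0$. For the upper bound it suffices to exhibit one specific sweepout, e.g.\ the standard latitude map $\gamma(t)(y) = (\cos\pi t,\ \sin\pi t \cdot y)$ of $S^3 \subset \RR^4$ composed with $S^3 \hookrightarrow \RR^N$, which is smooth, of degree one, and has $\|\gamma(t)\|_{2, 2}$ uniformly bounded in $t$, so that $D_\ep(\gamma(t))$ is bounded and $V(f_{\gamma, t})$ is continuous on the compact interval $[0, 1]$ by part (a), hence bounded.

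For part (c), observe that for each fixed $\gamma \in \cP$ the function $E_{H, \ep}(\gamma(t), f_{\gamma, t}) = D_\ep(\gamma(t)) + H \cdot V(f_{\gamma, t})$ is jointly continuous in $(H, \ep, t)$ --- the dependence on $H$ is linear, the dependence on $\ep$ is polynomial with $t$-continuous coefficients, and continuity in $t$ is part (a). Hence $(H, \ep) \mapsto \sup_{t \in [0, 1]} E_{H, \ep}(\gamma(t), f_{\gamma, t})$ is continuous in $(H, \ep)$, and therefore $\omega_{H, \ep} = \inf_{\gamma \in \cP}\sup_t E_{H, \ep}(\gamma(t), f_{\gamma, t})$ is upper semicontinuous in $(H, \ep)$ --- each sublevel set $\{\omega_{H, \ep} < c\}$ is an open set as a union of open sets --- and in particular Borel measurable.
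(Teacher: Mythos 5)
Your proposal is correct, and parts (b) and (c) proceed essentially as in the paper: for (b), both exhibit an explicit latitude sweepout to bound $\omega_{H, \ep}$ from above and use the constant endpoint slice for the lower bound, and for (c), both observe that for each fixed $\gamma$ the map $(H, \ep) \mapsto \sup_t E_{H,\ep}(\gamma(t), f_{\gamma, t})$ is continuous, so $\omega_{H, \ep}$ is an infimum of continuous functions, hence upper semicontinuous and Borel.

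Part (a), however, is argued by a genuinely different route. The paper notes that for $t$ near $t_0$ one has $E_{H, \ep}(\gamma(t), f_{\gamma, t}) = E^{\cA}_{H, \ep}(\gamma(t))$, where $\cA$ is a simply-connected neighborhood of $\gamma(t_0)$, and then appeals to the regularity of the local reduction established in Proposition~\ref{prop:local-reduct}; in other words, the continuity estimate on the enclosed volume is subsumed in the earlier proof that $E^{\cA}_{H, \ep}$ is $C^2$. You instead work directly with the definition of $V$, constructing a $C^0$-continuous one-parameter family $\Phi_r \in \cE(\gamma(t))$ and invoking Lemma~\ref{lemm:volume-extension}(b),(c) together with a tail-volume estimate to compare $V(f_{\gamma, t})$ and $V(f_{\gamma, t_0})$. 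This is more elementary — it bypasses the local-reduction machinery entirely — but effectively re-derives by hand the continuity content already packaged into Proposition~\ref{prop:local-reduct}, and its tail estimate mirrors the computation~\eqref{eq:hi-V-bound} used inside Lemma~\ref{lemm:small-D-not-maximum}. One small imprecision to fix: your stated restriction $\|\gamma(t) - \gamma(t_0)\|_{C^0} < \delta_0/4$ only controls the endpoint, but the projection $\Pi$ in the second piece of $\Phi_r$ requires $\|\gamma(\alpha_r) - \gamma(t)\|_{C^0}$ to be small for \emph{all} intermediate $\alpha_r$ between $t_0$ and $t$. This is repaired by instead restricting $|t - t_0| < \rho$, with $\rho$ chosen via the uniform continuity of $\gamma$ on $[0,1]$.
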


\begin{proof}
For part (a), fix $t_0 \in [0, 1]$ and choose a simply-connected neighborhood $\cA$ of $\gamma(t_0)$. Recalling the definition of the local reduction $E^{\cA}_{H, \ep}$ induced by $(\gamma(t_0), f_{\gamma, t_0})$, it's not hard to see that 
\[
E_{H, \ep}(\gamma(t), f_{\gamma, t}) = E^{\cA}_{H, \ep}(\gamma(t)),
\]
for $t$ sufficiently close to $t_0$. The asserted continuity then follows immediately from Proposition~\ref{prop:local-reduct}(a).

For part (b), we view $S^3$ as the set $\{ x_1^2 + x_2^2 + x_3^2 + x_4^2 = 1\}$ and consider the path $\gamma$ given by parametrizing $S^3 \cap \{x_4 = 2t - 1\}$ for $t \in [0, 1]$. It is clear that, with the parametrizations suitably chosen, we have $\gamma \in \cP$, so the latter is non-empty. Thanks to part (a), this implies that $\omega_{H, \ep}$ is finite. On the other hand, the first inequality in (b) is a trivial consequence of the fact that $E_{H, \ep}(\gamma(0), f_{\gamma, 0}) = 0$ for all $\gamma \in \cP$. Finally, part (c) follows from the fact that for each $\gamma \in \cP$, the function $(H, \ep) \mapsto \sup\limits_{t \in [0, 1]}E_{H, \ep}(\gamma(t), f_{\gamma, t})$ is continuous.
\end{proof}

%Struwe's trick---------------------------------------------------------------------------------
\subsection{Derivative estimate and existence of nice sweepouts}\label{SS:struwe}
In this section, we modify the monotonicity trick of Struwe~\cite{Struwe88} to produce, for almost every $H > 0$, sweepouts enjoying nice estimates which will help us obtain critical points of $E_{H, \ep}$ for a sequence $\ep$ going to zero in the two sections to follow. The basic idea (see~\cite[Equations (4.2), (4.8), Lemma 4.1]{Struwe88}) is that the monotonicity properties of the min-max value along with standard real analysis imply derivative estimates which can be exploited to give energy bounds.

In our case we need the derivative estimates to be uniform with respect to the parameter $\ep$. This is the content of Proposition~\ref{prop:min-max-value}(c) below.
\begin{prop}\label{prop:min-max-value}
The following holds for $\omega_{H, \ep}$ as a function of $H$ and $\epsilon$.
\begin{enumerate}
\item[(a)] For each $\ep \geq 0$, the function $H \mapsto \omega_{H, \ep}/H$ is non-increasing.
\vskip 1mm
\item[(b)] For each $H > 0$, the function $\ep \mapsto \omega_{H, \ep}$ is non-decreasing.
\vskip 1mm
\item[(c)] Given a sequence $\ep_j \to 0$, for almost every $H \in \RR_+$ there exist a subsequence, which we do not relabel, and some $c > 0$, such that
\[
0 \leq \frac{d}{dH}\Big( -\frac{\omega_{H, \ep_j}}{H} \Big) \leq c,\quad \text{ for all }j \in \NN.
\]
\end{enumerate}
\end{prop}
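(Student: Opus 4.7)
For (a), I would observe that for any $\gamma \in \cP$ and any $t \in [0, 1]$,
\[
\frac{E_{H, \ep}(\gamma(t), f_{\gamma, t})}{H} = \frac{D_{\ep}(\gamma(t))}{H} + V(f_{\gamma, t}),
\]
and since $D_{\ep}(\gamma(t)) \geq 0$, the right-hand side is non-increasing in $H > 0$ for each fixed $t$ and $\gamma$. This monotonicity survives passage to $\sup_{t \in [0,1]}$ with $\gamma$ fixed, and subsequently to $\inf_{\gamma \in \cP}$, yielding the stated monotonicity of $\omega_{H, \ep}/H$. Part (b) is similar and even more direct, since $V(f_{\gamma,t})$ does not involve $\ep$ while $D_{\ep}(\gamma(t))$ is manifestly non-decreasing in $\ep$; the same inf-sup bookkeeping gives the result.

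For (c), the key idea is to combine (a) and (b) with a standard real-variable argument. Set $G_j(H) := -\omega_{H, \ep_j}/H$; by (a) each $G_j$ is non-decreasing in $H$ on $(0, \infty)$, hence differentiable almost everywhere with $G_j' \geq 0$ (this gives the lower bound in the conclusion automatically). Fix any compact interval $[a, b] \subset (0, \infty)$. By the classical integration theorem for monotone functions together with the non-negativity of $\omega_{H,\ep}$ (Lemma~\ref{lemm:min-max-finite}(b)),
\[
\int_a^b G_j'(H)\, dH \;\leq\; G_j(b) - G_j(a) \;=\; \frac{\omega_{a, \ep_j}}{a} - \frac{\omega_{b, \ep_j}}{b} \;\leq\; \frac{\omega_{a, \ep_j}}{a}.
\]
This is where (b) enters: since $\ep_j \to 0$ the sequence $\{\ep_j\}$ is bounded above by some $\ep_0 > 0$, so $\omega_{a, \ep_j} \leq \omega_{a, \ep_0} < \infty$ by Lemma~\ref{lemm:min-max-finite}(b). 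Hence $\{G_j'\}$ is bounded in $L^1([a, b])$ uniformly in $j$.

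Fatou's lemma then yields $\int_a^b \liminf_j G_j'(H)\, dH < \infty$, so $\liminf_j G_j'(H) < \infty$ for almost every $H \in [a, b]$. At each such $H$, I would extract a subsequence (depending on $H$) along which $G_j'(H) \leq c := \liminf_j G_j'(H) + 1$, giving the desired uniform upper bound. Exhausting $(0, \infty)$ by a countable union of compact intervals $[a, b]$ then gives the conclusion for a.e.\ $H \in (0, \infty)$.

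The only non-routine aspect is securing the $L^1$-bound on $G_j'$ uniformly in $j$, which is precisely where both monotonicity facts (a) and (b) must be deployed simultaneously; once this is in place, the rest is a textbook application of Fatou's lemma and subsequence extraction.
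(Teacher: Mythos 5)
Your proposal is correct and follows essentially the same route as the paper: in part (a) you directly read off the pointwise monotonicity of $E_{H,\ep}/H = D_\ep/H + V$ in $H$ and pass it through $\sup_t$ then $\inf_\gamma$, whereas the paper records the same fact as a difference identity and runs an explicit approximate-minimizer argument with a $\delta$; the two are equivalent and yours is marginally cleaner. Part (c) is the same Lebesgue/Fatou argument as the paper's, with the minor cosmetic difference that you bound $\omega_{a,\ep_j}$ via $\omega_{a,\ep_0}$ with $\ep_0 = \sup_j \ep_j$ and drop the $-\omega_{b,\ep_j}/b$ term using non-negativity, where the paper uses $\omega_{a,1}$ and $\omega_{b,0}$; either bookkeeping yields the same $L^1$-bound uniform in $j$ and hence the a.e.\ finiteness of $\liminf_j G_j'(H)$.
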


\begin{proof}
For part (a), given $H>H'>0$, for any $u \in W^{2, 2}(S^2, S^3)$ with $f\in \cE(u)$, we have by direct computation %subtracting the following two equations
%\[
%\frac{E_{H, \ep}(u, f)}{H} = \frac{D_{\ep}(u)}{H} + V(f),
%\]
%and
%\[
%\frac{E_{H', \ep}(u, f)}{H'} = \frac{D_{\ep}(u)}{H'} + V(f),
%\]
that (see also~\cite[Equation (4.2)]{Struwe88})
\begin{equation}\label{eq:differential of normalized energy}
\frac{E_{H, \ep}(u, f)}{H} - \frac{E_{H', \ep}(u, f)}{H'} = -\frac{H - H'}{H\cdot H'} D_{\ep}(u) \leq 0.
\end{equation}
Next, for any $\delta>0$, there exists $\gamma \in \cP$ such that
\[
\max_{t\in [0, 1]} E_{H', \ep}(\gamma(t), f_{\gamma, t}) \leq \omega_{H', \ep} + \delta.
\]
Therefore,
\begin{equation}\label{eq:supremum-monotonicity}
\frac{\omega_{H, \ep}}{H} \leq \max_{t\in[0, 1]} \frac{E_{H, \ep}(\gamma(t), f_{\gamma, t})}{H} \leq  \max_{t\in [0, 1]} \frac{E_{H', \ep}(\gamma(t), f_{\gamma, t})}{H'} \leq \frac{\omega_{H', \ep}}{H'} + \frac{\delta}{H'},
\end{equation}
where we used~\eqref{eq:differential of normalized energy} to get the second inequality. The monotonicity of $H \mapsto \omega_{H, \ep}/H$ then follows by the arbitrariness of $\delta$.

\vskip 1mm
For part (b), note that for any given $u\in W^{2, 2}(S^2, S^3)$ and $f \in \cE(u)$, the map $\ep \mapsto D_{\ep}(u)$ is non-decreasing. The conclusion then follows the same way as part (a). 

\vskip 1mm
For part (c), by part (a) and Lemma~\ref{lemm:min-max-finite}(b), we know that for each $j$, the derivative $\frac{d}{dH}\Big(-\frac{\omega_{H, \ep_j}}{H} \Big)$ exists almost everywhere for $H\in (0, +\infty)$ and is non-negative. Furthermore, given any $0<a<b<+\infty$, we have
\[
\frac{\omega_{a, \ep_j}}{a} - \frac{\omega_{b, \ep_j}}{b} \geq \int_a^b \frac{d}{dH}\Big(-\frac{\omega_{H, \ep_j}}{H} \Big) dH.
\]
(See for instance~\cite[Theorem (7.21)]{Zyg}.) Then by Fatou's Lemma, 
\begin{align}\label{eq:Fatou}
\int_a^b \liminf_{j \to \infty}\frac{d}{dH}\Big(-\frac{\omega_{H, \ep_j}}{H} \Big) dH &\leq \liminf_{j \to \infty} (\frac{\omega_{a, \ep_j}}{a} - \frac{\omega_{b, \ep_j}}{b}) \leq \frac{\omega_{a, 1}}{a} - \frac{\omega_{b, 0}}{b} < \infty,
\end{align}
where we used part (b) for the second inequality. Therefore, for almost every $H\in [a, b]$, 
\[
\liminf_{j \to \infty}\frac{d}{dH}\Big(-\frac{\omega_{H, \ep_j}}{H} \Big) < +\infty.
\]
The conclusion follows by the arbitrariness of $a$ and $b$.
\end{proof}

The next result shows how the derivative bound in Proposition~\ref{prop:min-max-value}(c) translates into the existence of minimizing sequences with $D_\ep$ uniformly bounded for almost min-max slices.

\begin{lemm}\label{lem:existence of good sweepouts}
Let $H > 0, \ep \in (0, 1)$ and suppose for some constant $c$ we have
\[
0 \leq \frac{d}{dH}\Big( -\frac{\omega_{H, \ep}}{H} \Big) \leq c.
\]
Then $\cP_{H/k, 8H^2c}$ is non-empty for all $k$ sufficiently large.
\end{lemm}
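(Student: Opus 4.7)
The plan is Struwe's monotonicity trick \cite{Struwe88}, quantified so as to track $D_{\ep}$. The heart of the argument is the identity, which one verifies directly from the definitions of $E_{H,\ep}$ and $V$: for any $u \in W^{2,2}(S^{2};S^{3})$, $f \in \cE(u)$, and $0 < H' < H$,
\[
\frac{E_{H,\ep}(u,f)}{H} - \frac{E_{H',\ep}(u,f)}{H'} \;=\; -\frac{H-H'}{HH'}\, D_{\ep}(u).
\]
In particular $E_{H,\ep}/H \leq E_{H',\ep}/H'$ pointwise, and the deficit between the two sides quantifies $D_{\ep}(u)$.

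From the hypothesis that $\frac{d}{dH}(-\omega_{H,\ep}/H) \leq c$ at our fixed value of $H$, combined with the monotonicity in Proposition~\ref{prop:min-max-value}(a), for all sufficiently small $h > 0$ I obtain
\[
0 \;\leq\; \frac{\omega_{H-h,\ep}}{H-h} - \frac{\omega_{H,\ep}}{H} \;\leq\; 2c\,h,
\]
the factor $2$ absorbing the $o(1)$ coming from the definition of the derivative. Fixing $k$ large, I set $h := 1/(4ck)$ and $H' := H - h$, so that $H' \geq H/2$, and pick $\gamma \in \cP$ with
\[
\max_{t \in [0,1]} E_{H',\ep}(\gamma(t),\, f_{\gamma, t}) \;\leq\; \omega_{H',\ep} + \tfrac{H'}{4k},
\]
such a $\gamma$ existing by the definition of $\omega_{H',\ep}$ as an infimum over $\cP$.

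The claim is that $\gamma \in \cP_{H/k,\, 8H^{2}c}$. For condition (i), the key identity and $D_{\ep} \geq 0$ yield $E_{H,\ep}(\gamma(t))/H \leq E_{H',\ep}(\gamma(t))/H' \leq (\omega_{H',\ep} + H'/(4k))/H'$, and applying $\omega_{H',\ep}/H' \leq \omega_{H,\ep}/H + 2ch = \omega_{H,\ep}/H + 1/(2k)$ gives $E_{H,\ep}(\gamma(t),f_{\gamma,t}) \leq \omega_{H,\ep} + H/k$. For condition (ii), when $E_{H,\ep}(\gamma(t),f_{\gamma,t}) \geq \omega_{H,\ep} - H/k$, solving the key identity for $D_{\ep}$ and combining the bounds above yields
\[
\tfrac{h}{HH'}\, D_{\ep}(\gamma(t)) \;=\; \tfrac{E_{H',\ep}(\gamma(t))}{H'} - \tfrac{E_{H,\ep}(\gamma(t))}{H} \;\leq\; 2ch + \tfrac{1}{4k} + \tfrac{1}{k} \;=\; \tfrac{7}{4k},
\]
from which $D_{\ep}(\gamma(t)) \leq 7HH'c \leq 8H^{2}c$.

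The main technical subtlety is the passage from a derivative bound at the single value $H$ to a finite-difference comparison over $[H-h, H]$, but this follows immediately from the definition of the derivative once $h$ is chosen sufficiently small (equivalently $k$ is large enough). Beyond this, the argument amounts to balancing the small parameters $h$ and $\eta := H'/(4k)$ carefully enough that both conditions defining $\cP_{H/k,\, 8H^{2}c}$ are achieved simultaneously, which is what dictates the precise choices above.
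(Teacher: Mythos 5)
Your proof is correct and is essentially the paper's own argument: you take $H' = H - h$ with $h = 1/(4ck)$, convert the derivative bound at $H$ into the finite-difference estimate $\omega_{H',\ep}/H' - \omega_{H,\ep}/H \leq 2ch$ via Proposition~\ref{prop:min-max-value}(a), pick a near-optimal sweepout for $E_{H',\ep}$, and use the exact identity $E_{H,\ep}/H - E_{H',\ep}/H' = -\frac{H-H'}{HH'}D_\ep$ to verify both conditions defining $\cP_{H/k,\,8H^2 c}$. The only cosmetic differences from the paper are the choice of slack $H'/(4k)$ instead of $H'/(2k)$ when selecting the sweepout, which makes the final bound $7HH'c$ rather than $8HH'c$; both are dominated by $8H^2c$.
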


\begin{proof}
Choose $H_k = H - \delta_k$, where
\[
\delta_k = 1/(4ck). 
\]
By assumption, there exists $k_0 \in \NN$ so that for $k\geq k_0$, we have 
\begin{equation}\label{eq:diff-quotient}
\frac{1}{H-H_k} \Big( \frac{\omega_{H_k, \ep}}{H_k} - \frac{\omega_{H, \ep}}{H} \Big) \leq 2 c.
\end{equation}
In particular, 
\begin{equation}\label{eq:H-Hk-difference}
\frac{\omega_{H_k, \ep}}{H_{k}} \leq \frac{\omega_{H, \ep}}{H} + 2c\delta_k = \frac{\omega_{H, \ep}}{H} + \frac{1}{2k}, \text{ for all }k \geq k_0.
\end{equation}
Next, for each such $k \geq k_0$, there exists $\gamma_k\in \cP$, such that 
\begin{equation}\label{eq:lower energy bound}
\frac{1}{H_k} \max_{t\in [0, 1]} E_{H_k, \ep}(\gamma_k(t), f_{\gamma_k, t}) \leq \frac{1}{H_k} \omega_{H_k, \ep} + \frac{1}{2k}.
\end{equation}
Combining this with~\eqref{eq:H-Hk-difference} and the middle inequality of~\eqref{eq:supremum-monotonicity}, we get
\begin{align}\label{eq:minimizing-sequence}
\max_{t\in [0, 1]} E_{H, \ep}(\gamma_k(t), f_{\gamma_k, t})  & \leq H  (\frac{\omega_{H_k, \ep}}{H_k}) + \frac{H}{2k}\nonumber  \\
& \leq \omega_{H, \ep}  + \frac{H}{2k} + \frac{H}{2k} = \omega_{H, \ep} + \frac{H}{k}.
\end{align}

On the other hand, suppose $t\in [0, 1]$ satisfies that
\begin{equation}\label{eq:upper energy bound}
\frac{1}{H} E_{H, \ep}(\gamma_k(t), f_{\gamma_k, t}) \geq \frac{1}{H} \omega_{H, \ep} - \frac{1}{k}.
\end{equation}
Then, for $k \geq k_0$, by subtracting \eqref{eq:upper energy bound} from \eqref{eq:lower energy bound} and using \eqref{eq:differential of normalized energy}, we have 
\[
\begin{aligned}
\frac{D_\ep(\gamma_k(t))}{H\cdot H_k} 
& = \frac{1}{H-H_k}\Big( \frac{E_{H_k, \ep}(\gamma_{k}(t), f_{\gamma_k, t})}{H_k} - \frac{E_{H, \ep}(\gamma_{k}(t), f_{\gamma_k, t})}{H} \Big)\\
& \leq \frac{1}{H-H_k} \Big( \frac{\omega_{H_k, \ep}}{H_k} - \frac{\omega_{H, \ep}}{H} + \frac{3}{2k} \Big)\\
& \leq \frac{1}{H-H_k} \Big( \frac{\omega_{H_k, \ep}}{H_k} - \frac{\omega_{H, \ep}}{H} \Big) + 6c\\
& \leq 8c,
\end{aligned}
\]
where we used~\eqref{eq:diff-quotient} to get the last line. Therefore 
\begin{equation}\label{eq:D-estimate}
D_\ep(\gamma_k(t)) \leq 8 H H_k c \leq 8H^2 c.
\end{equation}
By~\eqref{eq:minimizing-sequence} and~\eqref{eq:D-estimate}, we conclude that $\gamma_{k} \in \cP_{H/k, 8H^2c}$.
\end{proof}

%------------------------------------------------------------------------------
\subsection{Non-trivial critical points of the perturbed functional with uniform energy bound}\label{SS:existence-non-trivial-critical}

The main result of this section is Proposition~\ref{prop:critical-uniform-bound}, which yields the objects described in the title when combined with Proposition~\ref{prop:min-max-value} and Lemma~\ref{lem:existence of good sweepouts}. We will however not bring these two latter results in until we've established one more ingredient in the next section.  The proof of Proposition~\ref{prop:critical-uniform-bound} is interrupted by Lemmas~\ref{lem:pseudo-gradient vector field} and~\ref{lemm:existence-time}, which concern the pseudo-gradient vector field and the existence time of its associated flow. We start with a lemma which will be used to show the non-triviality of min-max critical points.

\begin{lemm}[Compare~\cite{Struwe88}, Lemma 2.4]
\label{lemm:small-D-not-maximum}
Given $H_0>0$, $\ep \in (0, 1]$, there exist positive $\eta_1(\ep, H_0), \eta_2(\ep, H_0)>0$ such that if $\gamma \in \cP$ and the slice at some $t_0$ satisfies
\begin{equation}\label{eq:small-D}
D_{\ep}(\gamma(t_0)) < \eta_1(\ep, H_0), 
\end{equation}
then for $H \in (0, H_0]$ we have
\begin{equation}
E_{H, \ep}(\gamma(t_0), f_{\gamma, t_0}) < \max_{0 \leq t \leq 1} E_{H, \ep}(\gamma(t), f_{\gamma, t}) - \eta_2(\ep, H_0).
\end{equation}
\end{lemm}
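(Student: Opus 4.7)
My plan is a contradiction argument centered on an auxiliary quantitative claim: for every $\mu > 0$ there exists $C_\ep(\mu) > 0$ (depending on $\ep$ and $\mu$) so that whenever $\gamma \in \cP$ and $[t_1, t_2] \subset [0, 1]$ satisfy $|V(f_{\gamma, t_2}) - V(f_{\gamma, t_1})| \geq \mu$, some slice in $[t_1, t_2]$ has $D_\ep(\gamma(t)) \geq C_\ep(\mu)$. To establish the claim I would argue the contrapositive: by Lemma~\ref{lemm:D-equivalence}(b) and the Sobolev embedding $W^{2, 2}(S^2) \hookrightarrow C^0(S^2)$, the hypothesis $D_\ep(\gamma(t)) \leq C_\ep(\mu)$ on $[t_1, t_2]$ gives $\|\gamma(t) - p(t)\|_{C^0} \lesssim \ep^{-1}\sqrt{C_\ep(\mu)}$ for a continuously varying choice of constants $p(t) \in S^3$ (the projection to $S^3$ of the spherical average of $\gamma(t)$). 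Attaching the linear-interpolation caps from $\gamma(t_i)$ to $p(t_i)$ at the two endpoints and the path of constants $p$ in between produces a map $S^3 \to S^3$ that is homotopic, through nearby maps staying in the tubular neighborhood $\cV$, to a map factoring through an interval; it therefore has degree zero. An isoperimetric estimate for near-constant maps yields $|V(\text{cap})| \leq C\ep^{-1} D_\ep(\gamma(t_i))^{3/2}$, so the degree-zero identity forces $|V(f_{\gamma, t_2}) - V(f_{\gamma, t_1})| \leq 2 C \ep^{-1} C_\ep(\mu)^{3/2}$. Choosing $C_\ep(\mu) \sim \mu^{2/3}$ with constants depending on $\ep$ makes the right-hand side smaller than $\mu$, a contradiction.

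Now suppose for contradiction that the lemma fails, producing sequences $\gamma_k \in \cP$, $t_{0, k} \in [0, 1]$, $H_k \in (0, H_0]$ with $D_\ep(\gamma_k(t_{0, k})) \to 0$ and $\max_t E_{H_k, \ep}(\gamma_k(t)) - E_{H_k, \ep}(\gamma_k(t_{0, k})) \to 0$. The same Sobolev argument places $\gamma_k(t_{0, k})$ within $C^0$-distance $o(1)$ of a constant, and then the closing-up reasoning together with Lemma~\ref{lemm:volume-extension}(a) yields $V(f_{\gamma_k, t_{0, k}}) \to m_* \Vol_g(S^3)$ along a subsequence for some integer $m_*$. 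A further extraction gives $H_k \to H_* \in [0, H_0]$. Comparing with $E_{H_k, \ep}(\gamma_k(1)) = H_k \Vol_g(S^3)$ and the near-maximality of $E_{H_k, \ep}(\gamma_k(t_{0, k}))$ forces $m_* \geq 1$ in the main case $H_* > 0$.

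In this main case, fix $\mu = \mu(\ep, H_0) > 0$ small enough that $C_\ep(\mu) > 2 H_0 \mu$ (possible since $C_\ep(\mu) \sim \mu^{2/3}$ dominates $H_0 \mu$ as $\mu \to 0$). For $k$ large set $t_{\mu, k} = \sup\{t \in [0, t_{0, k}] : V(f_{\gamma_k, t}) \leq m_* \Vol_g(S^3) - \mu\}$; by continuity $V(f_{\gamma_k, t_{\mu, k}}) = m_* \Vol_g(S^3) - \mu$ and $V(f_{\gamma_k, t}) \geq m_* \Vol_g(S^3) - \mu$ on $[t_{\mu, k}, t_{0, k}]$. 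The auxiliary claim applied to this interval (on which $V$ changes by $\mu$) produces $t^*_k$ with $D_\ep(\gamma_k(t^*_k)) \geq C_\ep(\mu)$, and hence
\[
\max_t E_{H_k, \ep}(\gamma_k(t)) - E_{H_k, \ep}(\gamma_k(t_{0, k})) \geq C_\ep(\mu) - H_k \mu + o(1) \geq C_\ep(\mu)/2 + o(1),
\]
contradicting the assumption that the left-hand side tends to zero.

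The residual case $H_* = 0$ is analogous: setting $s_k = \sup\{t : V(f_{\gamma_k, t}) \leq 0\}$, the auxiliary claim with $\mu = \Vol_g(S^3)$ applied to $[s_k, 1]$ produces a slice with $D_\ep \geq C_\ep(\Vol_g(S^3))$ and $V \geq 0$, whence $\max E \geq C_\ep(\Vol_g(S^3)) > 0$, contradicting $E_{H_k, \ep}(\gamma_k(t_{0, k})) \to 0$ (which uses that $V(f_{\gamma_k, t_{0, k}})$ stays bounded along the extracted subsequence). The main obstacle is the rigorous implementation of the auxiliary claim, in particular a careful execution of the degree-zero closing-up argument and the quantitative isoperimetric-type cap estimate $|V(\text{cap})| \leq C \ep^{-1} D_\ep^{3/2}$, which relies on the bounded geometry of $\cV$ and on Lemma~\ref{lemm:volume-extension} to extend $V$ continuously past $C^0$ regularity.
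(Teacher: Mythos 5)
The heart of your argument -- the capping-off-and-degree-zero step to control volume \emph{differences} along an interval where $D_\ep$ stays small -- is the same technical idea the paper uses, and your auxiliary claim is essentially a restatement of the key estimate from the paper's proof. However, there is a genuine gap in the contradiction machinery you wrap around it: the extraction of a fixed integer $m_*$ with $V(f_{\gamma_k, t_{0,k}}) \to m_*\Vol_g(S^3)$. The closing-up argument only shows $V(f_{\gamma_k, t_{0,k}}) = N_k\Vol_g(S^3) + o(1)$ for \emph{some} integers $N_k$; to pass to a convergent subsequence you need $N_k$ bounded, which is not justified and is not automatic. Indeed the contradiction hypothesis only pins down $D_\ep(\gamma_k(t_{0,k}))$ and the gap $\max_t E - E(\gamma_k(t_{0,k}))$; it gives no a priori bound on $V(f_{\gamma_k, t_{0,k}})$, which can escape to $+\infty$ (for instance if $H_k \to 0$ while $H_k V(f_{\gamma_k, t_{0,k}})$ stays bounded away from zero). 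You flag this yourself in the residual case ``(which uses that $V(f_{\gamma_k, t_{0,k}})$ stays bounded'') but do not close the loop; as written, the $H_* = 0$ branch does not yield a contradiction when $E_{H_k,\ep}(\gamma_k(t_{0,k})) \not\to 0$.

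The gap is repairable: replace $m_*\Vol_g(S^3)$ by $V(f_{\gamma_k, t_{0,k}})$ itself in the definition of $t_{\mu,k}$, and split on whether $V(f_{\gamma_k, t_{0,k}}) \geq \mu$ or not (handling the second case by working on $[t_{0,k}, 1]$ with an analogously defined stopping time). Note also a sign slip: the error term in your displayed inequality should be $-o(1)$, not $+o(1)$. It is worth contrasting with the paper's proof, which is architecturally more robust precisely because it never looks at the absolute size of $V$: it picks $t_1$ with $D_\ep(\gamma(t_1)) = \alpha$ by the intermediate value theorem (using that any $\cP$-sweepout has $\max_t D_\ep \geq \alpha_0$), caps off the segment $[t_0, t_1]$, and then estimates the \emph{difference} $E(\gamma(t_1), f_{\gamma,t_1}) - E(\gamma(t_0), f_{\gamma,t_0})$ directly, so that no subsequences, limits, or boundedness of $V$ are ever needed, and the constants $\eta_1, \eta_2$ come out explicitly rather than existentially.
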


\begin{proof}
We first note that there exists $\alpha_0 > 0$ such that if $\gamma \in \cP$ then 
\[
\max_{0 \leq t \leq 1} D_{\ep}(\gamma(t)) \geq \alpha_0,
\]
for otherwise we may use Lemma~\ref{lemm:D-equivalence}(b) and the Poincar\'e inequality to show that the induced map $f_{\gamma, 1} = h_\gamma$ is null-homotopic, contradicting the definition of $\cP$. Hence, if 
\[
\eta_1 < \alpha < \alpha_0,
\]
then there must exist some $t' \neq t_0$ such that $D_{\ep}(\gamma(t'))\geq \alpha$. Without loss of generality we assume $t' > t_0$ and define $t_1 = \inf\{t \geq t_0\ |\ D_{\ep}(\gamma(t)) \geq \alpha\}$. Then 
\begin{equation}\label{eq:t1-not-so-small}
D_{\ep}(\gamma(t_1))  = \alpha,
\end{equation} 
\begin{equation}\label{eq:small-D-path}
D_{\ep}(\gamma(t)) \leq \alpha \text{ for all }t \in [t_0, t_1].
\end{equation}
To continue, note that by Lemma~\ref{lemm:D-equivalence}(b) and the Poincar\'e inequality applied to $\gamma(t_0), \gamma(t_1)$, we get constants $\widetilde{c}_0, \widetilde{c}_1 \in \RR^N$ such that 
\[
\|\gamma(t_i) - \widetilde{c}_i\|_{\infty} < K_1\sqrt{\alpha},\ i = 0, 1,
\] 
where $K_1$ depends on $\ep$ and on $A_0$ from Lemma~\ref{lemm:D-equivalence}. Since the $\gamma(t_i)$ map into $S^3$, we further have  $\dist(\widetilde{c_i}, S^3) < K_1\sqrt{\alpha}$, and hence, provided $K_1\sqrt{\alpha} < \delta_0/2$, where $\delta_0$ is from Lemma~\ref{lemm:volume-properties}(c), we may define $c_i = \Pi(\widetilde{c}_i)$ and find that
\[
\|\gamma(t_i) - c_i\|_{\infty} < 2K_1\sqrt{\alpha} < \delta_0,\ \text{ for } i = 0, 1.
\]

We next want to estimate the volume enclosed between $\gamma(t_0)$ and $\gamma(t_1)$. Note that, since $\|\gamma(t_i) - c_i\|_{\infty} < \delta_0$ we may define $h_1, h_2 : [0, 1] \times S^2 \to S^3$ by 
\[
h_i(s, x) = \Pi(s\gamma(t_i)(x) + (1 - s)c_i),
\]
and observe that by~\eqref{eq:small-D-path} along with Lemma~\ref{lemm:D-equivalence} and Lemma~\ref{lemm:projection-estimates}(a), we have
\begin{equation}\label{eq:hi-D-bound}
D_{\ep}(h_i(s, \cdot)) < K_2\alpha, \text{ for all }s \in [0, 1] \text{ and }i = 1, 2.
\end{equation}
Moreover, if $L$ denotes an upper bound for $|d\Pi|$ on the tubular neighborhood $\cV$ of $S^3$, then a direct computation shows that  
\begin{align}\label{eq:hi-V-bound}
|V(h_i)| \leq L^{3} \|\gamma(t_i) - c_i\|_{\infty} \int_{S^2} |\nabla \gamma(t_i)|^2 &\leq 4L^3K_1\sqrt{\alpha} D_{\ep}(\gamma(t_i)).
\end{align}
To continue, we let $f_{\gamma, t_0, t_1}:[0, 1] \times S^2 \to S^3$ be defined by 
\[
f_{\gamma, t_0, t_1}(s, x) = \gamma\big( s t_1 + (1 - s)t_0, x \big),
\]
and let $f: [0, 1] \times S^2 \to S^3$ be the concatenation obtained by following $h_0, f_{\gamma, t_0, t_1}$ and then the reverse of $h_1$. By~\eqref{eq:small-D-path} and~\eqref{eq:hi-D-bound}, we see that if $(1 + K_2)\alpha < \alpha_0$ then
\[
D_{\ep}(f(s, \cdot)) < \alpha_0 \text{ for all }s \in [0, 1],
\]
and hence $f$ induces a degree-zero map from $S^3$ to $S^3$. This in turn implies that 
\[
V(f_{\gamma, t_1}) - V(f_{\gamma, t_0}) = V(h_1) - V(h_0).
\]
Consequently we may estimate
\begin{align*}
\big| V(f_{\gamma, t_1}) - V(f_{\gamma, t_0})\big| &\leq \big| V(h_0) \big| + \big| V(h_1) \big|\\
&\leq CL^3K_1\sqrt{\alpha}\big( D_{\ep}(\gamma(t_0)) + D_{\ep}(\gamma(t_1)) \big).
\end{align*}

We are now ready to finish the proof. Indeed, recalling the definition of $E_{H, \ep}$ and the assumption $H \leq H_0$, by the triangle inequality we have
\begin{align*}
E_{H, \ep}(\gamma(t_1), f_{\gamma, t_1}) - E_{H, \ep}(\gamma(t_0), f_{\gamma, t_0}) \geq &\ D_{\ep}(\gamma(t_1)) - D_{\ep}(\gamma(t_0)) - H_0 \big| V(f_{\gamma, t_1}) - V(f_{\gamma, t_0})\big|.
\end{align*}
Using the previous inequality to estimate $\big| V(f_{\gamma, t_1}) - V(f_{\gamma, t_0})\big|$ and then rearranging, we get
\begin{align*}
&\ E_{H, \ep}(\gamma(t_1), f_{\gamma, t_1}) - E_{H, \ep}(\gamma(t_0), f_{\gamma, t_0}) \geq\\
 \geq&\  (1 - CH_0L^3 K_1\sqrt{\alpha}) D_{\ep}(\gamma(t_1)) - (1 + CH_0L^3K_1 \sqrt{\alpha})D_{\ep}(\gamma(t_0)) \\
 >&\ (1 - CH_0L^3 K_1\sqrt{\alpha}) \alpha - (1 + CH_0L^3K_1 \sqrt{\alpha})\eta_1,
\end{align*}
where the last line follows from~\eqref{eq:t1-not-so-small} and our assumption on $\gamma(t_0)$. Upon requiring, in addition to the above thresholds on $\alpha$, that $CH_0L^3K_1\sqrt{\alpha} < 1/2$ and then choosing $\eta_1$ such that $\eta_1 < \alpha$ and $(1 + CH_0L^3K_1\sqrt{\alpha})\eta_1 < \alpha/4$, we conclude the proof with $\eta_2 = \alpha/4$. 
\end{proof}

We are now ready to extract a strongly convergent min-max sequence with non-trivial limit out of a sequence of good sweepouts using pseudo-gradient vector fields.

\begin{prop}\label{prop:critical-uniform-bound}
Given $H > 0, \ep \in (0, 1]$, suppose for some $C_0 > 0$ there exist sweepouts $\gamma_k \in \cP_{\alpha_k, C_0}$ for all $k$, where $\alpha_k \to 0$. Then, passing to a subsequence if necessary, there exists $t_k \in [0, 1]$ so that the following hold.
\vskip 2mm
\begin{enumerate}
\item[(a)] $|E_{H, \ep}(\gamma_k(t_k), f_{\gamma_k, t_k}) - \omega_{H, \ep}| \leq \alpha_{k}$. In particular, $E_{H, \ep}(\gamma_k(t_k), f_{\gamma_k, t_k}) \to \omega_{H, \ep}$ as $k \to \infty$.
\vskip 1mm
\item[(b)]  $\gamma_k(t_k)$ converges strongly in $W^{2, 2}(S^2; S^3)$ to some $u$ lying in $\cK_{C_0}$. 
\vskip 1mm
\item[(c)] The limiting map $u$ in part (b) is non-constant. In fact, $D_{\ep}(u) \geq \eta_1(\ep, H)$, where the latter is given by Lemma~\ref{lemm:small-D-not-maximum}.
 \end{enumerate}
\end{prop}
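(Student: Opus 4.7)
The plan is a standard min-max deformation, combined with the Palais--Smale condition from Proposition~\ref{prop:PS} for compactness and Lemma~\ref{lemm:small-D-not-maximum} for non-triviality. The technical heavy lifting is handled by the pseudo-gradient constructions in Lemmas~\ref{lem:pseudo-gradient vector field} and~\ref{lemm:existence-time}, to which the proof will appeal.

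For each $k$, set
\[
A_k = \{t \in [0, 1] : E_{H, \ep}(\gamma_k(t), f_{\gamma_k, t}) \geq \omega_{H, \ep} - \alpha_k\}.
\]
This is closed and nonempty, and property~(ii) of $\cP_{\alpha_k, C_0}$ forces $D_\ep(\gamma_k(t)) \leq C_0$ for $t \in A_k$. The first step is to show that, after passing to a subsequence, one can pick $t_k \in A_k$ with $\|G_{H, \ep}(\gamma_k(t_k))\| \to 0$. Suppose instead that some $\delta > 0$ satisfies $\|G_{H, \ep}(\gamma_k(t))\| \geq \delta$ for all $t \in A_k$ and all large $k$. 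Then the pseudo-gradient vector field from Lemma~\ref{lem:pseudo-gradient vector field}, defined on a $W^{2, 2}$-neighborhood of $\{u : \|G_{H, \ep}(u)\| \geq \delta/2,\ D_\ep(u) \leq C_0 + 1\}$, generates via Lemma~\ref{lemm:existence-time} a flow of uniform positive existence time along which $E_{H, \ep}$ decreases at a definite rate. Multiplying the vector field by a cutoff in the sweepout parameter $t$ that vanishes near $t = 0, 1$ (possible since the endpoint slices have fixed energies strictly below $\omega_{H, \ep} - \alpha_k$ for large $k$) gives a deformation that preserves the boundary values of $\gamma_k$ and its degree, hence keeps it in $\cP$, while producing a sweepout whose max energy is strictly below $\omega_{H, \ep} - \alpha_k/2$, contradicting the definition of $\omega_{H, \ep}$.

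With such $t_k$ in hand, $\gamma_k(t_k)$ is a $W^{2, 2}$-bounded Palais--Smale sequence, so Proposition~\ref{prop:PS} yields, along a further subsequence, strong $W^{2, 2}$-convergence $\gamma_k(t_k) \to u$ with $\delta E_{H, \ep}(u) = 0$ and $D_\ep(u) \leq C_0$. Sobolev embedding gives $C^0$-convergence, and via Lemma~\ref{lemm:volume-properties}(c) the extensions $f_{\gamma_k, t_k}$ may be concatenated with a short bridging path to produce an $f \in \cE(u)$ with $V(f_{\gamma_k, t_k}) \to V(f)$. Combined with $D_\ep(\gamma_k(t_k)) \to D_\ep(u)$, this shows $E_{H, \ep}(u, f) = \omega_{H, \ep}$, so $u \in \cK_{C_0}$. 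Part (a) is built into the choice $t_k \in A_k$ together with property~(i) of $\cP_{\alpha_k, C_0}$, and part (b) is established.

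For part (c), the same two facts give $\max_t E_{H, \ep}(\gamma_k(t), f_{\gamma_k, t}) - E_{H, \ep}(\gamma_k(t_k), f_{\gamma_k, t_k}) \leq 2\alpha_k$. If $D_\ep(\gamma_k(t_k)) < \eta_1(\ep, H)$ along a subsequence, Lemma~\ref{lemm:small-D-not-maximum} with $H_0 = H$ forces this difference to exceed $\eta_2(\ep, H)$, contradicting $\alpha_k \to 0$. Hence $D_\ep(\gamma_k(t_k)) \geq \eta_1(\ep, H)$ eventually, and strong $W^{2, 2}$-convergence passes this positive lower bound to $u$. The main obstacle is the deformation step: one must build a pseudo-gradient flow on the infinite-dimensional Hilbert manifold $W^{2, 2}(S^2; S^3)$ that respects both the constraint $u(S^2) \subset S^3$ and the boundary conditions defining $\cP$, while still lowering the max energy by a definite amount --- this is precisely the content entrusted to Lemmas~\ref{lem:pseudo-gradient vector field} and~\ref{lemm:existence-time}.
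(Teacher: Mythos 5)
Your proposal tracks the paper's proof essentially step for step: the contradiction hypothesis that $\|G_{H,\ep}(\gamma_k(t))\|$ stays bounded away from zero on the set of near-maximal parameters, the pseudo-gradient flow (Lemmas~\ref{lem:pseudo-gradient vector field} and~\ref{lemm:existence-time}), the cutoff deformation contradicting the definition of $\omega_{H,\ep}$, Palais--Smale (Proposition~\ref{prop:PS}) for convergence, the bridging extension with discreteness from Lemma~\ref{lemm:volume-properties} for part (b), and Lemma~\ref{lemm:small-D-not-maximum} for part (c). One small imprecision: your cutoff should be supported inside the open set $\{t : E_{H,\ep}(\gamma_k(t), f_{\gamma_k,t}) > \omega_{H,\ep} - \alpha_k\}$ (so that the flow is only applied where the uniform gradient lower bound and the $D_\ep \leq C_0$ bound from property~(ii) of $\cP_{\alpha_k,C_0}$ both hold), not merely ``vanish near $t=0,1$''; the paper's $I_k \subset J_k$ sandwich makes this explicit, and it is also where Lemma~\ref{lemm:small-D-not-maximum} is needed to keep the endpoints out of $J_k$.
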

\begin{rmk}\label{rmk:lower-bound-promise}
Conclusion (c) only gives a lower bound in terms of $\ep$. A uniform lower bound will be derived later in Proposition \ref{prop:lower-bound}.
\end{rmk}

\begin{proof}
To begin we define  
\begin{equation}\label{eq:J_k}
J_k = \{t \in [0, 1]\ |\ E_{H, \ep}(\gamma_{k}(t), f_{\gamma_{k}, t}) > \omega_{H, \ep} - \alpha_k\}.
\end{equation}
In view of Proposition~\ref{prop:PS}, we first want to establish the following statement: 
\vskip 2mm
\noindent\textit{For all $\delta > 0$, there exists $k_0 \in \NN$ such that 
\begin{equation}\label{eq:property-star}
\inf \big\{ \| G_{H, \ep}(\gamma_k(t)) \| \ |\  t \in J_k\big\} < \delta \text{, for all }k \geq k_0. \tag{$\ast$}
\end{equation}}

We will prove~\eqref{eq:property-star} by contradiction. That is, suppose we can find some $\delta>0$ and a subsequence of $\gamma_k$, which we do not relabel, such that for all $k$ there holds
\[
\| G_{H, \ep}(\gamma_k(t)) \| \geq \delta, \text{ for all }t \in J_k.
\]

\begin{lemm}%[Existence of pseudo-gradient vector field]
\label{lem:pseudo-gradient vector field}
 There exists a locally Lipschitz continuous map $X: \tilde{V} \to W^{2, 2}(S^2; \RR^N)$,
where
\[
\tilde{V} =  \{ u \in W^{2, 2}(S^2; S^3)\ |\ \delta E_{H, \ep}(u) \neq 0 \} ,
\]
such that 
\begin{enumerate}
\item $X(u) \in \cT_{u}$, for every $u\in \tilde V$.
\vskip 1mm
\item $\|X(u)\|_{2, 2} < 2 \min\{ \|G_{H, \ep}(u)\|, 1 \}$.
\vskip 1mm
\item \[\langle G_{H, \ep}(u), X(u) \rangle < -\frac{\min\{ \| G_{H, \ep}(u)\|,  1\} \|G_{H, \ep}(u)\|}{A_1 \big( 1 + \|u\|_{2, 2}^2 \big)},\] where the constant $A_1$ is from Lemma~\ref{lemm:projection-estimates}.
\end{enumerate}
\end{lemm}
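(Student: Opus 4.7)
The plan is to carry out the standard pseudo-gradient vector field construction, adapted to our setting: work locally with a near-optimal direction $\widetilde\psi$ in $W^{2,2}(S^2;\RR^N)$, project it pointwise onto $\cT_u$ via $P_u$, scale the result appropriately in $u$, and then glue the resulting local fields via a Lipschitz partition of unity.

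For each $u_0 \in \tilde V$, by the definition of the operator norm of $G_{H,\ep}(u_0)$, I would first pick $\widetilde\psi_{u_0} \in W^{2,2}(S^2;\RR^N)$ with $\|\widetilde\psi_{u_0}\|_{2,2} = 1$ and
\[
G_{H,\ep}(u_0)(\widetilde\psi_{u_0}) < -\theta\,\|G_{H,\ep}(u_0)\|,
\]
for a fixed constant $\theta \in (1/\sqrt 2, 1)$; for instance, $\theta = 4/5$. Setting $\psi_{u_0}(u) := P_u(\widetilde\psi_{u_0}) \in \cT_u$, one has $G_{H,\ep}(u)(\widetilde\psi_{u_0}) = \delta E_{H,\ep}(u)(\psi_{u_0}(u))$, and $\psi_{u_0}(u_0) \neq 0$ (otherwise the above inequality would be violated at $u_0$). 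By the continuity statements in Proposition~\ref{prop:first-var-properties}(c) and Lemma~\ref{lemm:projection-estimates}(b), there is a neighborhood $U_{u_0} \subset \tilde V$ of $u_0$ on which
\[
G_{H,\ep}(u)(\widetilde\psi_{u_0}) < -\theta\,\|G_{H,\ep}(u)\| \quad\text{and}\quad \|\psi_{u_0}(u)\|_{2,2} > 0.
\]

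On $U_{u_0}$ I would then define the local field
\[
X_{u_0}(u) := \frac{2\theta\,\min\{\|G_{H,\ep}(u)\|,1\}}{\|\psi_{u_0}(u)\|_{2,2}}\,\psi_{u_0}(u).
\]
Property (1) is immediate. For (2), one has $\|X_{u_0}(u)\|_{2,2} = 2\theta\,\min\{\|G_{H,\ep}(u)\|,1\} < 2\min\{\|G_{H,\ep}(u)\|,1\}$ since $\theta < 1$. For (3), combining $G_{H,\ep}(u)(\widetilde\psi_{u_0}) < -\theta\,\|G_{H,\ep}(u)\|$ with the upper bound $\|\psi_{u_0}(u)\|_{2,2} \leq A_1(1+\|u\|^2_{2,2})$ from Lemma~\ref{lemm:projection-estimates}(b) yields
\[
\langle G_{H,\ep}(u), X_{u_0}(u)\rangle < -\frac{2\theta^2\,\min\{\|G_{H,\ep}(u)\|,1\}\,\|G_{H,\ep}(u)\|}{A_1\,(1+\|u\|^2_{2,2})},
\]
which is strictly stronger than (3) since $2\theta^2 > 1$. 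Local Lipschitz continuity of $u \mapsto X_{u_0}(u)$ on $U_{u_0}$ follows from the local Lipschitz property of $u \mapsto P_u$ into $W^{2,2}(S^2;\RR^{N\times N})$ in Lemma~\ref{lemm:projection-estimates}(b), together with the algebra property of $W^{2,2}(S^2)$ in dimension two, the local Lipschitz continuity of $u \mapsto \|G_{H,\ep}(u)\|$ from Proposition~\ref{prop:first-var-properties}(c), and the positive lower bound on $\|\psi_{u_0}(u)\|_{2,2}$ (perhaps after shrinking $U_{u_0}$).

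Finally, by paracompactness of the metrizable manifold $\tilde V$, I would extract a locally finite refinement $\{U_i\}$ of the cover $\{U_{u_0}\}_{u_0\in \tilde V}$ with corresponding fields $X_i := X_{u_i}$, choose a locally Lipschitz partition of unity $\{\rho_i\}$ subordinate to $\{U_i\}$, and set $X(u) := \sum_i \rho_i(u) X_i(u)$. Each of (1)--(3) for $X$ then follows from taking a convex combination of the corresponding strict inequalities for the $X_i(u)$, using $\sum_i \rho_i(u) = 1$ and the fact that at any given $u$ at least one $\rho_i(u) > 0$. The main subtlety is ensuring the strict inequality in (3) with exactly the constant $A_1(1+\|u\|^2_{2,2})$ in the denominator — this is why one needs $\theta > 1/\sqrt 2$, and why, crucially, the field must be built from $P_u(\widetilde\psi_{u_0})$ rather than $\widetilde\psi_{u_0}$ itself, as it is the projection that introduces precisely the factor from Lemma~\ref{lemm:projection-estimates}(b).
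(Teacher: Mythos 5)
Your construction is correct, and it is essentially the standard pseudo-gradient construction the paper also uses, hinging on the same three facts: the continuity of $G_{H,\ep}$ from Proposition~\ref{prop:first-var-properties}(c), the norm growth bound $\|P_u(\psi)\|_{2,2}\leq A_1(1+\|u\|_{2,2}^2)\|\psi\|_{2,2}$ from Lemma~\ref{lemm:projection-estimates}(b), and the idempotence identity $G_{H,\ep}(u)\circ P_u = G_{H,\ep}(u)$.

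The one organizational difference worth noting: the paper first builds an ambient locally Lipschitz field $\widetilde X: \widetilde V\to W^{2,2}(S^2;\RR^N)$ (following the textbook construction) with the factor $A_1(1+\|u\|^2_{2,2})$ already absorbed into its norm bound, and applies the projection $P_u$ exactly once at the very end; you instead project the near-optimal directions to get $\psi_{u_0}(u)=P_u(\widetilde\psi_{u_0})$ \emph{before} forming the local fields, normalize each $X_{u_0}(u)$ by $\|\psi_{u_0}(u)\|_{2,2}$, and then let the $A_1(1+\|u\|^2_{2,2})$ factor enter conclusion~(3) via the upper bound on $\|\psi_{u_0}(u)\|_{2,2}$. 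This trades the paper's single terminal projection for projections in each local chart, at the cost of having to tune $\theta\in(1/\sqrt2,1)$ so that a factor $2\theta^2>1$ survives; it also makes the gluing step marginally cleaner, since each local field already takes values in $\cT_u$ and the partition of unity produces a convex combination in $\cT_u$ directly, with no residual projection needed. Both routes are valid, and your verification of properties (1)--(3) for the glued field (strict inequalities preserved under convex combination because $\sum_i\rho_i=1$ with at least one $\rho_i>0$) is sound.
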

\begin{proof}[Sketch of proof]
The argument is standard, and we only outline the key steps. First, thanks to the continuity property of $G_{H, \ep}$ established in Proposition~\ref{prop:first-var-properties}(c), we may follow the construction in~\cite[Chapter II.3]{StrBook} to obtain a locally Lipschitz map 
\[
\widetilde{X}: \widetilde{V} \to W^{2, 2}(S^2; \RR^N)
\]
such that for all $u\in \widetilde{V}$ there holds
\begin{align}
\|\widetilde{X}(u)\|_{2, 2}& < \frac{2 \min\{ \|G_{H, \ep}(u)\|, 1 \}}{A_1 \big( 1 + \|u\|_{2, 2}^2 \big)} \label{eq:pre-projection-norm}\\
G_{H, \ep}(u)(\widetilde{X}(u)) &< -\frac{\min\{ \|G_{H, \ep}(u)\|,  1\} \|G_{H, \ep}(u)\|}{A_1 \big( 1 + \|u\|_{2, 2}^2 \big)}. \label{eq:pre-projection-first-var}
\end{align}
Now we let $X(u) = P_u (\widetilde{X}(u))$. Then by Lemma~\ref{lemm:projection-estimates}(b) and what we have just arranged, we see that $X$ is a locally Lipschitz map into $W^{2, 2}(S^2; \RR^N)$.  
%(This follows easily from the Lipschitz continuity of the maps $\widetilde{X}$ and $u \mapsto P_u$, along with the product estimate~\eqref{eq:W22-algebra}.)
%Indeed, since $P_u (\widetilde{X}(u))$ is a matrix-vector product, for $u, v \in W^{2, 2}(S^2; S^3)$ we may write 
%\[
%X(u) - X(v) = (P_u - P_v)(\widetilde{X}(u)) + P_v( \widetilde{X}(u) - \widetilde{X}(v)),
%\]
%and use the Lipschitz continuity of the maps $\widetilde{X}$ and $u \mapsto P_u$, along with the product estimate~\eqref{eq:W22-algebra}, to bound the $W^{2, 2}$-norm of each term on the right when $u, v$ are sufficiently close together. 
To finish, note that property (1) in the conclusion is immediate, while Lemma~\ref{lemm:projection-estimates}(b) together with~\eqref{eq:pre-projection-norm} imply property (2). To see property (3), note from the definition of $G_{H, \ep}(u)$ that $G_{H, \ep}(u)(X(u)) = G_{H, \ep}(u)(\widetilde{X}(u))$.
\end{proof}

\vskip 2mm
Returning to the proof of Proposition \ref{prop:critical-uniform-bound}, we consider the flow associated with $X$, denoted 
\[
\Phi: \{(u, \tau)\ |\ u \in \widetilde{V},\ 0 \leq \tau < T(u)\} \to W^{2, 2}(S^2; S^3) \ \subset W^{2, 2}(S^2; \RR^N),
\]
%where $\Phi(u, \cdot)$ is the solution of the initial value problem
%\begin{equation}\label{eq:pesudo-gradient flow}
%\begin{aligned}
%\frac{\partial}{\partial \tau} \Phi(u, \tau) & = X\big(\Phi(u, \tau) \big), \\
%\Phi(u, 0) & = u,
%\end{aligned}
%\end{equation}
where $T(u)$ denotes the maximal existence time. We next derive a lower bound on the existence time.

\begin{lemm}\label{lemm:existence-time}
For all $K > 0$ and $\delta \in (0, 1)$, there exists $T=T(\delta, K)>0$ such that if $\|G_{H, \ep}(u)\| \geq \delta>0$ and $D_\ep(u)\leq K$, then the existence time $T(u)$ satisfies
\[
T(u) \geq T(\delta, K).
\]
Furthermore, we have
\[
\| G_{H, \ep}(\Phi(u, \tau)) \| \geq \frac{1}{2}\delta, \quad \text{ for all }\tau \leq T(\delta, K).
\]
\end{lemm}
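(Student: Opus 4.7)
The strategy is to combine the uniform bound $\|X\|_{2,2} < 2$ from property (2) of Lemma~\ref{lem:pseudo-gradient vector field} with the local Lipschitz continuity of $G_{H,\ep}$ provided by Proposition~\ref{prop:first-var-properties}(c). The former prevents the flow from escaping to infinity, so the only way $T(u)$ can be finite is for $\|G_{H,\ep}(\Phi(u,\tau))\|$ to drop to zero; the latter will show this cannot happen too quickly when the initial gradient is bounded below.

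More concretely, fix $u_0$ with $\|G_{H,\ep}(u_0)\| \geq \delta$ and $D_\ep(u_0) \leq K$. First, combining Lemma~\ref{lemm:D-equivalence}(b) with the pointwise identity $|u_0| \equiv 1$ yields $\|u_0\|_{2,2} \leq M_0$ for some constant $M_0 = M_0(\ep, K)$. For any $\tau$ in the existence interval, integrating the flow equation in the ambient Hilbert space $W^{2,2}(S^2; \RR^N)$ and invoking property (2) gives
\[
\|\Phi(u_0,\tau) - u_0\|_{2,2} \leq \int_0^\tau \|X(\Phi(u_0, s))\|_{2,2}\, ds < 2\tau,
\]
so $\|\Phi(u_0, \tau)\|_{2,2} \leq M_0 + 2 =: M_1$ for $\tau \in [0, \min(1, T(u_0)))$.

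With this a priori bound in hand, Proposition~\ref{prop:first-var-properties}(c), applied with its $K$ replaced by $\max(H, M_1)$, furnishes a constant $C = C(H, \ep, K)$ such that
\[
\|G_{H,\ep}(\Phi(u_0,\tau)) - G_{H,\ep}(u_0)\| \leq C\,\|\Phi(u_0,\tau) - u_0\|_{2,2} < 2C\tau
\]
for all $\tau \in [0, \min(1, T(u_0)))$. Setting $T(\delta, K) := \min(1, \delta/(4C))$, the triangle inequality then yields $\|G_{H,\ep}(\Phi(u_0,\tau))\| > \delta - 2C\tau \geq \delta/2$ on $[0, T(\delta, K)] \cap [0, T(u_0))$. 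Because $\|G_{H,\ep}(\Phi(u_0,\tau))\|$ remains uniformly bounded away from zero on this set, $\Phi(u_0,\tau)$ stays in a closed subset of $\widetilde{V}$, so the standard maximal-existence criterion forces $T(u_0) \geq T(\delta, K)$, and the claimed lower bound on $\|G_{H,\ep}\|$ then holds on all of $[0, T(\delta, K)]$. No step is particularly obstructive; the main item requiring care is that $M_1$, and hence $C$ and $T(\delta, K)$, depend on the fixed parameters $\ep$ and $H$, which is permissible since these are held fixed throughout the lemma.
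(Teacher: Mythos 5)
Your proof is correct and follows essentially the same route as the paper: use Lemma~\ref{lemm:D-equivalence}(b) for the a priori $W^{2,2}$-bound, property (2) of Lemma~\ref{lem:pseudo-gradient vector field} to control the flow's displacement, and Proposition~\ref{prop:first-var-properties}(c) to keep $\|G_{H,\ep}\|$ from dropping below $\delta/2$ too quickly, which then yields the existence-time bound via the standard maximal-interval criterion. The only cosmetic difference is the choice $\min(1,\cdot)$ versus $\min(1/2,\cdot)$ for the preliminary time window, and your aside about $|u_0|\equiv 1$ (which should really be "$u_0$ maps into the compact set $S^3\subset\RR^N$") is already subsumed by the statement of Lemma~\ref{lemm:D-equivalence}(b).
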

\begin{proof}
By Lemma \ref{lem:pseudo-gradient vector field}(2) and standard ODE theory, we see that if $T(u) < + \infty$ then 
\[
\liminf_{t \to T(u)^-}\|G_{H, \ep}(\Phi(u, t))\| = 0.
\]
Therefore to bound $T(u)$ from below we only need to obtain a lower bound for $\|G_{H, \ep} (\Phi(u, \tau)) \|$. 
First note that by Lemma~\ref{lemm:D-equivalence}(b) and the assumption that $D_{\ep}(u) \leq K$, we get 
\[
\|u\|_{2, 2} \leq A_{0}(1 + \ep^{-2}K).
\]
Next, for all $\tau < \min\{1/2, T(u)\}$, we use property (2) in Lemma~\ref{lem:pseudo-gradient vector field} to estimate
\begin{align}
\| \Phi(u, \tau) - u \|_{2, 2} &\leq \int_0^\tau \| X(\Phi(u, \rho)) \|_{2, 2} d\rho \leq 2 \int_0^\tau \min\{ \|G_{H, \ep}(\Phi(u, \rho))\|, 1 \} d\rho\nonumber\\
&\leq 2\tau < 1. 
\end{align}
In particular, 
\[
\|\Phi(u, \tau)\|_{2, 2} < A_{0}(1 + \ep^{-2}K) + 1:= L, \text{ whenever }\tau < \min\{1/2, T(u)\}.
\]
On the other hand, by Proposition \ref{prop:first-var-properties}(c) and the bounds $\|u\|_{2, 2}, \|\Phi(u, \tau)\|_{2, 2} \leq L$, we have
\[
\|G_{H, \ep} (\Phi(u, \tau)) - G_{H, \ep}(u) \| \leq C_{L} \|\Phi(u, \tau)- u\|_{2, 2} \leq 2C_{L}\tau.
\]
(Here $C_L$ depends on $H$ as well, but since the latter is fixed we omit it from the notation.)
Therefore, we get
\begin{equation}\label{eq:flow-first-var-lower-bound}
\| G_{H, \ep}(\Phi(u, \tau))\| > \frac{\delta}{2}, \text{ for all } \tau < \min\big\{ T(u), \frac{\delta}{4(C_L + 1)} \big\}.
\end{equation}
Since the flow can be continued as long as $\|G_{H, \ep}(\Phi(u, \tau))\|$ stays positive, the above lower bound implies $T(u) \geq \frac{\delta}{4(C_L + 1)} =: T(\delta, K)$, and also gives the second conclusion.
\end{proof}

We continue the proof of Proposition \ref{prop:critical-uniform-bound}. In addition to $J_k$ defined in~\eqref{eq:J_k}, consider also the following subset of the parameter space $I=[0, 1]$:
\[
I_k = \{ t\in [0, 1]\ |\ E_{H, \ep}(\gamma_k(t), f_{\gamma_k, t}) \geq \omega_{H, \ep} - \alpha_k/2 \}.
\]
Then since $t \mapsto E_{H, \ep}(\gamma_k(t), f_{\gamma_k, t})$ is continuous, we see that the set $I_k$ is compact while $J_k$ is open, and hence there exists a continuous cut-off function $\phi_k$ such that $\phi_k(t) = 1$ if $t \in I_k$ and $\phi_k(t) = 0$ if $t \notin J_k$. Also, note that by Lemma~\ref{lemm:small-D-not-maximum}, for $k$ sufficiently large we have $0, 1 \notin J_k$. Now recall that we are assuming by contradiction that 
\begin{equation}\label{eq:contra-lower-bound}
\|G_{H, \ep}(\gamma_k(t))\| \geq \delta, \text{ for all } t\in J_k.
\end{equation}
Furthermore, by the definition of the class $\cP_{\alpha_k, C_0}$, we see that
\begin{equation}\label{eq:contra-upper-bound}
D_{\ep}(\gamma_k(t)) \leq C_0, \text{ for all } t \in J_k.
\end{equation}
Hence Lemma~\ref{lemm:existence-time} guarantees that $T(\gamma_{k}(t)) \geq T(\delta, C_0)$ for all $t \in J_{k}$. For brevity, below we write $T_0$ for $T(\delta, C_0)$.

To continue, we let
\[
\Gamma_{k}(s, t) = \Phi\big(\gamma_k(t), \phi_k(t)T_0 s\big) \text{ for }(s, t) \in [0, 1] \times [0, 1],
\]
and write $\tilde{\gamma}_k$ for $\Gamma_k(1, \cdot)$. Note that $\Gamma_k(s, \cdot) \in \cP$ for all $s \in [0, 1]$ since $0, 1 \notin J_k$, and since $\Phi(u, \tau)$ depends continuously on $u$ and is Lipschitz in $\tau$ (uniformly in $u$). We claim the following three properties: First of all,
\begin{equation}
E_{H, \ep}(\tilde{\gamma}_{k}(t), f_{\tilde{\gamma}_k, t}) = E_{H, \ep}(\gamma_k(t), f_{\gamma_k, t}) + \int_{0}^{\phi_k(t)T_0} \big\langle G_{H, \ep}(\Phi(\gamma_k(t), s)), X(\Phi(\gamma_k(t), s)) \big\rangle ds. \label{eq:change-under-cutoff-flow}
\end{equation}
Secondly, 
\begin{equation}
E_{H, \ep}(\tilde{\gamma}_k(t), f_{\tilde{\gamma}_k, t}) \leq E_{H, \ep}(\gamma_k(t), f_{\gamma_k, t}) \text{ for all }t \in [0, 1] \label{eq:E-no-increase}.
\end{equation}
Thirdly, there exists a constant $\beta > 0$ independent of $k$, such that 
\begin{equation}
E_{H, \ep}(\tilde{\gamma}_k(t), f_{\tilde{\gamma}_k, t}) < \omega_{H, \ep}  - \beta \text{ for all }t \in I_k \text{ and $k$ sufficiently large}. \label{eq:peak-pushed-down}
\end{equation}

To see~\eqref{eq:change-under-cutoff-flow}, note that for all $s_0, t_0 \in [0, 1]$, if we let $\cA$ be a simply-connected neighborhood of $\Gamma_k(s_0, t_0)$, then for all $(s, t)$ sufficiently close to $(s_0, t_0)$, via the map $\Gamma_k$ itself we may construct a homotopy of extensions and apply Lemma~\ref{lemm:volume-properties}(b) to see that 
\[
E_{H, \ep}(\Gamma_k(s, t), f_{\Gamma_k(s, \cdot), t}) = E^{\cA}_{H, \ep}(\Gamma_{k}(s, t)),
\]
where $E^{\cA}_{H, \ep}$ is the local reduction induced by $(\Gamma_k(s_0, t_0), f_{\Gamma_k(s_0, \cdot), t_0})$. In particular, recalling the definition of $\Gamma_k$ in terms of the flow $\Phi$ and differentiating the above identity in $s$ gives 
\[
\frac{d}{ds}\big|_{s = s_0}E_{H, \ep}(\Gamma_k(s, t_0), f_{\Gamma_k(s, \cdot), t_0})  = \phi_k(t_0)T_0 \cdot \langle \delta E_{H, \ep}(\Gamma_k(s_0, t_0)),\ X(\Gamma_k(s_0, t_0)) \rangle.
\]
Since $s_0, t_0 \in [0, 1]$ are arbitrary, we get~\eqref{eq:change-under-cutoff-flow} upon integrating from $s = 0$ to $s = 1$, changing variables, and noting that 
\[
\langle \delta E_{H, \ep}(\Phi(\gamma_k(t), s)),\ X(\Phi(\gamma_k(t), s)) \rangle = \langle G_{H, \ep}(\Phi(\gamma_k(t), s)),\ X(\Phi(\gamma_k(t), s)) \rangle
\]
by property (1) in Lemma~\ref{lem:pseudo-gradient vector field}.

The estimate~\eqref{eq:E-no-increase} is now an easy consequence of~\eqref{eq:change-under-cutoff-flow} and Lemma~\ref{lem:pseudo-gradient vector field}(3). To get~\eqref{eq:peak-pushed-down} when $t \in I_k$, we note that by~\eqref{eq:contra-lower-bound}, ~\eqref{eq:contra-upper-bound}, and the second conclusion of Lemma~\ref{lemm:existence-time}, 
\[
\| G_{H, \ep}(\Phi(\gamma_{k}(t), s))\| \geq \delta/2 \text{ for all } t \in I_k \text{ and }s \in [0, T_0]. 
\]
Furthermore, from the proof of Lemma~\ref{lemm:existence-time} and the upper bound~\eqref{eq:contra-upper-bound}, we see that
\[
\| \Phi(\gamma_k(t), s) \|_{2, 2} \leq A_0(1 + \ep^{-2}C_0) + 1 := L, \text{ for all }t \in I_k \text{ and }s \in [0, T_0].
\]
Thus Lemma~\ref{lem:pseudo-gradient vector field}(3) tells us that 
\[
\langle G_{H, \ep}(\Phi(\gamma_k(t), s)), X(\Phi(\gamma_k(t), s)) \rangle < - \frac{\delta^2/4}{A_1 (1 + L^2)} =: -\tilde{\beta}, \text{ for all }t \in I_k, s \in [0, T_0].
\]
We emphasize that the constant $\tilde\beta$ is independent of $k$. Putting this back into~\eqref{eq:change-under-cutoff-flow}, and recalling our choice of $\gamma_k$, we get, for all $t \in I_k$, that
\begin{align*}
E_{H, \ep}(\tilde{\gamma}_{k}(t), f_{\tilde{\gamma}_k, t}) &< E_{H, \ep}(\gamma_k(t), f_{\gamma_k, t}) - T_0 \cdot \tilde{\beta}\\
&< \omega_{H, \ep} + \alpha_k - T_0 \cdot \tilde{\beta}.
\end{align*}
Letting $\beta = T_0 \cdot \tilde{\beta}/2$, then since $\tilde{\beta}$, $T_0$ are independent of $k$, and since $\alpha_k \to 0$ by assumption, we get~\eqref{eq:peak-pushed-down} for all large enough $k$ and any $t \in I_k$, as claimed. Recalling the definition of $I_k$, we see from~\eqref{eq:E-no-increase} and~\eqref{eq:peak-pushed-down} that if $k$ is sufficiently large so that $\alpha_k < \beta$, then
\[
\max_{0 \leq t\leq 1} E_{H, \ep}(\tilde{\gamma}_k(t), f_{\tilde{\gamma}_k, t}) \leq \omega_{H, \ep} - \frac{\alpha_k}{2} < \omega_{H, \ep}.
\]
Since $\tilde{\gamma}_k \in \cP$, this contradicts the definition of the min-max value. Hence the statement~\eqref{eq:property-star} must hold.

Consequently, there exists a subsequence of $\gamma_k$, which we do not relabel, and a sequence $t_k \in J_k$, such that 
\[
\|G_{H, \ep}(\gamma_k(t_k)) \| \to 0 \text{ as }t \to \infty.
\]
Moreover, as $t_k \in J_k$, we have by the definition of $\cP_{\alpha_k, C_0}$ that 
\[
D_{\ep}(\gamma_{k}(t_k)) \leq C_0 \text{ for all }k. 
\]
Proposition~\ref{prop:PS} now shows that, passing to a subsequence if necessary, $\gamma_k(t_k)$ converges strongly in $W^{2, 2}$ to a limit $u \in W^{2}(S^2; S^3)$ satisfying $\delta E_{H, \ep}(u) = 0$ and $D_{\ep}(u) \leq C_0$.

We are now ready to deduce the conclusions of Proposition~\ref{prop:critical-uniform-bound}. First, part (a) follows immediately from the definitions of $\cP_{\alpha_k, C_0}$ and $J_k$. To prove part (b), it remains to show that there exists some extension $f\in \cE(u)$ such that 
\begin{equation}\label{eq:min-max-extension}
E_{H, \ep}(u, f) = \omega_{H, \ep}.
\end{equation}
To see that, note that by the strong $W^{2, 2}$-convergence established above, for $k$ sufficiently large we may consider the extension $f_k \in \cE(u)$ formed by concatenating $f_{\gamma_k, t_k}$ with the map $(s, x) \mapsto \Pi(su(x) + (1 - s)\gamma_k(t_k)(x))$. A straightforward computation similar to the one leading to~\eqref{eq:hi-V-bound} then shows that 
\[
\lim_{k \to \infty} |V(f_k) - V(f_{\gamma_k, t_k})| = 0,
\]
and consequently, by conclusion (a) and the strong $W^{2, 2}$-convergence of $\gamma_k(t_k)$ to $u$, we have
\[
\lim_{k \to \infty} |E_{H, \ep}(u, f_k)  - \omega_{H, \ep}|= 0.
\]
By Lemma~\ref{lemm:volume-properties}(a) this implies that the sequence $E_{H, \ep}(u, f_k)$ is eventually constantly equal to $\omega_{H, \ep}$. In particular we get some $f \in \cE(u)$ satisfying~\eqref{eq:min-max-extension}.

To prove conclusion (c), suppose by contradiction that $D_{\ep}(u) < \eta_1(\ep, H)$. Then by the strong $W^{2, 2}$-convergence of $\gamma_k(t_k)$ to $u$, for sufficiently large $k$ we may apply Lemma~\ref{lemm:small-D-not-maximum} to see that 
\[
E_{H, \ep}(\gamma_{k}(t_k), f_{\gamma_k, t_k}) < \max_{0 \leq t \leq 1} E_{H, \ep}(\gamma_k(t), f_{\gamma_k, t}) - \eta_2(\ep, H).
\]
Recalling our choice of $\gamma_k$ and the definition of $J_k$, the above implies that 
\[
\omega_{H, \ep} - \alpha_k < \omega_{H, \ep} + \alpha_k - \eta_2,
\]
for all $k$ large enough. Since $\alpha_k \to 0$ as $k \to\infty$, this is a contradiction, and we get (c).
\end{proof}

%------------------------------------------------------------------------------
\subsection{Deformation lemma and index upper bound}\label{subsec:deformation}

In this part we exhibit non-trivial critical points to the perturbed functional with bounded index and energy. The final ingredient needed for this, in addition to results from the previous two sections, is a deformation lemma inspired by the Morse index estimates in the Almgren-Pitts min-max theory by Marques-Neves \cite{Marques-Neves16} and Song \cite{Song19}. In particular, we do not need to perturb the functional to be Morse as in the classical approach (see \cite{Micallef-Moore88}). We obtain critical points with the desired properties in Corollary~\ref{coro:critical-uniform-bound}.

\begin{lemm}[Deformation Lemma]\label{lemm:deformation}
Given $H > 0$, $\ep \in (0, 1]$ and $C_0 > 0$, let $\cK'$ be a compact subset of $\cK_{C_0 + 1}$ and suppose $\Ind_{H, \ep}(v) \geq 2$ for all $v \in \cK'$. Then, for each sequence of sweepouts $\{\gamma_k\}\subset \cP$, where $\gamma_k\in \cP_{\alpha_k, C_0}$ with $\alpha_k\to 0$, there exist another sequence of sweepouts $\{\widetilde\gamma_k\}$ such that
\begin{enumerate}
\item[(a)] $\widetilde\gamma_k \in \cP_{\alpha_k, C_0+1}$ for $k$ large enough.
\vskip 1mm
\item[(b)] For any $u\in \cK'$, there exists $k_0(u), d_0(u)>0$, such that for all $k\geq k_0(u)$, 
\[ 
\inf\{\| \widetilde\gamma_k(t) - u \|_{2,2}: E_{H, \ep}(\widetilde\gamma_k(t), f_{\widetilde{\gamma}_k, t}) \geq \omega_{H, \ep}-\alpha_k \}  \geq d_0(u).
\]
\end{enumerate}
\end{lemm}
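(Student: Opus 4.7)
The idea is to build the deformed sweepout $\widetilde{\gamma}_k$ by composing finitely many local isotopies of $W^{2,2}(S^2;S^3)$, each supported in a generalized Morse neighborhood (Definition-Proposition~\ref{defiprop:Morse-neighborhood}) around one element of $\cK'$. The essential geometric input will be that the index hypothesis $\Ind_{H,\ep}(u)\geq 2$ for $u\in \cK'$ forces $\dim H_-^u \geq 2$ in the Morse decomposition $\cT_u=H_0^u\oplus H_-^u\oplus H_+^u$, so the set $\{\psi : \psi_-=0\}$ has codimension at least $2$ in $\cT_u$. A $1$-parameter family of slices can thus be perturbed off this set and then flowed radially outward in $H_-^u$, which will simultaneously move each slice away from $u$ in the $W^{2,2}$-norm and decrease the Morse normal form
\[
E_{H,\ep}^{\cA_u}\circ\Psi_u(\psi) \;=\; \omega_{H,\ep} + e_u(\psi_0) + \|\psi_+\|^2 - \|\psi_-\|^2
\]
on the Morse chart $\Psi_u:\cB_u\to\cA_u$, where the local reduction has been normalized using Remark~\ref{rmk:local-reduct}(2).

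By compactness of $\cK'$, I will extract a finite subcover $\cA_{u_1},\ldots,\cA_{u_N}$ with uniform inner radius $r>0$, chosen small enough that $D_\ep$ oscillates by less than $1/N$ on each $B_{2r}(u_j)$ (which is possible because $D_\ep$ is $W^{2,2}$-continuous as a bounded quadratic form) and that each $B_{2r}(u_j)$ is disjoint from the constant endpoints of $\gamma_k$ (which, being constants, have index $0$ and so are not in $\cK'$). In each chart I will construct a continuous isotopy $F_j:[0,1]\times W^{2,2}(S^2;S^3)\to W^{2,2}(S^2;S^3)$, equal to the identity outside $B_{2r}(u_j)$, satisfying: (i) on the peak set $P_j=\{v\in B_r(u_j) : E_{H,\ep}^{\cA_{u_j}}(v)\geq \omega_{H,\ep}-\alpha_k\}$, the final image $F_j(1,v)$ has $W^{2,2}$-distance $\geq d_j>0$ from $u_j$; (ii) $E_{H,\ep}^{\cA_{u_j}}(F_j(s,v))$ is nonincreasing in $s$ for all $v$; and (iii) $D_\ep(F_j(s,v))\leq D_\ep(v) + 1/N$. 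Property (i) is achieved by first applying a small translation by a fixed vector $e_j\in H_-^{u_j}$ of norm comparable to $r$ — which moves every slice off the codimension-$\geq 2$ set $\{\psi_-=0\}$ — and then following the flow of a vector field that radially expands $\psi_-$; property (ii) is then automatic from the $-\|\psi_-\|^2$ term in the normal form.

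Iteratively composing yields $\widetilde{\gamma}_k = F_N(1,\cdot)\circ\cdots\circ F_1(1,\cdot)\circ \gamma_k$, which will lie in $\cP$ because each $F_j$ is an isotopy supported away from endpoints, hence preserves the degree. Conclusion (a) will follow because (ii) preserves the upper bound $\omega_{H,\ep}+\alpha_k$ on $E_{H,\ep}$ while (iii) accumulates to at most $N\cdot 1/N = 1$ in $D_\ep$; conclusion (b) will follow by setting $d_0(u)=\min_j\{d_j/2 : u\in \overline{B_{r/2}(u_j)}\}$ and using continuity of the construction in $u$. The main obstacle will be ensuring that the composed monotonicity in (ii) holds globally: energy decrease in one chart should not be undone in another. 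This will be handled using Remark~\ref{rmk:local-reduct}(2), which guarantees that the local reductions $E_{H,\ep}^{\cA_{u_i}}$ and $E_{H,\ep}^{\cA_{u_j}}$ differ by a constant on any connected overlap, so local monotonicity transfers to absolute monotonicity of $E_{H,\ep}$ under composition of the $F_j$.
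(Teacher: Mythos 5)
Your strategy — build $\widetilde{\gamma}_k$ by composing ambient isotopies $F_j$ supported near each $u_j\in\cK'$ — is genuinely different from the paper's proof, which does not use isotopies at all. The paper cuts the sweepout into subintervals each confined to a single Morse chart, \emph{replaces} each subpath wholesale by a homotopic path (two radial ``push-out'' segments attached at the two endpoints, joined by an arc on $\Psi_i(B^0_{\rho_{1}}\oplus\partial B^-_{2\rho_{2}}\oplus\{0\})$, which is connected precisely because $\dim H_-\geq 2$), and then verifies energy and $D_\ep$ bounds on the replacement path directly. The index hypothesis is used via connectedness of that sphere; in your proposal it is used via the codimension of $\{\psi_-=0\}$. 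These are the same numerical fact, but the architectures differ, and the difference is where your argument breaks.

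The gap is the claim that ``property (ii) is then automatic from the $-\|\psi_-\|^2$ term.'' The translation $\psi_-\mapsto\psi_-+se_j$ by a \emph{fixed} $e_j\in H_-$ does not monotonically increase $\|\psi_-\|$: whenever $\langle\psi_-,e_j\rangle<0$, the function $s\mapsto\|\psi_-+se_j\|^2$ decreases on $[0,-\langle\psi_-,e_j\rangle/\|e_j\|^2]$, so $E^{\cA_{u_j}}_{H,\ep}$ \emph{increases} during that phase by up to $\langle\psi_-,e_j\rangle^2/\|e_j\|^2$. Since $F_j$ must act on all of $B_{2r}(u_j)$ — in particular on slices with $\|\psi_-\|\sim r$ in the annulus $B_{2r}(u_j)\setminus B_r(u_j)$, where the interpolation to the identity forces the subsequent radial-expansion factor to tend to $0$ — even the end-to-end change $E(F_j(1,v))-E(v)$ can be positive and of order $r^2$, a quantity independent of $k$. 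That breaks conclusion (a) for slices whose original energy is only slightly below $\omega_{H,\ep}+\alpha_k$. Replacing $e_j$ by a $v$-dependent direction such as $\psi_-(v)$ (which is in effect what the paper does, but only at the two \emph{endpoints} of each sub-arc, via the choice of $\xi(j)$) restores monotonicity but destroys continuity near $\{\psi_-=0\}$ — exactly the dilemma the paper sidesteps by routing the interior of each replacement through $\partial_{-}C(\rho_1,2\rho_2,\rho_3)$, where the energy is already $\leq\omega_{H,\ep}-\underline{\beta}$ and requires no further control. Your verification of (b) has a related soft spot: a later $F_{j'}$ whose support $B_{2r}(u_{j'})$ overlaps $B_{2r}(u_j)$ can move slices back toward $u_j$, whereas the paper shows the only surviving high-energy slices lie on the push-out segments emanating from $\gamma_k(a_n),\gamma_k(b_n)\notin\cN_{\theta/2}$, and bounds their distance from $\cK'$ quantitatively via~\eqref{eq:distance-barrier}.
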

Some preparations are in order before we give the proof. Suppose  $v$ is a critical point of $E_{H, \ep}$ lying in $\cK'$. Then by definition there exists $f \in \cE(v)$ such that 
\[
E_{H, \ep}(v, f) = \omega_{H, \ep}.
\]
Next we let $\cA$ be a generalized Morse neighborhood of $v$ given by Proposition~\ref{defiprop:Morse-neighborhood}. Then~\eqref{eq:generalized-Morse} holds with $\cB, \Psi, H_0, H_-, H_+$ and $e$ as in Definition~\ref{defiprop:Morse-neighborhood} and with $E^{\cA}_{H, \ep}$ being the local reduction induced by $(v, f)$. In particular, $\dim H_{-} = \Ind_{H, \ep}(v) \geq 2$ by assumption, and the function $e$ satisfies
\begin{equation}\label{eq:e-bound}
|e(\psi_0) -\omega_{H, \ep} | \leq a(\|\psi_0\|)\|\psi_0\|^2,
\end{equation}
where $\lim_{t \to 0+}a(t) = 0$. For later use we introduce the following additional notation. For $r_1, r_2, r_3 > 0$ small, we let
\[
C(r_1, r_2, r_3) = \Psi(B_{r_1}^{0} \oplus B_{r_2}^{-} \oplus B_{r_3}^{+}),
\]
and also define
\[
\partial_{-}C(r_1, r_2, r_3) = \Psi(B_{r_1}^{0} \oplus \partial B_{r_2}^{-} \oplus B_{r_3}^{+}).
\]
Here  $B_{r_1}^0$, $B_{r_2}^-$ and $B_{r_3}^+$ denote open balls in $H_0$, $H_-$ and $H_+$ respectively. Also, we denote the closure of $C(r_1, r_2, r_3)$ by $\overline{C}(r_1, r_2, r_3)$. We now give the proof of Lemma~\ref{lemm:deformation}. 
\begin{proof}[Proof of Lemma~\ref{lemm:deformation}]
The proof proceeds in five stages and is similar in structure to that of~\cite[Theorem 7]{Song19}. In our case, while the existence of generalized Morse neighborhoods offers some convenience, passing between different local reductions requires extra care.  
\vskip 1mm
%Step 1---------------------------------------------------------------------------------------
\noindent\textbf{Step 1: First covering of $\cK'$.} 
\vskip 1mm
To begin, take $v \in \cK'$ and let $\Psi_v$, $\cB_v$ and $\cA_v$ be as above, where we added subscripts to indicate their dependence on $v$. For $\tau = \tau(v) > 0$ small enough, we have $C(\tau, \tau, \tau) \subset \cA_v$ along with the following properties: First, 
\begin{equation}\label{eq:error-small}
|e(\psi_{0}) - \omega_{H, \ep}| \leq \frac{1}{4}\|\psi_0\|^2 \text{ for }\|\psi_0\| \leq \tau.
\end{equation}
Secondly, for all $w \in C(\tau, \tau, \tau)$,
\begin{equation}\label{eq:E-close}
|E^{\cA_v}_{H, \ep}(w) - \omega_{H, \ep}| < \frac{\delta}{4},
\end{equation}
\begin{equation}\label{eq:D-close}
|D_{\ep}(w) - D_{\ep}(v)| < \frac{1}{2},
\end{equation}
where 
\[
\delta = \min\big\{\frac{\eta_2(\ep, H)}{2}, \frac{H}{4}\Vol_g(S^3), \frac{1}{2}\big\}.
\]
Here $\eta_2$ is from Lemma~\ref{lemm:small-D-not-maximum}. Note also that $\delta$ is independent of $v$. Next we choose $\rho_1(v), \rho_2(v), \rho_3(v) < \frac{\tau(v)}{2}$ so that 
\begin{equation}\label{eq:rho-relation}
\rho_1^2 - 4\rho_2^2 + 4\rho_3^2 < 0.
\end{equation}
(For instance we may fix $\rho_2 = \tau/4$ and then choose $\rho_1, \rho_3$ small.) Then from~\eqref{eq:error-small} and~\eqref{eq:generalized-Morse} we see that 
\begin{equation}\label{eq:lateral-negative}
-\beta = -\beta(v):= \sup\{E^{\cA}_{H, \ep}(w) - \omega_{H, \ep} |\ w \in \partial_{-}C(2\rho_1, 2\rho_2, 2\rho_3)\} < 0.
\end{equation}
Moreover,~\eqref{eq:E-close} and~\eqref{eq:D-close} hold on $\overline{C}(2\rho_1, 2\rho_2, 2\rho_3)$. We then choose $r = r(v)$ such that 
\[
B^{2, 2}_{2r}(v) := \{w \in W^{2, 2}(S^2; S^3)\ |\ \|w - v\|_{2, 2} < 2r\} \subset C(\rho_1, \rho_2, \rho_3).
\]
The collection of open sets $\{B^{2, 2}_{r(v)}(v)\}_{v \in \cK'}$ obviously covers $\cK'$, and we can extract a finite subcovering $\{B^{2, 2}_{r(v_i)}(v_i)\}_{i = 1}^L$. Below we write $r_i = r(v_i)$, $\beta_i = \beta(v_i)$ and let 
\[
\underline{r} = \min_{1 \leq i \leq L} r_i,\ \underline{\beta} = \min_{1 \leq i \leq L}\beta_i.
\]
\vskip 1mm
%Step 2---------------------------------------------------------------------------------------
\noindent\textbf{Step 2: Second covering of $\cK'$.} 
\vskip 1mm
We proceed to describe the second covering. For $w \in \cK'$, let $\cA_{w}, \Psi_{w}$ and $\rho_i = \rho_i(w)$ ($i = 1, 2, 3$)  have the same meaning as in Step 1. Note in particular that $\Psi_{w}^{-1}(w) = 0$. Using successively the continuity of the functions
\[
(u, \xi_{-}) \mapsto \|\Psi_{w}(\Psi_w^{-1}(u) + \xi_-) - u\|_{2, 2}, \text{ and }
\]
\[
(u, \xi_{-}) \mapsto E^{\cA_w}_{H, \ep}(\Psi_{w}(\Psi_w^{-1}(u) + \xi_-)) - E^{\cA_w}_{H, \ep}(u),
\]
defined for $(u, \xi_{-}) \in C(\rho_1, \rho_2, \rho_3) \times B^{-}_{\rho_2}$, we find a radius $s = s(w) > 0$, an element $\xi_{-} = \xi_{-}(w) \in B_{\rho_2}^{-}$ and a threshold $c_w \in (0, \delta)$, with the following properties: First,
\begin{equation}\label{eq:refine}
B^{2, 2}_{s(w)}(w) \subset C(\rho_1(w), \rho_2(w), \rho_3(w)) \cap \big(\cup_{i = 1}^L B^{2, 2}_{r_{i}}(v_{i})\big).
\end{equation}
Second, and more importantly, for $u \in B^{2, 2}_{s(w)}(w)$, writing $u_t = \Psi_{w}(\Psi_w^{-1}(u) + t\xi_{-}(w))$, we have that
\begin{equation}\label{eq:distance-short}
\|u_{t} - u\|_{2, 2} < \underline{r} \text{ for all }t \in [0, 1].
\end{equation}
\begin{equation}\label{eq:energy-decrease}
E^{\cA_{w}}_{H, \ep}(u_1) - E^{\cA_{w}}_{H, \ep}(u) < -\frac{c_w}{2}.
\end{equation}
It follows from~\eqref{eq:distance-short} and the definition of $\underline{r}$ that for all $u \in B^{2, 2}_{s(w)}(w)$ and any $i \in \{1, \cdots L\}$ such that $u \in B^{2, 2}_{r_i}(v_i)$, we have $u_t \in B^{2, 2}_{2r_i}(v_i)$ for all $t \in [0, 1]$. In particular, by our choice of $r_i$, 
\[
|E_{H, \ep}^{\cA_i}(u_1) - \omega_{H, \ep}|,\ |E_{H, \ep}^{\cA_i}(u) - \omega_{H, \ep}| < \frac{\delta}{4}.
\]
Since $u_{t}$ also lies in $C(2\rho_1, 2\rho_2, 2\rho_3)$, by~\eqref{eq:E-close} the above hold with $\cA_i$ replaced by $\cA_w$. Hence~\eqref{eq:energy-decrease} and Lemma~\ref{lemm:volume-properties}(a) give
\begin{equation}\label{eq:energy-decrease-compare}
E^{\cA_{i}}_{H, \ep}(u_1) - E^{\cA_{i}}_{H, \ep}(u) < -\frac{c_w}{2},
\end{equation}
for all $u \in B^{2, 2}_{s(w)}(w)$ and $i \in \{1, \cdots, L\}$ such that $u \in B^{2, 2}_{r_i}(v_i)$. 

Now, out of the open covering $\{B_{s(w)}(w)\}_{w \in \cK'}$ of $\cK'$, we extract a finite subcover $\{B_{s_j}(w_j)\}_{j = 1}^{M}$, where we've written $s_j = s(w_j)$. Then we define 
\[
\underline{c} = \frac{1}{4}\min_{1 \leq j \leq M} c_{w_j}.
\]
Note particularly that $\underline{c} < \frac{\delta}{4}$. Moreover, for $\theta > 0$ we let 
\[
\cN_{\theta} = \cup_{v \in \cK'}B^{2, 2}_{\theta}(v),
\]
and by the compactness of $\cK'$ we may fix a $\theta>0$ sufficiently small so that $\cN_{\theta} \subset \cup_{j = 1}^{M}B^{2, 2}_{s_j}(w_j) \subset \cup_{i = 1}^L B^{2, 2}_{r_i}(v_i)$, where the second containment follows from~\eqref{eq:refine}. We conclude Step 2 with the following lemma which summarizes some of the properties we established above.

\begin{lemm}\label{lemm:perturb-out}
With $\cN_{\theta}$ as above, for all $u \in \cN_{\theta}$ there exists a continuous path $t \mapsto u_t$ with $u_0 = u$ such that for all $i \in \{1, \cdots, L\}$ with $u \in B^{2, 2}_{r_i}(v_i)$, we have $u_{t} \in B^{2, 2}_{2r_i}(v_i)$ for all $t \in [0, 1]$, and that 
\[
E^{\cA_{i}}_{H, \ep}(u_1) \leq E^{\cA_{i}}_{H, \ep}(u) - 2\underline{c}.
\]
(Note that $u$ must belong to some $B^{2, 2}_{r_i}(v_i)$ since $\cN_{\theta} \subset \cup_{i = 1}^L B^{2, 2}_{r_i}(v_i)$.)
\end{lemm}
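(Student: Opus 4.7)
The plan is essentially to unpack the construction carried out in Step 2 so that the lemma becomes a direct consequence of the properties \eqref{eq:distance-short} and \eqref{eq:energy-decrease-compare} arranged there, combined with the definition of $\underline{c}$. Given $u \in \cN_\theta$, I would first use the containment $\cN_\theta \subset \bigcup_{j=1}^M B^{2,2}_{s_j}(w_j)$ to pick any index $j = j(u)$ with $u \in B^{2,2}_{s_j}(w_j)$, and then set
\[
u_t := \Psi_{w_j}\bigl(\Psi_{w_j}^{-1}(u) + t\,\xi_-(w_j)\bigr), \quad t \in [0, 1].
\]
Continuity of $t \mapsto u_t$ is immediate from the continuity of $\Psi_{w_j}$; the straight-line segment $\Psi_{w_j}^{-1}(u) + t\,\xi_-(w_j)$ stays in the domain $\cB_{w_j}$ since $\Psi_{w_j}^{-1}(u)$ lies in $B^0_{\rho_1(w_j)} \oplus B^-_{\rho_2(w_j)} \oplus B^+_{\rho_3(w_j)}$ by \eqref{eq:refine}, while $\xi_-(w_j) \in B^-_{\rho_2(w_j)}$, so convexity keeps us inside.

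For the containment in $B^{2,2}_{2r_i}(v_i)$, I would fix any $i$ with $u \in B^{2,2}_{r_i}(v_i)$, apply \eqref{eq:distance-short} to obtain $\|u_t - u\|_{2,2} < \underline{r} \leq r_i$, and finish with the triangle inequality
\[
\|u_t - v_i\|_{2,2} \leq \|u_t - u\|_{2,2} + \|u - v_i\|_{2,2} < 2r_i.
\]
For the energy inequality, I would directly invoke \eqref{eq:energy-decrease-compare}, whose hypotheses, namely $u \in B^{2,2}_{s_j}(w_j)$ and $u \in B^{2,2}_{r_i}(v_i)$, are precisely what we have arranged; this yields $E^{\cA_i}_{H, \ep}(u_1) - E^{\cA_i}_{H, \ep}(u) < -c_{w_j}/2$. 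The defining identity $\underline{c} = \tfrac14\min_{1 \leq k \leq M} c_{w_k}$ then gives $-c_{w_j}/2 \leq -2\underline{c}$, closing out the estimate.

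Honestly, there is no substantive obstacle here: all of the real work was front-loaded into Steps 1 and 2, where the $w_j$, $\xi_-(w_j)$ and $c_{w_j}$ were designed precisely so that this statement would be a bookkeeping assembly of \eqref{eq:distance-short} and \eqref{eq:energy-decrease-compare}. The only subtle point worth flagging is that the conclusion must hold \emph{simultaneously} for every index $i$ with $u \in B^{2,2}_{r_i}(v_i)$, despite the path $u_t$ itself being defined using a single $j$; this is exactly the reason Step 2 took care to pass from the local reduction $E^{\cA_{w_j}}_{H,\ep}$ to each $E^{\cA_i}_{H,\ep}$ via the bounds \eqref{eq:E-close} and Lemma~\ref{lemm:volume-properties}(a), which forces the local reductions to differ by the same integer multiple of $H\cdot\Vol_g(S^3)$ on both $u$ and $u_1$.
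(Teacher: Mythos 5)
Your proposal is correct and follows the paper's own proof: both pick $j$ with $u \in B^{2,2}_{s_j}(w_j)$, take the path $u_t = \Psi_{w_j}(\Psi_{w_j}^{-1}(u) + t\xi_-(w_j))$, and read off the conclusions from \eqref{eq:distance-short}, \eqref{eq:energy-decrease-compare} and the definition of $\underline{c}$. The extra discussion you add (convexity of the segment, triangle inequality, and the remark about why the estimate holds simultaneously for all relevant $i$) is consistent with what was arranged in Step~2.
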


\begin{proof}
For $u$ as given, since $\cN_{\theta} \subset \cup_{j = 1}^M B_{s_j}(w_j)$ we may choose $j$ such that $u \in B_{s_j}(w_j)$. The conclusion then is just a restatement of~\eqref{eq:distance-short} and~\eqref{eq:energy-decrease-compare}, using the definition of $\underline{c}$. 
\end{proof}
\vskip 1mm
%Step 3------------------------------------------------------------------------------------------
\noindent\textbf{Step 3: Subdivision of the parameter space and deformation of endpoints.}
\vskip 1mm
Take $\gamma_{k} \in \cP_{\alpha_k, C_0}$ and define the intervals
\[
I_{k} = \{t \in [0, 1]\ |\ E_{H, \ep}(\gamma_k(t), f_{\gamma_k, t}) > \omega_{H, \ep} - \delta \text{, and }\gamma_{k}(t) \in \overline{\cN_{3\theta /4}}\},
\]
\[
J_{k} = \{t \in [0, 1]\ |\ E_{H, \ep}(\gamma_k(t), f_{\gamma_k, t}) > \omega_{H, \ep} - \delta \text{, and }\gamma_{k}(t) \in \cN_{\theta}\}.
\]
Note that $0, 1 \notin J_k$ by Lemma~\ref{lemm:small-D-not-maximum} and our choice of $\delta$. Next we claim that for $k$ large enough so that $\alpha_k < \delta$, we have
\[
E_{H, \ep}(\gamma_k(t), f_{\gamma_k, t}) > \omega_{H, \ep} - \frac{\delta}{4} \text{ for all }t \in J_k.
\]
To see that, note that by the definition of $\cP_{\alpha_k, C_0}$ and $J_{k}$, for $t$ belonging to the latter we have
\[
E_{H, \ep}(\gamma_k(t), f_{\gamma_k, t}) \in [\omega_{H, \ep} - \delta, \omega_{H, \ep} + \alpha_k].
\]
On the other hand, by our choice of $\cN_{\theta}$ there exists some $i \in \{1, \cdots, L\}$ such that $\gamma_k(t) \in B^{2, 2}_{r_i}(v_i)$, and hence by~\eqref{eq:E-close} we have
\[
|E_{H, \ep}^{\cA_i}(\gamma_k(t)) - \omega_{H, \ep}| < \frac{\delta}{4}.
\]
Hence, as soon as $k$ is so large that $\alpha_k < \delta$, we may use Lemma~\ref{lemm:volume-properties}(a) and our choice of $\delta$ to infer that 
\[
E_{H, \ep}(\gamma_k(t), f_{\gamma_k, t}) = E_{H, \ep}^{\cA_i}(\gamma_k(t)) > \omega_{H, \ep} - \frac{\delta}{4},
\]
for all $t \in J_k$ as claimed. 

To continue, we drop the subscript $k$ in $\gamma_k, \alpha_k, I_{k}, J_{k}$ and so on. Note that by the above claim, for $k$ sufficiently large, $I$ is a compact subset of the open set $J$, and hence we may find finitely many disjoint intervals $\{[a_n, b_n]\}_{n = 1}^m$ such that 
\begin{equation}\label{eq:interval-choice}
I \subset \cup_{n = 1}^m [a_n, b_n] \subset J, \text{ and that }\gamma(a_n), \gamma(b_n) \notin \cN_{\theta/2}.
\end{equation}
Since $\gamma([a_n, b_n]) \subset \cN_{\theta} \subset \cup_{j = 1}^{M}B^{2, 2}_{s_j}(w_j) \subset \cup_{i = 1}^{L}B^{2, 2}_{r_i}(v_i)$, we can further partition $[a_n, b_n]$ by 
\[
a_n = t_0 < t_1 < \cdots < t_{p} = b_n,
\]
such that for all $l = 0, \cdots, p-1$, there exists $i = i(l)$ with 
\[
\gamma([t_{l}, t_{l + 1}]) \subset B^{2, 2}_{r_i}(v_i).
\]
Note that in the notation we have further suppressed the $n$-dependence of the partition $\{t_l\}$ and the indices $\{i(l)\}$, apart from their $k$-dependence.

For later purposes, we note that for $k$ large enough so that $\alpha < \frac{\delta}{4}$, we have
\begin{equation}\label{eq:volume-agree}
E_{H, \ep}(\gamma(t), f_{\gamma, t}) = E^{\cA_{i(l)}}_{H, \ep}(\gamma(t)), \text{ for all }t \in [t_l, t_{l + 1}].
\end{equation}
Indeed, by the definition of $\cP_{\alpha, C_{0}}$ and $J$, and by~\eqref{eq:E-close}, both sides lie within $\delta$ of $\omega_{H, \ep}$, and hence must agree by Lemma~\ref{lemm:volume-properties}(a).

Next, for $l = 1, \cdots, p-1$, since $\gamma(t_{l}) \in \cN_{\theta}$, we may apply Lemma~\ref{lemm:perturb-out} to find paths $P_{l}:[0, 1] \to W^{2, 2}(S^2; S^3)$ such that $P_{l}(0) = \gamma(t_l)$ and
\vskip 1mm
\begin{enumerate}
\item[(a1)] $P_{l}(t) \subset B_{2r_{i(l-1)}}(v_{i(l-1)}) \cap B_{2r_{i(l)}}(v_{i(l)})$ for all $t \in [0, 1]$.
\vskip 1mm
\item[(a2)] $E_{H, \ep}^{\cA_{i}}(P_{l}(1)) \leq E_{H, \ep}^{\cA_{i}}(\gamma(t_l)) - 2\underline{c}$ for $i = i(l-1), i(l)$.
\end{enumerate}
Note that (a2) and~\eqref{eq:volume-agree} imply that if $\alpha < \underline{c}$ then 
\begin{equation}\label{eq:deform-energy-down}
E_{H, \ep}^{\cA_{i}}(P_{l}(1)) \leq \omega_{H, \ep} - \underline{c} \text{ for }i = i(l-1), i(l).
\end{equation}
Borrowing the notation from Song \cite[Theorem 7]{Song19}, below we use ``$+$'' to denote concatenation of paths and ``$-$'' to denote reversal of orientation of a path, and let
\[
h_{0} = \gamma \big|_{[t_0, t_1]} + P_1,\ h_{p-1} = -P_{p-1} + \gamma\big|_{[t_{p-1}, t_p]}.
\]
For $l = 1, \cdots, p-2$, we let
\[
h_{l} = -P_l + \gamma\big|_{[t_l, t_{l + 1}]} + P_{l +1}.
\]
Note, then, that $\gamma\big|_{[a_n, b_n]}$ is homotopic to $h_0 + h_1 + \cdots + h_{p-1}$. Also, the endpoints $h_{0}(0), h_{p-1}(1) \notin \cN_{\theta/2}$ by~\eqref{eq:interval-choice} above.

\vskip 1mm
%Step 4------------------------------------------------------------------------------------------
\noindent\textbf{Step 4: Replacing $\gamma_k$ on the sub-intervals.}
\vskip 1mm

In this step we will further replace each $h_l$ by homotopic paths. Note that by the definition of $[t_l, t_{l + 1}]$ and by property (a1) in Step 3, the path $h_l$ maps into $B^{2, 2}_{2r_{i(l)}}(v_{i(l)}) \subset C(\rho_1(v_{i(l)}), \rho_2(v_{i(l)}), \rho_3(v_{i(l)})) \subset \overline{C}(2\rho_1(v_{i(l)}), 2\rho_2(v_{i(l)}), 2\rho_3(v_{i(l)}))$. Since the latter is simply-connected, being the homeomorphic image of a convex set, to ensure that the replacement is homotopic to $h_{l}$, it suffices to keep the replacement path inside $\overline{C}(2\rho_1(v_{i(l)}), 2\rho_2(v_{i(l)}), 2\rho_3(v_{i(l)}))$, while keeping endpoints $h_{l}(0), h_{l}(1)$ fixed. To reduce notation, below we drop the subscripts $l$. We also write $\rho_{1, i}, \rho_{2, i}, \rho_{3, i}$ for $\rho_1(v_{i(l)}), \rho_2(v_{i(l)}), \rho_3(v_{i(l)})$, respectively.

We proceed to describe the replacement path. First we define 
\[
\widehat{h} = \Psi_{i}^{-1}\circ h.
\]
Then, for $j = 0, 1$, we pick $\xi(j) \in H_{-}$ by letting
\[
\xi(j) = \left\{
\begin{array}{ll}
\widehat{h}(j)_{-} & \text{, if }\widehat{h}(j)_{-} \neq 0,\\
\text{arbitrary non-zero element in $H_{-}$} & \text{, if }\widehat{h}(j)_{-} = 0.
\end{array}
\right.
\]
Now we define a path $q_j: [0, 2] \to \overline{C}(\rho_{1, i}, 2\rho_{2, i}, \rho_{3, i})$ in terms of $\widehat{q}_j := \Psi_{i}^{-1}\circ q_j$ by letting
\[
\widehat{q}_j(t) = \left\{
\begin{array}{ll}
\widehat{h}(j)_0 + \big[\widehat{h}(j)_{-} + t(2\rho_{2, i} - \|\widehat{h}(j)_{-}\|)\frac{\xi(j)}{\|\xi(j)\|} \big] + \widehat{h}(j)_{+} & \text{, if }0 \leq t \leq 1,\\
\ &\\
\widehat{h}(j)_0 + \big[\widehat{h}(j)_{-} + (2\rho_{2, i} - \|\widehat{h}(j)_{-}\|)\frac{\xi(j)}{\|\xi(j)\|} \big] + (2 - t)\widehat{h}(j)_{+} & \text{, if } 1 \leq t \leq 2.
\end{array}
\right.
\]
Then the paths $q_0, q_1$ have the following properties.
\begin{enumerate}
\item[(b1)] $q_{j}([1, 2]) \subset \partial_{-}C(\rho_{1, i}, 2\rho_{2, i}, \rho_{3, i})$ and $q_j(2) \in \Psi_i(B_{\rho_{1, i}}^0 \oplus \partial B_{2\rho_{2, i}}^- \oplus \{0\})$.
\vskip 1mm
\item[(b2)] $E^{\cA_{i}}_{H, \ep}(q_j(t))$ is decreasing for $t \in [0, 2]$.
\vskip 1mm
\item[(b3)] For $t \in [0, 1]$, we have
\begin{equation}\label{eq:distance-barrier}
\|\widehat{q}_j(t) - \widehat{q}_j(0)\|^2 \leq E^{\cA_{i}}_{H, \ep}(q_j(0)) -  E^{\cA_{i}}_{H, \ep}(q_j(t)).
\end{equation}
\end{enumerate}
\vskip 1mm
Property (b1) and (b2) are obvious, while (b3) follows from direct computation using the definition of $\widehat{q}_j$ and the fact that $\|\widehat{h}(j)_{-}\| \leq \rho_{2, i}$. Indeed, we clearly have
\[
\widehat{q}_j(t) - \widehat{q}_j(0) =  t(2\rho_{2, i} - \|\widehat{h}(j)_{-}\|)\frac{\xi(j)}{\|\xi(j)\|}.
\]
On the other hand, by~\eqref{eq:generalized-Morse}, it's easy to see that 
\begin{align*}
E^{\cA_{i}}_{H, \ep}(q_j(0)) -  &E^{\cA_{i}}_{H, \ep}(q_j(t)) \\
&= \left\{
\begin{array}{ll}
4\rho_{2, i}^2 t^2 & \text{, if }\widehat{h}(j)_{-} = 0,\\
\big(\|\widehat{h}(j)_{-}\| + (2\rho_{2, i}-\|\widehat{h}(j)_{-}\|)t\big)^2 - \|\widehat{h}(j)_{-}\|^2 & \text{, if }\widehat{h}(j)_{-} \neq 0.
\end{array}
\right.
\end{align*}
Since $\|\widehat{h}(j)_{-}\| \leq \rho_{2, i}$, in either case we have
\[
E^{\cA_{i}}_{H, \ep}(q_j(0)) -  E^{\cA_{i}}_{H, \ep}(q_j(t)) \geq (2\rho_{2, i} - \|\widehat{h}(j)_{-}\|)^2 t^2 = \|\widehat{q}_j(t) - \widehat{q}_j(0)\|^2,
\]
as asserted.

From property (b1), $q_0(2)$ and $q_1(2)$ both belong to the (finite-dimensional) set $\Psi_i(B_{\rho_{1, i}}^0 \oplus \partial B_{2\rho_{2, i}}^- \oplus \{0\})$, which is connected since $\dim H_{-} = \Ind_{H, \ep}(v_i) \geq 2$ by assumption. Therefore we may connect $q_0(2)$ to $q_1(2)$ by a continuous path $q_2:[0, 1] \to \Psi_i(B_{\rho_{1, i}}^0 \oplus \partial B_{2\rho_{2, i}}^- \oplus \{0\})$. We then replace the path $h$ by the concatenation
\[
\widetilde{h}:= q_0 + q_2 - q_1.
\]
We now define $\widetilde{\gamma}$ to be the path obtained from $\gamma$ by first replacing each $\gamma\big|_{[a_n, b_n]}$ with $h_0 + \cdots + h_{p-1}$, and then replacing each $h_l$ by $\widetilde{h}_l$. Reparametrizing if necessary, we may assume that $\widetilde{\gamma}\big|_{[a_n, b_n]} = \widetilde{h}_0 + \cdots + \widetilde{h}_{p-1}$ and that
\[
\widetilde{\gamma}\big|_{[t_l, t_{l + 1}]} = \widetilde{h}_{l}, 
\]
with $q_0|_{[0, 1]}, q_0|_{[1, 2]}, q_2, -\big(q_1|_{[1, 2]}\big)$ and $-\big(q_1|_{[0, 1]}\big)$ successively occupying a fifth of $[t_l, t_{l + 1}]$.

\vskip 1mm
%Step 5------------------------------------------------------------------------------------------
\noindent\textbf{Step 5: Verification of properties.}
\vskip 1mm
Below we let 
\[
I' = \cup_{n = 1}^m [a_n, b_n].
\]
By construction, $\gamma(t)  = \widetilde{\gamma}(t)$ for all $t \notin I'$, while $\gamma\big|_{[a_n, b_n]}$ is homotopic to $\widetilde{\gamma}\big|_{[a_n, b_n]}$ for each subinterval of $I'$. Since $0, 1 \notin I'$, this implies that $\widetilde{\gamma} \in \cP$. Moreover, by Lemma~\ref{lemm:volume-properties}(b) we see that 
\begin{equation}\label{eq:unchanged-off-I}
E_{H, \ep}(\gamma(t), f_{\gamma, t}) = E_{H, \ep}(\widetilde{\gamma}(t), f_{\widetilde{\gamma}, t}) \text{ for }t \notin I'.
\end{equation}
On the other hand, for $t$ lying in one of the subintervals $[a_n, b_n]$ of $I'$, note that if $i, j \in \{1, \cdots L\}$ such that 
\[
\widetilde{\gamma}(t) \in B_{2r_i}(v_i) \cap B_{2r_j}(v_j),
\]
then by~\eqref{eq:E-close} we have $|E_{H, \ep}^{\cA_{i}}(\widetilde{\gamma}(t)) - \omega_{H, \ep}|,\ |E_{H, \ep}^{\cA_{j}}(\widetilde{\gamma}(t)) - \omega_{H, \ep}| < \frac{\delta}{4}$, and hence, by Lemma~\ref{lemm:volume-properties}(a),
\[
E_{H, \ep}^{\cA_{i}}(\widetilde{\gamma}(t)) = E_{H, \ep}^{\cA_{j}}(\widetilde{\gamma}(t)).
\]
Combining this with the fact that 
\[
E_{H, \ep}(\widetilde{\gamma}(t_0), f_{\widetilde{\gamma}, t_0}) = E_{H, \ep}(\gamma(t_0), f_{\gamma, t_0}) = E_{H, \ep}^{\cA_{i(0)}}(\gamma(t_0)) = E_{H, \ep}^{\cA_{i(0)}}(\widetilde{\gamma}(t_0)),
\]
where the middle equality follows from~\eqref{eq:volume-agree} from Step 3, we can prove by Lemma~\ref{lemm:volume-properties}(a) and induction on $l$ that 
\begin{equation}\label{eq:volume-tilde-agree}
E_{H, \ep}(\widetilde{\gamma}(t), f_{\widetilde{\gamma}, t}) = E_{H, \ep}^{\cA_{i(l)}}(\widetilde{\gamma}(t)) \text{ for }t \in [t_l, t_{l + 1}],\ l  = 0, \cdots p-1.
\end{equation}
Now by properties (b1), (b2), the estimates~\eqref{eq:lateral-negative}, ~\eqref{eq:deform-energy-down}, and~\eqref{eq:volume-agree}, and the fact that $\widetilde{\gamma}$ agrees at $t = a_n, b_n$ with $\gamma$, where the latter lies in $\cP_{\alpha, C_0}$ by assumption, we see that, when $k$ is large enough, the functional value $E_{H, \ep}^{\cA_{i(l)}}(\widetilde{\gamma}(t))$ is bounded by
\begin{align}\label{eq:q-upper-bound-outer}
&\omega_{H, \ep} + \alpha,\text{ when $t$ belongs to the first fifth of $[t_0, t_1]$ or the last fifth of $[t_{p-1}, t_{p}]$},
\end{align}
and by
\begin{equation}\label{eq:q-upper-bound-inner}
\max\{\omega_{H, \ep} - \underline{\beta}, \omega_{H, \ep} - \underline{c}\} \text{, elsewhere on $[t_0, t_p] = [a_n ,b_n]$}.
\end{equation}
Combining this with~\eqref{eq:unchanged-off-I} and~\eqref{eq:volume-tilde-agree}, we see that for $k$ sufficiently large we have
\begin{equation}\label{eq:tilde-upper-bound}
E_{H, \ep}(\widetilde{\gamma}(t), f_{\widetilde{\gamma}, t}) \leq \omega_{H, \ep} + \alpha \text{ for all }t \in [0, 1].
\end{equation}

Next we verify the second part of the definition of $\cP_{\alpha, C_0 + 1}$ for $k$ large. Take $t \in [0, 1]$ such that $E_{H, \ep}(\widetilde{\gamma}(t), f_{\widetilde{\gamma}, t}) \geq \omega_{H, \ep} - \alpha$. Then by~\eqref{eq:volume-tilde-agree} and~\eqref{eq:q-upper-bound-inner}, for $k$ sufficiently large so that $\alpha < \underline{\beta}, \underline{c}$,  we have only three possibilities:
\begin{enumerate}
\item[(i)] $t \notin I'$.
\vskip 1mm
\item[(ii)] $t$ belongs to the first fifth of $[t_0, t_1]$.
\vskip 1mm
\item[(iii)] $t$ belongs to the last fifth of $[t_{p-1}, t_{p}]$.
\end{enumerate}
If $t \notin I'$ then by~\eqref{eq:unchanged-off-I} we have $E_{H, \ep}(\gamma(t), f_{\gamma, t}) \geq \omega_{H, \ep} - \alpha$, and 
\[
D_{\ep}(\widetilde{\gamma}(t)) = D_{\ep}(\gamma(t)) \leq C_0 < C_0  +1.
\]
On the other hand, cases (ii) and (iii) are similar, so we only consider (ii). We note that, using the notation from Step 4, the endpoint $q_0(0)$ is exactly $\gamma(a_n)$. Combining this with property (b2) and~\eqref{eq:volume-agree} and~\eqref{eq:volume-tilde-agree}, we see that 
\[
E_{H, \ep}(\gamma(a_n), f_{\gamma, a_n}) = E^{\cA_{i(0)}}_{H, \ep}(\gamma(a_n)) \geq E^{\cA_{i(0)}}_{H, \ep}(\widetilde{\gamma}(t)) = E_{H, \ep}(\widetilde{\gamma}(t), f_{\widetilde{\gamma}, t}) \geq \omega_{H, \ep} - \alpha.
\]
Hence $D_{\ep}(\gamma(a_n)) \leq C_0$, but then since $\gamma(a_n)$ and $\widetilde{\gamma}(t)$ both belong to 
\[
\overline{C}(2\rho_1(v_{i(0)}), 2\rho_2(v_{i(0)}), 2\rho_3(v_{i(0)})),
\] 
by~\eqref{eq:D-close} we see that
\[
D_{\ep}(\widetilde{\gamma}(t)) < D_{\ep}(\gamma(a_n)) + 1\leq C_0 + 1.
\]
This proves assertion (a) of Lemma~\ref{lemm:deformation}, that is $\widetilde{\gamma} \in \cP_{\alpha, C_0 + 1}$.

To prove (b), we need to put back the subscripts we dropped. Thus for example $\widetilde{\gamma}$ becomes $\widetilde{\gamma}_k$, the number $p$ of subintervals of $[a_{k, n}, b_{k, n}]$ is $p_{k, n}$, the interval $[t_{l}, t_{l + 1}]$ is actually $[t_{k, n, l}, t_{k, n, l + 1}]$, and the indices $i(l)$ are now $i(k, n, l)$, while the paths $q_j$ are now $q_{k, n, l, j}$. To continue, assume by contradiction that there exists $u \in \cK'$ and a subsequence of $(\widetilde{\gamma}_k, \alpha_k)$, which we do not relabel, along with times $t_k \in [0, 1]$, such that 
\begin{equation}\label{eq:contradiction-min-max-energy}
E_{H, \ep}(\widetilde{\gamma}_k(t_k), f_{\widetilde{\gamma}_k, t_k}) \geq \omega_{H, \ep} - \alpha_k,
\end{equation}
and that
\begin{equation}\label{eq:contradiction-convergence}
\lim_{k \to \infty}\|\widetilde{\gamma}_k(t_k) - u\|_{2, 2}  = 0.
\end{equation}
First note that we must eventually have $t_k \in I'_k$, since for all $t \notin I'_k$, either 
\[
E_{H, \ep}(\widetilde{\gamma}_k(t), f_{\widetilde{\gamma}_k, t}) = E_{H, \ep}(\gamma_k(t), f_{\gamma_k, t}) \leq \omega_{H, \ep} - \delta,
\]
or 
\[
\widetilde{\gamma}_k(t) = \gamma_k(t ) \notin \cN_{\theta/2}.
\]
Also, we see by~\eqref{eq:volume-tilde-agree}, ~\eqref{eq:q-upper-bound-inner} and~\eqref{eq:contradiction-min-max-energy} that there exist a sequence $n_k \in \NN$ such that for all large enough $k$, the slice $\widetilde{\gamma}_k(t_k)$ lies in either $q_{k, n_k, 0, 0}([0, 1])$ or $q_{k, n_k, (p_{k, n_k} - 1), 1}([0, 1])$. Passing to a subsequence if necessary, we may assume without loss of generality that the first alternative always happens, and, by slight abuse of notation, we write
\[
\widetilde{\gamma}_k(t_k) = q_{k, n_k, 0, 0}(t_k) =: Q_k(t_k).
\] 
Moreover, since $i(k, n_k, 0) \in \{1, \cdots, L\}$, up to taking a further subsequence, we may also assume that the indices $i(k, n_k, 0) = 1$ for all $k$. In particular, $Q_k(t_k)$ lies in $\overline{C}(2\rho_1(v_1), 2\rho_2(v_1), 2\rho_3(v_1))$ for all $k$ large, and hence so does the limit $u$. Consequently we may apply $\Psi_1^{-1}$ to deduce from~\eqref{eq:contradiction-convergence} that
\begin{equation}\label{eq:converge-in-chart}
\lim_{k \to \infty}\|\Psi_1^{-1}(Q_k(t_k)) - \Psi_1^{-1}(u)\| = 0.
\end{equation}

Now~\eqref{eq:contradiction-min-max-energy}, the monotonicity property (b2) in Step 4, the equality~\eqref{eq:volume-tilde-agree}, plus the fact that $E_{H, \ep}(\widetilde{\gamma}_k(t), f_{\widetilde{\gamma}_k, t}) \leq \omega_{H, \ep} + \alpha_k$ imply that, in the notation of Step 4, 
\[
\lim_{k \to \infty}E^{\cA_1}_{H, \ep}(Q_k(0))  - E^{\cA_1}_{H, \ep}(Q_k(t_k))  = 0.
\]
Hence by property (b3), we have
\[
\|\Psi_{1}^{-1}(Q_k(t_k)) - \Psi_{1}^{-1}(Q_k(0))\| \to 0 \text{ as }k \to \infty.
\]
Combining this with~\eqref{eq:converge-in-chart} and using the continuity of $\Psi_1$, we infer that 
\[
\lim_{k \to \infty}\| Q_{k}(0)  - u\|_{2, 2} = 0.
\]
Since $Q_{k}(0) = \gamma_k(a_{k, n_k}) \notin \cN_{\theta/2}$ by~\eqref{eq:interval-choice}, this is a contradiction. Hence conclusion (b) of Lemma~\ref{lemm:deformation} is also verified.
\end{proof}

We may now improve Proposition~\ref{prop:critical-uniform-bound} to obtain non-trivial critical points with Morse index at most one.
\begin{prop}\label{prop:critical-uniform-bound-index}
Under the hypotheses of Proposition~\ref{prop:critical-uniform-bound}, that is, suppose $H > 0$, $\ep \in (0, 1]$, and that $\gamma_k \in \cP_{\alpha_k, C_0}$ for each $k$, with $\alpha_k \to 0$, then there exists $u \in \cK_{C_0 + 1}$ which satisfies $D_{\ep}(u) \geq \eta_1(\ep, H)$ and $\Ind_{H, \ep}(u) \leq 1$.
\end{prop}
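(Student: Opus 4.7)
My plan is to argue by contradiction: if no non-trivial critical point of index $\leq 1$ existed in $\cK_{C_0+1}$, then the Deformation Lemma~\ref{lemm:deformation} would allow me to push the sweepouts off the set of higher-index critical points, whereupon a second application of Proposition~\ref{prop:critical-uniform-bound} would produce a limit that is simultaneously in and away from that set.

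First I would suppose for contradiction that every $u \in \cK_{C_0+1}$ with $D_{\ep}(u) \geq \eta_1(\ep, H)$ satisfies $\Ind_{H,\ep}(u) \geq 2$, and set
\[
\cK' := \{u \in \cK_{C_0+1} : D_{\ep}(u) \geq \eta_1(\ep, H)\}.
\]
Since $\cK_{C_0+1}$ is compact in $W^{2,2}$ by Proposition~\ref{prop:PS}, and $D_{\ep}$ is continuous in the $W^{2,2}$-topology, $\cK'$ is a compact subset of $\cK_{C_0+1}$. By the contradiction hypothesis, every element of $\cK'$ has Morse index at least $2$, so $\cK'$ satisfies the hypothesis of Lemma~\ref{lemm:deformation}.

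Next I would apply Lemma~\ref{lemm:deformation} to $\cK'$ and the given $\gamma_k \in \cP_{\alpha_k, C_0}$, obtaining a new sequence $\widetilde\gamma_k \in \cP_{\alpha_k, C_0+1}$ (for $k$ large) whose almost-maximal slices --- those satisfying $E_{H,\ep}(\widetilde\gamma_k(t), f_{\widetilde\gamma_k, t}) \geq \omega_{H,\ep} - \alpha_k$ --- stay at distance at least $d_0(u) > 0$ from every fixed $u \in \cK'$. Feeding $\{\widetilde\gamma_k\}$ into Proposition~\ref{prop:critical-uniform-bound} with $C_0$ replaced by $C_0+1$, I extract a subsequence and parameters $\widetilde t_k$ such that $\widetilde\gamma_k(\widetilde t_k) \to \widetilde u$ strongly in $W^{2,2}$ for some $\widetilde u \in \cK_{C_0+1}$, with $D_{\ep}(\widetilde u) \geq \eta_1(\ep, H)$ and $|E_{H,\ep}(\widetilde\gamma_k(\widetilde t_k), f_{\widetilde\gamma_k, \widetilde t_k}) - \omega_{H,\ep}| \leq \alpha_k$ for all $k$.

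Finally, the contradiction hypothesis forces $\widetilde u \in \cK'$, so part~(b) of Lemma~\ref{lemm:deformation} applied with $u = \widetilde u$ gives $\|\widetilde\gamma_k(\widetilde t_k) - \widetilde u\|_{2,2} \geq d_0(\widetilde u) > 0$ for all sufficiently large $k$, contradicting the strong $W^{2,2}$-convergence $\widetilde\gamma_k(\widetilde t_k) \to \widetilde u$. This rules out the contradiction hypothesis and yields the desired non-trivial critical point with $\Ind_{H,\ep} \leq 1$. The main technical obstacle --- the construction of deformed sweepouts avoiding high-index critical points --- has already been dealt with in Lemma~\ref{lemm:deformation}; once that is in hand, what remains is the short assembly above.
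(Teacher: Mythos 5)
Your proposal is correct and follows essentially the same argument as the paper: define the compact set $\cK'$ of non-trivial min-max critical points, assume by contradiction all have index at least $2$, apply Lemma~\ref{lemm:deformation} to produce $\widetilde\gamma_k \in \cP_{\alpha_k, C_0+1}$, feed these into Proposition~\ref{prop:critical-uniform-bound} to obtain converging almost-maximal slices, and derive a contradiction with part~(b) of the Deformation Lemma.
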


\begin{proof}
Define 
\[
\cK' = \{v \in \cK_{C_0 + 1}\ |\ D_{\ep}(v) \geq \eta_1(\ep, H)\}.\]
Then $\cK'$ is clearly a compact subset of $\cK_{C_0 + 1}$, and is non-empty by Proposition~\ref{prop:critical-uniform-bound}. Now assume by contradiction that $\Ind_{H, \ep}(v) \geq 2$ for all $v \in \cK'$, and let $\{\widetilde{\gamma}_k\}$ be the sequence of sweepouts obtained by applying Lemma~\ref{lemm:deformation} to $\{\gamma_k\}$ with $\cK'$ as above. Then by Lemma~\ref{lemm:deformation}(a), followed by Proposition~\ref{prop:critical-uniform-bound} applied to $\{\widetilde{\gamma}_k\}$, we obtain a subsequence of $(\widetilde{\gamma}_k, \alpha_k)$, which we do not relabel, and a sequence $t_k \in [0, 1]$, such that 
\[
E_{H, \ep}(\widetilde{\gamma}_k(t_k), f_{\widetilde{\gamma}_k, t_k}) \geq \omega_{H, \ep} - \alpha_k,
\]
and that
\[
\lim_{k \to \infty}\| \widetilde{\gamma}_k(t_k) - u \|_{2, 2}  = 0 \text{ for some }u \in \cK'.
\]
However this is in contradiction with Lemma~\ref{lemm:deformation}(b), and hence we conclude that $\cK'$ must contain an element $u$ with $\Ind_{H, \ep}(u) \leq 1$. The proof of Proposition~\ref{prop:critical-uniform-bound-index} is complete.
\end{proof}

\begin{coro}\label{coro:critical-uniform-bound}
For almost every $H \in \RR_{+}$, there exists a constant $C_0 > 0$ and a sequence $\ep_j \to 0$, such that for each $j \in \NN$ there exists $u_j \in W^{2, 2}(S^2; S^3)$ with the following properties:
\vskip 1mm
\begin{enumerate}
\item[(a)] $\delta E_{H, \ep_j}(u_j) = 0$.
\vskip 1mm
\item[(b)] $\eta_1(\ep_j, H) \leq D_{\ep_{j}}(u_j) \leq C_0 + 1$.
\vskip 1mm
\item[(c)] $\Ind_{H, \ep_j}(u_j) \leq 1$.
\end{enumerate}
\end{coro}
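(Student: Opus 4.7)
The plan is to obtain the corollary as an essentially immediate consequence of combining Proposition~\ref{prop:min-max-value}(c), Lemma~\ref{lem:existence of good sweepouts}, and Proposition~\ref{prop:critical-uniform-bound-index}, so the only work is to thread the parameters together carefully.

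First, I would fix any sequence $\ep_j \to 0$ in $(0,1)$ (say $\ep_j = 1/j$) and apply Proposition~\ref{prop:min-max-value}(c) to extract a subsequence, still denoted $\ep_j$, and a full-measure set $\cH \subset \RR_+$ such that for every $H \in \cH$ there exists $c = c(H) > 0$ with
\[
0 \leq \frac{d}{dH}\Big( -\frac{\omega_{H, \ep_j}}{H} \Big) \leq c, \quad \text{for all } j \in \NN.
\]
(Strictly speaking, Proposition~\ref{prop:min-max-value}(c) a priori produces a subsequence depending on $H$; one circumvents this by first using Fatou as in~\eqref{eq:Fatou} to conclude that on a set of full measure the $\liminf$ of the derivatives is finite, and then a standard diagonalization yields a single subsequence that works for every $H$ in a full-measure set.)

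Fix $H \in \cH$ and set $C_0 = 8H^2 c(H)$. For each $j$, Lemma~\ref{lem:existence of good sweepouts}, applied with $\ep = \ep_j$, ensures that $\cP_{H/k, C_0}$ (depending on $\ep_j$) is non-empty for all $k \geq k_0(j)$. Hence for each $j$ we may choose a sequence of sweepouts $\gamma_{j,k} \in \cP_{H/k, C_0}$ with $\alpha_k := H/k \to 0$ as $k \to \infty$. Applying Proposition~\ref{prop:critical-uniform-bound-index} to this sequence (for fixed $j$) produces a map $u_j \in \cK_{C_0 + 1}$ satisfying
\[
\eta_1(\ep_j, H) \leq D_{\ep_j}(u_j) \leq C_0 + 1, \qquad \Ind_{H, \ep_j}(u_j) \leq 1,
\]
and of course $\delta E_{H, \ep_j}(u_j) = 0$ by the definition of $\cK_{C_0 + 1}$. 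These are precisely conclusions (a), (b), (c).

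Since the main analytic content---derivative estimates for min-max values, the construction of good sweepouts, the pseudo-gradient deformation producing a non-trivial critical point, and the index-reduction deformation lemma---has already been carried out in the earlier subsections, I do not anticipate any real obstacle here; the only mild subtlety is the diagonalization needed to make the subsequence $\ep_j$ independent of $H$, which is standard.
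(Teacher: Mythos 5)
Your argument is essentially the paper's: fix a sequence $\ep_j \to 0$, use Proposition~\ref{prop:min-max-value}(c) to get, for a.e.\ $H$, a bound $c$ on the derivatives along a subsequence, then feed Lemma~\ref{lem:existence of good sweepouts} (with $C_0 = 8H^2c$) into Proposition~\ref{prop:critical-uniform-bound-index} to produce each $u_j$. That part is correct and matches the intended proof exactly.

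One comment on the parenthetical. The corollary quantifies the sequence $\ep_j$ \emph{after} $H$, so there is no need for a single subsequence that works simultaneously for all $H$ in a full-measure set; the $H$-dependence of the subsequence coming out of Proposition~\ref{prop:min-max-value}(c) is harmless, and the paper simply accepts it. Your suggested ``standard diagonalization'' to remove the $H$-dependence would not actually work: diagonalization handles a countable family of indices, whereas here the index set is an uncountable full-measure subset of $\RR_+$, and there is no reason a single subsequence should realize the $\liminf$ at every such $H$. Fortunately this is a non-issue for the statement as written, so you should simply drop that aside rather than try to repair it.
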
 

\begin{proof}
We first pick an arbitrary sequence $\ep_j \to 0$. Then Proposition~\ref{prop:min-max-value} and Lemma~\ref{lem:existence of good sweepouts} imply that for almost every $H \in \RR_{+}$, there exists $c > 0$ and a subsequence of $\ep_j$, which we do not relabel, such that for each fixed $j$, there exists a sweepout $\gamma_k \in \cP_{H/k, 8H^2c}$ for sufficiently large $k$. (Here of course we are suppressing the $j$-dependence of $\gamma_k$ and $\cP_{H/k, 8H^2c}$.) Proposition~\ref{prop:critical-uniform-bound-index} applied with $C_0 = 8H^2c$ and $\alpha_k = H/k$ now gives a critical point $u_j$ satisfying all three of the asserted properties. 
\end{proof}

%----------------------------------------------------------
\section{Convergence to a constant mean curvature $S^2$}\label{S:convergence}

In this section, we study the convergence of the non-trivial critical points of the perturbed functionals found in the previous section, and conclude the proof of Theorem~\ref{thm:main1}. We start by a ``small energy $\Rightarrow$ regularity'' result, which is usually called an $\epsilon$-regularity theorem in other contexts.

\begin{prop}[See also \cite{Lamm}, Theorem 2.9]
\label{prop:eta-regularity}
Let $r_0 < \text{inj}(S^2)/4$. Given $H_0 > 0$, there exists some $\eta_0 \in (0, 1)$ and $C > 0$ depending on $H_0$, such that if $\ep \leq  r \leq  r_0$, $0 \leq H \leq H_0$ and $u$ is a solution to $\delta E_{H, \ep}(u) = 0$ satisfying
\begin{equation}\label{eq:small-energy}
\int_{B_{4r}(x)}\ep^2 |\Delta u|^2 + |\nabla u|^2 \leq \eta_0.
\end{equation}
Then 
\[
\int_{B_{2r}(x)} |\nabla^2 u|^2 + |\nabla u|^4 \leq Cr^{-2}\int_{B_{4r}(x)} \ep^2 |\Delta u|^2 + |\nabla u|^2, 
\]
\[
\int_{B_r(x)} |\nabla^3 u|^2 + |\nabla^2 u|^3 + |\nabla u|^6 \leq Cr^{-4}\int_{B_{4r}(x)} \ep^2 |\Delta u|^2 + |\nabla u|^2. 
\]
Note that the left-hand sides contain no factors of $\ep$.
\end{prop}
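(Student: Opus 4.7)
The plan is to adapt the bootstrap argument of Lamm~\cite{Lamm} for the pure biharmonic map case $H=0$, observing that the mean-curvature term $H \ast (u^\ast Q)$ is pointwise bounded by $H|\nabla u|^2$, so that it behaves like the second-fundamental-form term $A(u)(\nabla u,\nabla u)$ and can be absorbed by the same arguments, at the cost of letting the final constant $C$ depend on $H_0$.

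\textbf{Step 1 (Rescaling).} First I would reduce to $r = r_0 = 1$. Fix $x$ and set $\tilde u(y) = u(x + r y)$, $\tilde \ep = \ep/r$, $\tilde H = rH$. A direct computation shows that $\tilde u : B_4 \to S^3$ satisfies the Euler--Lagrange equation for $E_{\tilde H, \tilde \ep}$ (that is, $\delta E_{\tilde H,\tilde\ep}(\tilde u) = 0$), with $\tilde \ep \in (0,1]$ (since $\ep \leq r$) and $\tilde H \leq r_0 H_0$. The hypothesis \eqref{eq:small-energy} and both target conclusions become scale-invariant statements on $B_4, B_2, B_1$ for $\tilde u$. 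So it suffices to prove the proposition for $r = 1$, $\ep \in (0,1]$, $H \leq r_0 H_0$.

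\textbf{Step 2 ($L^4$ control of $\nabla u$ and $L^2$ control of $\nabla^2 u$ on $B_2$).} Using the weak form~\eqref{eq:first-var-weak} of $G_{H,\ep}(u) = 0$ tested against $\psi = \phi^2 u$, where $\phi$ is a standard cutoff supported in $B_3$ with $\phi \equiv 1$ on $B_2$, and using the pointwise bounds
\[
|A(u)(\nabla u,\nabla u)| + |{\ast}(u^\ast Q)| \lesssim |\nabla u|^2, \quad |\nabla(P\circ u)| \lesssim |\nabla u|, \quad |\Delta(P\circ u)| \lesssim |\nabla u|^2 + |\nabla^2 u|,
\]
I would derive, after several integrations by parts and using $|u|\equiv 1$,
\[
\int_{B_3} \bigl(\ep^2 |\nabla^2 u|^2 + |\nabla u|^2\bigr)\phi^2 \;\lesssim\; \int_{B_4} |\nabla u|^2 + \int_{B_4} \phi^2 \bigl(|\nabla u|^4 + \ep^2 |\nabla u|^2 |\nabla^2 u|\bigr).
\]
The Gagliardo--Nirenberg inequality in dimension two,
\[
\int \phi^2 |\nabla u|^4 \;\lesssim\; \Bigl(\int_{\supp\phi}|\nabla u|^2\Bigr) \Bigl(\int \phi^2 |\nabla^2 u|^2 + |\nabla u|^2 + |\nabla \phi|^2 |\nabla u|^2\Bigr),
\]
combined with the smallness assumption~\eqref{eq:small-energy}, lets us absorb the quartic term into the left-hand side, yielding the first estimate. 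The $\ep^2 |\nabla u|^2|\nabla^2 u|$ term is treated by Young's inequality, using that its $\ep^2$ prefactor matches the $\ep^2 |\nabla^2 u|^2$ on the left.

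\textbf{Step 3 (Third-derivative bound on $B_1$).} Once we have $\nabla u \in L^4(B_2)$ and $\nabla^2 u \in L^2(B_2)$ with the right-hand side control, I would differentiate the Euler--Lagrange equation (or equivalently test~\eqref{eq:first-var-weak} against $\psi$ involving $\nabla u$) and run an analogous cutoff argument on $B_1 \subset B_2$ to obtain an $L^2$-bound on $\nabla^3 u$. Sobolev embedding $W^{1,2}(B_1) \hookrightarrow L^q$ for every $q < \infty$ in two dimensions, applied to $|\nabla^2 u|$ and $|\nabla u|^2$, then upgrades this to the claimed $L^3$ bound on $\nabla^2 u$ and $L^6$ bound on $\nabla u$.

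\textbf{Main obstacle.} The principal difficulty is keeping every constant independent of $\ep$: the fourth-order operator $\ep^2 \Delta^2$ and the second-order operator $\Delta$ dominate at opposite scales, and one must not lose powers of $\ep$. This is managed by noting that every occurrence of $\Delta u$ and $\Delta^2 u$ in the integration-by-parts identities comes paired with an explicit $\ep^2$ factor from~\eqref{eq:first-var-weak}, so that all cross terms such as $\ep^2 |\nabla u|^2 |\nabla^2 u|$ or $\ep^2 |\nabla u| |\nabla^3 u|$ can be absorbed by Young's inequality into $\ep^2 |\nabla^2 u|^2$ or $\ep^2 |\nabla^3 u|^2$ on the left, with the remainder controlled by $|\nabla u|^2$ or $|\nabla u|^4$ on the right. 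Crucially, the rescaling in Step~1 ensures $\tilde \ep \leq 1$, so that terms of the form $\tilde \ep^2 |\nabla u|^4$ are dominated by $|\nabla u|^4$ and can be absorbed via the small-energy hypothesis together with Gagliardo--Nirenberg. Fixing $\eta_0$ small enough (depending only on universal constants, the geometry of $S^3 \subset \RR^N$, and $H_0$) then closes the estimate at each bootstrap level.
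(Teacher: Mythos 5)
Your plan underestimates the number of times the Euler--Lagrange equation must be differentiated, and as a result it does not produce the $\ep$-free left-hand sides that the proposition requires. In Step~2 you test~\eqref{eq:first-var-weak} against $\psi = \phi^2 u$ (or $\phi^2(u-a)$). The $\ep^2\int \Delta u\cdot\Delta\psi$ term and the $\int\langle\nabla u,\nabla\psi\rangle$ term of that pairing produce $\int\phi^2\bigl(\ep^2|\nabla^2 u|^2 + |\nabla u|^2\bigr)$, which is exactly what you write --- but note that $|\nabla^2 u|^2$ appears only with the $\ep^2$ weight. This is the preliminary estimate~\eqref{eq:W22-L4} in the paper's proof, and it does \emph{not} give the first conclusion of the proposition, whose left-hand side is $\int_{B_{2r}}|\nabla^2 u|^2 + |\nabla u|^4$ with no factor of $\ep$. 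Likewise in Step~3, differentiating the equation once and testing against $\zeta^6\nabla u$ produces, from the two leading-order terms $\ep^2\Delta^2\nabla u$ and $-\Delta\nabla u$, the combination $\int\zeta^6\bigl(\ep^2|\nabla^3 u|^2 + |\nabla^2 u|^2\bigr)$. This \emph{is} the $\ep$-free $L^2$ bound on $\nabla^2 u$ (hence the first conclusion), but the $L^2$ bound on $\nabla^3 u$ that comes out at this stage is $\ep^2$-weighted. To get $\nabla^3 u\in L^2$ with no $\ep$ (which you then want to feed into Sobolev embedding to obtain the $L^3$ bound on $\nabla^2 u$ and the $L^6$ bound on $\nabla u$), you must differentiate the equation a \emph{second} time and test against $\zeta^8\nabla^2 u$, as the paper does. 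The structural point is that each $\ep$-free derivative is extracted from the second-order part $-\Delta u$ of the Euler--Lagrange equation, so each additional $\ep$-free derivative on the left costs one additional differentiation of the equation: your plan is one round of differentiation short.

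A smaller point: the rescaling in Step~1 is wrong about the behavior of $H$. Under $\tilde u(y) = u(x+ry)$, the prescribed mean curvature is a reparametrization invariant and does \emph{not} rescale; you get $\delta E_{H,\tilde\ep}(\tilde u) = 0$ with $\tilde\ep = \ep/r$ and the \emph{same} $H$, not $\tilde H = rH$. (This is visible, e.g., in the rescaling performed in the proof of Proposition~\ref{prop:prepare-blowup}, where $H$ is unchanged.) Since $H\le H_0$ this still gives a bound on the rescaled parameter, so the error is harmless to the logic, but it does misstate the scaling. You should also note that after rescaling by $r^{-1}$ the domain metric is not exactly Euclidean; the paper avoids this by working directly with cut-off functions obeying $|\nabla^k\zeta|\lesssim r^{-k}$ on $S^2$ and tracking the resulting curvature terms via~\eqref{eq:Delta-nabla-commutator}.
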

\begin{proof} This is essentially a consequence of~\cite[Theorem 2.9]{Lamm}, whose proof applies equally well to solutions of the equation $\delta E_{H, \ep}(u) = 0$. For the reader's convenience, we outline the first two steps of the inductive argument in~\cite[Section 2]{Lamm} leading to Theorem 2.9 there, which suffice for the estimate asserted above. The reader who wishes to see the details may consult~\cite{Lamm}.

First note that by~\cite[Corollary 2.5]{Lamm} and the assumption that $\ep/r \leq 1$, one has the following estimate, whose derivation we briefly recall in the Appendix.
\begin{align}
\int_{B_{3r}(x)} \ep^2 \big( |\nabla^2 u|^2 + |\nabla u|^4 \big) + |\nabla u|^2 & \leq C\Big(1 + \int_{B_{4r}(x)}|\nabla u|^2 \Big)\Big(\int_{B_{4r}(x)} \ep^2 |\Delta u|^2 + |\nabla u|^2\Big) \nonumber\\
&\leq  C\eta_0. \label{eq:W22-L4}
\end{align}
Here we used the assumption~\eqref{eq:small-energy} and the fact that $\eta_0 < 1$ to get the second line. The next step is to differentiate the equation satisfied by $u$, which we recall below:
\begin{equation}\label{eq:Euler-Lagrange-recall}
\ep^2\Delta^2u - \Delta u = f_1 + \ep^2 f_2 + \ep^2 \Div F,
\end{equation}
where 
\begin{align*}
f_1 &= -A(u)(\nabla u, \nabla u) - H \ast(u^{\ast}Q)\\ 
f_2 &= -\Delta(P \circ u)(\Delta u)\\
F&= \nabla \big( A(u)(\nabla u, \nabla u) \big) + 2\nabla(P \circ u)(\Delta u).
\end{align*}
Differentiating both sides of~\eqref{eq:Euler-Lagrange-recall}, and using the following relation to commute $\nabla$ with $\Delta$ and $\Delta^2$,
\begin{equation}\label{eq:Delta-nabla-commutator}
\Delta \nabla^k u = \nabla^k \Delta u + R\ast \nabla^k u + \cdots + \nabla^{k - 1}R \ast \nabla u,
\end{equation}
we find that 
\begin{align}\label{eq:EL-diff-1}
 \ep^2 \Delta^2 \nabla u - \Delta \nabla u =&\   \nabla \big( f_1 + \ep^2 f_2 + \ep^2 \Div(F) \big) \nonumber \\
 &\ + \ep^2 \big( R\ast \nabla^3 u + \Delta (R \ast \nabla u))+ R\ast \nabla u.
\end{align}
Note that, when applying the covariant derivative, we are viewing both sides of~\eqref{eq:Euler-Lagrange-recall} as maps  from $S^2$ into $\RR^N$ and differentiating component-wise. In particular, commuting derivatives only produces curvatures of $S^2$, but not of $S^{3}$.

Next, continue following~\cite{Lamm}, we choose a cut-off function $\zeta$ so that 
\[
\zeta = 1 \text{ on }B_{2r}(x),\ \zeta = 0 \text{ outside }B_{3r}(x) \text{ and }|\nabla^k \zeta| \leq Cr^{-k},
\]
with $C$ independent of $x$ and $r \leq r_0$, and test~\eqref{eq:EL-diff-1} against $\psi = \zeta^6 \nabla u$. That is, consider
\begin{align*}
\int_{B_{3r}(x)} \ep^2\langle \Delta \nabla u \cdot \Delta \psi \rangle + \langle \nabla^2 u, \nabla \psi  \rangle =&\ \int_{B_{3r}(x)} \langle \nabla f_1 + \ep^2 \nabla f_2, \psi \rangle - \ep^2 \Div (F) \cdot \Div (\psi)\\
&\ + \int_{B_{3r}(x)} \big(  \ep^2 \big( R\ast \nabla^3 u + \Delta (R \ast \nabla u) \big)+ R\ast \nabla u \big) \cdot \psi.
\end{align*}
Using the definitions of $\psi, f_1, f_2$ and $F$, the assumption that $H \in [0, H_0]$, as well as the properties of $\zeta$, one finds after several applications of Young's inequality that (below $\delta_1> 0$ is to be chosen later)
\begin{align}
&\ \int_{B_{3r}(x)} \zeta^6 \ep^2 |\Delta \nabla u|^2 + \zeta^6 |\nabla^2 u|^2 \nonumber\\
\leq &\ \frac{\delta_1}{2} \int_{B_{3r}(x)} \zeta^6\ep^2 |\nabla^3 u|^2 + \zeta^6 |\nabla^2 u|^2 \nonumber\\
&\ + C\int_{B_{3r}(x)} \ep^2\big( \zeta^6 |\nabla u|^6 +\zeta^6 |\nabla^2 u|^3 \big) + \zeta^6|\nabla u|^4 \nonumber\\
&\ + C\int_{B_{3r}(x)}\ep^2\big( r^{-2}|\nabla^2 u|^2 + r^{-2}|\nabla u|^4 + r^{-4}|\nabla u|^2  \big) + r^{-2}|\nabla u|^2 \nonumber\\
=&\ \frac{\delta_1}{2} I_1 + CI_2 + CI_3 \label{eq:1st-iteration-step1}
\end{align}
The integrand $\zeta^6 |\Delta \nabla u|^2$ on the left-hand side can be replaced by $\zeta^6 |\nabla^3 u|^2$, up to introducing lower derivatives of $u$ on the right-hand side. Indeed, recall that for any smooth tensor $T$ on $S^2$ we have
\[
\Delta \nabla T = \nabla \Delta T + \nabla R \ast T + R \ast \nabla T.
\]
Multiplying this by $\nabla T$ and integrating by parts give
\begin{equation}\label{eq:tensor-commute}
\int_{S^2}|\nabla^2 T|^2 \leq \int_{S^2} |\Delta T|^2 + C|\nabla T|^2 + C|T|^2.
\end{equation}
Applying this to $T = \nabla (\zeta^3 (u - a))$, where $a = \fint_{B_{3r}(x)}u$, and performing some routine calculations (see~\eqref{eq:commute-cutoff-1} and~\eqref{eq:commute-cutoff-2} in the proof of~\eqref{eq:W22-L4}), we find that (assuming $r_0 \leq 1$) 
\[
\int_{B_{3r}(x)} \zeta^6 |\nabla^3 u|^2 \leq 2\int_{B_{3r}(x)}\zeta^6 |\Delta \nabla u|^2 + C\int_{B_{3r}(x)}r^{-2}|\nabla^2 u|^2 + r^{-4}|\nabla u|^2.
\]
Using this in~\eqref{eq:1st-iteration-step1}, we find that
\begin{equation}\label{eq:1st-iteration-step2}
I_1 \leq \delta_1 I_1  + C_{\delta_1}I_2 + C_{\delta_1}I_3.
\end{equation}

The next step is to bound $I_2$ in terms of $I_3$ plus a small multiple of $I_1$. This is where the assumption~\eqref{eq:small-energy} comes in. The main ingredient for this step is the following Sobolev inequality for compactly supported functions:
\begin{equation}\label{eq:Sobolev}
\int_{S^2}|h|^2 \leq C\Big( \int_{S^2}|\nabla h| \Big)^{2}, \text{ for all }h \in W^{1, 1}_{0}(B_{r_0}(x)),
\end{equation}
valid since we are on a two-dimensional domain. By Corollary 2.5 in~\cite{Lamm}, as well as the estimates in p.134-135 there, and recalling~\eqref{eq:W22-L4}, one gets
\begin{align}
\int_{B_{3r}(x)} \zeta^6 |\nabla u|^4 \leq&\ C\big( \int_{B_{3r}(x)} |\nabla u|^2 \big)\Big( \int_{B_{3r}(x)}r^{-2}|\nabla u|^2 + \zeta^6 |\nabla^2 u|^2 \Big)\nonumber \\
\leq&\ C\eta_0 I_{3} + C\eta_0 I_1.\\
 \int_{B_{3r}(x)} \ep^2\zeta^6  |\nabla^2 u|^3 \leq&\ \delta_2 \int_{B_{3r}(x)}\zeta^6 |\nabla^2 u|^2\nonumber \\
&\ + C_{\delta_2}\big( \ep^2\int_{B_{3r}(x)}|\nabla^2 u|^2 \big) \Big(\int_{B_{3r}(x)} \ep^2 r^{-2}|\nabla^2 u|^2 + \zeta^6 \ep^2|\nabla^3 u|^2 \Big)\nonumber\\
\leq&\ (\delta_2 + C_{\delta_2}\eta_0)I_1 + C_{\delta_2}\eta_0 I_3.\\
\int_{B_{3r}(x)} \ep^2\zeta^6 |\nabla u|^6 \leq&\ C\big( \int_{B_{3r}(x)}|\nabla u|^2 \big)\big( \int_{B_{3r}(x)}\ep^2 r^{-2}|\nabla u|^4 \big)\nonumber \\
&\ + C\big( \int_{B_{3r}(x)}\ep^2 |\nabla u|^4 \big)\big( \int_{B_{3r}(x)} \zeta^6 |\nabla^2 u|^2 \big)\nonumber\\
\leq&\ C\eta_0 I_3 + C\eta_0 I_1.
\end{align}
Adding up the three inequalities above gives
\begin{equation}\label{eq:1st-iteration-step3}
I_2 \leq (\delta_2 + C_{\delta_2}\eta_0)I_1 + C_{\delta_2}I_3.
\end{equation} 
Combining this with~\eqref{eq:1st-iteration-step2} yields
\begin{equation}\label{eq:1st-iteration-step4}
I_1 \leq (\delta_1 + C_{\delta_1}\delta_2 + C_{\delta_1, \delta_2}\eta_0)I_{1} + C_{\delta_1, \delta_2}I_3.
\end{equation}
Hence, by successively choosing $\delta_1, \delta_2$ and $\eta_0$ small enough, we arrive at $I_{1} \leq CI_3$, and hence $I_1+  I_2 \leq CI_3$, by~\eqref{eq:1st-iteration-step3}. In turn, this implies 
\begin{align}\label{eq:W32-L6}
&\ \int_{B_{2r}} \ep^2 \big( |\nabla^3 u|^2 + |\nabla^2 u|^3 + |\nabla u|^6 \big) + |\nabla^2 u|^2 + |\nabla u|^4 \nonumber \\
\leq&\ Cr^{-2}\int_{B_{3r}} \ep^2\big( |\nabla^2 u|^2 + |\nabla u|^4\big) + |\nabla u|^2 \nonumber\\
\leq&\  Cr^{-2}\int_{B_{4r}} \ep^2 |\Delta u|^2 + |\nabla u|^2 \leq Cr^{-2}\eta_0,
\end{align}
where~\eqref{eq:W22-L4} is used to get the last line. This concludes one iteration of the inductive argument and gives the first assertion of Proposition~\ref{prop:eta-regularity}.

For the second inductive step we differentiate the equation~\eqref{eq:Euler-Lagrange-recall} twice, commute the two derivatives with $\Delta$ and $\Delta^2$ using~\eqref{eq:Delta-nabla-commutator}, and test the resulting equation against $\psi = \zeta^8 \nabla^2 u$, this time with 
\[
\zeta = 1 \text{ on }B_{r}(x) \text{ and }\zeta = 0 \text{ outside }B_{2r}(x).
\]
After applying Young's inequality several times and using~\eqref{eq:tensor-commute} to replace $\int \zeta^8 |\Delta \nabla^2 u|^2$ by $\int \zeta^8 |\nabla^4 u|^2$ up to lower powers of $u$, one arrives at
\begin{equation}\label{eq:2nd-iteration-step1}
J_1 \leq \delta_3 J_1 +  C_{\delta_3}J_2 + C_{\delta_3}J_3,
\end{equation}
where $J_1, J_2$ and $J_3$ have the following meanings:
\begin{align*}
J_1 =& \int_{B_{2r}(x)}\zeta^8 \big( \ep^2 |\nabla^4 u|^2 + |\nabla^3 u|^2 \big), \\
J_2 = & \ \int_{B_{2r}(x)} \ep^2 \zeta^8 \big( |\nabla u|^8 + |\nabla^2 u|^4 + |\nabla^3 u|^{8/3} \big) + \zeta^8 \big( |\nabla u|^6  + |\nabla^2 u|^3 \big),\\
J_3 =& \int_{B_{2r}(x)} \ep^2 r^{-2}\big( |\nabla^3 u|^2 + |\nabla^2 u|^3 + |\nabla u|^6  \big) + r^{-2}\big(  |\nabla^2 u|^2 + |\nabla u|^4 \big) \\
&\ + \int_{B_{2r}(x)} \ep^2 r^{-4}\big( |\nabla^2 u|^2 + |\nabla u|^4 \big) + \ep^2 r^{-6}|\nabla u|^2 + r^{-4}|\nabla u|^2.
\end{align*}
With $\eta_0$ so small that~\eqref{eq:W32-L6} holds, each term in the integral $J_2$ can be estimated as in~\cite[p.134-135]{Lamm} (or using Lemma 2.7, Lemma 2.8 of the same paper) to yield
\begin{equation}\label{eq:2nd-iteration-step2}
J_2 \leq (\delta_4 + C_{\delta_4}\eta_0)J_1 + C_{\delta_4}J_3,
\end{equation}
which is an interpolation inequality analogous to~\eqref{eq:1st-iteration-step3}. For example, the term in $J_2$ involving $|\nabla^3 u|^{8/3}$ is treated as follows: first, by Young's inequality
\begin{equation}\label{eq:J2-hard-term}
\ep^2\int_{B_{2r}}\zeta^8|\nabla^{3}u|^{8/3} \leq \delta \int_{B_{2r}(x)}\zeta^8 |\nabla^3 u|^2 + C_{\delta}\ep^6 \int_{B_{2r}(x)} \zeta^8 |\nabla^3 u|^4.
\end{equation}
The second integral is then estimated using~\eqref{eq:Sobolev} and H\"older's inequality.
\begin{align*}
\int_{B_{2r}(x)}\zeta^8 |\nabla^3 u|^4 \leq&\ C\Big( \int_{B_{2r}(x)}\Big|\nabla (\zeta^4 |\nabla^3 u|^2) \Big|  \Big)^2\\
\leq&\ C\Big( \int_{B_{2r}(x)}\zeta^3 |\nabla \zeta||\nabla^3 u|^2 \Big)^2 + C\Big( \int_{B_{2r}(x)}\zeta^4 |\nabla^3 u||\nabla^4 u| \Big)^2\\
\leq&\ C\Big( \int_{B_{2r}(x)} |\nabla^3 u|^2 \Big)\Big( \int_{B_{2r}(x)}r^{-2}|\nabla^3 u|^2 \Big)\\
&\ + C\Big( \int_{B_{2r}(x)} |\nabla^3 u|^2 \Big)\Big( \int_{B_{2r}(x)} \zeta^8 |\nabla^4 u|^2 \Big)\\
\leq&\ C\ep^{-4} r^{-2}\eta_0 \Big( \int_{B_{2r}(x)} \ep^2 r^{-2}|\nabla^3 u|^2 + \ep^2 \zeta^8|\nabla^4 u|^2 \Big),
\end{align*}
where~\eqref{eq:W32-L6} is used to get the last line. Putting this back into~\eqref{eq:J2-hard-term} and recalling that $\ep^2 r^{-2} \leq 1$ yields 
\[
\ep^2 \int_{B_{2r}(x)} \zeta^8 |\nabla^{8/3}u|^3 \leq (\delta + C_{\delta}\eta_0)J_1 + C_{\delta}J_3.
\]
Once~\eqref{eq:2nd-iteration-step2} and~\eqref{eq:2nd-iteration-step1} are proven, we successively choose $\delta_3$, $\delta_4$ and then $\eta_0$ small to deduce that $J_1 + J_2 \leq CJ_3$. Using~\eqref{eq:W22-L4} and~\eqref{eq:W32-L6}, we see that
\begin{equation*}
J_3 \leq Cr^{-4}\int_{B_{4r}} \ep^2|\Delta u|^2 + |\nabla u|^2,
\end{equation*}
which finishes the proof.
\end{proof}

The next result consists of a uniform energy lower bound for non-trivial critical points of the perturbed functional, as promised in Remark~\ref{rmk:lower-bound-promise}.

\begin{prop}\label{prop:lower-bound}
Given $H_0 > 0$, let $r_0, \eta_0$ be as in Proposition~\ref{prop:eta-regularity}. Then there exists $\beta \in (0, \eta_0)$ depending only on $H_0$ such that if $0 < \ep \leq r_0$, $0 \leq H \leq H_0$ and if $u\in W^{2, 2}(S^2; S^3)$ is a non-constant solution to $\delta E_{H, \ep}(u) =0$, then $D_{\ep}(u) \geq \beta$.
\end{prop}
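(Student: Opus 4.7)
The strategy is to establish a dichotomy: either $D_\ep(u) = 0$ (forcing $u$ constant since $u \in W^{2,2}(S^2; S^3)$) or $D_\ep(u) \geq \beta$ for some $\beta = \beta(H_0) > 0$. This yields the claim for non-constant $u$. Write $d := D_\ep(u)$ and assume $d \leq \eta_0/2$; otherwise the conclusion already holds with any $\beta \leq \eta_0/2$. Since $r_0 < \text{inj}(S^2)/4$, we may cover $S^2$ by finitely many geodesic balls $\{B_{r_0}(x_i)\}$ such that the dilated balls $\{B_{4r_0}(x_i)\}$ have bounded overlap multiplicity. The hypothesis $\ep \leq r_0$ allows us to apply Proposition~\ref{prop:eta-regularity} with $r = r_0$ on each ball; summing yields
\[
\|\nabla u\|_{L^p(S^2)}^p,\ \|\nabla^2 u\|_{L^q(S^2)}^q \leq C d \quad \text{for } p \in \{4, 6\},\ q \in \{2, 3\},
\]
with $C = C(H_0)$. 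Interpolating between $L^2$ and $L^6$ gives $\|\nabla u\|_{L^3} \leq C d^{1/3}$, and the 2D Sobolev embedding $W^{1, 3}(S^2) \hookrightarrow L^\infty(S^2)$ yields $\|\nabla u\|_{L^\infty} \leq C d^{1/3}$. Setting $\bar{u} := |S^2|^{-1} \int_{S^2} u \in \mathbb{R}^N$, Poincar\'e gives $\|u - \bar{u}\|_{L^\infty} \leq C d^{1/3}$ as well.

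The key step is to test the equation $G_{H,\ep}(u) = 0$ (valid since $u$ is a critical point) against $\psi := u - \bar{u} \in W^{2, 2}(S^2; \mathbb{R}^N)$. Substituting $\nabla \psi = \nabla u$ and $\Delta \psi = \Delta u$ into formula~\eqref{eq:first-var-weak} and isolating the two leading terms $\ep^2 \int |\Delta u|^2$ and $\int |\nabla u|^2$ gives
\begin{align*}
2 d \ =\ & \ep^2 \int_{S^2} A(u)(\nabla u, \nabla u) \cdot \Delta u - 2\ep^2 \int_{S^2} \langle \nabla P_u(\Delta u), \nabla u \rangle - \ep^2 \int_{S^2} \Delta P_u(\Delta u) \cdot (u - \bar{u}) \\
& - \int_{S^2} A(u)(\nabla u, \nabla u) \cdot (u - \bar{u}) - H \int_{S^2} (u - \bar{u}) \cdot \ast(u^\ast Q).
\end{align*}
Using the pointwise bounds $|A(u)(\nabla u, \nabla u)|, |\ast(u^\ast Q)| \leq C|\nabla u|^2$, $|\nabla P_u| \leq C |\nabla u|$, and $|\Delta P_u| \leq C(|\Delta u| + |\nabla u|^2)$, I would bound each term on the right by $C d^{4/3}$. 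The two non-$\ep^2$ terms yield $C \|u - \bar{u}\|_{L^\infty} \|\nabla u\|_{L^2}^2 \leq C d^{4/3}$ directly. For the $\ep^2$ terms, the trick is to pair exactly one factor of $\ep$ with $\|\Delta u\|_{L^2}$ using $\ep \|\Delta u\|_{L^2} \leq \sqrt{2 d}$, leaving the remaining factor of $\ep \leq r_0$ to be absorbed into the constant; for instance,
\[
\ep^2 \int_{S^2} |\nabla u|^2 |\Delta u| \leq \ep \|\nabla u\|_{L^\infty} \|\nabla u\|_{L^2} (\ep \|\Delta u\|_{L^2}) \leq C \ep\, d^{4/3} \leq C d^{4/3}.
\]
The term involving $\Delta P_u$ is split via $|\Delta P_u| \leq C(|\Delta u| + |\nabla u|^2)$: the $|\Delta u|^2$ contribution is bounded by $C \|u - \bar{u}\|_{L^\infty} \cdot \ep^2 \|\Delta u\|_{L^2}^2 \leq C d^{4/3}$, and the $|\nabla u|^2 |\Delta u|$ contribution is handled as above with the extra factor $\|u - \bar{u}\|_{L^\infty}$ only helping.

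Combining, $2 d \leq C d^{4/3}$, so either $d = 0$ or $d \geq (2/C)^3$. Setting $\beta := \min\{\eta_0/2,\, (2/C)^3\}$---which depends only on $H_0$ since $C$ is built from $H_0$-dependent Sobolev and $\eta$-regularity constants---completes the proof: a non-constant critical point satisfies $d > 0$ and hence $d \geq \beta$. The main delicate point is ensuring that every error term carries at least one extra factor of $d^{1/3}$ beyond $d$; this requires both the $L^\infty$-bound on $\nabla u$ coming from the full higher-order conclusion of Proposition~\ref{prop:eta-regularity}, and the choice of test function $u - \bar{u}$ rather than $u$ itself, so that the smallness of $\|u - \bar{u}\|_{L^\infty}$ enters multiplicatively into the non-$\ep^2$ errors.
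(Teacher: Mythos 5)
Your proof is correct, and it follows a genuinely different route than the paper's, so a brief comparison is warranted.

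The paper uses only the first (second-order) conclusion of Proposition~\ref{prop:eta-regularity}. From $\int_{S^2}|\nabla^2 u|^2 + |\nabla u|^4 \leq C\beta$ and the Sobolev embedding $W^{2,2}(S^2)\hookrightarrow C^0$, it gets $\|u - a\|_\infty \leq C\beta^{1/2}$, tests against $\psi = u-a$, and applies Young's inequality so that the error is controlled by $\ep^2\int|\nabla u|^4$ plus terms carrying a $\beta^{1/2}$ factor. It then bounds $\int|\nabla u|^4$ by an integration by parts ($\int|\nabla u|^4 \leq C\beta^{1/2}\int|\nabla u|^4 + |\nabla^2 u|^2$) and closes the loop via a self-improving inequality $\int\ep^2|\nabla^2 u|^2 \leq (\mathrm{small})\cdot\int\ep^2|\nabla^2 u|^2$, forcing $\nabla^2 u\equiv 0$ and thence $u$ constant. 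You instead invoke the full third-order conclusion of Proposition~\ref{prop:eta-regularity}, which is precisely what buys the pointwise gradient bound $\|\nabla u\|_{L^\infty}\leq Cd^{1/3}$; with that in hand, every error term in the tested equation is directly dominated by $Cd^{4/3}$, and the conclusion falls out from the scalar inequality $2d\leq Cd^{4/3}$. The trade-off is transparent: the paper is more economical in what it extracts from the $\ep$-regularity theorem (no $\nabla^3 u$ needed) at the cost of a cascade of Young/Poincar\'e steps and an extra integration by parts; you spend the full strength of Proposition~\ref{prop:eta-regularity} to make the endgame a one-line absorption. Both are fine, and the constants in your argument depend only on $H_0$, $r_0$ and $\eta_0$ exactly as required. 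One tiny remark: the $L^\infty$ bound on $u-\bar u$ comes more cleanly from $\|u-\bar u\|_\infty \leq \mathrm{diam}(S^2)\,\|\nabla u\|_\infty$ (or from Morrey's inequality $W^{1,p}\hookrightarrow L^\infty$ with $p>2$) than from what is usually called Poincar\'e; the conclusion you use is nonetheless correct.
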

\begin{proof}
We prove the contrapositive. Namely, we will find a small enough constant $\beta \in (0, \eta_0)$ such that if $\delta E_{H, \ep}(u) = 0$ with $H \in [0, H_0]$, $\ep \in (0, r_0]$, and if $D_{\ep}(u) < \beta$, then $u$ must be constant. To that end, take a finite sub-collection of $\{B_{r_{0}}(x)\}_{x \in S^2}$ that covers $S^2$. Then since $\ep \leq  r_0$ and $D_{\ep}(u) \leq \beta <  \eta_0$, we may apply Proposition~\ref{prop:eta-regularity} to each $B_{4r_{0}}(x)$ in the finite subcover and add up the result to deduce that 
\[
\int_{S^2} |\nabla^2 u|^2 + |\nabla u|^4 \leq C N_0 r_{0}^{-2}\beta,
\]
where $N_0$ is the number of balls in the finite subcovering. By Poincar\'e inequality this implies the existence of some $a \in \RR^N$ such that 
\begin{equation}\label{eq:osc-small}
\|u - a\|_{\infty; S^2} \leq C\beta^{1/2}.
\end{equation}
Note that we've absorbed the factors  {$N_0$, }$r_{0}^{-2}$ into the constant $C$ since they are fixed once and for all. We now substitute $\psi = u - a$ into the equation~\eqref{eq:Euler-Lagrange-weak}, integrate by parts, and apply Young's inequality to get
\begin{align}
\int_{S^2}\ep^2 |\Delta u|^2 + |\nabla u|^2 \leq &\  C\ep^2 \int_{S^2} |\nabla u|^2 |\Delta u| + \big(|\Delta u|^2  + |\nabla u|^2|\Delta u|\big)|u - a| \nonumber\\
&\ +C(1 + H) \int_{S^2} |\nabla u|^2 |u - a|  \nonumber\\
\leq&\ \int_{S^2}\delta\ep^2 |\Delta u|^2 +C_{\delta}\ep^2 |\nabla u|^4 \nonumber \\
&\ + C\beta^{1/2}\int_{S^2}\ep^2 |\Delta u|^2 + \ep^2|\nabla u|^4 + (1 + H) |\nabla u|^2 \nonumber\\
\leq&\ (\delta + C_1\beta^{1/2})\int_{S^2}\ep^2|\Delta u|^2 + |\nabla u|^2 + (C_{\delta} + C_2\beta^{1/2})\int_{S^2}\ep^2 |\nabla u|^4. \label{eq:global-lower-bound-step1}
\end{align}
Note that we used the assumption $H \in [0, H_0]$ and absorbed $H_0$ into the constant $C_1$ in the last line. Taking $\delta$ and $\beta$ sufficiently small so that $\delta + C_1\beta^{1/2} < 1/2$, we obtain
\begin{equation}\label{eq:global-bound-by-L4}
\int_{S^2} \ep^2 |\Delta u|^2 + |\nabla u|^2 \leq 2(C_{\delta} + C_2\beta^{1/2})\int_{S^2}\ep^2 |\nabla u|^4.
\end{equation}
To estimate the right-hand side, we write it as $|\nabla u|^2 \langle \nabla u, \nabla(u - a) \rangle$ and integrate by parts to get
\begin{align}\label{eq:global-L4-bound}
\int_{S^2}|\nabla u|^4 \leq&\ C\int_{S^2} |u - a||\nabla u|^2 |\nabla^2 u| \leq C_3\beta^{1/2}\int_{S^2}|\nabla u|^4 + |\nabla^2 u|^2.
\end{align}
Further requiring that $C_3\beta^{1/2} < 1/2$, we deduce that 
\begin{equation}\label{eq:bound-L4-by-W22}
\int_{S^2} |\nabla u|^4 \leq 2C_3\beta^{1/2} \int_{S^2}|\nabla^2 u|^2.
\end{equation}
Next we use Lemma~\ref{lemm:D-equivalence} to link the term $|\Delta u|^2$ on the left-hand side of ~\eqref{eq:global-bound-by-L4} with $|\nabla^2 u|^2$. This gives
\[
\int_{S^2} \ep^2|\nabla^2 u|^2 \leq \int_{S^2}\ep^2|\Delta u|^2 + A_0\ep^2|\nabla u|^2 \leq (1 + A_0)\int_{S^2}\ep^2|\Delta u|^2 + |\nabla u|^2,
\]
where for the last inequality we used the assumption that $\ep \leq r_0 < 1$. Combining this with~\eqref{eq:global-bound-by-L4} and~\eqref{eq:bound-L4-by-W22} gives
\begin{equation}
\int_{S^2} \ep^2 |\nabla^2 u|^2 \leq 4(1 + A_0)(C_{\delta} + C_2\beta^{1/2})C_3\beta^{1/2}\int_{S^2} \ep^2|\nabla^2 u|^2.
\end{equation}
Decreasing $\beta$ further if necessary, we deduce from the above that $\nabla^2 u$ vanishes identically, which implies by~\eqref{eq:bound-L4-by-W22} that $u$ is constant. The proof is complete.
\end{proof}

%-------------------------------------------------------------------------------------------------
A second consequence of Proposition~\ref{prop:eta-regularity}, as is well-known, is strong subsequential convergence away from energy concentration points. This is the content of the next result.

\begin{prop}\label{prop:convergence-mod-bubble}
Let $H$ be in the full-measure set of values yielded by Corollary~\ref{coro:critical-uniform-bound}, and let $C_0, \ep_j$ and $u_j$ be as in the conclusion of Corollary~\ref{coro:critical-uniform-bound}. There exist a subsequence of $u_j$, which we do not relabel, a finite number of points $p_1, \cdots, p_L \in S^2$ and a map $u \in W^{2, 2}_{\loc}(S^2 \setminus \{p_1, \cdots, p_L\}; S^3)$ such that 
\[
u_j \to u \text{ strongly in }W^{2, 2}(K; \RR^N) \text{ for any compact set }K \cap \{p_1, \cdots, p_L\} = \emptyset.
\]
Moreover, $u$ extends to a solution to~\eqref{equ:CMC equation1} from $S^2$ into $S^3$, and is smooth.
\end{prop}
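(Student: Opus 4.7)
The plan is to carry out a standard concentration--compactness argument: identify a finite set of energy-concentration points, use the $\ep$-uniform higher regularity of Proposition~\ref{prop:eta-regularity} to extract strong compactness away from this set, pass to the limit in the weak Euler--Lagrange equation, and conclude with removable singularities.

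To start, let $\eta_0$ be the threshold from Proposition~\ref{prop:eta-regularity} and define
\[
\Sigma := \Bigl\{ p \in S^2 \,\Big|\, \liminf_{j \to \infty} \int_{B_r(p)} \ep_j^2 |\Delta u_j|^2 + |\nabla u_j|^2 \geq \eta_0 \text{ for every } r > 0 \Bigr\}.
\]
Since $D_{\ep_j}(u_j) \leq C_0 + 1$ uniformly, a simple covering argument bounds $|\Sigma|$ by some integer $L$; I label its points $p_1, \dots, p_L$. For any $p \in S^2 \setminus \Sigma$, there exist $r_p \in (0, \text{inj}(S^2)/16)$ and a subsequence along which
\[
\int_{B_{4 r_p}(p)} \ep_j^2 |\Delta u_j|^2 + |\nabla u_j|^2 < \eta_0.
\]
Since $\ep_j \to 0$, eventually $\ep_j \leq r_p$, so Proposition~\ref{prop:eta-regularity} yields $\ep$-independent bounds on $\|u_j\|_{W^{3,2}(B_{r_p}(p))}$. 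Covering any compact $K \subset S^2 \setminus \Sigma$ by finitely many such balls and extracting diagonally, I obtain $u_j \rightharpoonup u$ in $W^{3, 2}_{\loc}(S^2 \setminus \Sigma)$, and Rellich--Kondrachov upgrades this to strong convergence in $W^{2, 2}_{\loc}(S^2 \setminus \Sigma; \RR^N)$. The accompanying $C^0_{\loc}$-convergence forces $u(x) \in S^3$ a.e., so $u \in W^{2, 2}_{\loc}(S^2 \setminus \Sigma; S^3)$.

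To identify the limiting equation, I would plug test functions $\psi \in W^{2, 2}(S^2; \RR^N)$ with compact support in $S^2 \setminus \Sigma$ into the weak formulation $G_{H, \ep_j}(u_j)(\psi) = 0$ from Proposition~\ref{prop:first-var-properties}(a). Every term carrying an $\ep_j^2$ prefactor is controlled by $C \ep_j^2$, where $C$ depends only on the $\ep$-independent $W^{3, 2}_{\loc}$-bounds on $u_j$ and on $\psi$ (via H\"older's inequality and Sobolev embedding in two dimensions), and hence vanishes as $j \to \infty$. The remaining $\ep$-free summands pass to the limit by the strong $W^{2, 2}_{\loc}$-convergence already established, yielding the weak form of~\eqref{equ:CMC equation1} for $u$ on $S^2 \setminus \Sigma$. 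An elliptic bootstrap parallel to the proof of Proposition~\ref{prop:weak-solution-smooth}, now with $\ep = 0$, then shows $u \in C^{\infty}(S^2 \setminus \Sigma; S^3)$.

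Finally, lower semicontinuity of the Dirichlet energy combined with the uniform bound $\int_{S^2} |\nabla u_j|^2 \leq 2(C_0 + 1)$ and a compact exhaustion of $S^2 \setminus \Sigma$ give $u \in W^{1, 2}(S^2; S^3)$. A standard logarithmic cutoff argument around each $p_\ell$ upgrades $u$ to a weak solution of~\eqref{equ:CMC equation1} on all of $S^2$, so Proposition~\ref{prop:removable-singularity} delivers the desired smooth extension. The main subtlety I anticipate lies in controlling the higher-order $\ep_j^2$-terms -- in particular those involving $\Delta(P_{u_j})(\Delta u_j)$ and $\nabla(P_{u_j})(\Delta u_j)$; here it is essential that Proposition~\ref{prop:eta-regularity} provides $L^p_{\loc}$-bounds on $\nabla^2 u_j$, $\nabla^3 u_j$, and $|\nabla u_j|^p$ \emph{without} any $\ep$-dependence, so that the $\ep_j^2$ prefactor alone is responsible for the decay.
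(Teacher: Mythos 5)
Your overall plan — identify a finite concentration set, use Proposition~\ref{prop:eta-regularity} for $\ep$-uniform higher regularity away from it, pass to the limit in the weak Euler--Lagrange equation, and finish with Proposition~\ref{prop:removable-singularity} — is exactly the route the paper follows. However, there is a genuine gap in the extraction of the subsequence: your set $\Sigma$ is defined via a $\liminf$ over the \emph{original} sequence, so for $p \notin \Sigma$ all you can say is that \emph{some} $p$-dependent subsequence has energy below $\eta_0$ on a small ball around $p$. These subsequences cannot be combined by ``covering finitely and extracting diagonally'': once you pass to a subsequence for the first ball, the $\liminf$ over that subsequence may jump above $\eta_0$ for the second ball (think of an alternating sequence whose $\liminf$ is small but whose even-indexed subsequence is large), so the property ``$p \notin \Sigma$'' is not stable under passing to subsequences. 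This is precisely why, in the no-concentration property the paper records (its equation~\eqref{eq:no-concentration}), the assertion is made \emph{for any subsequence}.

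The standard fix, which is what makes the ``argument which is by now standard'' invoked in the paper actually work, is to first pass to a subsequence along which the finite Radon measures $\mu_j := (\ep_j^2 |\Delta u_j|^2 + |\nabla u_j|^2)\, d\Vol_{S^2}$ converge weakly-$\ast$ to some Radon measure $\mu$ with $\mu(S^2) \leq C_0 + 1$, and then set $\Sigma := \{p \in S^2 : \mu(\{p\}) \geq \eta_0\}$; this set is automatically finite, and for $p \notin \Sigma$ one may pick $r_p$ with $\mu(\overline{B_{4r_p}(p)}) < \eta_0$, which by weak-$\ast$ convergence gives $\mu_j(B_{4r_p}(p)) < \eta_0$ for \emph{all} large $j$ along the already-fixed subsequence — no further point-dependent extraction is needed. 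Once that correction is made, the remainder of your argument (the vanishing of the $\ep_j^2$-weighted terms using the $\ep$-free local $W^{2,2}$- and $L^4$-bounds from Proposition~\ref{prop:eta-regularity}, the elliptic bootstrap away from $\Sigma$, the $W^{1,2}$-bound via lower semicontinuity, and the logarithmic cut-off to extend to a weak solution across $\Sigma$ before applying Proposition~\ref{prop:removable-singularity}) matches the paper and is correct.
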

\begin{proof}
Thanks to Corollary~\ref{coro:critical-uniform-bound}(b), Proposition~\ref{prop:eta-regularity} and an argument which is by now standard, we obtain a subsequence of $u_j$, which we do not relabel, a finite number of points $p_1, \cdots, p_L \in S^2$, and some $u \in W^{2, 2}_{\loc}(S^2 \setminus \{p_1, \cdots, p_L\})$ such that 
\[
u_j \to u \text{ strongly in }W^{2, 2}(K; \RR^N) \text{ for any compact set }K \cap \{p_1, \cdots, p_L\} = \emptyset,
\]
and that for any $1 \leq i \leq L$ and $t > 0$, there holds
\begin{equation}\label{eq:concentration}
\liminf_{j \to \infty}\int_{B_t(p_i)}\ep_j^2|\Delta u_j|^2 + |\nabla u_j|^2 \geq \eta_0/2,
\end{equation}
where $\eta_0$ is the constant from Proposition~\ref{prop:eta-regularity}, while at any $x \notin \{p_1, \cdots, p_L\}$, and for any subsequence, there holds 
\begin{equation}\label{eq:no-concentration}
\inf_{t > 0}\big(\liminf_{k \to \infty}\int_{B_t(x)}\ep_{j_k}^2|\Delta u_{j_k}|^2 + |\nabla u_{j_k}|^2\big) = 0.
\end{equation}

For any $\psi \in C^2_{c}(S^2\setminus \{p_1, \cdots, p_L\}; \RR^N)$, the strong $W^{2,2}$-convergence of $u_j$ and the fact that $\ep_j \to 0$ allow us to pass to the limit in the equation $G_{H, \ep_j}(u_j)(\psi) = 0$
%\begin{align*}
%0 =&\ \ep_j^2\int_{S^2} \Delta u_j \cdot \Delta \psi - A(u_j)(\nabla u_j, \nabla u_j) \cdot \Delta \psi + 2 \langle \nabla (P \circ u_j)(\Delta u_j), \nabla \psi\rangle + \Delta (P\circ u_j)(\Delta u_j) \cdot \psi\\
%&+  \int_{S^2}\langle \nabla u_j, \nabla \psi \rangle + A(u_j)(\nabla u_j, \nabla u_j)\cdot \psi + H \int_{S^2} \psi \cdot \ast (u_j^{\ast}Q)
%\end{align*}
 to get 
 \[
 \int_{S^2}\langle \nabla u, \nabla \psi \rangle + A(u)(\nabla u, \nabla u)\cdot \psi + H \int_{S^2}\psi \cdot \ast (u^\ast Q) = 0.
 \]
This implies that $u$ is a weak solution to \eqref{equ:CMC equation1} away from $p_1, \cdots, p_L$. Since $u$ lies in $W^{2, 2}_{\loc}(S^2 \setminus \{p_1, \cdots , p_L\})$, standard elliptic theory implies that $u$ is in fact smooth away from $p_1, \cdots, p_L$.

On the other hand, again by Corollary~\ref{coro:critical-uniform-bound}(b), we see that
%\[
%\int_{K} |\nabla u_j|^2 \leq D_{\ep_j}(u_j) \leq C_0 +1, 
%\]
%for all $j$ and any compact set $K \cap \{p_1, \cdots, p_L\} = \emptyset$, and therefore 
\[
\int_{K}|\nabla u|^2  = \lim_{j \to\infty}\int_{K}|\nabla u_j|^2 \leq C_0 + 1, 
\] 
for any compact set $K$ disjoint from $\{p_1, \cdots, p_L\}$. Thus, $u$ is a weak solution to~\eqref{equ:CMC equation1} on $S^2$, and we may apply Proposition~\ref{prop:removable-singularity} to conclude that $u$ is in fact smooth on all of $S^2$.
\end{proof}

In the case that $\{p_1, \cdots, p_L\}$ is non-empty, we fix some $\rho_0 < \frac{1}{4}\text{inj}(S^2)$ with 
\[
4\rho_0 < d(p_i, p_j) \text{ for all }i \neq j.
\]
To examine the behavior of $u_j$ near, say, $p_1$, we introduce, as in~\cite{Lamm}, the function
\[
Q_j(t) = \max_{x \in \overline{B_{2\rho_0}(p_1)}}\int_{B_t(x)} \ep_j^{2}|\Delta u_j|^2 + |\nabla u_j|^2
\]
to help us find suitable rates at which to perform the rescaling. The following result ensures that the perturbation parameters, after rescaling, still converge to zero.

\begin{prop}[See also \cite{Lamm}, Lemma 3.1]\label{prop:prepare-blowup} 
There exist a subsequence $(u_{j_k}, \ep_{j_k})$ of $(u_j, \ep_j)$, along with a sequence of radii $t_k \to 0$ and a sequence of points $x_k \to p_1$ such that 
\begin{enumerate}
\item[(a)] $Q_{j_k}(t_k) = \int_{B_{t_k}(x_k)}\ep_{j_k}^2|\Delta u_{j_k}|^2 + |\nabla u_{j_k}|^2 = \eta_0/3$.
\vskip 1mm
\item[(b)] $\liminf_{k \to \infty} \ep_{j_k} /t_k =  0$.
\end{enumerate}
\end{prop}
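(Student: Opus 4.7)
The plan is to first construct $t_j$ and $x_j$ realizing (a) with $t_j \to 0$, then pass to a subsequence along which $x_{j_k} \to p_1$, and finally upgrade to a further subsequence on which (b) also holds via a blowup argument.

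For each fixed $j$ the critical point $u_j$ is smooth (by Proposition~\ref{prop:weak-solution-smooth}), hence $t \mapsto Q_j(t)$ is continuous on $[0, 2\rho_0]$ with $Q_j(0) = 0$. The concentration estimate~\eqref{eq:concentration} at $p_1$ will give $Q_j(\rho_0) \geq \eta_0/2$ for all sufficiently large $j$, so by the intermediate value theorem I would define
\[
t_j := \inf\{t > 0 : Q_j(t) \geq \eta_0/3\} \in (0, \rho_0], \qquad Q_j(t_j) = \eta_0/3,
\]
and pick $x_j \in \overline{B_{2\rho_0}(p_1)}$ realizing this maximum, which verifies (a). That $t_j \to 0$ follows because $t_j \geq \delta > 0$ along a subsequence would give $Q_j(\delta/2) < \eta_0/3$, contradicting~\eqref{eq:concentration} which forces $\liminf_j Q_j(\delta/2) \geq \eta_0/2$. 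Next, extracting a convergent subsequence $x_j \to x_\infty \in \overline{B_{2\rho_0}(p_1)}$, I rule out $x_\infty \neq p_1$: if $x_\infty \notin \{p_1, \ldots, p_L\}$, then~\eqref{eq:no-concentration} yields $r > 0$ and a further subsequence along which $\int_{B_r(x_\infty)}(\ep_j^2|\Delta u_j|^2 + |\nabla u_j|^2) \to 0$, while $B_{t_j}(x_j) \subset B_r(x_\infty)$ eventually contradicts (a); the alternative $x_\infty = p_i$ for $i \geq 2$ is excluded by the choice $4\rho_0 < d(p_i, p_1)$.

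For (b), I proceed by contradiction: if no subsequence satisfies $\liminf \ep_{j_k}/t_k = 0$, then along the (relabeled) sequence $\delta_j := \ep_j/t_j \geq c > 0$ for some constant $c$. The strategy is to rescale to produce a non-constant, finite-energy solution to the perturbed equation on $\RR^2$, which a computation of Lamm~\cite{Lamm} rules out. Concretely, I consider $v_j(y) := u_j(\exp_{x_j}(t_j y))$ on $B_{\rho_0/t_j}(0) \subset \RR^2$, which satisfies a perturbed CMC-type equation with parameter $\delta_j$ on a metric tending to the Euclidean one. Further subsequencing gives $\delta_j \to \delta_0 \in [c, \infty]$, and the maximal property of $x_j$ bounds the rescaled local energy: for every fixed $y_0 \in \RR^2$ and $j$ large,
\[
\int_{B_1(y_0)} \delta_j^2|\Delta v_j|^2 + |\nabla v_j|^2 \leq Q_j(t_j) = \eta_0/3,
\]
so a suitable version of Proposition~\ref{prop:eta-regularity} (applied at scales $r \geq \delta_j$ as necessary) will yield $C^2_{\loc}(\RR^2)$ subconvergence to a limit $v$. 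Nontriviality of $v$ follows from passing $\int_{B_1(0)}\delta_j^2|\Delta v_j|^2 + |\nabla v_j|^2 = \eta_0/3$ to the limit, and $v$ is a finite-$D_{\delta_0}$-energy solution of the perturbed equation on $\RR^2$ with parameter $\delta_0 > 0$. The main obstacle is to rule out such $v$; this will be handled by adapting the Pohozaev-type computation in~\cite{Lamm} forbidding non-constant finite perturbed-energy solutions on $\RR^2$, with a supplementary argument for the degenerate case $\delta_0 = \infty$ where the bi-harmonic term dominates.
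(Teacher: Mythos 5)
Your construction for part (a) is essentially the same as the paper's (with a slightly different but equivalent choice of $t_j$ via an infimum), and the subsequential argument for $x_j\to p_1$ is correct.

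For part (b), however, you chose to rescale by $t_j$ rather than by $\ep_j$, which is what the paper does. This is more than a cosmetic difference: with $v_j(y)=u_j(x_j+t_jy)$ the effective perturbation parameter becomes the variable quantity $\delta_j=\ep_j/t_j$, which under the contradiction hypothesis is only bounded below, not above. You flag the degenerate case $\delta_j\to\infty$ but leave it to a ``supplementary argument,'' and this is precisely where the proposal breaks down. When $\delta_j\to\infty$ you cannot conclude that the limit $v$ is non-constant from $\int_{B_1(0)}\delta_j^2|\Delta v_j|^2+|\nabla v_j|^2=\eta_0/3$: the entire mass $\eta_0/3$ could be carried by the term $\delta_j^2|\Delta v_j|^2$ while $\int_{B_1}|\nabla v_j|^2\to 0$, in which case $v$ is constant and there is no contradiction. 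Moreover, the PDE itself degenerates (after dividing through by $\delta_j^2$, the linear $\Delta v_j$ term vanishes and one is left with a biharmonic-type equation $P_v(\Delta^2v)=0$, for which the paper's Pohozaev computation does not apply), and Proposition~\ref{prop:eta-regularity} is not applicable at unit scale once $\delta_j>1$, since it requires $\ep\le r$. The paper avoids all of this by rescaling by $\ep_j$ instead: then the effective parameter is identically $1$, the set on which energy concentrates is the fixed ball $B_{\alpha^{-1}}(0)$ (since $t_j/\ep_j\le\alpha^{-1}$), the uniform $W^{2,2}$ bounds and the bootstrap of Proposition~\ref{prop:weak-solution-smooth} give smooth local subconvergence with constants independent of $j$, the non-triviality $\int_{B_{\alpha^{-1}}}|\Delta v|^2+|\nabla v|^2\ge\eta_0/3$ passes to the limit, and the Pohozaev argument shows $v$ is harmonic and hence constant, yielding the contradiction. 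You should switch to the $\ep_j$-rescaling; as written, the $\delta_j\to\infty$ branch of your argument is an unresolved gap.
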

\begin{proof}
For part (a), we note that $Q_j$ is continuous and $Q_j(0) = 0$. Moreover, by~\eqref{eq:concentration} we have, for all $k \in \NN$, that 
\[
\liminf_{j \to \infty}Q_j(1/k) \geq \eta_0/2 > \eta_0/3.
\]
This allows us to choose a subsequence $u_{j_k}$ and sequences $t_k \in [0, 1/k]$, $x_k \in \overline{B_{2\rho_0}(p_1)}$, such that
\[
Q_{j_k}(t_k) = \int_{B_{t_{k}}(x_k)}\ep^2_{j_k} |\Delta u_{j_k}|^2 + |\nabla u_{j_k}|^2 = \eta_0/3.
\]
The proof of part (a) is complete upon noting that $x_k$ must converge to $p_1$, for otherwise by the compactness of $\overline{B_{2\rho_0}(p_1)}$ and our choice $\rho_0$, upon passing to a further subsequence if necessary, there would be a point $p \notin \{p_1, \cdots, p_L\}$ where 
\[
\inf_{t > 0}\big(\liminf_{k \to \infty}\int_{B_t(p)}\ep_{j_k}^2|\Delta u_{j_k}|^2 + |\nabla u_{j_k}|^2\big) > 0,
\]
which contradicts~\eqref{eq:no-concentration}.

For part (b), by slight abuse of notation we denote the subsequence $(u_{j_k}, \ep_{j_k})$ still by $(u_j, \ep_j)$, and write $t_j, x_j$ for the sequences $t_k, x_k$. Now, assume by contradiction that there exists $\alpha > 0$ such that 
\[\ep_{j}/\alpha \geq t_j \text{ for all sufficiently large $j$.}\]
We are now going to rescale the maps $u_j$. To that end, we use the exponential map to identify $B_{2\rho_0}(p_1)$ with the ball $B_{2\rho_0} = B_{2\rho_0}(0) \subset \RR^2$ equipped with a Riemannian metric $g$, and then define
\[
v_j(y) = u_{j}(\tau_j(y)) := u_j(x_j + \ep_j y).
\]
Then it's not hard to see that $v_j$ has the following properties. (Below, the operators $\Delta, \nabla, \Div$ and $\ast$ are with respect to the metric $g_j = \ep_j^{-2}\tau_j^{\ast}g$ when applied to $v_j$, and with respect to $g$ when applied to $u_j$.)
\begin{enumerate}
\item[(i)] For $j$ large enough, $v_j$ satisfies the following system on $B_{\rho_0/\ep_j}$: 
\begin{align*}
&\Delta^2 v_j - \Delta \big( A(v_j)(\nabla v_j, \nabla v_j) \big) -2 \Div \big( \nabla(P\circ v_j )(\Delta v_j) \big) + \Delta (P\circ v_j)(\Delta v_j) \\
&- \Delta v_j + A(v_j)(\nabla v_j, \nabla v_j) + H \ast (v_j^{\ast}Q) = 0.
\end{align*}

\vskip 1mm
\item[(ii)] For all $R > 0$ and $j$ large enough, 
\begin{align*}
\int_{B_R}| \nabla^2 v_j|^2 + |\nabla v_j|^2 &= \int_{B_{\ep_j R}(x_j)}\ep_{j}^2|\nabla^2 u_j|^2 + |\nabla u_j|^2 \leq A_0(C_0 + 1),
\end{align*}
where we used Lemma~\ref{lemm:D-equivalence}(b) and Corollary~\ref{coro:critical-uniform-bound}(b) to get the inequality.
\vskip 1mm
\item[(iii)] For $j$ large enough,
\[\int_{B_{\alpha^{-1}}} |\Delta v_j|^2 + |\nabla v_j|^2  \geq \int_{B_{t_j}(x_j)}\ep_j^2|\Delta u_j|^2 + |\nabla u_j|^2 = \eta_0/3.\]
\end{enumerate}
Now, the estimate in (ii) and the fact that the metrics $g_j$ converge to the Euclidean metric smoothly locally on $\RR^2$ as $j \to \infty$ imply that $v_j$ is uniformly bounded in $W^{2, 2}$ on compact subsets of $\RR^2$. Thus, we may use (i) and argue as in Proposition~\ref{prop:weak-solution-smooth} to obtain the estimates required to extract a subsequence of $v_j$, which we do not relabel, that converges smoothly locally on $\RR^2$ to some $v \in C^{\infty}(\RR^2; S^3)$. Passing to the limit in (i) and recalling the proof of Proposition~\ref{prop:first-var-properties}(a), we see that $v$ satisfies 
\begin{equation}\label{eq:blowup-limit}
P_v(\Delta^2 v) - P_v(\Delta v) + H \ast (v^{\ast}Q) = 0.
\end{equation}
Moreover, (ii) implies that 
\begin{equation}\label{eq:finite}
\int_{\RR^2}|\nabla^2 v|^2 + |\nabla v|^2 < \infty.
\end{equation}
We next show that~\eqref{eq:blowup-limit} and~\eqref{eq:finite} together imply that $v$ is constant, which contradicts (iii) and finishes the proof of (b). Indeed, from~\eqref{eq:blowup-limit} we have, as in~\cite[p.146]{Lamm}, that 
\begin{equation}\label{eq:orthogonality}
\big(\Delta^2 v(x) - \Delta v(x) \big)\perp \Span\{\partial_1 v(x), \partial_2 v(x) \} \text{ for all }x \in \RR^2.
\end{equation}
Thanks to this property and the bound~\eqref{eq:finite}, we can follow the computation in~\cite{Lamm} to see that in fact $v$ is harmonic, and hence must be constant by~\eqref{eq:finite}. 
\end{proof}
\begin{proof}[Proof of the harmonicity of $v$.]
For completeness, we include the details of the computation in~\cite[p.146]{Lamm} below leading to the harmonicity of $v$. To begin, take a cut-off function $\zeta \in C^{\infty}_{c}(\RR^2)$ with $\zeta = 1 \text{ on }B_{1} \text{ and }\zeta = 0 \text{ outside }B_{2}$, and write $\zeta_R = \zeta(\cdot/R)$. In particular, for $k \geq 1$, the support of $\nabla^k\zeta_R$ is contained in $B_{2R}\setminus B_{R}$, and $|\nabla^k\zeta_{R}| \leq CR^{-k}$ with $C$ independent of $R$. Next let $f(x) = \zeta_{R}(x)^2 x^{i}\partial_i v$, where $i$ is summed from $1$ to $2$. Then by~\eqref{eq:orthogonality}, we have
\begin{equation}\label{eq:orthogonality-integrated}
\int_{\RR^2} \big( \Delta^2 v - \Delta v \big)\cdot f = 0.
\end{equation}
We now compute
\begin{align}
-\int_{\RR^2}\Delta v \cdot f &= \int_{\RR^2} \langle \nabla v, \nabla f \rangle \nonumber\\
&= \int_{\RR^2}2 (\zeta_R \partial_j\zeta_R) (x^{i}\partial_i v) \cdot \partial_j v +\zeta_R^2 |\nabla v|^2 + \zeta_R^2 x^{i}\partial_i\Big( \frac{|\nabla v|^2}{2} \Big) \nonumber\\
&= \int_{\RR^2} 2 (\zeta_R \partial_j\zeta_R) (x^{i}\partial_i v) \cdot \partial_j v - \zeta_R( x^{i}\partial_i\zeta_R) |\nabla v|^2.
\end{align}
In particular, 
\begin{equation}\label{eq:Delta-computation}
\Big| \int_{\RR^2}\Delta v\cdot f \Big| \leq C\int_{B_{2R}\setminus B_{R}} |x||\nabla\zeta_R||\nabla v|^2.
\end{equation}
On the other hand, 
\begin{align*}
\int_{\RR^2} \Delta^2 v \cdot  f =&\  \int_{\RR^2} \Delta v \cdot \Delta f\\
= &\  \int_{\RR^2} 2\big( |\nabla\zeta_R|^2 + \zeta_R\Delta\zeta_R \big)x^i\partial_i v \cdot \Delta v + 4(\zeta_R\partial_j \zeta_R) \big( \delta_{ij} \partial_i v + x^{i}\partial_{ij}v \big)\cdot \Delta v\\
&\ + \int_{\RR^2}2\zeta_R^2  |\Delta v|^2  +\zeta_R^2 x^{i}\partial_i \Big( \frac{|\Delta v|^2}{2} \Big)\\
=&\ \int_{\RR^2} 2\big( |\nabla\zeta_R|^2 + \zeta_R\Delta\zeta_R \big)x^i\partial_i v \cdot \Delta v + 4(\zeta_R\partial_j \zeta_R) \big( \delta_{ij} \partial_i v + x^{i}\partial_{ij}v \big)\cdot \Delta v \nonumber\\
&\ + \int_{\RR^2} \zeta_R^2 |\Delta v|^2 - |\Delta v|^2x^{i}\zeta_R \partial_i\zeta_R,
\end{align*}
where we integrated by parts to get the last line. In particular
\begin{align}\label{eq:Delta-2-computation}
\int_{\RR^2}\zeta_R^2 |\Delta v|^2 \leq&\  \Big| \int_{\RR^2}\Delta^2 v \cdot f \Big| \nonumber\\
&\  + C\int_{B_{2R}\setminus B_{R}} \big(|\nabla\zeta_R|^2 + |\Delta \zeta_R|\big)|x||\Delta v||\nabla v|\nonumber\\
&\ + C\int_{B_{2R}\setminus B_{R}} |\nabla\zeta_R| |\nabla v||\Delta v| + |\nabla\zeta_R||x||\nabla^2 v|^2.
\end{align}
Recalling that $\Big| \int_{\RR^2}\Delta^2 v \cdot f \Big| = \Big| \int_{\RR^2}\Delta v\cdot f \Big|$ by~\eqref{eq:orthogonality-integrated}, and using the properties of $\zeta_R$, we finally get
\[
\int_{\RR^2}\zeta_{R}^2 |\Delta v|^2 \leq C\int_{B_{2R} \setminus B_{R}} |\nabla v|^2 + R^{-1}|\nabla v||\nabla^2 v| + |\nabla^2 v|^2.
\]
Using~\eqref{eq:finite} we see that the right-hand side converges to zero as $R \to \infty$, and hence $\int_{\RR^2}|\Delta v|^2 = 0$. Thus $v$ is harmonic, as asserted above.
\end{proof}

%Proof of Theorem 1.1---------------------------------------------------
\subsection*{Proof of Theorem \ref{thm:main1}.} 

Let $H$ be one of the values given by Corollary~\ref{coro:critical-uniform-bound} and let $C_0$, $u_j, \ep_j$ be as in the conclusion of Corollary~\ref{coro:critical-uniform-bound}. Applying Proposition~\ref{prop:convergence-mod-bubble}, we get a set of points $\{p_1, \cdots, p_L\} \subset S^2$ and subsequences of $u_j$, $\ep_j$, which we do not relabel, with the properties asserted there. How the argument continues from here depends on whether $\{p_1, \cdots, p_L\}$ is empty or not.
\vskip 1mm
\noindent\textbf{Case 1:  $\{p_1, \cdots, p_L\} = \emptyset$.}\ 
\vskip 1mm
If the set $\{p_1, \cdots, p_L\}$ is empty, then by Proposition~\ref{prop:convergence-mod-bubble}, the convergence of $u_j$ to $u$ takes place in $W^{2, 2}(S^2; S^3)$. Recalling that each $u_{j}$ is non-constant, we see from Proposition~\ref{prop:lower-bound} applied with $H_0 = H$ that 
\[
\int_{S^2}\ep_j^2 |\Delta u_j|^2 + |\nabla u_j|^2 \geq \beta \text{ for all }j.
\]
The strong $W^{2, 2}$-convergence of $u_j$ on all of $S^2$ then implies that 
\[
\int_{S^2} |\nabla u|^2 \geq \beta.
\]
In particular, the solution $u$ yielded by Proposition~\ref{prop:convergence-mod-bubble} is non-constant. To finish the proof in this case we need to show that $\Ind_{H}(u) \leq 1$. To do that, it suffices to show that if we have linearly independent elements $\psi_1, \cdots, \psi_d \in \cT_u$ such that $\delta^2 E_{H}(u)$ restricted to their span is negative-definite, then $d \leq 1$. With $\psi_i$ as such, and letting $\psi_{i, j} = P_{u_j}(\psi_i)$, then each $\psi_{i, j} \in \cT_{u_j}$ and moreover $\psi_{i, j} \to \psi_i$ strongly in $W^{2, 2}(S^2; \RR^N)$ as $j \to \infty$ by Lemma~\ref{lemm:projection-estimates}(b) since $u_j \to u$ strongly in $W^{2, 2}(S^2; S^3)$. In particular, it's not hard to see from Proposition~\ref{prop:second-var-formula-intrinsic} that 
\[
\lim_{j \to \infty}\delta^2 E_{H, \ep_j}(u_j)(\psi_{i, j}, \psi_{k, j}) = \delta^2 E_{H}(u)(\psi_i, \psi_k) \text{ for all }i, k  = 1, \cdots, d.
\]
Thus for large enough $j$ the matrix $\big(\delta^2 E_{H, \ep_j}(u_j)(\psi_{i, j}, \psi_{k, j})\big)_{i, k = 1, \cdot, d}$ is negative-definite. Conclusion (c) of Corollary~\ref{coro:critical-uniform-bound} then forces $d \leq 1$ as desired, and we are done with Case 1.

\vskip 1mm
\noindent\textbf{Case 2:  $\{p_1, \cdots, p_L\} \neq \emptyset$.}\ 
\vskip 1mm
If $\{p_1, \cdots, p_L\}$ is non-empty, we choose $\rho_0$ as in the paragraph before Proposition~\ref{prop:prepare-blowup} and use the exponential map to identify $B_{2\rho_0}(p_1)$ with $B_{2\rho_0}(0) \subset \RR^2$, equipped with a metric $g$. Next we invoke Proposition~\ref{prop:prepare-blowup}, and by the same abuse of notation as in the proof of part (b) there, we write $u_{j}, \ep_j, t_j$ and $x_j$ for $u_{j_k}, \ep_{j_k}, t_k$ and $x_k$ respectively. By Proposition~\ref{prop:prepare-blowup}(b), passing to another subsequence if necessary, we may also assume that 
\[
\widetilde{\ep}_j := \ep_j/t_j \to 0.
\] 
We then rescale the maps $u_j$ by letting
\begin{equation}\label{eq:case2-blowup}
\widetilde{u}_{j}(y) = u_{j}(\sigma_j(y)) :=  u_{j}(x_j + t_j y),
\end{equation}
The new sequence $\widetilde{u}_j$ has the following properties:
\begin{enumerate}
\item[(i)] For sufficiently large $j$, the map $\widetilde{u}_{j}$ satisfies the following on $B_{\rho_0/t_j}$:
\begin{align*}
&\widetilde{\ep}_j^2\Big(\Delta^2 \widetilde{u}_{j} - \Delta \big( A(\widetilde{u}_{j})(\nabla \widetilde{u}_{j}, \nabla \widetilde{u}_{j}) \big) -2 \Div \big( \nabla(P\circ \widetilde{u}_{j} )(\Delta \widetilde{u}_{j}) \big) + \Delta (P\circ \widetilde{u}_{j})(\Delta \widetilde{u}_{j})\Big) \\
&- \Delta \widetilde{u}_{j} + A(\widetilde{u}_{j})(\nabla \widetilde{u}_{j}, \nabla \widetilde{u}_{j}) + H \ast (\widetilde{u}_{j}^{\ast}Q) = 0.
\end{align*}
\vskip 1mm
\item[(ii)] For all $j$ and $R < \rho_0/t_j$, 
\[
\int_{B_R}\widetilde{\ep}_j^2 |\Delta \widetilde{u}_j|^2 + |\nabla \widetilde{u}_j|^2 = \int_{B_{t_j R}(x_j)} \ep_j^2 |\Delta u_j|^2 + |\nabla u_j|^2 \leq C_0 + 1. 
\]
\vskip 1mm
\item[(iii)] By Proposition~\ref{prop:prepare-blowup}(a), for $j$ large enough and 
$|y|\leq \rho_0/t_j$, we have
\[
\int_{B_{1}(y)}\widetilde{\ep}_j^2 |\Delta \widetilde{u}_j|^2 + |\nabla \widetilde{u_j}|^2 = \int_{B_{t_j}(x_j + t_j y)} \ep_j^2 |\Delta u_j|^2 + |\nabla u_j|^2 \leq \eta_0/3.
\]
Moreover, equality holds when $y = 0$ by our choice of $x_j$.
\end{enumerate}
(Here and below, the volume form in the integral, the operators $\Delta, \nabla, \Div$ and $\ast$ are with respect to the metric $g_j := t_j^{-2}\sigma_j^{\ast}g$ when applied to $\widetilde{u}_j$, and with respect to $g$ when applied to $u_j$.)

By property (iii) and the fact that $\widetilde{\ep}_j \to 0$ as $j \to \infty$, we may apply Proposition~\ref{prop:eta-regularity} on each $B_{1}(y)$ when $j$ is large enough to get 
\[
\int_{B_{1/4}(y)} |\nabla^3 \widetilde{u}_j|^2 \leq C\int_{B_{1}(y)}\widetilde{\ep}_j^2|\Delta \widetilde{u}_j|^2 + |\nabla \widetilde{u}_j|^2 \leq \eta_0/3.
\]
This in turn yields a subsequence of $\widetilde{u}_j$, which we do not relabel, converging strongly in $W^{2, 2}$ on compact subsets of $\RR^2$ to a limit map $u$. Moreover, $u$ must be non-constant since 
\[
\int_{B_1(0)}|\nabla u|^2 = \lim_{j \to \infty} \int_{B_{1}(0)}\widetilde{\ep}_j^2|\Delta \widetilde{u}_j|^2 + |\nabla \widetilde{u}_j|^2 = \eta_0/3,
\]
where the last equality comes form property (iii). We then use the strong $W^{2, 2}$-convergence of $\widetilde{u}_j$ to $u$ and the fact that $g_j$ converges to the standard metric $g_{\RR^2}$ smoothly locally on $\RR^2$ to pass to the limit as $j \to \infty$ in property (i) above and deduce that $u$ is a weak solution to~\eqref{equ:CMC equation1} on $\RR^2$ lying in $W^{2,2}$, and hence is smooth by standard elliptic theory. 

To obtain from $u$ a solution to~\eqref{equ:CMC equation1} on $S^2$, we note that the strong $W^{2,2}$-convergence also allows us to pass to the limit as $j \to \infty$ in property (ii) above to deduce that $\int_{B_{R}} |\nabla u|^2 \leq  C_0+1$ for all $R >0$. Consequently
\[
\int_{\RR^2}|\nabla u|^2 < \infty.
\]
Using the stereographic projection $\sigma: S^2\setminus \{p\} \to \RR^2$ and the fact that both~\eqref{equ:CMC equation1} and the Dirichlet energy are conformally invariant, we see from the above energy bound and Proposition~\ref{prop:removable-singularity} that $v:= u\circ\sigma$ extends to a non-constant solution to the CMC equation~\eqref{equ:CMC equation1} on all of $S^{2}$, and it remains to show that $\Ind_{H}(v) \leq 1$.  As in the last part of Case 1, we take $\psi_1, \cdots, \psi_d \in \cT_v$ linearly independent such that $\delta^2 E_{H}(v)$ restricted to their span is negative-definite, and show that $d \leq 1$. By the classical logarithmic cut-off trick, we may assume that there exists some $R> 0$ such that $\supp(\psi_i) \subset \sigma^{-1}(B_{R})$ for all $i = 1, \cdots, d$. Letting $\widetilde{\psi}_i = \psi_i \circ \sigma^{-1}$, then each $\widetilde{\psi}_i$ is supported in $B_R$ by what we just arranged. Moreover, by conformal invariance, we have
\begin{equation}\label{eq:2nd-var-conformal}
\delta^2 E_{H}(u)(\widetilde{\psi}_i, \widetilde{\psi}_k; g_{\RR^2}) = \delta^2 E_H (v)(\psi_i, \psi_k; g_{S^2}) \text{ for all }i, k = 1, \cdots, d,
\end{equation}
where we've indicated the metrics used to compute the defining integrals. Here $g_{\RR^2}$ and $g_{S^2}$ denote, respectively, the standard metrics on $\RR^2$ and $S^2$. Now we let $\widetilde{\psi}_{i, j} = P_{\widetilde{u}_j}(\widetilde{\psi}_i)$ and note that since $\widetilde{u}_j \to u$ strongly in $W^{2, 2}(B_R; S^3)$, we have by Lemma~\ref{lemm:projection-estimates}(b) that $\widetilde{\psi}_{i, j} \to \widetilde{\psi}_i$ strongly in $W^{2, 2}(\RR^2; \RR^N)$ as $j \to \infty$. Recalling further that $g_j$ converges to $g_{\RR^2}$ smoothly on compact subset of $\RR^2$, we deduce from Proposition~\ref{prop:second-var-formula-intrinsic} that 
\begin{equation}\label{eq:2nd-var-limit}
\lim_{j \to \infty} \delta^2 E_{H, \widetilde{\ep}_j}(\widetilde{u}_j)(\widetilde{\psi}_{i, j}, \widetilde{\psi}_{k, j}; g_j) = \delta^2 E_{H}(u)(\widetilde{\psi}_i, \widetilde{\psi}_k; g_{\RR^2}).
\end{equation}
We next undo the rescaling~\eqref{eq:case2-blowup} and define $\widehat{\psi}_{i, j}(x) = \widetilde{\psi}_{i, j}\big(\frac{x - x_j}{t_j}\big)$. Then each $\widehat{\psi}_{i, j}$ lies in $\cT_{u_j}$ and is supported in $B_{t_j R}(x_j)$. Moreover, we see from~\eqref{eq:case2-blowup}, ~\eqref{eq:2nd-var-intrinsic}, 
the definition of $g_j$, and the relation $\widetilde{\ep}_j = \frac{\ep_j}{t_j}$ that 
\[
\delta^2 E_{H, \ep_j}(u_j)(\widehat{\psi}_{i, j}, \widehat{\psi}_{k, j}; g) = \delta^2 E_{H, \widetilde{\ep}_j}(\widetilde{u}_j)(\widetilde{\psi}_{i, j}, \widetilde{\psi}_{k, j}; g_j).
\]
Combining this with~\eqref{eq:2nd-var-conformal} and~\eqref{eq:2nd-var-limit}, we deduce that the matrix 
\[
\big(\delta^2 E_{H, \ep_j}(u_j)(\widehat{\psi}_{i, j}, \widehat{\psi}_{k, j})\big)_{i, k = 1, \cdots, d}
\]
is negative-definite for large enough $j$. Thus, again by Corollary~\ref{coro:critical-uniform-bound}(c), we must have $d \leq 1$. The proof is complete.

%---------------%---------------%---------------%---------------%---------------%---------------%---------------
%---------------%---------------%---------------%---------------%---------------%---------------%---------------
\section{Improved existence result under curvature assumption}\label{sec:improved}

In this section we prove Theorem~\ref{thm:main2}. The main step is to derive, under the curvature condition in Remark~\ref{rmk:main2}, an a priori energy bound for solutions to~\eqref{equ:CMC equation1},~\eqref{equ:CMC equation2} with Morse index at most $1$, which allows us to reach other values of $H$ from the full measure set produced by Theorem~\ref{thm:main1}. We obtain this energy bound by first transferring the index bound on $\delta^2 E_H(u)$ to another bilinear form $B_H(u)$ to be defined in Section~\ref{subsec:index-comparison}. The desired bound is obtained in Section~\ref{subsec:uniform-energy-bound} via the standard conformal balancing argument (see for example~\cite{LiYau}). The proof of Theorem~\ref{thm:main2} is completed at the very end of the section.

\subsection{Index comparison}\label{subsec:index-comparison}
We begin by setting up notation and recalling some general facts. Let $u$ be a non-constant solution to~\eqref{equ:CMC equation1},~\eqref{equ:CMC equation2}. We denote the pull-back bundle $u^{\ast}TS^3$ by $E$, and let $E_{\CC}$ be the complexification $E \otimes \CC$. The metric and the Levi-Civita connection on $S^{3}$ induce a metric $\langle\ \cdot\ ,\ \cdot\ \rangle$ and a connection $D$ on $E$, which we extend to be complex linear on $E_{\CC}$. In particular $D$ is compatible with the Hermitian metric $\langle\ \cdot\ ,\overline{\ \cdot\ }\rangle$. Given $p \in S^2$, introducing isothermal coordinates $(x^1, x^2)$ centered at $p$, we may write~\eqref{equ:CMC equation1},~\eqref{equ:CMC equation2} as
\begin{align}
2D_{\bar{z}}u_z &= \sqrt{-1}HQ(u_{\bar{z}}, u_{z}), \label{eq:CMC-equation-complex}\\
\langle u_z, u_z \rangle & = 0, \label{eq:conformal-complex}
\end{align}
where as usual we let $\paop{z} = \frac{1}{2}\big( \paop{x^1} - \sqrt{-1}\paop{x^2} \big),\ \paop{\bar{z}} = \frac{1}{2}\big( \paop{x^1} + \sqrt{-1}\paop{x^2} \big)$. It follows from~\eqref{eq:CMC-equation-complex} that if $u_z$ vanishes at $p$, then in terms of $z = x^1 + \sqrt{-1}x^2$ we have
\begin{equation}\label{eq:twisting}
u_{z} = z^{b}v,
\end{equation}
where $b = b_p \in \NN$ and $v$ is a local section of $E_{\CC}$ with $v(p) \neq 0$.  In particular the set $\cS$ of branch points of $u$ is finite, as is well known. Moreover thanks to~\eqref{eq:conformal-complex}, the real and imaginary parts of $v$ allow us to extend the image of $TS^2\big|_{S^2\setminus \cS}$ in $E$ under $du\big|_{S^2 \setminus \cS}$ across each branch point to an oriented, real rank-two subbundle $\xi$ of $E$. The orientation and induced metric on $\xi$ give it the structure of a complex line bundle and induce a splitting of $\xi_{\CC}:= \xi \otimes \CC$ into $\xi^{1, 0} \oplus \xi^{0, 1}$ in such a way that locally away from $\cS$, the fibers of $\xi^{1, 0}$ and $\xi^{0, 1}$ are spanned, respectively, by $u_z$ and $u_{\bar{z}}$. Next, denoting by $\nu$ the orthogonal complement of $\xi$ in $E$, then $\nu$ is trivial as there is a non-vanishing, unit-length section $\bm n$ coming from $Q$ and the orientation on $\xi$. Finally, the connection $D$ induces (metric-compatible) connections $D^{\perp}$ and $D^{T}$ on $\nu$ and $\xi$, respectively, and both $\xi^{1,0}$ and $\xi^{0, 1}$ are preserved by $D^{T}$.

The main result of this section compares $\Ind_H(u)$ with the index of the following bilinear form: For $s \in \Gamma(\nu)$ we let
\begin{equation}
B_H(u)(s, s) = \int_{S^2}\big[ |\nabla f|^2 - \frac{|\nabla u|^2}{2}\big(\frac{H^2}{2} + \Ric_g(\bm n, \bm n)\big)f^2 \big] d\Vol_{S^2},
\end{equation}
where $f = \langle s, \bm n \rangle \in C^{\infty}(S^2; \RR)$ and $\Ric_g$ denotes the Ricci tensor of $(S^3, g)$. Note that $B_H(u)$ is essentially the second variation of a weighted area functional (see for example~\cite[Equation (1.3)]{Zhou-Zhu19}), with the second fundamental form term replaced by $H^2/2$ to avoid issues at branch points of $u$. The index of $B_H(u)$ is by definition the supremum of  $\dim V$ over all finite-dimensional subspaces $V$ of $\Gamma(\nu)$ on which $B_H(u)$ is negative-definite. We now state the index comparison result.
\begin{prop}\label{prop:index-comparison}
For a non-constant solution $u$ to~\eqref{equ:CMC equation1} and~\eqref{equ:CMC equation2}, the index of $B_H(u)$ is less than or equal to $\Ind_{H}(u)$.
\end{prop}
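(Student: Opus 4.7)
The plan is to construct, for each section $s \in \Gamma(\nu)$, a tangential correction $T(s) \in \Gamma(\xi)$ depending linearly on $s$, such that the combined section $\Phi(s) := s + T(s) \in \cT_u$ satisfies $\delta^2 E_H(u)(\Phi(s), \Phi(s)) \leq B_H(u)(s, s)$. Since $\nu$ and $\xi$ are mutually orthogonal in $E = u^*TS^3$, the map $\Phi$ is automatically injective, so any finite-dimensional subspace $V \subset \Gamma(\nu)$ on which $B_H(u)$ is negative-definite is carried by $\Phi$ to a subspace of $\cT_u$ of the same dimension on which $\delta^2 E_H(u)$ is negative-definite, giving $\Ind_H(u) \geq \dim V$, and hence the desired inequality upon taking the supremum.

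As a first step I would compute $\delta^2 E_H(u)(s,s)$ directly for $s = f\bm n \in \Gamma(\nu)$. Invoking Proposition~\ref{prop:second-var-formula-intrinsic}, together with $D_X \bm n = -S(u_*X)$ (shape operator of the immersed image), the Gauss equation relating $R^{S^3}(\bm n, \nabla u, \nabla u, \bm n)$ to $\tfrac{|\nabla u|^2}{2}\Ric_g(\bm n, \bm n)$, the cross-product identity $Q(D_{x^1}\bm n, u_{x^2}) + Q(u_{x^1}, D_{x^2}\bm n) = -\operatorname{tr}(S)\cdot\tfrac{|\nabla u|^2}{2}\bm n$, and the CMC equation in the form $\operatorname{tr}(S) = H$, yields
\begin{equation*}
\delta^2 E_H(u)(s,s) - B_H(u)(s,s) = \int_{S^2} \tfrac{|\nabla u|^2}{4} (\kappa_1 - \kappa_2)^2 f^2 \, d\Vol_{S^2} \geq 0,
\end{equation*}
where $\kappa_1, \kappa_2$ are the principal curvatures of the image. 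Thus the naive inclusion $\Gamma(\nu) \hookrightarrow \cT_u$ gives the inequality in the wrong direction, reflecting the fact that normal variations of $u$ as a map are generally non-conformal and thereby inflate the Dirichlet energy relative to area.

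Following Eijiri-Micallef, the tangential correction $T(s)$ is designed to absorb precisely this $(\kappa_1 - \kappa_2)^2$-defect. In complex coordinates on $S^2$ one seeks $T(s)$ of the form $2\operatorname{Re}(\phi_s\, u_z) \in \Gamma(\xi)$ for a scalar function $\phi_s$ chosen to minimize $\eta \mapsto \delta^2 E_H(u)(s + \eta, s + \eta)$ over such $\eta$. Expanding the minimization using weak conformality~\eqref{eq:conformal-complex} and the CMC equation~\eqref{eq:CMC-equation-complex} leads to an equation for $\phi_s$ whose solvability reduces, via the asymptotic structure~\eqref{eq:twisting} near branch points, to a standard $\bar\partial$-problem on $S^2$. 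After substituting the optimal $\phi_s$ back, the excess $(\kappa_1 - \kappa_2)^2$-term is converted into a negative contribution that cancels the defect, producing $\delta^2 E_H(u)(\Phi(s), \Phi(s)) \leq B_H(u)(s,s)$.

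The principal obstacle is carrying out the explicit computation verifying this cancellation. This involves meticulously splitting each term in the second variation formula into its normal and tangential parts in complex notation, exploiting the CMC identity $2 D_{\bar z} u_z = \sqrt{-1}H Q(u_{\bar z}, u_z)$ to produce the $H^2/2$-factor that replaces the $|A|^2$-factor appearing in a pure area-type second variation, and ensuring that the tangential correction $T(s)$ extends to a globally defined $W^{2,2}$-section across the finite set of branch points of $u$, using the smooth extension of $\xi$ established in the setup preceding the proposition.
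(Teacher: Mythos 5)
Your overall plan coincides with the paper's: following Ejiri--Micallef, augment each $s\in\Gamma(\nu)$ by a tangential correction $\sigma\in\Gamma(\xi)$ so that $\delta^2 E_H(u)(s+\sigma,s+\sigma)\leq B_H(u)(s,s)$, then transport negative subspaces via the injective map $s\mapsto s+\sigma$. Your intermediate formula for the defect of a pure normal variation, $\delta^2 E_H(u)(f\bm n,f\bm n)-B_H(u)(f\bm n,f\bm n)=\int\frac{|\nabla u|^2}{4}(\kappa_1-\kappa_2)^2f^2\,d\Vol_{S^2}$, is in fact correct: it is nothing but $4\int|(D_z s)^{0,1}|^2\,dx^1\wedge dx^2$, with $(D_z s)^{0,1}=f(D_z\bm n)^{0,1}$ the traceless (Hopf-differential) part of the Weingarten map. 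So the intuition is sound, though this pointwise expression plays no role in the actual argument.

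Two real gaps remain. First, the structural inequality on which everything hinges --- the paper's Lemma~\ref{lemm:index-computation}, namely $\delta^2 E_H(u)(s+\sigma,s+\sigma)\leq B_H(u)(s,s)+4\int|\eta|^2\,d\Vol_{S^2}$ with $\eta=\big((D_z s)^{0,1}+D_z^T\sigma^{0,1}\big)dz$ --- is not established; you label the verification ``the principal obstacle'' but do not carry out the splitting of $D_z v$ into normal/tangential and $(1,0)/(0,1)$ components, the integrations by parts using the Bianchi identity and the parallelism of $Q$, or the use of~\eqref{eq:CMC-equation-complex} to collapse cross terms. This cannot be sidestepped by a bare variational minimization: the restriction of $\delta^2 E_H(u)$ to tangential variations is not a priori bounded below (indeed tangential negative directions are exactly what one is trying to account for), so one needs the explicit decomposition showing the $\sigma$-dependence is confined to the non-negative $\eta$-term. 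Second, the solvability of the $\bar\partial$-type equation is asserted rather than proved. On $S^2$ the equation $D'\sigma^{0,1}=-(D_z s)^{0,1}dz$ is obstructed by holomorphic sections of $\Lambda^{1,0}T^{\ast}S^2\otimes\xi^{0,1}$, and it is precisely the Chern class computation $c_1=-4-\sum_{p\in\cS}b_p<0$ --- which uses the twisted bundle $\xi^{0,1}$, not the raw image of $du$ --- that kills this obstruction. Relatedly, writing $T(s)=2\operatorname{Re}(\phi_s u_z)$ with $\phi_s$ a smooth function confines you to sections of $\xi$ vanishing to order $b_p$ at each branch point, a proper subspace that is generally too small to solve the equation unless $\phi_s$ is permitted poles of order $b_p$; this is why the paper works directly with $\sigma^{0,1}\in\Gamma(\xi^{0,1})$.
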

The essential ingredient for proving Proposition~\ref{prop:index-comparison} is the following computation adapted from~\cite[Theorem 2.1]{EM}, which we explain before proving Proposition~\ref{prop:index-comparison}.

\begin{lemm}\label{lemm:index-computation}
For any $\sigma \in \Gamma(\xi)$ and $s \in \Gamma(\nu)$, write $v = s + \sigma \in \Gamma(E)$ and define $\eta \in \Gamma(\Lambda^{1, 0}T^{\ast}S^2 \otimes \xi^{0, 1})$ by $\eta = \big((D_z s)^{0, 1} + D_z^{T}\sigma^{0, 1}\big) dz$. Then 
\begin{equation}\label{eq:index-computation}
\delta^2 E_{H}(u)(v, v) \leq B_H(u)(s, s) + 4\int_{S^2} |\eta|^2  d\Vol_{S^{2}}.
\end{equation}
\end{lemm}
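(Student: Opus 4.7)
The plan is to adapt the calculation of Ejiri--Micallef~\cite{EM} to the CMC setting. Working in local isothermal coordinates $z = x^1 + \sqrt{-1}x^2$ on $S^2$, we write $s = f\bm n$ with $f$ real, decompose $v = s + \sigma$ as in the statement, and use the orthogonal splitting $E = \nu \oplus \xi$ together with the complex splitting $\xi_{\CC} = \xi^{1,0} \oplus \xi^{0,1}$. The strategy is first to prove the equality (on $S^2 \setminus \cS$, which has full measure)
\[
\delta^2 E_H(u)(v,v) \;=\; J_u(s,s) \;+\; 4\int_{S^2}|\eta|^2\, d\Vol_{S^2},
\]
where $J_u(s,s)$ denotes the standard CMC Jacobi quadratic form
\[
J_u(s,s) \;=\; \int_{S^2}\Big[|\nabla f|^2 \;-\; \tfrac{|\nabla u|^2}{2}\big(|A^\Sigma|^2 + \Ric_g(\bm n,\bm n)\big)f^2\Big] d\Vol_{S^2},
\]
with $A^\Sigma$ the second fundamental form of the immersion $\Sigma = u(S^2\setminus \cS) \subset (S^3,g)$. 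Once this identity is in hand, the inequality~\eqref{eq:index-computation} follows immediately from the pointwise bound $|A^\Sigma|^2 \geq H^2/2$, valid for any CMC surface of mean curvature $H$, which gives $J_u(s,s) \leq B_H(u)(s,s)$.

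To establish the equality, one expands each of the three summands in~\eqref{eq:2nd-var-intrinsic} (with $\ep = 0$) in complex notation. For the kinetic term, one writes $|Dv|^2 = 4|D_zv|^2$ and decomposes $D_zv$ according to $E_{\CC} = \xi^{1,0} \oplus \xi^{0,1} \oplus \nu_{\CC}$; the $\xi^{0,1}$ component is precisely $(D_zs)^{0,1} + D^T_z \sigma^{0,1}$, and this component together with its conjugate furnishes the $4\int|\eta|^2$ contribution. For the curvature term, the three-dimensionality of $S^3$, together with conformality and the fact that $\{u_{x^1}/|u_{x^1}|, u_{x^2}/|u_{x^2}|, \bm n\}$ forms an orthonormal frame of $TS^3$ along $u$, yields
\[
\sum_{\alpha = 1,2} R^{S^3}(s, u_{x^\alpha}, u_{x^\alpha}, s) \;=\; \tfrac{|\nabla u|^2}{2}\Ric_g(\bm n, \bm n) f^2,
\]
while the mixed and purely tangential curvature contributions conspire with their counterparts from the kinetic term via the Gauss equation, producing the $|A^\Sigma|^2$ factor. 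For the $H$-cross term, the identity $Q(u_z, u_{\bar z}) = \sqrt{-1}|u_z|^2\bm n$ (which follows from~\eqref{eq:cross-product}, the conformality~\eqref{eq:conformal-complex}, and the choice of orientation used to define $\bm n$), combined with the CMC equation~\eqref{eq:CMC-equation-complex} and integration by parts, allows the cross term to be rewritten in complex form and combined with the above.

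The main obstacle will be the careful bookkeeping of the shape-operator terms (i.e., the second fundamental form of $\xi \subset E$ along $u$) that appear when $D$ is decomposed as $D^T + D^\perp$ plus shape operators, and the absorption of the resulting cross terms into the single square $|(D_zs)^{0,1} + D^T_z\sigma^{0,1}|^2$. The key structural fact enabling this is that $Q(u_z, \cdot)$ restricts to an isomorphism $\nu_{\CC} \to \xi^{0,1}$, and $Q(u_{\bar z}, \cdot)$ to its conjugate $\nu_{\CC} \to \xi^{1,0}$: this is exactly what makes the CMC cross term pair normal variations with the tangential $(0,1)$-component $D^T_z\sigma^{0,1}$ in the correct proportions, so that the purely tangential contributions enter the final expression only through the combination featured in $\eta$, in precise parallel with the minimal-surface case of Ejiri--Micallef.
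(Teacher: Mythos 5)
Your plan mirrors the paper's own proof, which adapts the Ejiri--Micallef computation: both pass to complex notation, use the CMC relations $2(D_z s)^{1,0} = -\sqrt{-1}HQ(u_z,s)$ and $2(D_z\sigma^{0,1})^\perp = -\sqrt{-1}HQ(u_z,\sigma^{0,1})$ to absorb the $H$-cross term, and reorganize the kinetic and curvature contributions (via the two by-parts identities and the Bianchi identity) so that all the $\sigma$-dependence collects into $4\int|\eta|^2$. The only real difference is cosmetic: you would first establish the full equality $\delta^2 E_H(u)(v,v) = J_u(s,s) + 4\int|\eta|^2$ with the genuine $|A^\Sigma|^2$-Jacobi form and then apply the pointwise inequality $|A^\Sigma|^2 \geq H^2/2$, whereas the paper bounds $|(D_z s)^T|^2$ directly from below by its $(1,0)$-component $|(D_z s)^{1,0}|^2 = \frac{H^2}{4}f^2|u_z|^2$. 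These are equivalent --- $4\int|(D_zs)^T|^2\,dx^1 dx^2 = \int\tfrac{|\nabla u|^2}{2}|A^\Sigma|^2 f^2\,d\Vol_{S^2}$, and dropping $|(D_z s)^{0,1}|^2$ is precisely dropping the trace-free part of the shape operator --- but the paper's form avoids having to address whether the density $\tfrac{|\nabla u|^2}{2}|A^\Sigma|^2$ extends well across $\cS$ (it does, being a constant multiple of the smooth quantity $\mu^{-2}|(D_z\bm n)^T|^2$). Two cautions about your sketch: the $|A^\Sigma|^2$ factor comes entirely from the $\xi$-component of $D_z s$ in the kinetic term, not from a Gauss-equation interaction with the curvature terms (the mixed curvature pieces cancel against their kinetic counterparts after the Ejiri--Micallef integrations by parts, leaving only $\langle R(s,u_z)u_{\bar z},s\rangle$); and to justify the integrations by parts you should reduce to $s,\sigma$ compactly supported away from $\cS$ via the logarithmic cut-off trick, as the paper does, rather than relying only on $\cS$ having measure zero.
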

\begin{proof}[Proof of Lemma~\ref{lemm:index-computation}]
Throughout this proof, we use $R$ and $\Ric$ to denote the curvature tensor and Ricci tensor of $(S^3, g)$, respectively. To begin, note that by the logarithmic cut-off trick it suffices to prove~\eqref{eq:index-computation} in the case where $s$ and $\sigma$ are supported away from the set $\cS$ of branch points. Also, as the conclusion, particularly the form $\eta$, suggests, we will work with complexified versions of $\delta^2 E_H(u)$ and $B_H(u)$. To begin, we use Proposition~\ref{prop:second-var-formula-intrinsic} and follow the calculations in~\cite[p.208-209]{Micallef-Moore88} to rewrite $\delta^2 E_H(u)(v, v)$ in terms of complex coordinates as follows:
\begin{align}\label{eq:d2E-complex}
\delta^2 E_{H}(u)(v, v) = 4\int_{S^2} |D_{z} v|^2 - &\langle R(v, u_{z})u_{\bar{z}}, v \rangle dx^1 \wedge dx^2\nonumber\\
& +4\int_{S^2} \re\langle D_{z}v, \overline{\sqrt{-1}HQ(u_z, v)} \rangle dx^1 \wedge dx^2.
\end{align}
In fact, our situation is simpler than in~\cite{Micallef-Moore88} since here $v$ is a section of $E$ rather than $E_{\CC}$. Also, note that the 2-forms in the integrals, written in isothermal coordinates $(x^1, x^2)$, are conformal invariant, and hence make sense globally on $S^2$. For later use we write $\mu$ for the conformal factor such that $g_{S^2} = \mu^2((dx^1)^2 + (dx^2)^2)$. To continue we note the following identities, both immediate consequences of~\eqref{eq:CMC-equation-complex}.
\begin{align}
2(D_z s)^{1, 0} &= -\sqrt{-1}HQ(u_z, s), \label{eq:s10} \\
2(D_z \sigma^{0, 1})^{\perp} & = -\sqrt{-1}HQ(u_z, \sigma^{0, 1}). \label{eq:sigma-01-perp}
\end{align}
Now we look at the second integral in~\eqref{eq:d2E-complex}. Splitting $v = s + \sigma^{0, 1} + \sigma^{1, 0}$, noting that $Q(u_z, \sigma^{1, 0}) = 0$ and using~\eqref{eq:s10} and~\eqref{eq:sigma-01-perp}, we get
\begin{align*}
&\re\langle D_{z}v, \overline{\sqrt{-1}HQ(u_z, v)} \rangle\\
=\ & \re\langle D_{z}v, \overline{\sqrt{-1}HQ(u_z, \sigma^{0, 1})} \rangle + \re\langle D_{z}v, \overline{\sqrt{-1}HQ(u_z, s)} \rangle\\
=\ &-2 \re\langle (D_{z}v)^{\perp}, \overline{(D_z \sigma^{0, 1})^{\perp}} \rangle - 2 \re\langle (D_{z}v)^{T}, \overline{(D_z s)^{1, 0}} \rangle.
\end{align*}
Combining the above calculation with the gradient term in the first line of~\eqref{eq:d2E-complex} and writing $|D_z v|^2 = |(D_z v)^{\perp}|^2 + |(D_z v)^{T}|^2$, we get
\begin{align}
&|D_z v|^2 +  \re\langle D_{z}v, \overline{\sqrt{-1}HQ(u_z, v)} \rangle \nonumber\\
=\ & \Big(|(D_z v)^{\perp}|^2 -2 \re\langle (D_{z}v)^{\perp}, \overline{(D_z \sigma^{0, 1})^{\perp}} \rangle\Big)\nonumber\\
& +  \Big(|(D_z v)^{T}|^2 -2 \re\langle (D_{z}v)^{T}, \overline{(D_z s)^{1, 0}} \rangle \Big)\nonumber \\
=\ & \Big( |(D_z v)^{\perp} - (D_z \sigma^{0, 1})^{\perp}|^2 - |(D_z \sigma^{0, 1})^{\perp}|^2\Big) \nonumber\\
&+ \Big(|(D_z v)^{T} - (D_z s)^{1, 0}|^2 - |(D_z s)^{1, 0}|^2\Big) \nonumber \\
=\ & |(D_z s)^{\perp} + (D_z \sigma^{1, 0})^{\perp} |^2 - |(D_z \sigma^{0, 1})^{\perp}|^2 \nonumber \\
& + | (D_z s)^{0, 1} + (D_z \sigma^{0, 1})^{T} + (D_z \sigma^{1, 0})^{T} |^2 - |(D_z s)^{1, 0}|^2, \label{eq:gradient-volume}
\end{align}
where in getting the last line we wrote $(D_z v)^{\perp} = (D_z s)^{\perp} + (D_z \sigma^{1, 0})^{\perp} + (D_z \sigma^{0, 1})^{\perp}$ and $(D_z v)^{T} = (D_z s)^{1, 0} + (D_z s)^{0, 1} + (D_z \sigma^{1, 0})^{T} + (D_z \sigma^{0, 1})^{T}$. Expanding the two square terms in~\eqref{eq:gradient-volume} with plus signs, we get
\begin{align}
& |(D_z s)^{\perp} + (D_z \sigma^{1, 0})^{\perp} |^2 = |(D_z s)^{\perp}|^2 + |(D_z \sigma^{1, 0})^{\perp}| + 2\re\langle (D_z s)^{\perp}, \overline{D_z \sigma^{1, 0}} \rangle \label{eq:square-expanded-1}\\
& |(D_z s)^{0, 1} + (D_z \sigma^{0, 1})^{T} + (D_z \sigma^{1, 0})^{T}|^2 = \frac{\mu^2}{2}|\eta|^2 + |(D_z \sigma^{1, 0})^{T}|^2,\label{eq:square-expanded-2}
\end{align}
Next we add~\eqref{eq:square-expanded-1} and~\eqref{eq:square-expanded-2}, put it back into~\eqref{eq:gradient-volume} and use the following identities
\begin{align*}
|(D_z \sigma^{1, 0})^{\perp}|  + |(D_z \sigma^{1, 0})^{T}|^2 &= |D_z \sigma^{1, 0}|^2\\
\re\langle (D_z s)^{\perp}, \overline{D_z \sigma^{1, 0}} \rangle &= \re\langle D_z s, \overline{D_z \sigma^{1, 0}} \rangle - \re\langle (D_z s)^{1, 0}, \overline{D_z \sigma^{1, 0}} \rangle\\
|(D_z s)^{1, 0}|^2 &= |(D_z s)^{T}|^2 - |(D_z s)^{0, 1}|^2
\end{align*}
to obtain 
\begin{align}\label{eq:gradient-volume-2}
&|D_z v|^2 +  \re\langle D_{z}v, \overline{\sqrt{-1}HQ(u_z, v)} \rangle\nonumber\\
=\ & \frac{\mu^2}{2}|\eta|^2 + |(D_z s)^{\perp}|^2  - |(D_z s)^{T}|^2 + |(D_z s)^{0, 1}|^2 - |(D_z \sigma^{0, 1})^{\perp}|^2\nonumber\\
& + |(D_z \sigma^{1, 0})|^2 + 2\re\langle D_z s, \overline{D_z \sigma^{1, 0}} \rangle -2 \re\langle (D_z s)^{1, 0}, \overline{D_z \sigma^{1, 0}} \rangle.
\end{align}
The integral of the term $|D_z \sigma^{1, 0}|^2$ is treated as in~\cite[p.7, equation (2.25)]{EM} using integration by parts and the Bianchi identity. The result is
\begin{equation}\label{eq:by-parts-1}
\int_{S^2} |D_z \sigma^{1, 0}|^2 dx^1 \wedge dx^2 = \int_{S^2} |D_z\sigma^{0, 1}|^2 + \langle R(\sigma^{1, 0}, u_{\bar{z}})u_z, \sigma^{0, 1} \rangle dx^1 \wedge dx^2.
\end{equation}
Similarly, for the integral of $\re\langle D_z s, \overline{D_z \sigma^{1, 0}} \rangle$, we integrate by parts (see~\cite[p.6]{EM}) to get
\begin{align}\label{eq:by-parts-2}
&\int_{S^2} \re\langle D_z s, D_{\bar{z}}\sigma^{0, 1} \rangle  dx^1 \wedge dx^2\\
=\ &\int_{S^2}-\re\langle s, R(u_z, u_{\bar{z}})\sigma^{0, 1} \rangle + \re\langle D_{z} s, D_{\bar{z}}\sigma^{1, 0} \rangle dx^1 \wedge dx^2  \nonumber\\
=\ & \int_{S^2}-\re\langle s, R(u_z, u_{\bar{z}})\sigma^{0, 1} \rangle + \re\langle (D_{z} s)^{0, 1}, (D_{\bar{z}}\sigma^{1, 0})^{T} \rangle dx^1 \wedge dx^2\nonumber\\
& + \int_{S^2}\re\langle (D_{z} s)^{\perp}, (D_{\bar{z}}\sigma^{1, 0})^{\perp} \rangle dx^1 \wedge dx^2. \nonumber
\end{align}
Putting~\eqref{eq:by-parts-1} and~\eqref{eq:by-parts-2} back into~\eqref{eq:gradient-volume-2}, and noting that 
\[
 |(D_z s)^{0, 1}|^2 + |(D_z \sigma^{0, 1})^{T}|^2 + 2 \re\langle (D_{z} s)^{0, 1}, (D_{\bar{z}}\sigma^{1, 0})^{T} \rangle = \frac{\mu^2}{2}|\eta|^2,
\]
and that $dx^1 \wedge dx^2 = \mu^{-2}d\Vol_{S^2}$, we get 
\begin{align}\label{eq:gradient-volume-3}
&\int_{S^2} |D_z v|^2 + \re\langle D_{z}v, \overline{\sqrt{-1}HQ(u_z, v)} \rangle dx^1 \wedge dx^2\nonumber\\
=\ & \int_{S^2}|\eta|^2 d\Vol_{S^2} + \int_{S^2}|(D_z s)^{\perp}|^2  - |(D_z s)^{T}|^2 dx^1 \wedge dx^2 \nonumber \\
& + \int_{S^2} \langle R(\sigma^{1, 0}, u_{\bar{z}})u_{z}, \sigma^{0, 1} \rangle-2\re\langle s, R(u_{z}, u_{\bar{z}})\sigma^{0, 1} \rangle dx^1 \wedge dx^2 \nonumber\\
 &+ \int_{S^2}2\re\langle (D_{z} s), (D_{\bar{z}}\sigma^{1, 0})^{\perp} \rangle - 2\re\langle (D_z s)^{1, 0}, \overline{D_z \sigma^{1, 0}} \rangle  dx^1 \wedge dx^2.
\end{align}
By~\eqref{eq:s10} and~\eqref{eq:sigma-01-perp} and an integration by parts using the fact that $Q$ is parallel with respect to the Levi-Civita connection on $S^3$, it is not hard to see that the two terms in the last line cancel. Moreover, as in~\cite[p.7, equation (2.22)]{EM}, the third line combines with the curvature term in~\eqref{eq:d2E-complex} to give
\begin{align*}
&\int_{S^2} \langle R(\sigma^{1, 0}, u_{\bar{z}})u_{z}, \sigma^{0, 1} \rangle-2\re\langle s, R(u_{z}, u_{\bar{z}})\sigma^{0, 1} \rangle - \langle R(v, u_z)u_{\bar{z}}, v \rangle dx^1 \wedge dx^2 \\
 = &-\int_{S^2} \langle R(s, u_z)u_{\bar{z}}, s \rangle dx^1 \wedge dx^2.
\end{align*}
Therefore we obtain
\begin{align}\label{eq:index-compare-almost}
\delta^2 E_{H}(u)(v, v) =&\ 4\int_{S^2} |\eta|^2 d\Vol_{S^2}\nonumber\\
& + 4\int_{S^2} |(D_z s)^{\perp}|^2 - |(D_z s)^{T}|^2 - \langle R(s, u_z)u_{\bar{z}}, s \rangle dx^1 \wedge dx^2.
\end{align}
To finish, we write $s = f\bm n$ and note that 
\[
|(D_z s)^{\perp}|^2 = |\nabla_z f|^2 = \frac{\mu^2}{4}|\nabla f|^2, \text{ and }
\]
\[
\langle R(s, u_z)u_{\bar{z}}, s \rangle =  \frac{\mu^2 |\nabla u|^2}{8}f^2 \Ric(\bm n, \bm n), \text{ while}
\]
\[
|(D_z s)^{T}|^2 \geq |(D_z s)^{1, 0}|^2 = \frac{H^2}{4}f^2 |u_z|^2 = \frac{\mu^2 |\nabla u|^2}{2}\frac{H^2}{8}f^2.
\]
Therefore the second integral on the right-hand side of~\eqref{eq:index-compare-almost} is bounded above by
\[
\int_{S^2}|\nabla f|^2 - \frac{|\nabla u|^2}{2}\Big( \frac{H^2}{2} + \Ric(\bm n, \bm n) \Big)f^2 d\Vol_{S^2},
\]
which gives the inequality~\eqref{eq:index-computation} as asserted.
\end{proof}

We are now ready to give the proof of Proposition~\ref{prop:index-comparison}.
\begin{proof}[Proof of Proposition~\ref{prop:index-comparison}]
With Lemma~\ref{lemm:index-computation} at our disposal, we may finish the proof exactly as in~\cite[p.7-8]{EM}. We reproduce a sketch of the argument for the genus-zero case, which is all we need. Below we write $D' = (D_z \ \cdot)^{T}dz$ and $D'' = (D_{\bar{z}}\  \cdot)^T d\bar{z}$.

In view of~\eqref{eq:index-computation}, given $s \in \Gamma(\nu)$ we want to solve the following equation for $\sigma^{0, 1} \in \Gamma(\xi^{0, 1})$: 
\begin{equation}\label{eq:dbar-pde}
D'\sigma^{0, 1}
 = -(D_z s)^{0, 1} dz =: \alpha.
\end{equation}
Since there are no $(2, 0)$-forms on a Riemann surface, we see by the Fredholm alternative that~\eqref{eq:dbar-pde} has a solution if and only if $\alpha$ is $L^{2}$-orthogonal to the kernel of the adjoint $(D')^{\ast}$ of $D'$, defined with respect to the metrics on $S^2$ and $\xi^{0, 1}$. To continue, recall that the Koszul-Malgrange theorem (see for instance~\cite[Theorem 2.1.53]{DK}) yields a holomorphic structure on $\xi^{0, 1}$ compatible with $D''$, and that $\Ker((D')^{\ast})$ has the same dimension as the space of holomorphic sections of $\Lambda^{1,0}T^{\ast}S^2 \otimes \xi^{0, 1}$. However, a direct computation shows that $c_1(\xi^{0, 1}) = -2 - \sum_{p \in \cS}b_p$, and hence
\begin{align*}
c_1(\Lambda^{1,0}T^{\ast}S^2 \otimes \xi^{0, 1}) &= c_1(\Lambda^{1,0}T^{\ast}S^2) + c_1(\xi^{0, 1}) = -4-\Sigma_{p \in \cS}b_p < 0.
\end{align*}
Hence $\Lambda^{1,0}T^{\ast}S^2 \otimes \xi^{0, 1}$ has no holomorphic sections, and for each $s \in \Gamma(\nu)$, the equation~\eqref{eq:dbar-pde} has a solution. To finish, let $s_1, \cdots, s_d$ be linearly independent sections of $\nu$ such that $B_H(u)$ is negative-definite on their span $V$. For $i = 1, \cdots, d$ we choose a solution $\sigma_i^{0, 1} \in \Gamma(\xi^{0, 1})$ to~\eqref{eq:dbar-pde} with $s_i$ in place of $s$ and define $\sigma_i = \sigma_i^{0, 1} + \overline{\sigma_i^{0, 1}}$. Extending by linearity the correspondence $s_i \mapsto s_i + \sigma_i$ to a linear map $T: V \to \Gamma(E)$, we see that $T$ is injective, so $\dim T(V) = \dim V = d$. Furthermore, Lemma~\ref{lemm:index-computation} along with the linearity of both sides of equation~\eqref{eq:dbar-pde} (with respect to $\sigma^{0, 1}$ and $s$, respectively) imply that
\[
\delta^2 E_H(u)(T(s), T(s)) \leq B_H(u)(s, s) < 0 \text{ for all }s \in V.
\]
This implies that $\Ind_H(u)$ is at least as large as the index of $B_H(u)$, as asserted.
\end{proof}

\subsection{Uniform energy bound}\label{subsec:uniform-energy-bound}
In this section we use the index comparison above together with the standard conformal balancing argument (see for instance~\cite{LiYau}) to get the uniform energy bound mentioned in the remarks above Section~\ref{subsec:index-comparison}. The main result is the following.
\begin{prop}\label{prop:uniform-area-energy}
Suppose for some $c_0 > 0$ we have
\begin{equation}\label{eq:Ricci-condition}
\Ric_g + \frac{H^2}{2}g \geq c_0 g,
\end{equation}
and let $u$ be a solution to~\eqref{equ:CMC equation1} and~\eqref{equ:CMC equation2} with $\Ind_H(u) \leq 1$. Then
\begin{equation}\label{eq:uniform-area-energy}
D(u) \leq 8\pi/c_0.
\end{equation}
\end{prop}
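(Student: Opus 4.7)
The plan is to transfer the Morse index hypothesis to the quadratic form $B_H(u)$ via Proposition~\ref{prop:index-comparison}, and then exploit it through the classical conformal balancing of Hersch and Li--Yau~\cite{LiYau}. Since $\nu$ is trivialized by $\bm n$, every $s \in \Gamma(\nu)$ has the form $s = f\bm n$ with $f \in C^\infty(S^2)$, and $B_H(u)(f\bm n, f\bm n) = \int_{S^2}\bigl(|\nabla f|^2 - V f^2\bigr) d\Vol_{S^2}$, where $V := \tfrac{|\nabla u|^2}{2}\bigl(\tfrac{H^2}{2} + \Ric_g(\bm n, \bm n)\bigr)$. By Proposition~\ref{prop:index-comparison}, the scalar Schr\"odinger operator $L := -\Delta - V$ on $S^2$ has at most one negative eigenvalue.

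The natural test family is $f_i := x_i \circ \phi$, where $x_1, x_2, x_3$ are the Euclidean coordinate functions on $S^2 \subset \RR^3$ and $\phi: S^2 \to S^2$ is a conformal diffeomorphism with conformal factor $\lambda_\phi$. These satisfy $\sum_i f_i^2 \equiv 1$ and $\sum_i |\nabla f_i|^2 = 2\lambda_\phi^2$ pointwise, and by conformal invariance of the area, $\int_{S^2} 2\lambda_\phi^2\, d\Vol_{S^2} = 8\pi$. Thus, provided $B_H(u)(f_i \bm n, f_i \bm n) \geq 0$ for each $i$, summing over $i$ and invoking~\eqref{eq:Ricci-condition} gives
\[
0 \leq \int_{S^2}\bigl(2\lambda_\phi^2 - V\bigr)\, d\Vol_{S^2} \leq 8\pi - c_0 D(u),
\]
which is~\eqref{eq:uniform-area-energy}.

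It remains to choose $\phi$ achieving this non-negativity. If $L \geq 0$ (index zero), any $\phi$ works, e.g.\ $\phi = \id$. If $L$ has a unique negative eigenvalue, let $\psi_0$ be its eigenfunction; by simplicity of the first eigenvalue of a scalar elliptic operator on a closed manifold and the strong maximum principle, one may choose $\psi_0 > 0$. Then $d\mu := \psi_0\, d\Vol_{S^2}$ is a positive measure on $S^2$ without atoms, and Hersch's lemma produces a M\"obius transformation $\phi: S^2 \to S^2$ such that $\int_{S^2}(x_i \circ \phi)\,\psi_0\, d\Vol_{S^2} = 0$ for $i = 1, 2, 3$. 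Each $f_i \bm n$ is then $L^2$-orthogonal to the negative eigensection $\psi_0 \bm n$ of $B_H(u)$, hence lies in the closed subspace on which $B_H(u) \geq 0$, yielding $B_H(u)(f_i \bm n, f_i \bm n) \geq 0$ as required.

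The main technical ingredient is the positivity of the first eigenfunction, which allows Hersch's lemma to apply with a genuine positive weight; once this is in place, the trace identity in the second paragraph delivers the bound with no further work. The role of the hypothesis $\Ind_H(u) \leq 1$ is precisely to afford the one ``bad'' direction that conformal balancing is designed to neutralize.
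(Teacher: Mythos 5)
Your proposal is correct and follows essentially the same line as the paper's own proof: transfer the index bound to $B_H(u)$ via Proposition~\ref{prop:index-comparison}, reduce to the scalar Schr\"odinger form on $f = \langle s, \bm n\rangle$, balance the first eigenfunction against the coordinate functions with a conformal automorphism of $S^2$ (Hersch/Li--Yau), and sum the trace identity. The only cosmetic difference is that the paper rules out the index-zero case directly by testing with $f \equiv 1$ (which, for non-constant $u$ under~\eqref{eq:Ricci-condition}, gives $B_H(u)(1,1) < 0$), whereas you cover it as a harmless separate branch; this does not change the argument.
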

\begin{proof}
Of course it suffices to prove the proposition assuming that $u$ is non-constant. By assumption and Proposition~\ref{prop:index-comparison}, we see that $B_H(u)$ has index at most $1$. Substituting $f \equiv 1$ into $B_H(u)(f, f)$ then implies by assumption~\eqref{eq:Ricci-condition} that $B_H(u)$ has index exactly $1$. In particular, 
\begin{equation}\label{eq:positive-if-orthogonal}
B_H(u)(f, f) \geq 0 \text{ provided }\int_{S^2} f\varphi d\Vol_{S^2} = 0,
\end{equation}
where $\varphi > 0$ is a lowest eigenfunction for the operator $-\Delta - \frac{|\nabla u|^2}{2}(\frac{H^2}{2} + \Ric_g(\bm n, \bm n))$. By a degree theory argument as in~\cite[p. 274]{LiYau}, we obtain a conformal map $F: S^2 \to S^2$ so that 
\[
\int_{S^2} (x^i \circ F) \varphi d\Vol_{S^2} = 0, \text{ for }i = 1, 2, 3.
\]
Here $x^1, x^2, x^3$ denote the coordinate functions of the standard embedding $S^2 \to \RR^3$. Hence by~\eqref{eq:positive-if-orthogonal} we have
\[
\int_{S^2}|\nabla (x^i \circ F)|^2 - \frac{|\nabla u|^2}{2} \big( \frac{H^2}{2} + \Ric_g(\bm n, \bm n) \big)(x^i \circ F)^2 d\Vol_{S^2} \geq 0.
\]
Rearranging the above inequality, summing over $i$ and using~\eqref{eq:Ricci-condition}, we get
\begin{align*}
c_0 \int_{S^2}\frac{|\nabla u|^2}{2}\sum_{i = 1}^3 (x^i \circ F)^2 d\Vol_{S^2} &\leq \sum_{i = 1}^3 \int_{S^2} |\nabla (x^i \circ F)|^2 d\Vol_{S^2} = \sum_{i = 1}^3\int_{S^2} |\nabla x^i|^2 d\Vol_{S^2},
\end{align*}
where we used the conformal invariance of the Dirichlet integral to get the equality. Recalling that $\sum_{i = 1}^3\int_{S^2} |\nabla x^i|^2 d\Vol_{S^2} = 8\pi$ and that $\sum_{i = 1}^3 (x^i)^2 = 1$, we get $c_0D(u) \leq 8\pi$, which gives~\eqref{eq:uniform-area-energy}.
\end{proof}
\begin{rmk}
If in addition $u$ is an immersion, namely without branch points, then a uniform bound on $D(u)$ depending only on $c_0$ holds assuming only that $R_g + \frac{3H^2}{2} \geq c_0>0$, where $R_g$ is the scalar curvature of $(S^3, g)$. Indeed, in this case Proposition~\ref{prop:index-comparison} holds with $\frac{H^2}{2}$ replaced by $|A|^2$ in the definition of $B_H(u)$, where the metric $\frac{|\nabla u|^2}{2}g_{S^2}$ is used on $S^2$ in computing the norm of the second fundamental form. The desired conclusion then follows from an adaptation (see for instance~\cite[p.229]{Rosenberg2006}) of the Schoen-Yau rearrangement~\cite[p.139]{Schoen-Yau1979} and the same conformal balancing argument as above. That said, ruling out branch points, or getting an a priori estimate on the total branching order for that matter, seems to be a difficult question.
\end{rmk}

\subsection*{Proof of Theorem~\ref{thm:main2}}
We shall prove the following stronger statement:
\vskip 2mm
\textit{For all $H > 0$ such that $\Ric_g + \frac{H^2}{2}g > 0$, there exists a non-constant solution $u$ to~\eqref{equ:CMC equation1} and~\eqref{equ:CMC equation2} with $\Ind_H(u) = 1$}. 
\vskip 2mm
To begin, note that by compactness there exists $c_0 > 0$ so that~\eqref{eq:Ricci-condition} holds. Next, thanks to Theorem~\ref{thm:main1} we can find a sequence $H_k$ increasing to $H$ such that for all $k$ there exists a non-constant solution $u_k$ to~\eqref{equ:CMC equation1},~\eqref{equ:CMC equation2} with $H_k$ in place of $H$ which satisfies $\Ind_{H_k}(u_k) \leq 1$. It is also clear from the proof of Theorem~\ref{thm:main1} that 
\begin{equation}\label{eq:lower-bound-k}
D(u_k) \geq \min\{\beta, \eta_0/3\} =: \beta_1,
\end{equation}
where the constants $\eta_0$ and $\beta$ are given respectively by Proposition~\ref{prop:eta-regularity} and Proposition~\ref{prop:lower-bound} with $H$ in place of $H_0$. Now since $H_k \to H$, we have by~\eqref{eq:Ricci-condition}, Proposition~\ref{prop:uniform-area-energy} and the index bound $\Ind_{H_k}(u_k) \leq 1$ that 
\begin{equation}\label{eq:approximation-energy-bound}
D(u_k) \leq 16\pi/c_0 \text{ for large enough }k.
\end{equation}
A straightforward adaptation of the proof of~\cite[Theorem 2.2]{Sch} yields constants $\delta_1, C > 0$ depending only on $H$ such that if $\int_{B_{2r}(x)} |\nabla u_k|^2 < \delta_1$, then
\[
r^2 \sup_{B_{r}(x)}|\nabla u_k|^2 \leq C\int_{B_{2r}(x)}|\nabla u_k|^2,
\]
and bootstrapping in~\eqref{equ:CMC equation1} gives all higher-order estimates. From this and~\eqref{eq:approximation-energy-bound}, and arguing as in Proposition~\ref{prop:convergence-mod-bubble}, we again see a dichotomy as in the proof of Theorem~\ref{thm:main1}: Either (i) $u_k$ has a subsequence converging smoothly on $S^2$ to a limit $u$, which must solve~\eqref{equ:CMC equation1},~\eqref{equ:CMC equation2} and satisfy $D(u) \geq \beta_1$, or (ii) the sequence $u_k$ exhibits energy concentration at some point, in which case we obtain a solution $v$ to~\eqref{equ:CMC equation1},~\eqref{equ:CMC equation2} satisfying $D(v) \geq \delta_1/3$ by rescaling and using Proposition~\ref{prop:removable-singularity}. In both cases, the index upper bound is established exactly as in Theorem~\ref{thm:main1}, so we omit the details. We finish upon noting that, as in the proof of Proposition~\ref{prop:uniform-area-energy}, the curvature condition~\eqref{eq:Ricci-condition} and the non-constancy of $u$ implies that $B_H(u)$ has positive index, and thus $\Ind_H(u) > 0$. The proof of Theorem~\ref{thm:main2} is complete.

%-----------------------------------------------------
\appendix

\section{Proofs of some standard estimates} 
\label{sec:proof of two lemmas}

As promised above, below we outline the proofs of Lemma \ref{lemm:D-equivalence}, \ref{lemm:projection-estimates} and equation~\eqref{eq:W22-L4}.

\begin{proof}[Proof of Lemma \ref{lemm:D-equivalence}]
For part (a), we first note that for $h \in C^3(S^2)$, testing the B\^ochner formula $\Delta\nabla h = \nabla \Delta h + \Ric^{S^2}(\nabla h)$ against $\nabla h$ and integrating by parts lead to
\begin{equation}\label{eq:Bochner-by-parts}
\int_{S^2} |\nabla^2 h|^2 = \int_{S^2} |\Delta h|^2 - \Ric^{S^2}(\nabla h, \nabla h).
\end{equation}
By approximation, the above identity also holds for any $h \in W^{2, 2}(S^2)$. Applying this to $u$ as a map in $W^{2, 2}(S^2; \RR^N)$ and rearranging, we get
\begin{equation}\label{eq:hessian-Delta-difference}
\Big | \int_{S^2}|\Delta u|^2 - \int_{S^2}|\nabla^2 u|^2 \Big| \leq C\int_{S^2}|\nabla u|^2,
\end{equation}
which gives (a) and the first inequality in (b).
%We also obtain the first inequality in part (b) from~\eqref{eq:hessian-Delta-difference} since
%\begin{align*}
%\int_{S^2}\ep^2 |\nabla^2 u|^2 + |\nabla u|^2 &\leq \int_{S^2}\ep^2 |\Delta u|^2 + (C\ep^2 + 1)|\nabla u|^2, \text{ and }\\
%\int_{S^2}\ep^2 |\Delta u|^2 + |\nabla u|^2 &\leq \int_{S^2}\ep^2 |\nabla^2 u|^2 + (C\ep^2 + 1)|\nabla u|^2.
%\end{align*}
The second inequality of (b) follows immediately from the first and the fact that $u$ maps into a compact submanifold of $\RR^N$. Finally it is clear from the proof that~\eqref{eq:hessian-Delta-difference}, and hence part (a), holds for $u \in W^{2, 2}(S^2; \RR^N)$.
\end{proof}

\begin{proof}[Proof of Lemma \ref{lemm:projection-estimates}]
For part (a), the fact that $\Pi(\widetilde{v})$ belongs to $W^{2, 2}(S^2; S^3)$ follows from Sobolev embedding along with the chain rule and product rule for weak derivatives. (See for instance~\cite[Chapter 7]{GT}.) To prove the estimate, we differentiate $\Pi(\widetilde{v})$ to see that
%\begin{align*}
%\nabla \big( \Pi(\widetilde{v}) \big) &= P_{\widetilde{v}}(\nabla \widetilde{v}),\\
%\nabla^2 \big( \Pi(\widetilde{v}) \big) &= (dP)_{\widetilde{v}}(\nabla \widetilde{v}, \nabla \widetilde{v}) + P_{\widetilde{v}} (\nabla^2 \widetilde{v}).
%\end{align*}
%Thus
\begin{equation}\label{eq:projection-a}
\int_{S^2}|\nabla v|^2 + \ep^2 |\nabla^2 v|^2 \leq C\int_{S^2}|\nabla \widetilde{v}|^2 + \ep^2 |\nabla \widetilde{v}|^4 +\ep^2 |\nabla^2 \widetilde{v}|^2.
\end{equation}
To estimate the integral of $\ep^2 |\nabla\widetilde{v}|^4$, recall that since we are on a two-dimensional domain, there holds the following Sobolev inequality %$\int_{S^2}|\nabla \widetilde{v}|^4 \leq C\big( \int_{S^2}|\nabla \widetilde{v}|^2 + |\nabla^2 \widetilde{v}|^2 \big)^2$.
%Consequently,
\begin{equation}\label{eq:Sobolev-W12-L4}
\int_{S^2} |\nabla \widetilde{v}|^4 \leq C\|\nabla\widetilde{v}\|_{1, 2}^2\int_{S^2}|\nabla\widetilde{v}|^2 + |\nabla^2\widetilde{v}|^2.
\end{equation}
Multiplying both sides by $\ep^2$ and putting this back into~\eqref{eq:projection-a} yield the estimate in part (a). To prove the last assertion of part (a), note that given $u_0 \in W^{2, 2}(S^2; \cV)$, there exists $\delta \in (0, 1)$ such that for all $\|u - u_0\|_{2, 2}, \|v - u_0\|_{2, 2} < \delta$, we have that $tv + (1 - t)u$ still maps into $\cV$ for all $t \in [0, 1]$, and that $\|u\|_{2, 2}, \|v\|_{2, 2} < \|u_0\|_{2, 2} + 1 := K$. We may then differentiate $\Pi(tv + (1 - t)u)$ and use the fundamental theorem of calculus to get 
\[
\|\Pi(u) - \Pi(v)\|_{2, 2} \leq C_K\|u - v\|_{2, 2},
\]
after some routine calculations using H\"older's inequality and Sobolev embedding. 

The proof of part (b) is very similar. Since $y \mapsto P_y$ is a smooth matrix-valued function for $y \in \cV$, we may argue as in part (a) to see that $u \mapsto P_u$ is a locally Lipschitz map from $W^{2, 2}(S^2; \cV) \to W^{2, 2}(S^2; \RR^{N \times N})$, and that $\int_{S^2} |\nabla (P_u)|^2 + |\nabla^2 (P_u)|^2 \leq C(1 + \|u\|_{2, 2}^2)\int_{S^2} |\nabla u|^2 + |\nabla^2 u|^2$. Consequently, since $P: \cV \to \RR^{N \times N}$ is bounded, there holds 
\[
\|P_u\|_{2, 2}^2 \leq C(1 + \|u\|_{2, 2}^2)^2,
\]
for some universal constant $C$. All the remaining conclusions of part (b) follow from the previous estimate, the observation that $P_u(\widetilde{\psi})$ is a matrix-vector product, and the fact that $fh$ belongs to $W^{2, 2}(S^2)$ whenever both $f$ and $h$ do, in which case 
\begin{equation}\label{eq:W22-algebra}
\|f h\|_{2,2} \leq C\|f\|_{2, 2}\|h\|_{2, 2},
\end{equation} 
where $C$ is again some universal constant. (See for example~\cite[Theorem 4.39]{Adams} or~\cite[Corollary 9.7]{Pal} for this last fact.)
\end{proof}

\begin{proof}[Proof of~inequality \eqref{eq:W22-L4}]
We choose a cut-off function $\zeta \in C^{\infty}(B_{4r}(x))$ which equals $1$ on $B_{7r/2}(x)$ and satisfies $|\nabla^k\zeta| \leq Cr^{-k}$. Moreover, we let $a = \fint_{B_{4r}(x)}u$. Applying~\eqref{eq:hessian-Delta-difference} to $\zeta (u - a)$, we get
\[
\int_{B_{4r}(x)} |\nabla^2(\zeta (u - a))|^2 \leq \int_{B_{4r}(x)} |\Delta (\zeta(u - a))|^2 + C|\nabla (\zeta (u - a))|^2.
\]
By a simple application of Young's inequality we have, for any $\delta > 0$, that
\begin{align}\label{eq:commute-cutoff-1}
\int_{B_{4r}(x)} \zeta^2 |\nabla^2 u|^2 \leq& (1 + \delta)\int_{B_{4r}(x)} |\nabla^2(\zeta (u - a))|^2\nonumber\\
& + C_{\delta}\int_{B_{4r}(x)} |\nabla \zeta|^2 |\nabla u|^2 + |u - a|^2 |\nabla^2 \zeta|^2.
\end{align}
An estimate to the opposite effect holds with $\nabla^2$ replaced by $\Delta$:
\begin{align}\label{eq:commute-cutoff-2}
\int_{B_{4r}(x)}|\Delta (\zeta (u - a))|^2  \leq& (1 + \delta)\int_{B_{4r}(x)} \zeta^2 |\Delta u|^2 \nonumber\\
& + C_{\delta}\int_{B_{4r}(x)} |\nabla \zeta|^2 |\nabla u|^2 + |u - a|^2 |\nabla^2 \zeta|^2.
\end{align}
Choosing $\delta$ small enough, we get after some straightforward computation using the properties of $\zeta$ that
\begin{align}
\int_{B_{7r/2}(x)}\ep^2 |\nabla^2 u|^2 &\leq 2\int_{B_{4r}(x)} \ep^2|\Delta u|^2 + C\big( \ep^2 + (\ep/r)^2 \big) \int_{B_{4r}(x)} |\nabla u|^2 + r^{-2}|u - a|^2 \nonumber \\
&\leq 2 \int_{B_{4r}(x)} \ep^2|\Delta u|^2 + C\big( \ep^2 + (\ep/r)^2 \big) \int_{B_{4r}(x)} |\nabla u|^2 \label{eq:W22-Delta-L2},
\end{align}
where the second line follows by the Poincar\'e inequality. To bound the integral of $|\nabla u|^4$, we choose another cut-off function $\xi \in C^{\infty}_c(B_{7r/2}(x))$ which equals $1$ on $B_{3r}(x)$ and satisfies $|\nabla^k\xi| \leq Cr^{-k}$. Applying the Sobolev inequality~\eqref{eq:Sobolev} to $h = \xi^2 |\nabla u|^2$ and using H\"older's inequality a few times, we get
\[
\int_{B_{3r}(x)}  |\nabla u|^4 \leq Cr^{-2}\Big( \int_{B_{7r/2}(x)} |\nabla u|^2 \Big)^2 + C\Big( \int_{B_{7r/2}(x)}|\nabla u|^2  \Big)\Big( \int_{B_{7r/2}(x)}|\nabla^2 u|^2 \Big)
\]
Hence 
\begin{equation}
\int_{B_{3r}(x)}\ep^2 |\nabla u|^4 \leq C\big( 1 + (\ep/r)^2 \big)\Big( \int_{B_{7r/2}(x)} |\nabla u|^2 \Big)\Big( \int_{B_{7r/2}(x)} \ep^2 |\nabla^2 u|^2 + |\nabla u|^2\Big).
\end{equation}
Combining this with~\eqref{eq:W22-Delta-L2} gives~\eqref{eq:W22-L4}.
\end{proof}

%%%
%Bibliography--------------------------------------------------------
\bibliographystyle{amsplain}
\bibliography{cmc-existence-arxiv-update}
\end{document}